\documentclass[12pt,letterpaper]{scrartcl}
\pdfoutput=1
\usepackage{preamble}

\title{Modular functors from non-semisimple 3d TFTs}

\author{
	Aaron Hofer\footnote{\normalsize{\texttt{\href{mailto:aaron.hofer@uni-hamburg.de}{aaron.hofer@uni-hamburg.de}}}} \qquad
	Ingo Runkel\footnote{\normalsize{\texttt{\href{mailto:ingo.runkel@uni-hamburg.de}{ingo.runkel@uni-hamburg.de}}}}\\[0.5cm]
	 \\  
	\normalsize\slshape Fachbereich Mathematik, Universit\"{a}t Hamburg,\\
	\normalsize\slshape Bundesstra{\ss}e 55, 20146 Hamburg, Germany
	}

\date{}

\begin{document}
\maketitle
\begin{abstract}
\noindent
Given a not necessarily semisimple modular tensor category $\calc$, we use the corresponding 3d\,TFT defined in \autocite{DGGPR19} to explicitly describe a modular functor as a symmetric monoidal 2-functor from a 2-category of oriented bordisms to a 2-category of finite linear categories. 
This recovers a result by Lyubashenko \autocite{Lyu1996ribbon} obtained via generators and relations.

Pulling back the modular functor for $\calc$ to a $2$-category of bordisms with orientation reversing involution cancels the gluing anomaly, and further pulling back to the original bordism category along a doubling functor leads to the modular functor for the Drinfeld centre $\calZ(\calc)$. 
\end{abstract}

\thispagestyle{empty}

\newpage

\tableofcontents

\newpage
\section{Introduction}
A modular functor is, colloquially speaking, a systematic assignment of mapping class group representations to surfaces which is compatible with gluing along boundaries.
Modular functors originate from the study of $2$-dimensional conformal field theories as solution spaces to certain differential equations \autocite{Segal1988cftmodfunc,MS1989cft,TUY1989CFT,AU2007ModFunc,FBZ2001VOA}.
In this context, the surfaces are equipped with a complex structure, and such modular functors are also called \emph{complex-analytic}.
A second natural source of modular functors are 3-dimensional topological field theories (TFTs), where the vector spaces assigned to surfaces carry mapping class group actions. In this case, the surfaces are just smooth manifolds, and such modular functors are called \emph{topological}. The relation between the two notions is discussed for example in \autocite{BK2001modular,Loo2010WZW}.

\medskip

In this paper, we study topological modular functors obtained from 3-dimensional surgery TFTs. The original class of examples are the surgery TFTs constructed by Reshetikhin and Turaev \autocite{RT1991, Tu16}. Their construction takes a modular fusion category as an input. These categories are finitely semisimple, monoidal, have duals, and a non-degenerate braiding. A generalisation of the resulting modular functor to modular tensor categories, which are still finite but need no longer be semisimple, was given in \autocite{Lyu1996ribbon} by an approach based on generators and relations. The surgery TFT construction itself was generalised to not necessarily semisimple modular tensor categories in \autocite{DGGPR19}, building in particular on \autocite{CGP12,DGP2017}. These TFTs are non-compact in the sense that they are no longer defined on all bordisms \cite{lurie2009classification,Haioun23nonssrel}. However, they include all bordisms necessary to obtain a modular functor, and it was verified in \autocite{DGGPR20} that the resulting mapping class group actions agree with those given in \autocite{Lyu1996ribbon}.
The main goal of this paper is to explicitly construct the modular functor in terms of the non-semisimple $3$d TFT of \autocite{DGGPR19}, including in particular the gluing morphisms. Related results in a slightly different setting were obtained in \autocite{DeRenzi2021nonssext2}, see \Cref{rem:mainthm}.

\medskip

A construction of Lyubashenko's modular functor for Drinfeld centres based on string-nets is given in \autocite{MSWY23drinmodfunc}, see also \autocite{KST2023framedstring} for a framed version. There it is also shown how to naturally extend the modular functor to an open-closed bordism category.

Our main motivation for a TFT-based construction comes from a possible future application to a correspondence between non-semisimple 2d\,CFT and 3d\,TFT in the spirit of \autocite{FFFS2002,FRSI,FFRS2007defects}.

\subsection*{Statement of the main results}

The variant of a modular functor that we will work with is defined to be a symmetric monoidal $2$-functor. The source category is the bordism $(2,1)$-category $\bord_{2+\epsilon,2,1}^{\mathrm{\chi}}$ consisting of closed oriented $1$-dimensional manifolds, $2$-dimensional oriented bordisms, and isotopy classes of orientation preserving diffeomorphisms, together with extra decorations necessary to compensate for a gluing anomaly indicated by the superscript $\chi$. 
The target 2-category is $\cat{P}\mathrm{rof}_{\mathbbm{k}}^{\coend \mathrm{ex}}$ consisting of finite linear categories, left exact profunctors, and natural transformations, see \Cref{sec:modfunc} for details. This corresponds to the definition used in \autocite{FSY2023RCFTstring1}, related notions were studied in  \autocite{Tillmann1998modfunc} and \autocite[Ch.\,6]{KL2001nsstqft}.

Our first main result is
\begin{thm}[{\Cref{thm:chiralmf}}]
    For every modular tensor category $\calc$, the 3d TFT
    \begin{align}
    \widehat{\rmV}_{\calc} \colon \widehat{\bord}_{3,2}^{\chi}(\calc) \to \vect
    \end{align}
    of \autocite{DGGPR19} induces a symmetric monoidal $2$-functor
    \begin{align}
    \mathrm{Bl}_\calc^{\chi}\colon\bord_{2+\epsilon,2,1}^{\mathrm{\chi}} \to \cat{P}\mathrm{rof}_{\mathbbm{k}}^{\coend \mathrm{ex}}.
    \end{align}
\end{thm}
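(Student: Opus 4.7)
The strategy is the standard ``descent by one dimension'' construction: send each cell of $\bord_{2+\epsilon,2,1}^{\mathrm{\chi}}$ to a cell of $\widehat{\bord}_{3,2}^{\chi}(\calc)$ by crossing with an interval and decorating with Wilson lines, then evaluate $\widehat{\rmV}_\calc$.

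On objects, a closed oriented 1-manifold $S$ with $n$ circles is sent to $\calc^{\boxtimes n}$, which is a finite linear category. On 1-morphisms, for a bordism $\Sigma \colon S_0 \to S_1$ I define the profunctor by
\[
\mathrm{Bl}_\calc^{\chi}(\Sigma)(X,Y) \;=\; \widehat{\rmV}_\calc\!\left(\Sigma \times [0,1]\,;\, L_X, L_{Y}\right),
\]
where $L_X,L_Y$ are the Wilson lines labelled by $X \in \mathrm{Bl}_\calc^{\chi}(S_0)$ and $Y \in \mathrm{Bl}_\calc^{\chi}(S_1)$ running through the collars, and the $\chi$-decorations on $\Sigma$ are transported to the thickened bordism. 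I then need to check that the result is left exact in each variable separately, which reduces to left exactness of $\widehat{\rmV}_\calc$ with respect to individual Wilson-line labels, a property built into the construction of \autocite{DGGPR19}. On 2-morphisms, an isotopy class of orientation preserving diffeomorphisms $\varphi\colon \Sigma \to \Sigma'$ thickens to $\varphi \times \mathrm{id}_{[0,1]}$; its image under $\widehat{\rmV}_\calc$ provides, naturally in $X$ and $Y$, the required 2-cell between profunctors.

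The substantive step is compositionality. For composable bordisms $\Sigma \colon S_0 \to S_1$ and $\Sigma' \colon S_1 \to S_2$ I must exhibit a coherent natural isomorphism
\[
\mathrm{Bl}_\calc^{\chi}(\Sigma' \circ \Sigma) \;\cong\; \mathrm{Bl}_\calc^{\chi}(\Sigma') \circ \mathrm{Bl}_\calc^{\chi}(\Sigma),
\]
where the right-hand side is computed as a coend over $\mathrm{Bl}_\calc^{\chi}(S_1)$. The plan is to exhibit this coend as the value of $\widehat{\rmV}_\calc$ on the thickened glued bordism by cutting along the separating surface $S_1 \times [0,1]$, using Lyubashenko's coend object $\mathbbm{L} \in \calc$ realised via Wilson lines going around a handle system on $S_1$, and appealing to the surgery presentation of the TFT to identify the two. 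The $\chi$-decorations encode the Maslov/Lagrangian data whose propagation under gluing is engineered precisely to cancel the scalar anomaly of $\widehat{\rmV}_\calc$, so that the resulting gluing isomorphism is strictly natural and satisfies the associator coherence of a 2-functor. Symmetric monoidality then follows from the universal property of Deligne products on objects and from multiplicativity of $\widehat{\rmV}_\calc$ under disjoint union on morphisms.

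The main obstacle I anticipate is the coend identification: in the semisimple case it reduces to a finite sum over simples via a Verlinde-type formula, but in the present setting one must invoke the full Lyubashenko coend and verify that left exactness is preserved under the gluing coend, while simultaneously tracking the $\chi$-data well enough to make the coherence diagrams for a symmetric monoidal 2-functor commute on the nose rather than merely up to scalar.
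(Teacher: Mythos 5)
Your overall architecture coincides with the paper's: cap off the boundary circles of a $2$-bordism with marked disks, take the TFT state space as the value of the profunctor, realise functoriality and mapping class group actions via cylinders and mapping cylinders with embedded ribbons, and identify horizontal composition of profunctors with gluing. Before the main issue, one smaller problem: your defining formula $\widehat{\rmV}_\calc(\Sigma\times[0,1];L_X,L_Y)$ does not type-check, since $\widehat{\rmV}_\calc$ assigns linear maps, not vector spaces, to $3$-manifolds. What is needed is to evaluate $\widehat{\rmV}_\calc$ on the \emph{object} $\underline{\Sigma}^{(\underline{X};\underline{Y})}$ of $\widehat{\bord}_{3,2}^{\chi}(\calc)$ obtained by gluing marked disks into $\partial\Sigma$; the thickened cylinders with embedded ribbons only enter to define the action on morphisms of labels. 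Left exactness is also not ``built into'' \autocite{DGGPR19}: it follows from representability, via the isomorphism of state spaces with $\Hom_\calc(\coend^{\otimes g}\otimes X_1\otimes\cdots\otimes X_p,\,Y_1\otimes\cdots\otimes Y_q)$ (\Cref{lem:bllex}).

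The substantive gap is compositionality, which is exactly the non-trivial content of the theorem and which your proposal asserts rather than proves. The composite of profunctors is the \emph{left exact} coend $\oint^{X}\mathrm{Bl}^{\chi}_{\calc}(\Sigma')(X,-)\otimes_{\kk}\mathrm{Bl}^{\chi}_{\calc}(\Sigma)(-,X)$, and one must show that the dinatural family induced by explicit gluing bordisms $M_X$ (a handle joining the two marked disks, carrying an $X$-labelled ribbon) is universal among left exact functors. ``Cutting along the separating surface and appealing to the surgery presentation'' does not establish this. In the paper it requires two separate arguments: for gluing boundary circles lying on distinct components, a Yoneda-type identification (\Cref{cor:gluingalg1}); for self-gluing of a connected component, a bichrome-graph computation using the slide trick, the Hopf pairing $\omega$, the integral $\Lambda$, and the relation $\calS^2=\zeta S^{-1}$, which reduces the glued handlebody to the universal dinatural transformation of \Cref{prop:lexcoendhom}, identifying $\oint^{X}\Hom_\calc(X\otimes U,X\otimes V)$ with $\Hom_\calc(\coend\otimes U,V)$ via the Radford pairing. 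Your closing paragraph correctly names this as the obstacle, but the proposal does not overcome it, so the argument is incomplete precisely where the theorem is hard.
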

Here $\widehat{\bord}_{3,2}^{\chi}(\calc)$ is a subcategory of the bordism category $\bord_{3,2}^{\chi}(\calc)$ with $\calc$ labelled ribbon graphs and extra decorations necessary to compensate a gluing anomaly, see \Cref{ssec:adbordcat}.
We note that in our setting actually $\cat{P}\mathrm{rof}_{\mathbbm{k}}^{\coend \mathrm{ex}} \simeq \coend \mathrm{ex}$ (see \Cref{ssec:chiral-bord-prof}), so we could have equally well defined modular functors with $\coend \mathrm{ex}$ as target. However, the present formulation is better adapted to the way the TFT is used to define $\mathrm{Bl}_\calc^{\chi}$ on $1$- and $2$-morphisms.

\medskip

The main technical contribution in this paper is the realisation of the morphisms needed for the gluing of surfaces by evaluating $\widehat{\rmV}_{\calc}$ on certain $3$-dimensional bordisms: 
\begin{prop}[{\Cref{prop:gluingfunc}}]
    Let $\Sigma$ be a
    surface with at least one incoming and one
    outgoing boundary component, and let $\Sigma_{\mathrm{gl}}$ be the surface obtained from gluing these boundaries. Then there is a natural isomorphism
    \begin{align}\label{eq:gluingiso}
        \mathrm{Bl}^{\chi}_{\calc}(\Sigma_{\mathrm{gl}}) \cong \oint^{X\in \calc }\! \mathrm{Bl}^{\chi}_{\calc}(\Sigma)(X,X).
    \end{align}
   induced by a $3$-dimensional bordism.
\end{prop}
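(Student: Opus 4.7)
My approach is to realise the comparison as the TFT invariant of an explicit gluing bordism, establish dinaturality via ribbon-graph isotopy, and then prove invertibility using Lyubashenko's coend object $\mathbb{L}\in\calc$ as realised inside the DGGPR TFT. Let $c_-$ and $c_+$ denote the incoming and outgoing circles of $\Sigma$ being glued. For each object $X\in\calc$ I would construct a 3-bordism $M_X$ from $\Sigma$ (with an $X$-decorated marked point on each of $c_\pm$) to $\Sigma_{\mathrm{gl}}$ as follows: take the cylinder $\Sigma\times[0,1]$, identify $c_-\times\{1\}$ with $c_+\times\{1\}$ via the prescribed gluing diffeomorphism so that the outgoing boundary is $\Sigma_{\mathrm{gl}}$, and insert a single $X$-coloured ribbon running from the marked point on $c_-\times\{0\}$ up to the identification annulus and back down to the marked point on $c_+\times\{0\}$. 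Evaluating $\widehat{\rmV}_{\calc}$ on $M_X$ yields a family of linear maps $\widehat{\rmV}_{\calc}(M_X)\colon \mathrm{Bl}^{\chi}_{\calc}(\Sigma)(X,X)\to \mathrm{Bl}^{\chi}_{\calc}(\Sigma_{\mathrm{gl}})$.

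For any morphism $f\colon X\to Y$ in $\calc$, sliding an $f$-coupon along the ribbon inside $M$ from above $c_+$ to above $c_-$ is an ambient isotopy of ribbon graphs, so isotopy-invariance of $\widehat{\rmV}_{\calc}$ gives the dinaturality identity. By the universal property of the coend in $\vect$ this produces a well-defined linear map $\Phi\colon \oint^X \mathrm{Bl}^{\chi}_{\calc}(\Sigma)(X,X)\to \mathrm{Bl}^{\chi}_{\calc}(\Sigma_{\mathrm{gl}})$, and naturality of $\Phi$ in the remaining boundary components of $\Sigma$ follows from the same ribbon-isotopy argument applied to the bordism cylinder over those components.

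The main obstacle is proving that $\Phi$ is an isomorphism. My plan is to cut the cylinder $\Sigma_{\mathrm{gl}}\times[0,1]$ along a meridian disk bounding the glued circle, producing a 3-bordism $N\colon \Sigma_{\mathrm{gl}}\to \Sigma$ carrying a ribbon coloured by the coend object $\mathbb{L}$, whose TFT value should realise at the level of state spaces the universal dinatural family out of $\oint^X \mathrm{Bl}^{\chi}_{\calc}(\Sigma)(X,X)$. Gluing $N$ above $M_X$ then reproduces the identity cylinder on $\Sigma_{\mathrm{gl}}$ (up to the controlled $\chi$-anomaly), while the reverse composition produces the $\mathbb{L}$-decorated cylinder on $\Sigma$, which the TFT evaluates to the identity on the coend. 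The two subtle points I expect are: (i) tracking the $\chi$-decoration through the cut-and-reglue so that the anomaly factors cancel correctly, and (ii) verifying that the pointwise coend on the right-hand side is genuinely computed by the TFT, for which one uses left exactness of $X\mapsto \mathrm{Bl}^{\chi}_{\calc}(\Sigma)(X,X)$, which is precisely the property singled out by the target 2-category $\cat{P}\mathrm{rof}_{\mathbbm{k}}^{\coend\mathrm{ex}}$.
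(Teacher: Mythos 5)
Your construction of the gluing bordism $M_X$ and the derivation of dinaturality from ribbon--graph isotopy are essentially the paper's (modulo the cosmetic point that the identification at the top should be performed on the capping disks of the closed surface $\underline{\Sigma}$ rather than on the boundary circles of $\Sigma$, so that both ends of $M_X$ are genuine objects of $\widehat{\bord}_{3,2}^{\chi}(\calc)$). The divergence is in the proof that the induced comparison map is an isomorphism, and there your argument has a genuine gap. The paper does not construct an inverse bordism: it transports $\eta_X$ along the identification of state spaces with $\Hom_\calc(\coend^{\otimes g}\otimes\cdots,\cdots)$, treats separately the case where the two glued circles lie on different connected components (where universality reduces to the Yoneda-type \Cref{cor:gluingalg1}) and the case where they lie on the same component, and in the latter case performs an explicit bichrome-graph computation (slide trick, \eqref{eq:hopfpairingmirror}, the definition of $\calS$ and $\calS^2=\zeta S^{-1}$) to recognise $\eta_X$ as the universal dinatural family of \Cref{prop:lexcoendhom}, whose target $\Hom_\calc(\coend\otimes U,V)$ carries the Radford pairing $\kappa$.

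The gap is twofold. First, universality of a dinatural family is not established by exhibiting some isomorphism between $\mathrm{Bl}^{\chi}_{\calc}(\Sigma_{\mathrm{gl}})$ and the left exact coend; one must show that $\eta_X$ itself becomes the universal wedge under that isomorphism. Your inverse bordism $N$ presupposes that $\oint^{X}\mathrm{Bl}^{\chi}_{\calc}(\Sigma)(X,X)$ has already been realised as a TFT state space together with its universal dinatural family, but that realisation is exactly \Cref{prop:lexcoendhom} combined with the algebraic state-space model --- the content you are trying to bypass. Second, the two composites are not identity cylinders (you also have their sources and targets interchanged: $N\circ M_X$ is an endomorphism of $\underline{\Sigma}$, $M\circ N$ of $\underline{\Sigma_{\mathrm{gl}}}$). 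For instance $N\circ M_X$ is a cylinder over $\Sigma$ in which the $X$-ribbon is encircled by the red, $\Lambda$-inserted meridian created by the cut; identifying this with the universal projection onto the coend, and identifying the $\coend$-decorated composite with an identity, requires precisely the handle-slide computation and the non-degeneracy of the Hopf pairing $\omega$. This is where modularity of $\calc$ enters, and your sketch never invokes it --- a warning sign, since the statement fails for a general finite ribbon category. Your remark (ii) on left exactness is in the right spirit, but as written the crucial step is asserted rather than proved.
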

To be more precise, the isomorphism (\ref{eq:gluingiso}) is induced by
the dinatural family 
\begin{align}
\mathrm{Bl}^{\chi}_{\calc}(\Sigma)(X,X) \to \mathrm{Bl}^{\chi}_{\calc}(\Sigma_{\mathrm{gl}})
\end{align}
which is obtained by evaluating the TFT $\widehat{\rmV}_{\calc}$ on a family of bordisms $M_X$ from $\underline{\Sigma}$ to $\underline{\Sigma_{\mathrm{gl}}}$ in $\widehat{\bord}_{3,2}^{\chi}(\calc)$, where $\underline{\Sigma}$ and $\underline{\Sigma_{\mathrm{gl}}}$ are obtained from $\Sigma$ and $\Sigma_{\mathrm{gl}}$ by replacing boundary components with marked disks, respectively, see \Cref{ssec:modfunc3mor} for details. Locally $M_X$ can be visualised as
\begin{align*}
       \begin{tikzpicture}[scale=0.8]
       \begin{scope}
       \fill[lightgray] (7.5,-10) rectangle (14.5,-5);
       \fill[white] (7.4,-4.98) -- (11,-6.02) -- (14.6,-4.98) -- cycle;
       \fill[white] (11,-6) .. controls (9,-6) .. (7.5,-5)
        (11,-6) .. controls (13,-6) .. (14.5,-5)
        (14.5,-5) -- (7.5,-5);
       \draw[thick] (11,-6) .. controls (9,-6) .. (7.5,-5)
        (11,-6) .. controls (13,-6) .. (14.5,-5);
       \fill[white] (7.4,-10.02) -- (11,-8.98) -- (14.6,-10.02) -- cycle;
       \fill[white] (11,-9) .. controls (9,-9) .. (7.5,-10)
        (11,-9) .. controls (13,-9) .. (14.5,-10)
        (14.5,-5) -- (7.5,-5);
       \draw[thick] (11,-9) .. controls (9,-9) .. (7.5,-10)
        (11,-9) .. controls (13,-9) .. (14.5,-10);
       \fill[lightgray!40] (10.02,-7.01) -- (10.02,-7.99) -- (7.5,-8.48) -- (7.5,-6.52) --cycle;
       \filldraw[fill=lightgray!40,thick] (10,-7) arc (90:-90:0.4 and 0.5);
       \filldraw[fill=lightgray,thick]
        (10,-7) .. controls (8.5,-7) .. (7.5,-6.5)
        (10,-8) .. controls (8.5,-8) .. (7.5,-8.5);
       \fill[lightgray!40] (11.98,-7.01) -- (11.98,-7.99) -- (14.5,-8.48) -- (14.5,-6.52) --cycle;
        \filldraw[fill=lightgray!40,thick] (12,-7) arc (90:270:0.4 and 0.5);
       \filldraw[fill=lightgray,thick]
        (12,-7) .. controls (13.5,-7) .. (14.5,-6.5) 
        (12,-8) .. controls (13.5,-8) .. (14.5,-8.5);
        \filldraw[string-blue] (10.4,-7.5) circle (0.05)
        (11.6,-7.5) circle (0.05);
        \begin{scope}[very thick,decoration={
                      markings,
                      mark=at position 0.52 with {\arrow{>}}}
                     ] 
            \draw[string-blue,thick,postaction={decorate}] (11.6,-7.5) .. controls (11,-7.3) .. (10.4,-7.5);
        \end{scope}
        \draw[dashed,thick] (11,-6) arc (90:270:0.3 and 1.5);
        \draw[dashed] (11,-6) arc (90:-90:0.3 and 1.5);
       \node at (10.4,-7.5) [string-blue,left] {\scriptsize $X$};
       \node at (11.6,-7.5) [string-blue,right] {\scriptsize $X$};
       \node at (11,-6) [above] {$\Sigma_{\mathrm{gl}}$};
       \end{scope}
   \end{tikzpicture} 
\end{align*}
where the dashed line indicates where the gluing was performed. 

We refer to the modular functor defined on the 2-category $\bord_{2+\epsilon,2,1}^{\mathrm{\chi}}$ which includes the anomaly compensating decorations as \emph{chiral}. Another interesting source 2-category is $\bord_{2+\epsilon,2,1}^{\circlearrowleft}$, which consists of oriented bordisms equipped with an orientation reversing involution, but which does not carry the extra decorations for anomaly cancellation. It contains open-closed bordisms as a full subcategory, see Section\,\ref{sec:anomfreemod}.

There is a symmetric monoidal 2-functor $U\colon \bord^{\circlearrowleft}_{2+\epsilon,2,1} \to \bord^{\chi}_{2+\epsilon,2,1}$ which assigns decorations coming from the involution in such a way that pullback along $U$ produces an ``anomaly-free'' modular functor \Cref{ssec:bordinv}. In the converse direction, there is a symmetric monoidal 2-functor $\widehat{(-)} \colon \bord_{2+\epsilon,2,1}^{\mathrm{\chi}} \to \bord^{\circlearrowleft}_{2+\epsilon,2,1}$ obtained by sending any oriented manifold $M$ to its orientation double $\widehat{M} := M \disjun -M$ with the natural orientation reversing involution.
Applying these constructions to the chiral modular functor $\mathrm{Bl}_\calc^{\chi}$ obtained in the above theorem gives a geometric relation between $\calc$ and its Drinfeld centre $\calZ(\calc) \simeq \calc \boxtimes \overline{\calc}$:

\begin{prop}[{\Cref{prop:condi}}]
    Let $\calc$ be a modular tensor category. There exists a symmetric monoidal $2$-natural isomorphism filling the following diagram of symmetric monoidal $2$-functors
\begin{align}
\begin{tikzcd}[ampersand replacement=\&]
	{\bord_{2+\epsilon,2,1}^{\mathrm{\chi}}} \& {\bord^{\circlearrowleft}_{2+\epsilon,2,1}} \& {\bord_{2+\epsilon,2,1}^{\mathrm{\chi}}} \\
	\\
	\&\& {\cat{P}\mathrm{rof}_{\mathbbm{k}}^{\coend \mathrm{ex}}}
	\arrow[""{name=0, anchor=center, inner sep=0}, "{\mathrm{Bl}_{\calZ(\calc)}^{\chi}}"', from=1-1, to=3-3]
	\arrow["{\widehat{(-)}}", from=1-1, to=1-2]
	\arrow["U", from=1-2, to=1-3]
	\arrow["{\mathrm{Bl}_\calc^{\chi}}", from=1-3, to=3-3]
	\arrow["\cong"', shorten >=8pt, Rightarrow, from=1-3, to=0]
\end{tikzcd}
\quad .
\end{align}
\end{prop}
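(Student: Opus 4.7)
The plan is to construct the symmetric monoidal $2$-natural isomorphism componentwise, exploiting two structural facts: the factorisation $\calZ(\calc)\simeq\calc\boxtimes\overline{\calc}$ of the Drinfeld centre of a modular tensor category, and the compatibility of the TFT $\widehat{\rmV}_\calc$ with orientation reversal and disjoint union. The functor $U\circ\widehat{(-)}$ sends an object (or bordism) to its orientation double together with the decorations induced by the swap involution, so I need to show that evaluating $\mathrm{Bl}_\calc^{\chi}$ on this doubled datum reproduces $\mathrm{Bl}_{\calZ(\calc)}^{\chi}$.

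On closed $1$-manifolds $S$ I would combine monoidality, giving $\mathrm{Bl}_\calc^{\chi}(S\disjun -S)\simeq \mathrm{Bl}_\calc^{\chi}(S)\boxtimes \mathrm{Bl}_\calc^{\chi}(-S)$, with the orientation-reversal identification $\mathrm{Bl}_\calc^{\chi}(-S)\simeq \mathrm{Bl}_{\overline{\calc}}^{\chi}(S)$ coming from the fact that $\widehat{\rmV}_{\overline{\calc}}$ is obtained from $\widehat{\rmV}_\calc$ by reversing the orientation of bordisms. Together with $\calc\boxtimes\overline{\calc}\simeq\calZ(\calc)$ this yields the sought identification on objects. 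On $1$-morphisms the values of the profunctors are given by evaluating $\widehat{\rmV}_\calc$ on marked $3$-manifolds, and multiplicativity under disjoint union produces the same chain of identifications.

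The crucial step is matching the gluing $2$-morphisms. The bordism $M_X$ displayed in \Cref{prop:gluingfunc}, applied to a doubled surface, decomposes into a pair $M_X$ and $M_{-X}$ acting on the two orientation copies independently. Hence the dinatural family realising the coend for $\Sigma\disjun -\Sigma$ factors as the Deligne tensor product of the dinaturals on each copy, and a Fubini-type argument identifies the iterated coend over $\calc$ and $\overline{\calc}$ with a single coend over $\calc\boxtimes\overline{\calc}\simeq\calZ(\calc)$. The anomaly decorations should cancel automatically: since $-\Sigma$ carries the opposite signature defect to $\Sigma$, the $\chi$-decoration produced by $U$ on $\widehat{\Sigma}$ is trivial, which matches the fact that $\calZ(\calc)$ has vanishing central charge and therefore anomaly.

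Finally I would assemble the pointwise isomorphisms into a symmetric monoidal $2$-natural transformation. Naturality in mapping class group elements is inherited from that of $\widehat{\rmV}_\calc$, and the symmetric monoidal structure follows because $\widehat{(-)}$, $U$, $\mathrm{Bl}_\calc^{\chi}$ and $\mathrm{Bl}_{\calZ(\calc)}^{\chi}$ are all symmetric monoidal and the identifications used in the previous steps respect disjoint unions. I expect the main obstacle to be the gluing comparison: although each ingredient (the splitting of $M_X$, the Fubini identity for coends over Deligne products, and $\calZ(\calc)\simeq\calc\boxtimes\overline{\calc}$) is natural on its own, coherently tracking the dinatural families, their anomaly decorations, and their compatibility with composition of $1$-morphisms requires careful bookkeeping of the underlying $3$-bordisms.
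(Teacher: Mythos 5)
Your proposal follows essentially the same route as the paper: factor the target through the ribbon equivalence $\calZ(\calc)\simeq\calc\boxtimes\overline{\calc}$ (which induces $\mathrm{Bl}^{\chi}_{\calZ(\calc)}\cong\mathrm{Bl}^{\chi}_{\calc\boxtimes\overline{\calc}}$ via \Cref{cor:modfunciso}), and then identify the pullback $\mathrm{Bl}^{\chi}_{\calc}\circ U\circ\widehat{(-)}$ with $\mathrm{Bl}^{\chi}_{\calc\boxtimes\overline{\calc}}$ using monoidality of the TFT under disjoint union together with its behaviour under orientation reversal and Deligne products from \Cref{ssec:tftorideligne}. The paper's proof is terser on the $1$- and $2$-morphism level and on the gluing comparison, which you correctly flag as the place where the bookkeeping lives, but the argument is the same.
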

Finally, by factoring the orientation doubling $2$-functor 
$\widehat{(-)}$ 
over the $2$-category of open-closed oriented bordisms we realise $\mathrm{Bl}_{\calZ{(\calc)}}^{\chi}$ as the closed sector of a full modular functor. To prove the above Proposition we study the behaviour of the TFT $ \widehat{\rmV}_{\calc}$ under orientation reversal as well as the Deligne tensor product, see \Cref{ssec:tftorideligne} for detailed statements.
\medskip

The rest of this paper is organised as follows. In \Cref{sec:algprelim} we recall the algebraic ingredients used throughout this paper, including finite ribbon categories as well as some results on coends in functor categories. In Section \ref{sec:3dTFT} we briefly review the construction as well as some properties of the $3$d TFTs of \autocite{DGGPR19}. In \Cref{sec:modfunc} we explicitly construct a chiral modular functor from these $3$d TFTs
and prove our main result \Cref{thm:chiralmf}. Finally, in \Cref{sec:anomfreemod} we extend the chiral modular functors to a bordism $2$-category with orientation reversing involution and explain the relation to the Drinfeld centre and give a topological proof of its monadicity (\Cref{prop:freeforgetadj}). 

\subsection*{Acknowledgements}
We thank Marco De Renzi, Nils Carqueville, Hannes Knötzele, Lukas Woike, and Yang Yang for useful discussions as well as helpful comments on an earlier draft.
AH and IR are partially supported by the Deutsche Forschungsgemeinschaft (DFG, German Research Foundation) under Germany`s Excellence Strategy - EXC 2121 ``Quantum Universe" - 390833306, and by the Collaborative Research Centre CRC 1624 ``Higher structures, moduli spaces and integrability'' - 506632645. 

\subsection*{Conventions}
Throughout this article, $\calc$ will be a finite ribbon category.
Moreover, we will appeal to the standard coherence results and assume $\calc$ to be strictly monoidal and strictly pivotal. We will always work over an algebraically closed field $\mathbbm{k}$ of characteristic zero. Unless otherwise noted, functors between abelian (and linear) categories will always be assumed to be additive (and linear).

Every manifold we will consider will be oriented, and every diffeomorphism will be orientation preserving unless explicitly stated otherwise. For any manifold $M$ we will denote the manifold with reversed orientation by $-M$. The interval $[0,1]$ will be denoted by $I$ and the unit circle by $S^1$. Finally, by a closed manifold we mean a compact manifold without boundary.

\section{Algebraic preliminaries}\label{sec:algprelim}
In this section, we collect definitions and results related to ribbon categories and coends that will be needed for the construction. For a detailed exposition, we refer to \autocite{BK2001modular,EGNO,TV,Loregian2021coendcalc,FS16coendsCFT}.

\subsection{Finite ribbon categories}\label{ssec:ribboncats}

A linear category is called \emph{finite} if it is equivalent, as a linear category, to the category $A$-$\Mod$
of finite dimensional modules of some finite dimensional algebra $A$. In particular, a finite linear category is abelian, every object has a projective cover, and its Hom sets are finite dimensional vector spaces. The complete intrinsic definition of finite linear categories can be found in \autocite[Sec.\,1.8]{EGNO}. 
By a \emph{finite tensor category} we mean a finite linear category which is in addition a rigid monoidal category such that the monoidal product $\otimes$ is bilinear and the monoidal unit $\unit$ is simple. The monoidal product is exact in both arguments by rigidity \autocite[Prop.\,4.2.1]{EGNO}.

A \emph{finite ribbon category} $\calc$ is a finite tensor category which is also ribbon. We will denote a choice of a set of representatives of isomorphism classes of simple objects by $\Irr \subset \calc$. 
We will employ the following conventions for structure morphisms in $\calc$. Every object $X$ in $\calc$ has a two-sided dual $X^{*}$, with duality morphisms denoted by:
\begin{equation}
   \begin{aligned}
    \mathrm{ev}_X &\colon X^{*} \otimes X \to \unit, \qquad \mathrm{coev}_X \colon \unit \to X \otimes X^{*}, 
    \\
    \widetilde{\mathrm{ev}_X} &\colon X \otimes X^{*} \to \unit, \qquad \widetilde{\mathrm{coev}_X} \colon \unit \to X^{*} \otimes X. 
\end{aligned} 
\end{equation}
The components of the braiding and twist isomorphisms will be denoted by
\begin{align}
    \beta_{X,Y} \colon X \otimes Y \to Y \otimes X, \qquad \theta_X \colon X \to X.
\end{align}
The twist $\theta$ satisfies
\begin{align}
    \theta_{X\otimes Y} = \beta_{Y,X} \circ \beta_{X,Y} \circ (\theta_X \otimes \theta_Y), \quad \mathrm{and} \quad 
    (\theta_X)^* = \theta_{X^*}
\end{align}
for all $X,Y \in \calc$. A direct computation shows that the twist $\theta$ endows $\calc$ with a pivotal structure, which is even spherical \autocite[Sec.\,8.10]{EGNO}. 
In diagrammatic notation these structural morphisms will be represented as
{\allowdisplaybreaks\begin{align}
        \lev_X &= 
    \begin{tikzpicture}[baseline]
        \draw[string-blue,thick] (0.5,0) arc (0:180:0.5cm);
        \draw[string-blue,thick,->] (0.5,0) arc (0:92:0.5cm);
        \node at (-0.5,0) [string-blue,thick,below] {\scriptsize $X^*$};
        \node at (0.5,0) [string-blue,thick,below] {\scriptsize $X$};
    \end{tikzpicture} 
    & \lcoev_X &= 
    \begin{tikzpicture}[baseline]
        \draw[string-blue,thick] (0.5,0) arc (0:-180:0.5cm);
        \draw[string-blue,thick,->] (0.5,0) arc (0:-92:0.5cm);
        \node at (0.5,0) [string-blue,thick,above] {\scriptsize $X^*$};
        \node at (-0.5,0) [string-blue,thick,above] {\scriptsize $X$};
    \end{tikzpicture} 
    & \beta_{X,Y} &= 
    \begin{tikzpicture}[baseline]
        \draw[string-blue, thick] (-0.5,-0.5) node[string-blue,below] {\scriptsize $X$} -- (0.5,0.5) node[string-blue,above] {\scriptsize $X$};
        \draw[string-blue, thick] (0.5,-0.5) node[string-blue,below] {\scriptsize $Y$} -- (0.15,-0.15);
        \draw[string-blue, thick] (-0.15,0.15) -- (-0.5,0.5) node[string-blue,above] {\scriptsize $Y$};
\end{tikzpicture}
        \nonumber \\
        \rev_X &= 
        \begin{tikzpicture}[baseline]
            \draw[string-blue,thick] (0.5,0) arc (0:180:0.5cm);
            \draw[string-blue,thick,->] (-0.5,0) arc (180:87:0.5cm);
            \node at (-0.5,0) [string-blue,thick,below] {\scriptsize $X$};
            \node at (0.5,0) [string-blue,thick,below] {\scriptsize $X^{*}$};
        \end{tikzpicture}
        & \rcoev_X &= 
       \begin{tikzpicture}[baseline]
            \draw[string-blue,thick] (0.5,0) arc (0:-180:0.5cm);
            \draw[string-blue,thick,->] (-0.5,0) arc (-180:-87:0.5cm);
            \node at (0.5,0) [string-blue,thick,above] {\scriptsize $X$};
            \node at (-0.5,0) [string-blue,thick,above] {\scriptsize $X^{*}$};
        \end{tikzpicture} 
        & \theta_{X} &= 
        \begin{tikzpicture}[baseline]
        \draw[string-blue, thick] (0,-0.2) -- (0,0.-0.5) node[string-blue,below] {\scriptsize $X$} ;
         \draw[string-blue, thick] (0,0.2) --(0,0.5) node[string-blue,above] {\scriptsize $X$};
        \draw[string-blue, thick] (0,-0.2) arc (180:90:0.15 and 0.3);
        \draw[string-blue, thick] (0,0.2) arc (180:210:0.15 and 0.3);
        \draw[string-blue,thick] (0.15,0.1) arc (90:-90:0.1 and 0.1);
        \draw[string-blue, thick] (0.15,-0.1) arc (-90:-120:0.15 and 0.3);
\end{tikzpicture}
\end{align}}
Note that we read such diagrams from the bottom to the top.
We will use the same conventions as \autocite{TV} and denote with $\overline{\calc}$ the same underlying pivotal
monoidal category, but equipped with the inverse braiding and twist, i.e.\
\begin{equation}
    \begin{aligned}
        \overline{\beta}_{X,Y} := \beta_{Y,X}^{-1} \colon X \otimes Y \to Y\otimes X \qquad \qquad \overline{\theta}_{X} := \theta_{X}^{-1} \colon X \to X
    \end{aligned}
\end{equation}
see \autocite[Sec.\,1.2.2 \& Ex.\ 3.1.7]{TV}. We will call $\overline{\calc}$ the \emph{mirrored} category of $\calc$.

We will now discuss a specific Hopf algebra in $\calc$ which will be a fundamental ingredient throughout the rest of this article. 
First, recall that $\calc$ admits an inner Hom functor $\calc^{\mathrm{op}} \times \calc \to \calc$ which sends $(X,Y) \in \calc^{\mathrm{op}} \times \calc$ to $X^{*} \otimes Y$. Due to our finiteness assumptions the coend of this functor exists 
and can be explicitly described as a cokernel, see \autocite[Ch.\,5]{KL2001nsstqft} for details. We will denote this coend as.
\begin{align}
    \coend := \int^{X \in \calc}\! X^{*} \otimes X
\end{align}
with universal dinatural transformation
\begin{align}
    \iota_X \colon X^* \otimes X \to \coend.
\end{align}
The object $\coend$ naturally carries the structure of a Hopf algebra with Hopf pairing in the braided monoidal category $\calc$. For Hopf algebras in braided monoidal categories we will follow the convention of \autocite[Ch.\,6]{TV}. We will denote its structure morphisms as follows:
\begin{equation}\label{eq:coendhopf}
\begin{aligned}
    (\mathrm{Product}) \; &\mu \colon \coend \otimes \coend \to \coend, \hfill &&&\mathrm{(Unit)} \;  &\eta \colon \unit \to \coend,\\
    (\mathrm{Coproduct}) \; &\Delta \colon \coend \to \coend\otimes \coend, \hfill &&&\mathrm{(Counit)}\;  &\epsilon \colon \coend \to \unit, \\
    (\mathrm{Antipode}) \; &S \colon \coend \to \coend, \hfill &&&\mathrm{(Pairing)} \;  &\omega \colon \coend\otimes \coend \to \unit. 
\end{aligned}
\end{equation}
These morphisms are induced from the universal property of the coend as shown in \Cref{fig:coend-Hopf-def}.
If we do not assume $\calc$ to be strictly pivotal the canonical isomorphisms $(X\otimes Y)^* \cong Y^* \otimes X^*$ for the product, $\unit^* \otimes \unit \cong \unit$ for the unit, and $X \cong X^{**}$ for the antipode are needed, see \autocite[Sec.\,6.4\,\&\,6.5]{TV} for details. 
Exchanging the over and under braidings in the definition of $\omega$ gives a pairing $\overline{\omega} \colon \coend \otimes \coend \to \unit $ which satisfies 
\begin{equation}\label{eq:hopfpairingmirror}
    \omega \circ (S\otimes \id_\coend)= \overline{\omega} = \omega \circ (\id_\coend \otimes S)
\end{equation}
see \autocite[Eq.\,5.2.8]{KL2001nsstqft}.

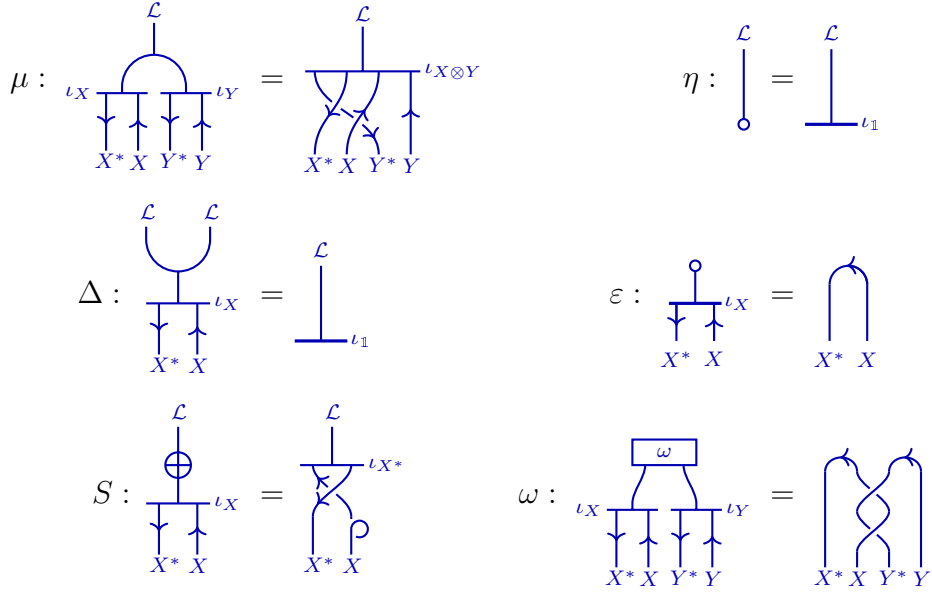
\begin{figure}[tb]
\begin{equation*}
    \begin{aligned}
      \mu : \begin{tikzpicture}[scale=0.85,baseline=0cm]
        \draw[string-blue,thick]
        (0.5,0) -- (0.5,-0.1)
        (-0.5,0) -- (-0.5,-0.1)
        (0.5,0) arc (0:180:0.5 and 0.5)
        (0,0.5) -- (0,1)
        (0.1,-0.1) -- (0.9,-0.1)
        (-0.1,-0.1) -- (-0.9,-0.1);
       \begin{scope}[decoration={
                      markings,
                      mark=at position 0.55 with {\arrow{>}}}
                     ] 
            \draw[string-blue,thick,postaction={decorate}] (-0.75,-0.1) -- (-0.75,-1);
            \draw[string-blue,thick,postaction={decorate}] (-0.25,-1) -- (-0.25,-0.1);
            \draw[string-blue,thick,postaction={decorate}] (0.75,-1) -- (0.75,-0.1);
            \draw[string-blue,thick,postaction={decorate}] (0.25,-0.1) -- (0.25,-1);
        \end{scope}
       \node at (-0.8,-0.1) [string-blue,left] {\scriptsize $\iota_X$};
       \node at (0.8,-0.1) [string-blue,right] {\scriptsize $\iota_Y$};
       \node at (-0.68,-0.875) [string-blue,below] {\scriptsize $X^*$};
       \node at (-0.23,-0.9) [string-blue,below] {\scriptsize $X$};
        \node at (0.32,-0.875) [string-blue,below] {\scriptsize $Y^*$};
       \node at (0.77,-0.9) [string-blue,below] {\scriptsize $Y$};
       \node at (0,0.915) [string-blue,above] {\scriptsize $\coend$};
   \end{tikzpicture}  
   &= \begin{tikzpicture}[scale=0.85,baseline=0.05cm]
        \draw[string-blue,thick]
        (0,0.3) -- (0,1)
        (0.9,0.3) -- (-0.9,0.3);
        \begin{scope}[decoration={
                      markings,
                      mark=at position 0.85 with {\arrow{>}}}
                     ] 
        \draw[string-blue,thick,postaction={decorate}] (-0.75,0.3) .. controls (-0.75,-0.3) and (0.25,-0.4) .. (0.25,-1);
        \end{scope}
       \begin{scope}[decoration={
                      markings,
                      mark=at position 0.55 with {\arrow{>}}}
                     ] 
            \draw[string-blue,thick,postaction={decorate}] (0.75,-1) -- (0.75,0.3);
            \fill[white] (-0.23,-0.55) -- (-0.13,-0.4) -- (0.05,-0.45) -- (-0.05,-0.6) -- cycle;
            \begin{scope}[xshift=-0.3cm,yshift=0.3cm]
                \fill[white] (-0.23,-0.55) -- (-0.13,-0.4) -- (0.05,-0.45) -- (-0.05,-0.6) -- cycle;
            \end{scope}
            \draw[string-blue,thick,postaction={decorate}] (-0.25,0.3) .. controls (-0.25,-0.3) and (-0.75,-0.4) .. (-0.75,-1);
            \draw[string-blue,thick,postaction={decorate}] (-0.25,-1) .. controls (-0.25,-0.4) and (0.25,-0.3) .. (0.25,0.3);
        \end{scope}
       \node at (0.8,0.3) [string-blue,right] {\scriptsize $\iota_{X\otimes Y}$};
       \node at (-0.68,-0.875) [string-blue,below] {\scriptsize $X^*$};
       \node at (-0.23,-0.9) [string-blue,below] {\scriptsize $X$};
        \node at (0.32,-0.875) [string-blue,below] {\scriptsize $Y^*$};
       \node at (0.77,-0.9) [string-blue,below] {\scriptsize $Y$};
       \node at (0,0.915) [string-blue,above] {\scriptsize $\coend$};
   \end{tikzpicture}   &
   \hfill \eta : \begin{tikzpicture}[baseline]
        \draw[string-blue,thick] (0.25,-0.5) -- (0.25,0.5);
        \filldraw[draw=string-blue, thick,fill=white] (0.25,-0.5) circle (0.07);
        \node at (0.2475,0.475) [string-blue,above] {\scriptsize $\coend$};
   \end{tikzpicture}  
   &= \begin{tikzpicture}[baseline]
        \draw[string-blue,thick] (0.25,-0.5) -- (0.25,0.5);
        \draw[string-blue,very thick] (-0.1,-0.5) -- (0.6,-0.5);
        \node at (0.2475,0.475) [string-blue,above] {\scriptsize $\coend$};
        \node at (0.5,-0.5) [string-blue,right] {\scriptsize $\iota_{\unit}$};
   \end{tikzpicture}   \\
   \Delta : \begin{tikzpicture}[scale=0.85,baseline=0cm]
        \draw[string-blue,thick]
        (-1.6,1.2) -- (-1.6,1)
        (-2.6,1.2) -- (-2.6,1)
        (-1.6,1) arc (0:-180:0.5 and 0.5)
        (-2.1,0.5) -- (-2.1,0)
        (-1.6,0) -- (-2.6,0);
       \begin{scope}[decoration={
                      markings,
                      mark=at position 0.55 with {\arrow{>}}}
                     ] 
            \draw[string-blue,thick,postaction={decorate}] (-1.8,-0.8) -- (-1.8,0);
            \draw[string-blue,thick,postaction={decorate}] (-2.4,0) -- (-2.4,-0.8);
        \end{scope}
       \node at (-1.7,0) [string-blue,right] {\scriptsize $\iota_X$};
       \node at (-1.78,-0.7) [string-blue,below] {\scriptsize $X$};
       \node at (-2.3,-0.675) [string-blue,below] {\scriptsize $X^*$};
       \node at (-1.575,1.15) [string-blue,above] {\scriptsize $\coend$};
       \node at (-2.575,1.15) [string-blue,above] {\scriptsize $\coend$};
   \end{tikzpicture}   
   &= \begin{tikzpicture}[baseline]
        \draw[string-blue,thick] (0.25,-0.5) -- (0.25,0.5);
        \draw[string-blue,very thick] (-0.1,-0.5) -- (0.6,-0.5);
        \node at (0.2475,0.475) [string-blue,above] {\scriptsize $\coend$};
        \node at (0.5,-0.5) [string-blue,right] {\scriptsize $\iota_{\unit}$};
   \end{tikzpicture}   
   & \hfill \epsilon : \begin{tikzpicture}[baseline]
        \draw[string-blue,thick] (0.25,0) -- (0.25,0.5);
       \draw[string-blue,thick] (0,-0.5) -- (0,0);
       \draw[string-blue,thick,->] (0,0) -- (0,-0.3);
       \draw[string-blue,thick] (0.5,-0.5) -- (0.5,0);
       \draw[string-blue,thick,->] (0.5,-0.5) -- (0.5,-0.2);
        \draw[string-blue,very thick] (-0.1,0) -- (0.6,0);
        \filldraw[draw=string-blue, thick,fill=white] (0.25,0.5) circle (0.07);
        \node at (0.5,0) [string-blue,right] {\scriptsize $\iota_{X}$};
        \node at (0.5,-0.5) [string-blue,below] {\scriptsize $X$};
        \node at (0,-0.5) [string-blue,below] {\scriptsize $X^*$};
   \end{tikzpicture}   
   &= \begin{tikzpicture}[baseline]
        \draw[string-blue,thick] (0.5,0.25) arc (0:180:0.25cm);
        \draw[string-blue,thick,->] (0.5,0.25) arc (0:95:0.25cm);
        \draw[string-blue,thick] (0.5,0.25) -- (0.5,-0.5);
        \draw[string-blue,thick] (0,0.25) -- (0,-0.5);
        \node at (0,-0.5) [string-blue,thick,below] {\scriptsize $X^*$};
        \node at (0.5,-0.5) [string-blue,thick,below] {\scriptsize $X$};
    \end{tikzpicture}
   \\
   S :\begin{tikzpicture}[scale=0.85,baseline=0cm]
        \draw[string-blue,thick]
        (-2.1,0.6) circle (0.2)
        (-2.1,1.2) -- (-2.1,0)
        (-1.9,0.6) -- (-2.3,0.6)
        (-1.6,0) -- (-2.6,0);
       \begin{scope}[decoration={
                      markings,
                      mark=at position 0.55 with {\arrow{>}}}
                     ] 
            \draw[string-blue,thick,postaction={decorate}] (-1.8,-0.8) -- (-1.8,0);
            \draw[string-blue,thick,postaction={decorate}] (-2.4,0) -- (-2.4,-0.8);
        \end{scope}
       \node at (-1.7,0) [string-blue,right] {\scriptsize $\iota_X$};
       \node at (-1.78,-0.7) [string-blue,below] {\scriptsize $X$};
       \node at (-2.3,-0.675) [string-blue,below] {\scriptsize $X^*$};
       \node at (-2.085,1.15) [string-blue,above] {\scriptsize $\coend$};
   \end{tikzpicture} 
   &= \begin{tikzpicture}[scale=0.85,baseline=0cm]
        \draw[string-blue,thick]
        (-2.1,1.2) -- (-2.1,0.6)
        (-1.6,0.6) -- (-2.6,0.6);
       \begin{scope}[decoration={
                      markings,
                      mark=at position 0.33 with {\arrow{<}}}
                     ]
            \draw[string-blue,thick] (-1.54,-0.4) arc (0:-115:0.13 and 0.13);
            \draw[string-blue,thick] (-1.8,-0.8) -- (-1.8,-0.2);
            \fill[white] (-1.85,-0.4) rectangle (-1.75,-0.25);
            \draw[string-blue,thick] (-1.8,-0.4) arc (180:0:0.13 and 0.13);
            \draw[string-blue,thick] (-2.4,-0.2) -- (-2.4,-0.8);
            \draw[string-blue,thick,postaction={decorate}] (-2.4,0.6) .. controls (-2.4,0.3) and (-1.8,0.1).. (-1.8,-0.2);
            \begin{scope}[xshift=-2cm,yshift=0.7cm]
                \fill[white] (-0.25,-0.55) -- (-0.13,-0.4) -- (0.05,-0.45) -- (-0.07,-0.6) -- cycle;
            \end{scope} 
            \draw[string-blue,thick,postaction={decorate}] (-2.4,-0.2) .. controls (-2.4,0.1) and (-1.8,0.3) .. (-1.8,0.6);
        \end{scope}
       \node at (-1.7,0.6) [string-blue,right] {\scriptsize $\iota_{X^*}$};
       \node at (-1.78,-0.7) [string-blue,below] {\scriptsize $X$};
       \node at (-2.3,-0.675) [string-blue,below] {\scriptsize $X^*$};
       \node at (-2.085,1.15) [string-blue,above] {\scriptsize $\coend$};
   \end{tikzpicture}  
   & \hfill \omega : \begin{tikzpicture}[scale=0.85,baseline=0cm]
        \filldraw[fill=white,draw=string-blue,thick] (-0.5,0.6) rectangle (0.5,1);
        \draw[string-blue,thick]
        (0.5,0) -- (0.5,-0.1)
        (-0.5,0) -- (-0.5,-0.1)
        (0.1,-0.1) -- (0.9,-0.1)
        (-0.1,-0.1) -- (-0.9,-0.1)
        (0.5,0) .. controls (0.5,0.2) and (0.3,0.4) .. (0.3,0.6)
        (-0.5,0) .. controls (-0.5,0.2) and (-0.3,0.4) .. (-0.3,0.6);
       \begin{scope}[decoration={
                      markings,
                      mark=at position 0.55 with {\arrow{>}}}
                     ] 
            \draw[string-blue,thick,postaction={decorate}] (-0.75,-0.1) -- (-0.75,-1);
            \draw[string-blue,thick,postaction={decorate}] (-0.25,-1) -- (-0.25,-0.1);
            \draw[string-blue,thick,postaction={decorate}] (0.75,-1) -- (0.75,-0.1);
            \draw[string-blue,thick,postaction={decorate}] (0.25,-0.1) -- (0.25,-1);
        \end{scope}
       \node at (-0.8,-0.1) [string-blue,left] {\scriptsize $\iota_X$};
       \node at (0.8,-0.1) [string-blue,right] {\scriptsize $\iota_Y$};
       \node at (-0.68,-0.875) [string-blue,below] {\scriptsize $X^*$};
       \node at (-0.23,-0.9) [string-blue,below] {\scriptsize $X$};
        \node at (0.32,-0.875) [string-blue,below] {\scriptsize $Y^*$};
       \node at (0.77,-0.9) [string-blue,below] {\scriptsize $Y$};
       \node at (0,0.8) [string-blue] {\scriptsize $\omega$};
   \end{tikzpicture} 
   &= \begin{tikzpicture}[scale=0.85,baseline=0cm]
       \begin{scope}[decoration={
                      markings,
                      mark=at position 0.52 with {\arrow{>}}}
                     ] 
            \draw[string-blue,thick] (-0.75,0.5) -- (-0.75,-1);
            \draw[string-blue,thick] (-0.25,-1) -- (-0.25,-0.75);
            \draw[string-blue,thick] (0.75,-1) -- (0.75,0.5);
            \draw[string-blue,thick] (0.25,-0.75) -- (0.25,-1);
            \draw[string-blue,thick,postaction={decorate}] (0.75,0.5) arc (0:180:0.25 and 0.22);
            \draw[string-blue,thick,postaction={decorate}] (-0.25,0.5) arc (0:180:0.25 and 0.22);
            \draw[string-blue,thick] (0.25,-0.75) .. controls (0.25,-0.55) and (-0.25,-0.325) .. (-0.25,-0.125);
            \begin{scope}[xshift=0.1cm,yshift=0.05cm]
                \fill[white] (-0.2,-0.5) -- (-0.1,-0.4) -- (0,-0.5) -- (-0.1,-0.6) -- cycle;
            \end{scope}
            \draw[string-blue,thick] (-0.25,-0.75) .. controls (-0.25,-0.55) and (0.25,-0.325) .. (0.25,-0.125);
            \draw[string-blue,thick] (-0.25,0.5) .. controls (-0.25,0.3) and (0.25,0.075) .. (0.25,-0.125);
            \begin{scope}[xshift=0.1cm,yshift=0.685cm]
                \fill[white] (-0.2,-0.5) -- (-0.1,-0.4) -- (0,-0.5) -- (-0.1,-0.6) -- cycle;
            \end{scope}
            \draw[string-blue,thick] (0.25,0.5) .. controls (0.25,0.3) and (-0.25,0.075) .. (-0.25,-0.125);
        \end{scope}
       \node at (-0.68,-0.875) [string-blue,below] {\scriptsize $X^*$};
       \node at (-0.23,-0.9) [string-blue,below] {\scriptsize $X$};
        \node at (0.32,-0.875) [string-blue,below] {\scriptsize $Y^*$};
       \node at (0.77,-0.9) [string-blue,below] {\scriptsize $Y$};
   \end{tikzpicture} 
    \end{aligned}
\end{equation*}
\caption{The dinatural transformations defining the Hopf algebra structure morphisms and the Hopf pairing on $\coend$.}\label{fig:coend-Hopf-def}
\end{figure}

\begin{definition}\label{def:mtc}
    A finite ribbon category $\calc$ is called \emph{modular} if the canonical Hopf pairing $\omega$ of the coend $\coend$ is non-degenerate.
\end{definition}
There are other equivalent definitions of modularity, see \autocite[Thm.\,1.1]{SHIMIZU2019nondeg}. For the rest of this and the next subsection we will always assume $\calc$ to be modular. 
It can be shown that $\calc$ is unimodular and that the Hopf algebra $\coend$ admits 
a unique-up-to-scalar two-sided integral $\Lambda$ and cointegral $\lambda$, 
see \autocite[Sec.\,2]{DGGPR19} and references therein for more details. 

We will normalise the integral and cointegral in terms of the \textsl{modularity parameter} $\zeta \in \mathbbm{k}^{\times}$ as
\begin{align}
    \lambda \circ \Lambda = \id_{\unit} 
    \quad , \quad
    \omega \circ (\id_{\coend} \otimes \Lambda) = \zeta \lambda \ .
\end{align}
That the second condition is possible is in fact equivalent to non-degeneracy of $\omega$.
The following statement is analogous to the case of classical Hopf algebras \autocite[Cor.\,4.2.13]{KL2001nsstqft}. 
\begin{proposition}
    The cointegral $\lambda$ induces a non-degenerate pairing 
    $\kappa := \lambda \circ \mu \circ (S \otimes \id_{\coend}) \colon \coend \otimes \coend \to \unit$ 
    which equips $\coend$ with the structure of a Frobenius algebra in $\calc$. The copairing is given by $\Delta \circ \Lambda$.  
\end{proposition}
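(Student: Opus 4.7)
The strategy is to exhibit $\Omega := \Delta \circ \Lambda \colon \unit \to \coend \otimes \coend$ as the copairing dual to $\kappa$, i.e.\ to verify the snake identity
\begin{align*}
(\id_\coend \otimes \kappa) \circ (\Omega \otimes \id_\coend) = \id_\coend
\end{align*}
and its mirror. Non-degeneracy of $\kappa$ follows immediately, and the Frobenius algebra structure on $\coend$ is then obtained by combining the existing algebra structure $(\mu,\eta)$ with the coalgebra structure determined by $\kappa$ and $\Omega$ via the duality.

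First I would unpack this identity diagrammatically. In Sweedler-style notation it reads $\Lambda_{(1)} \cdot \lambda(S(\Lambda_{(2)}) \cdot h) = h$. The computation proceeds by applying the right cointegral identity $(\id_\coend \otimes \lambda) \circ \Delta = \eta \circ \lambda$ after expanding $\Delta(S(\Lambda_{(2)}) h)$ via the braided Hopf algebra rule $\Delta \circ S = (S \otimes S) \circ \beta_{\coend,\coend} \circ \Delta$ together with the bialgebra axiom for $\Delta \circ \mu$. Coassociativity and the antipode-counit axiom $\mu \circ (\id_\coend \otimes S) \circ \Delta = \eta \circ \epsilon$ then collapse two of the three legs of the iterated coproduct of $\Lambda$. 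What remains is a term of the form $\lambda(S(\Lambda) \cdot h_{(1)}) \cdot h_{(2)}$, and using two-sidedness of $\Lambda$ (unimodularity of $\calc$), $S$-invariance of the two-sided integral, and the normalization $\lambda \circ \Lambda = \id_\unit$, this reduces to $\epsilon(h_{(1)}) \cdot h_{(2)} = h$. The mirror snake identity is established analogously, now using the left cointegral property.

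The main obstacle is tracking the antipode $S$ and the braidings $\beta$ through the calculation: in the braided setting $S$ is an anti-homomorphism only up to a braid, so the relations it satisfies with $\Delta$ and $\mu$ introduce crossings that must be placed consistently in the graphical calculus. A further point requiring care is the scalar by which $S \circ \Lambda$ differs from $\Lambda$, which must be controlled using unimodularity together with the normalization via the modularity parameter $\zeta$. Once these braidings are handled, the verification is a direct diagrammatic transcription of the classical Larson-Sweedler argument exhibiting a finite-dimensional Hopf algebra as Frobenius, as cited in \autocite[Cor.\,4.2.13]{KL2001nsstqft}.
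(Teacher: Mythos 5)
The paper offers no proof of this proposition at all: it is stated as the braided analogue of the classical Larson--Sweedler/Radford result and justified solely by the pointer to \autocite[Cor.\,4.2.13]{KL2001nsstqft}. So your attempt can only be measured against the standard argument that citation encodes. Your overall plan is the correct one: exhibit $\Omega = \Delta\circ\Lambda$ as the copairing for $\kappa$ by verifying the two zig-zag identities, deduce non-degeneracy, and extract the Frobenius structure; and you correctly identify the ingredients (two-sidedness of $\Lambda$ and $\lambda$, the normalisation $\lambda\circ\Lambda=\id_{\unit}$, anti-(co)multiplicativity of $S$ up to braiding, and the scalar relating $S\circ\Lambda$ to $\Lambda$).

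There is, however, a concrete gap in the computational route you describe. After rewriting the scalar $\lambda\big(\mu(S(\Lambda_{(2)})\otimes h)\big)$ via the cointegral identity and expanding the coproduct of $\mu(S(\Lambda_{(2)})\otimes h)$ with the bialgebra axiom and $\Delta\circ S=(S\otimes S)\circ\beta\circ\Delta$, the resulting expression has the schematic form $\Lambda_{(1)}\,S(\Lambda_{(3)})h_{(1)}\cdot\lambda\big(S(\Lambda_{(2)})h_{(2)}\big)$ (braidings suppressed): the two legs of the iterated coproduct of $\Lambda$ that end up multiplied together are $\Lambda_{(1)}$ and $\Lambda_{(3)}$, which are \emph{not} adjacent, so the antipode--counit axiom cannot be applied and the claimed collapse of ``two of the three legs'' does not go through as stated. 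The standard repair is to reverse the order of the argument: first derive the integral-shifting identity $g\Lambda_{(1)}\otimes\Lambda_{(2)}=\Lambda_{(1)}\otimes S^{-1}(g)\Lambda_{(2)}$ (suitably braided) directly from $\Delta(g\Lambda)=\epsilon(g)\,\Delta(\Lambda)$ together with one application of the antipode axiom on genuinely adjacent legs, and only then contract with $\lambda$ and invoke $\lambda\circ\Lambda=\id_{\unit}$; the mirror identity handles the other zig-zag. A second, smaller point: obtaining a Frobenius algebra from $(\mu,\eta)$ together with $\kappa$ and $\Omega$ ``via the duality'' is not automatic from non-degeneracy alone --- one must also check the invariance/Casimir-type compatibility of $\kappa$ and $\Omega$ with $\mu$, which in the braided setting involves $S\circ\mu=\mu\circ(S\otimes S)\circ\beta$ and a cyclicity property of $\lambda$, and your sketch passes over this step.
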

The pairing $\kappa$ is called the \emph{Radford pairing}.  
Since the Hopf pairing is also non-degenerate, the composition 
\begin{align}
\calS := (\omega \otimes \id_{\coend}) \circ (\id_{\coend} \otimes (\Delta\circ \Lambda)) \colon \coend \to \coend
\label{eq:def-S-endo-of-L}
\end{align}
is invertible. We can define another endomorphism of $\coend$ by the universal property via $\cat{T} \circ \iota_X = \iota_X \circ (\id_{X^*} \otimes \theta_X)$. 
This morphism is invertible with inverse $\cat{T}^{-1} \circ \iota_X = \iota_X \circ (\id_{X^*} \otimes \theta_X^{-1})$. 
Moreover the constants $\Delta^{\pm}$ defined via 
\begin{align}\label{eq:defDeltapm}
    \epsilon \circ \cat{T}^{\pm1} \circ \Lambda = \Delta^{\pm} \, \id_{\unit}
\end{align} are non-zero and satisfy $\zeta = \Delta^+ \Delta^-$, see \autocite[Prop.\,2.6 \& Cor.\,4.6]{DGGPR19}. Using this and the normalisation of $\Lambda$ and $\lambda$ a direct computation shows that 
\begin{align} \label{eq:S^2}
    \calS^2 = \zeta S^{-1}
\end{align}
where $S^{-1}$ is the inverse of the antipode of $\coend$. 

\subsection{Tensor ideals and modified traces}
Let us denote with $\Proj(\calc)$ the full subcategory of projective objects of $\calc$. This forms a \emph{tensor ideal} in $\calc$, i.e.\ it is closed under retracts, and absorbent with respect to monoidal products with arbitrary objects of $\calc$ \autocite[Prop.\,4.2.12]{EGNO}.
In fact, $\Proj(\calc)$ is the smallest non-zero tensor ideal of $\calc$ \autocite[Lem.\,4.4.1]{GKP10}.

Recall that the (right) partial trace of an endomorphism $f \in \End_\calc (X \otimes Y)$ is defined as the endomorphism 
\begin{equation}
\rptr(f) = (\id_X \otimes \rev_Y) \circ (f \otimes \id_{Y^*}) \circ (\id_X \otimes \lcoev_Y) \in \End_\calc (X),
\end{equation}
or graphically
\begin{align}
\rptr(f) =\begin{tikzpicture}[xscale=-1,baseline=0.35cm]
        \filldraw[fill=white,draw=string-blue,thick] (-0.8,0.1) rectangle (0.9,0.7);
        \draw[string-blue,thick](-0.4,0.7) -- (-0.4,0.8)
        (-0.4,0.05) -- (-0.4,0.1)
        (-1.6,0.05) -- (-1.6,0.8);
        \begin{scope}[decoration={
                      markings,
                      mark=at position 0.55 with {\arrow{>}}}
                     ] 
            \draw[string-blue,thick,postaction={decorate}] (0.5,-0.6) -- (0.5,0.1);
            \draw[string-blue,thick,postaction={decorate}] (0.5,0.7) -- (0.5,1.4);
            \draw[string-blue,thick,postaction={decorate}] (-0.4,0.8) arc (0:180: 0.6 and 0.3);
            \draw[string-blue,thick,postaction={decorate}] (-1.6,0.05) arc (-180:0: 0.6 and 0.3);
        \end{scope}
       \node at (-0.3,0.9) [string-blue] {\scriptsize $W$};
       \node at (0.55,1.3) [string-blue,right] {\scriptsize $X$};
       \node at (0.55,-0.5) [string-blue,right] {\scriptsize $X$};
       \node at (0.05,0.4) [string-blue] {$f$};
   \end{tikzpicture} 
\end{align}

A \emph{modified trace} $\trI$ on $\Proj(\calc)$ is a family of linear maps
\begin{equation}
 \{ \trI_P : \End_\calc(P) \to \kk \}_{P \in \Proj(\calc)}
\end{equation}
satisfying the following conditions:
\begin{itemize}
	\item[1)] {\em Cyclicity}: For all $P,Q \in \Proj(\calc)$ and $f \colon P \to Q$, $g \colon Q \to P$ we have 
	 \begin{equation}
	     \trI_Q(f \circ g) = \trI_P(g \circ f).
	 \end{equation}
\item[2)] {\em Right partial trace}: For all $P \in \Proj(\calc)$, $X \in \calc$ and
	 $h \in \End_\calc(P \otimes X)$, 
	 \begin{equation}
	     \trI_{P \otimes X}(h) = \trI_P(\rptr(h));
	 \end{equation}
\end{itemize}
In general, there is also a notion of left partial trace and a modified trace would need to satisfy a condition analogous to 2) above for the left partial trace. However, 
since $\calc$ is ribbon in our setting, it suffices to only consider one of them \autocite[Thm\,3.3.1]{GKP10}. 

It was shown in \autocite[Thm\,5.5\,\&\,Cor.\,5.6]{GKP18} that there is unique-up-to-scalar non-zero
modified trace on $\Proj(\calc)$, and this trace induces a non-degenerate pairing on the Hom spaces. 

\subsection{Coends in functor categories}
As a final algebraic ingredient, we will now recall some general results on coends, in particular in functor categories. See \autocite{Loregian2021coendcalc,FS16coendsCFT} for a more detailed exposition and proofs. In this section $\calB$, $\calc$, and $\calD$ will denote finite ribbon categories, albeit these conditions can be drastically relaxed in general. Moreover, all functors we will consider will be assumed to be linear unless stated otherwise. 

First recall the ``delta distribution" property of the Hom functor, which is a reformulation of the Yoneda lemma.

\begin{lemma}[{\autocite[Prop.\,4]{FS16coendsCFT}}]
Let  $F \colon \calB \to \vect$ be a linear functor. For any object $Y \in \calB$ the coend of the functor 
\begin{equation}
\Hom_{\calB}(-,Y) \otimes_{\mathbbm{k}}F(-) \colon \calB^\op \times \calB \to \vect
\end{equation}
can be realised as the vector space $F(Y)$ with the family of linear maps
 \begin{equation}
     \begin{aligned}
         i_X \colon \Hom_{\calB}(X,Y) \otimes_{\mathbbm{k}}F(X) &\to F(Y)\\
         (f,x) &\mapsto F(f)(x) 
     \end{aligned}
 \end{equation}
for $X \in \calB$. Moreover, the isomorphism $F(Y) \cong \int^{X\in \calB} \Hom_{\calB}(X,Y) \otimes_{\mathbbm{k}}F(X)$ is natural in $Y$.
\end{lemma}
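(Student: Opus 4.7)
The plan is to verify the universal property of the coend by direct construction. First I would check that the prescribed linear maps $i_X \colon \Hom_{\calB}(X,Y) \otimes_{\mathbbm{k}} F(X) \to F(Y)$, $(f,x) \mapsto F(f)(x)$, form a dinatural transformation from the bifunctor $\Hom_{\calB}(-,Y) \otimes_{\mathbbm{k}} F(-) \colon \calB^{\mathrm{op}} \times \calB \to \vect$ to the constant functor at $F(Y)$. The dinaturality square for a morphism $g \colon X \to X'$ reduces, on an elementary tensor $(f,x)$ with $f \colon X' \to Y$ and $x \in F(X)$, to the identity $F(f \circ g)(x) = F(f)(F(g)(x))$, i.e.\ to functoriality of $F$.

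For universality, let $\phi_X \colon \Hom_{\calB}(X,Y) \otimes_{\mathbbm{k}} F(X) \to V$ be another dinatural family with target $V \in \vect$. The candidate factorisation $\psi \colon F(Y) \to V$ is forced by taking $X = Y$ and evaluating on $(\id_Y, y)$: one must have $\psi(y) = \phi_Y(\id_Y, y)$, which already gives uniqueness. For existence I need $\psi \circ i_X = \phi_X$ for every $X \in \calB$. This is the key point of the argument: applying the dinaturality square of $\phi$ to the morphism $f \colon X \to Y$, evaluated on $(\id_Y, x) \in \Hom_{\calB}(Y,Y) \otimes_{\mathbbm{k}} F(X)$, gives
\begin{equation*}
    \phi_X(f,x) \;=\; \phi_Y\bigl(\id_Y,\, F(f)(x)\bigr) \;=\; \psi\bigl(F(f)(x)\bigr) \;=\; \psi\bigl(i_X(f,x)\bigr),
\end{equation*}
as desired.

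Finally, for naturality in $Y$, given $h \colon Y \to Y'$ post-composition by $h$ defines a morphism of bifunctors $\Hom_{\calB}(-,Y) \otimes_{\mathbbm{k}} F(-) \Rightarrow \Hom_{\calB}(-,Y') \otimes_{\mathbbm{k}} F(-)$, and hence a morphism of cowedges. Its mate under the universal property, computed via $i_{X}^{Y'}(h \circ f, x) = F(h \circ f)(x) = F(h)\bigl(i_X^{Y}(f,x)\bigr)$, is precisely $F(h) \colon F(Y) \to F(Y')$.

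The whole argument is the standard co-Yoneda lemma, tensored pointwise in the second variable with $F$ over $\mathbbm{k}$, so no genuine obstacle arises. The only point requiring care is the bookkeeping needed to read off both the explicit formula for $\psi$ and the identity $\psi \circ i_X = \phi_X$ from a single well-chosen instance of the dinaturality of $\phi$ with one slot set to $\id_Y$.
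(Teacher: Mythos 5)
Your argument is correct and complete: dinaturality of $i$ is functoriality of $F$, uniqueness and existence of the factorisation both follow from the single dinaturality square of $\phi$ at $f\colon X\to Y$ evaluated on $(\id_Y,x)$, and naturality in $Y$ is read off from $i_X^{Y'}(h\circ f,x)=F(h)(i_X^Y(f,x))$. The paper gives no proof of this lemma --- it cites \autocite[Prop.\,4]{FS16coendsCFT} and describes the statement as a reformulation of the (co-)Yoneda lemma, which is exactly the argument you have written out, so there is nothing to compare beyond noting that the linearity of $F$ is what makes $(f,x)\mapsto F(f)(x)$ bilinear and hence well defined on the tensor product.
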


In practice this allows one to explicitly compute many coends as can be seen from the following useful corollary.       
\begin{corollary}\label{cor:gluingalg1}
For any $U,V,U',V' \in \calB$ the coend of the functor
\begin{align}
    \Hom_{\calB}(U \otimes (-), V) \otimes_{\mathbbm{k}} \Hom_{\calB}(U', (-) \otimes V') \colon \calB^{\op} \times \calB \to \vect
\end{align}
exists and can be realised by $(\Hom_{\calB}(U \otimes U', V \otimes V'), i)$ with the dinatural transformation 
\begin{equation}
\begin{aligned}
i_X \colon \Hom_{\calB}(U \otimes X, V) \otimes_{\mathbbm{k}} \Hom_{\calB}(U', X \otimes V') &\to \Hom_{\calB}(U \otimes U', V \otimes V')
\\ 
\begin{tikzpicture}[scale=0.9,baseline=0.3cm]
        \filldraw[fill=white,draw=string-blue,thick] (-1,0.1) rectangle (0.7,0.7);
        \begin{scope}[decoration={
                      markings,
                      mark=at position 0.52 with {\arrow{>}}}
                     ] 
            \draw[string-blue,thick,postaction={decorate}] (0.3,-1) -- (0.3,0.1);
            \draw[string-blue,thick,postaction={decorate}] (-0.6,-1) -- (-0.6,0.1);
            \draw[string-blue,thick,postaction={decorate}] (-0.15,0.7) -- (-0.15,1.8);
        \end{scope}
       \node at (-0.55,-0.5) [string-blue,right] {\scriptsize $U$};
       \node at (0.35,-0.5) [string-blue,right] {\scriptsize $X$};
       \node at (0,1.2) [string-blue,right] {\scriptsize $V$};
       \node at (-0.15,0.4) [string-blue] {$f$};
   \end{tikzpicture} \quad
   \otimes \quad
\begin{tikzpicture}[scale=0.9,baseline=0.3cm]
        \filldraw[fill=white,draw=string-blue,thick] (-1,0.1) rectangle (0.7,0.7);
        \begin{scope}[decoration={
                      markings,
                      mark=at position 0.52 with {\arrow{>}}}
                     ] 
            \draw[string-blue,thick,postaction={decorate}] (-0.15,-1) -- (-0.15,0.1);
            \draw[string-blue,thick,postaction={decorate}] (0.3,0.7) -- (0.3,1.8);
            \draw[string-blue,thick,postaction={decorate}] (-0.6,0.7) -- (-0.6,1.8);
        \end{scope}
       \node at (0,-0.5) [string-blue,right] {\scriptsize $U'$};
       \node at (-0.55,1.2) [string-blue,right] {\scriptsize $X$};
       \node at (0.35,1.2) [string-blue,right] {\scriptsize $V'$};
       \node at (-0.15,0.4) [string-blue] {$g$};
   \end{tikzpicture} \quad \quad
   &  \mapsto \quad
    \begin{tikzpicture}[scale=0.9,baseline=0.3cm]
        \filldraw[fill=white,draw=string-blue,thick] (-1,-0.45) rectangle (0.7,0.15)
        (-1.9,0.65) rectangle (-0.2,1.25);
        \draw[string-blue,thick](-0.6,0.15) -- (-0.6,0.65);
        \begin{scope}[decoration={
                      markings,
                      mark=at position 0.52 with {\arrow{>}}}
                     ] 
            \draw[string-blue,thick,postaction={decorate}] (-0.15,-1) -- (-0.15,-0.45);
            \draw[string-blue,thick,postaction={decorate}] (0.3,0.15) -- (0.3,1.8);
            \draw[string-blue,thick,postaction={decorate}] (-1.5,-1) -- (-1.5,0.65);
            \draw[string-blue,thick,postaction={decorate}] (-1.05,1.25) -- (-1.05,1.8);
        \end{scope}
        \node at (-1.5,-0.55) [string-blue,right] {\scriptsize $U$};
       \node at (-1.05,1.5) [string-blue,right] {\scriptsize $V$};
       \node at (-1.05,0.95) [string-blue] {$f$};
       \node at (0,-0.75) [string-blue,right] {\scriptsize $U'$};
       \node at (0.35,1.1) [string-blue,right] {\scriptsize $V'$};
       \node at (-0.25,-0.2) [string-blue] {$g$};
   \end{tikzpicture} 
\end{aligned}
\end{equation}
Moreover, the isomorphism between $\Hom_{\calB}(U \otimes U', V \otimes V')$ and the coend over $X$ of $\Hom_{\calB}(U \otimes X, V) \otimes_{\mathbbm{k}} \Hom_{\calB}(U', X \otimes V')$ is natural in $U,V,U',$ and $V'$.
\end{corollary}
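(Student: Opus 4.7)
The plan is to reduce the claim to the preceding Yoneda-type lemma by using the rigidity of $\calB$ to absorb the $U$-factor into the second argument of the Hom.

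First I would invoke the duality adjunction: the morphisms $\lev_U$ and $\lcoev_U$ induce a natural isomorphism
\begin{equation*}
\Phi_X \colon \Hom_{\calB}(U \otimes X, V) \xrightarrow{\;\cong\;} \Hom_{\calB}(X, U^{*} \otimes V), \qquad f \mapsto (\id_{U^*} \otimes f) \circ (\lcoev_U \otimes \id_X),
\end{equation*}
with inverse obtained via $\lev_U$. This is natural in $X$ as well as in $U$ and $V$. Substituting this into the functor whose coend we wish to compute yields the naturally isomorphic functor $\Hom_{\calB}(-, U^{*} \otimes V) \otimes_{\mathbbm{k}} \Hom_{\calB}(U', (-) \otimes V')$, so both coends coincide if one exists.

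Next, I would apply the preceding Yoneda lemma with $F(X) := \Hom_{\calB}(U', X \otimes V')$ and target object $Y := U^{*} \otimes V$. Since $F$ is linear (the tensor product is bilinear) this gives existence of the coend together with a canonical isomorphism
\begin{equation*}
\int^{X \in \calB}\! \Hom_{\calB}(X, U^{*} \otimes V) \otimes_{\mathbbm{k}} \Hom_{\calB}(U', X \otimes V') \;\cong\; \Hom_{\calB}(U', U^{*} \otimes V \otimes V'),
\end{equation*}
with dinatural structure map $(g, h) \mapsto (g \otimes \id_{V'}) \circ h$. Applying duality once more on the other side, i.e.\ the natural isomorphism $\Hom_{\calB}(U', U^{*} \otimes V \otimes V') \cong \Hom_{\calB}(U \otimes U', V \otimes V')$ given by precomposing with $\lev_U$ after tensoring with $\id_U$, identifies the target with the claimed space.

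Finally, I would unwind the composite of these natural isomorphisms on a generator $(f, g)$ of the defining bifunctor and verify that the resulting element of $\Hom_{\calB}(U \otimes U', V \otimes V')$ is exactly the stacked diagram displayed in the statement; this reduces to the zigzag identities for the duality of $U$, and is straightforward in our strictly pivotal setting so that no coherence corrections appear. Naturality in $U, V, U', V'$ is automatic since each isomorphism in the chain is natural in these variables. I expect the only slightly delicate step to be the bookkeeping in this last diagrammatic identification; all other steps are a formal composition of the Yoneda lemma with the Hom--tensor adjunction.
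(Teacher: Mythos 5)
Your proposal is correct and matches the paper's (implicit) derivation: the corollary is presented as a direct consequence of the preceding delta-distribution/Yoneda lemma, obtained exactly as you describe by using rigidity to rewrite $\Hom_{\calB}(U\otimes -,V)\cong\Hom_{\calB}(-,U^{*}\otimes V)$, applying the lemma with $F=\Hom_{\calB}(U',-\otimes V')$ and $Y=U^{*}\otimes V$, and transporting back along the inverse adjunction. The only quibble is notational: in the paper's conventions the adjunction isomorphisms for absorbing $U$ use $\rcoev_U\colon\unit\to U^{*}\otimes U$ and $\rev_U\colon U\otimes U^{*}\to\unit$ rather than $\lcoev_U$ and $\lev_U$, after which the zigzag identity indeed reduces the composite dinatural map to the displayed diagram $(f\otimes\id_{V'})\circ(\id_U\otimes g)$.
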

More generally let $G \colon \calB \times 
\calc^{\mathrm{op}}\times\calc \to \calD$ be a functor. We can fix an object $Y \in \calB$, often called a \emph{parameter} in this context, to obtain a functor 
\begin{equation}
G_Y := G(Y,-,-) \colon \calc^{\mathrm{op}}\times\calc \to \calD.
\end{equation} 
Due to our finiteness assumptions the coend of $G_Y$ exists and is an object
  \begin{equation}
  e_{Y} := \int^{X\in\calc}\! G_{Y}(X,X) \in \calD.
  \end{equation}
The fact that $G$ is functorial in $Y$ as well implies that the assignment
  \begin{equation}
  Y \,\longmapsto\, e_Y
  \end{equation}
defines a functor from $\calB$ to $\calD$, which we will denote by $\int^{X\in\calc}\! G(-,X,X).$
On the other hand we can reinterpret $G$ as a functor
  \begin{equation} 
  \widetilde G \colon
  \calc^{\mathrm{op}}\times\calc \to \Fun (\calB , \calD).
  \end{equation}
If the coend of $\tilde G$ exists as an object in the functor category
$\Fun(\calB , \calD)$, then we denote it by
  \begin{equation} 
  \Big( \int^{X\in\calc}\! \widetilde G(X,X) \Big) (-) \colon \calB \to \calD.
  \end{equation}
The parameter theorem for coends explains how these two constructions are related:

\begin{theorem}[{\autocite[Sec.\,IX.7.]{MacLane1978cats}}]
The functor 
\begin{equation}
  \int^{X\in\calc}\! G(-,X,X) \colon \calB \to \calD
\end{equation}
has a natural structure of a coend for the functor
  \begin{equation}
  \tilde G \colon \calc^{\mathrm{op}}\times\calc \to 
  \Fun(\calB , \calD),
  \end{equation}
provided that all coends $\int^{X\in\calc}\! G(Y,X,X)$ exist, i.e. 
\begin{equation}
\int^{X\in\calc}\! G(-,X,X) \cong \Big( \int^{X\in\calc}\! \widetilde G(X,X) \Big) (-) \end{equation}
as objects in $\Fun(\calB,\calD)$.
\end{theorem}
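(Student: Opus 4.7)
The plan is to verify the universal property of a coend in the functor category $\Fun(\calB,\calD)$ by assembling the pointwise coends into a functor and showing that the pointwise dinatural families form a dinatural transformation of functors which is universal.

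First, define $E \colon \calB \to \calD$ on objects by
\begin{equation*}
E(Y) := \int^{X \in \calc}\! G(Y,X,X),
\end{equation*}
equipped with its universal dinatural family $\iota^{Y}_{X} \colon G(Y,X,X) \to E(Y)$. For a morphism $f \colon Y \to Y'$ in $\calB$, I would construct $E(f)$ via the universal property applied to the family $\iota^{Y'}_{X} \circ G(f, \id_X, \id_X)$, which is dinatural in $X$ because $\iota^{Y'}$ is. Uniqueness in the universal property then immediately yields $E(\id_Y) = \id_{E(Y)}$ and $E(g\circ f) = E(g) \circ E(f)$, so $E$ is a functor. By the very definition of $E(f)$, for each fixed $X$ the maps $\iota^{Y}_{X}$ assemble into a natural transformation $\iota_{X} \colon \tilde G(X,X) \Rightarrow E$.

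Next I would verify that the collection $\{\iota_{X}\}_{X \in \calc}$ is dinatural in $X$ as a family of natural transformations. For each morphism $g \colon X \to X'$ in $\calc$, the two required natural transformations $\tilde G(X',X) \rightrightarrows E$ agree after evaluating at every $Y \in \calB$, because there they reduce to the dinaturality of $\iota^{Y}$ with respect to $g$; since equality of natural transformations is checked componentwise, dinaturality holds in $\Fun(\calB,\calD)$.

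For universality, suppose $\phi_{X} \colon \tilde G(X,X) \Rightarrow F$ is a dinatural transformation in $\Fun(\calB,\calD)$ for some $F \colon \calB \to \calD$. Evaluating at $Y \in \calB$ gives a dinatural family $(\phi_{X})_{Y} \colon G(Y,X,X) \to F(Y)$ in $X$, so the universal property of $E(Y)$ produces a unique morphism $\psi_Y \colon E(Y) \to F(Y)$ with $\psi_Y \circ \iota^{Y}_{X} = (\phi_{X})_{Y}$ for all $X$. Naturality of $\psi$ in $Y$ is then forced: for $f \colon Y \to Y'$, both $\psi_{Y'} \circ E(f)$ and $F(f) \circ \psi_{Y}$ satisfy the defining property of the unique factorisation of the dinatural family $(\phi_{X})_{Y'} \circ G(f,\id,\id)$ through $E(Y)$, and must therefore coincide. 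Uniqueness of $\psi$ as a natural transformation is inherited pointwise from uniqueness at each $Y$.

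The main conceptual obstacle is a notational rather than a mathematical one: one has to be careful to distinguish the two ``layers'' of universal property (in $\calD$ versus in $\Fun(\calB,\calD)$) and consistently apply uniqueness to pass from pointwise assertions to global ones. Once this is bookkeeping is in place, every step is a direct application of the universal property of a coend, so no further ingredient is needed.
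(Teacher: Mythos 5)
Your proposal is correct and is the standard pointwise argument for the parameter theorem: the paper itself gives no proof and simply cites Mac Lane (Sec.\,IX.7), whose argument (stated there for ends, dualised here) is exactly the one you reconstruct. All the key verifications — functoriality of $Y \mapsto \int^{X} G(Y,X,X)$ via uniqueness of factorisations, componentwise dinaturality in $\Fun(\calB,\calD)$, and pointwise construction plus forced naturality of the comparison map — are carried out correctly.
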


We now turn to results that are specific to the setting of finite tensor categories, in particular instead of general functors we will from now on only consider left exact functors. We denote the category of left exact functors from $\calc$ to $\calD$ by $\coend \mathrm{ex}(\calc,\calD)$, and analogously with $\calR \mathrm{ex}(\calc,\calD)$ the category of right exact functors. 
 The following idea is due to \autocite{Lyu1996ribbon}.
\begin{definition}
    Let $\calB, \calc,$ and $\calD$ be finite tensor categories and let 
    \begin{equation}
    G \colon \calB \times\calc^{\op}\times \calc \to \calD
    \end{equation}
    be a functor left exact in each argument.
    The \emph{left exact coend} $\oint^{X\in \calc}\! G(-,X,X)$ 
    is the coend of $G$ over $\calc$ which is universal with respect to functors in the category $\coend\mathrm{ex}(\calB, \calD)$.
\end{definition}
The notation $\oint$ indicates that this is in general not the same thing as the standard coend in the full functor category $\Fun(\calB, \calD)$. 
Left exact coends work well with the Hom functor in the following sense.
\begin{proposition}[{\autocite[Prop.\,9]{FS16coendsCFT}}]\label{prop:lexcoendhom}
    The coend (over the first and third argument) of the functor
\begin{align}
    \Hom_{\calc}\big((-)\otimes (-), (-)\otimes (-)\big) \colon \calc^{\op} \times \calc^{\op} \times \calc \times \calc \to \vect
\end{align}
exists in the functor category $\coend\mathrm{ex}(\calc^{\op}\times \calc, \vect)$ and is given by $\big(\Hom_{\calc}(\coend\otimes (-), (-)),i \big)$
with the family, natural in $U$ and $V$, of dinatural transformations 
\begin{align*}
 i_X^{U,V} \colon \Hom_{\calc}(X\otimes U, X\otimes V) &\to \Hom_{\calc}(\coend\otimes U, V)
\\ 
\begin{tikzpicture}[scale=0.9,baseline=0.3cm]
        \filldraw[fill=white,draw=string-blue,thick] (-1,0.1) rectangle (0.7,0.7);
        \begin{scope}[decoration={
                      markings,
                      mark=at position 0.52 with {\arrow{>}}}
                     ] 
            \draw[string-blue,thick,postaction={decorate}] (0.3,-1) -- (0.3,0.1);
            \draw[string-blue,thick,postaction={decorate}] (-0.6,-1) -- (-0.6,0.1);
            \draw[string-blue,thick,postaction={decorate}] (0.3,0.7) -- (0.3,1.8);
            \draw[string-blue,thick,postaction={decorate}] (-0.6,0.7) -- (-0.6,1.8);
        \end{scope}
       \node at (-0.55,-0.5) [string-blue,right] {\scriptsize $X$};
       \node at (0.35,-0.5) [string-blue,right] {\scriptsize $U$};
       \node at (-0.55,1.2) [string-blue,right] {\scriptsize $X$};
       \node at (0.35,1.2) [string-blue,right] {\scriptsize $V$};
       \node at (-0.15,0.4) [string-blue] {$f$};
   \end{tikzpicture} \quad
   &  \mapsto \quad
    \begin{tikzpicture}[scale=0.9,baseline=0.3cm]
        \filldraw[fill=white,draw=string-blue,thick] (-1,-0.1) rectangle (0.7,0.5)
        (-2.2,1.3) rectangle (-0.8,1.7);
        \draw[string-blue,thick](-0.6,0.5) -- (-0.6,0.8)
        (-0.6,-0.25) -- (-0.6,-0.1)
        (-1.4,-0.25) -- (-1.4,0.8)
        (-1,1.3) -- (-1,0.8)
        (-2,1.3) -- (-2,0.3);
        \begin{scope}[decoration={
                      markings,
                      mark=at position 0.52 with {\arrow{>}}}
                     ] 
            \draw[string-blue,thick,postaction={decorate}] (0.3,-1) -- (0.3,-0.1);
            \draw[string-blue,thick,postaction={decorate}] (0.3,0.5) -- (0.3,1.8);
            \draw[string-blue,thick] (-1.5,0.8) -- (-0.5,0.8);
            \draw[string-blue,thick,postaction={decorate}] (-1.4,-0.25) arc (-180:0: 0.4 and 0.3);
            \draw[string-blue,thick,postaction={decorate}] (-2,-1) -- (-2,1.3);
        \end{scope}
       \node at (-0.4,-0.4) [string-blue] {\scriptsize $X$};
       \node at (-0.3,0.7) [string-blue] {\scriptsize $\iota_X$};
       \node at (-1.5,1.5) [string-blue] {\scriptsize $\kappa$};
       \node at (0.35,-0.5) [string-blue,right] {\scriptsize $U$};
       \node at (0.35,1.2) [string-blue,right] {\scriptsize $V$};
       \node at (-0.15,0.2) [string-blue] {$f$};
       \node at (-1.8,-0.5) [string-blue] {\scriptsize $\coend$};
   \end{tikzpicture} 
\end{align*}
where $\kappa$ is the Radford pairing on the coend $\coend$ from above.
\end{proposition}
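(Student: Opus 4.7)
The plan is to verify the universal property of $\Hom_\calc(\coend\otimes-, -)$ as a left exact coend in three stages. First I would check that each $i_X^{U,V}$ is natural in $U$ and $V$ (this is immediate from the graphical definition, since the $U,V$-strands pass through $i_X(f)$ only via pre- and post-composition with $f$) and dinatural in $X$. Dinaturality in $X$ reduces, upon unpacking the definition, to the dinaturality of the universal transformation $\iota_X \colon X^*\otimes X \to \coend$, combined with the identity expressing the Radford pairing $\kappa = \lambda\circ\mu\circ(S\otimes\id_\coend)$ in terms of the Hopf structure of $\coend$. Left exactness of $\Hom_\calc(\coend\otimes-, -)$ in both arguments is standard: $\coend\otimes-$ is exact (exactness of $\otimes$), and $\Hom_\calc$ is left exact in each argument.

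The main content is universality. Given any $F \in \coend\mathrm{ex}(\calc^{\op}\times\calc,\vect)$ and a family $\varphi_X \colon \Hom_\calc(X\otimes U, X\otimes V) \to F(U,V)$ natural in $U,V$ and dinatural in $X$, I would use the Frobenius structure on $\coend$ (non-degeneracy of $\kappa$ with copairing $\Delta\circ\Lambda$) to construct the factorisation $\widetilde\varphi$. Concretely, for $f \colon \coend\otimes U \to V$ set
\begin{align*}
\widetilde\varphi^{U,V}(f) \;:=\; \varphi_\coend\!\bigl((\id_\coend \otimes f)\circ(\Delta \otimes \id_U)\bigr),
\end{align*}
which uses the comultiplication to route one $\coend$-strand into $f$ while the other becomes the first argument of $\varphi_\coend$. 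To verify $\widetilde\varphi \circ i_X = \varphi_X$, I would insert the graphical definition of $i_X(g)$, apply dinaturality of $\varphi$ to shift the morphism $\iota_X$ from inside $f$ onto the first argument of $\varphi_\coend$, and then collapse the composite using the Frobenius snake identity that cancels the copairing $\Delta\Lambda$ against the Radford pairing $\kappa$. Uniqueness of $\widetilde\varphi$ is then automatic: any lex natural $\psi$ satisfying $\psi\circ i_X = \varphi_X$ for all $X$ must, upon the same diagrammatic manipulation applied to an arbitrary $f$, coincide with the formula above. Naturality of $\widetilde\varphi$ in $U,V$ follows from that of $\varphi_\coend$, and left exactness from left exactness of $F$ together with exactness of $\Delta \otimes \id_U$ and $\id_\coend \otimes f$ type constructions.

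The hard part will be the clean execution of the diagrammatic identity $\widetilde\varphi \circ i_X = \varphi_X$: it interweaves the dinaturality of both $\iota$ and $\varphi$, the Hopf structure of $\coend$ (notably the interaction of $\Delta$, $\Lambda$, and $S$ with $\iota$), and the Frobenius snake relation coming from $\kappa$ and its copairing $\Delta\circ\Lambda$. This is in the same spirit as the proofs of factorisation and braided-transposition identities for $\coend$ in the literature, and its clean verification requires careful bookkeeping of which defining property of $\coend$ justifies each rewrite. Once this identity is in hand, the universality and hence the identification of $\Hom_\calc(\coend\otimes-, -)$ with $\oint^X \Hom_\calc(X\otimes-, X\otimes-)$ in $\coend\mathrm{ex}(\calc^{\op}\times\calc,\vect)$ follows formally.
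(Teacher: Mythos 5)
The paper does not actually prove this statement: it is imported verbatim from \autocite[Prop.\,9]{FS16coendsCFT}, so there is no in-paper argument to compare yours against. Judged on its own terms, your outline has a genuine gap in the universality step. The proposed factorisation $\widetilde\varphi^{U,V}(f)=\varphi_\coend\bigl((\id_\coend\otimes f)\circ(\Delta\otimes\id_U)\bigr)$ does not satisfy $\widetilde\varphi\circ i_X=\varphi_X$, and the verification you describe is not a legal move: dinaturality of $\varphi$ in the index only transports along morphisms $a\colon Y\to Y'$ between indexing objects, whereas $\iota_X\colon X^*\otimes X\to\coend$ is not a morphism $X\to\coend$, so it cannot be ``shifted onto the first argument of $\varphi_\coend$'' by dinaturality; one would first have to pass from $\varphi_X$ to $\varphi_{X\otimes X^*\otimes X}$ via the (co)evaluations, which is a separate argument. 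Moreover, the Frobenius snake you invoke cancels $\kappa$ against the copairing $\Delta\circ\Lambda$, but the integral $\Lambda$ appears nowhere in your formula. Already in the tautological case $F=\Hom_\calc(\coend\otimes-,-)$, $\varphi=i$, your formula requires
\begin{equation*}
(\kappa\otimes\id_\coend)\circ\Bigl(\id_\coend\otimes\bigl[(\iota_\coend\otimes\id_\coend)\circ(\id_{\coend^*}\otimes\Delta)\circ\widetilde{\mathrm{coev}}_\coend\bigr]\Bigr)=\id_\coend,
\end{equation*}
and the bracketed morphism is a partial trace of $\Delta$, not the copairing $\Delta\circ\Lambda$; applying $\epsilon$ to the first output shows it cannot equal $\Delta\circ\Lambda$ in general.

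There is also a structural red flag: nowhere in your existence or uniqueness argument is left exactness of the target $F$ actually used (you only invoke it to conclude that the factorisation is again left exact). An argument valid for arbitrary $F$ would identify $\Hom_\calc(\coend\otimes-,-)$ with the ordinary coend in $\Fun(\calc^{\op}\times\calc,\vect)$, which is precisely what the $\oint$ notation warns is false in the non-semisimple case. Any correct proof must use left exactness of $F$ essentially --- for instance by first rewriting $\Hom_\calc(X\otimes U,X\otimes V)\cong\Hom_\calc(U,X^*\otimes X\otimes V)$, representing the left exact $F$ via Eilenberg--Watts, and reducing to the universal property of the ordinary coend $\coend=\int^{X}X^*\otimes X$ inside $\calc$ --- rather than by an explicit Frobenius-algebra inverse to $i_X$.
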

Using the rigidity of $\calc$ we can also express this as
\begin{equation}
\oint^{X\in \calc}\! \Hom_{\calc}(X^*\otimes X\otimes -,-) \cong \Hom_{\calc}\left(\left(\int^{X\in \calc} \! X^*\otimes X\right)\otimes -,-\right).
\end{equation}
Or in other words: the left exact coend ``commutes" with the Hom functor.
It should be noted here that from a formal point of view, there is no reason to consider left exact instead of right exact functors. Indeed, 
for finite tensor categories the resulting functor categories are actually equivalent \autocite[Thm.\,3.2]{FSS19eilenbergwatts}.
However, for our purposes it turns out that left exact functors are more natural (cf.\ \Cref{sec:modfunc}).
To understand the notion of left exact coends better we recall a result from homological algebra (which however will not be used in the following).
\begin{lemma}\label{lem:rexproj}
    Let $\calA$ be an abelian category with enough injectives and let $\calA'$ be an abelian category. The restriction functor $\mathcal{L}\mathrm{ex}(\calA,\calA') \to \Fun(\mathrm{Inj}(\calA),\calA')$ is an equivalence.
\end{lemma}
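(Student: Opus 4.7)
The plan is to prove the lemma by constructing a quasi-inverse to the restriction functor which extends a functor $G\colon \mathrm{Inj}(\calA)\to\calA'$ to a left exact functor on all of $\calA$ via partial injective coresolutions. First I would fix, for every object $A\in\calA$, a choice of exact sequence $0\to A\to I^0_A\to I^1_A$ with $I^0_A, I^1_A$ injective (available by the enough-injectives hypothesis). For a left exact $F\colon\calA\to\calA'$ one has $F(A)\cong\ker(F(I^0_A)\to F(I^1_A))$ canonically, so the obvious candidate for the extension of $G$ is
\begin{equation}
    \widetilde{G}(A) := \ker\bigl(G(I^0_A)\to G(I^1_A)\bigr).
\end{equation}

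To make $\widetilde{G}$ functorial, given $f\colon A\to B$ I would use the injectivity of $I^0_B$ and $I^1_B$ to lift $f$ to a chain map of the two partial resolutions; applying $G$ and passing to kernels produces a morphism $\widetilde{G}(f)\colon \widetilde{G}(A)\to \widetilde{G}(B)$. The standard homotopy argument (again using injectivity) shows the lift is unique up to chain homotopy and therefore $\widetilde{G}(f)$ is well-defined, independent of all choices, and the assignment is functorial. Left exactness of $\widetilde{G}$ then follows from the snake lemma applied to a short exact sequence $0\to A\to B\to C\to 0$ together with a suitable compatible choice of partial coresolutions, after noting that one is free to replace the fixed coresolutions by any other partial coresolution without changing $\widetilde{G}$ up to canonical isomorphism. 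Finally, for $A=I$ injective the sequence $0\to I\xrightarrow{\mathrm{id}} I\to 0$ is a legitimate partial coresolution, so $\widetilde{G}(I)\cong G(I)$ naturally, giving a natural isomorphism $\widetilde{G}|_{\mathrm{Inj}(\calA)}\cong G$.

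For the other direction, given a left exact $F\colon\calA\to\calA'$ the canonical isomorphism $F(A)\cong\ker(F(I^0_A)\to F(I^1_A))$ provides a natural isomorphism $\widetilde{F|_{\mathrm{Inj}(\calA)}}\cong F$, proving essential surjectivity and providing the unit/counit of the equivalence. Full faithfulness on morphisms of functors reduces to the same computation: a natural transformation $\alpha\colon F|_{\mathrm{Inj}}\Rightarrow F'|_{\mathrm{Inj}}$ between the restrictions of two left exact functors extends uniquely to a natural transformation $F\Rightarrow F'$ by restricting the morphism $\alpha_{I^0_A}\colon F(I^0_A)\to F'(I^0_A)$ to kernels, which is independent of the chosen coresolution by the same homotopy argument.

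The main obstacle I expect is verifying that the choices made in the construction (of coresolutions, of lifts of morphisms) do not matter, i.e.\ carefully organising the homotopy/comparison lemma for partial injective coresolutions so that $\widetilde{G}$ is a genuine functor and the extension is natural in $G$. All other steps are formal consequences of left exactness plus the universal property of kernels. Since the lemma is not used later in the paper, one could alternatively cite this as a standard fact from homological algebra (it is essentially the statement that the derived category of left exact functors out of $\calA$ is equivalent to functors out of injectives, specialised to degree zero).
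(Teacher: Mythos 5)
Your proposal is correct and follows essentially the same route as the paper: the quasi-inverse is the zeroth right derived functor $G \mapsto R_0G$ with $R_0G(A) = \ker\bigl(G(I^0_A)\to G(I^1_A)\bigr)$, exactly your $\widetilde{G}$. The paper's proof is terser and delegates the well-definedness, functoriality, and left-exactness checks to ``standard techniques in homological algebra'', which are precisely the comparison/homotopy arguments you spell out.
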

\begin{proof}
We claim that $F \mapsto R_0F$ is an inverse to the restriction functor with $R_0F\colon \calA \to \calA'$ the zeroth right derived functor of $F \colon \mathrm{Inj}(\calA)\to \calA'$. Recall that this is defined on objects $X \in \calA$ as $R_0F(X) = \mathrm{ker}\big(F(I_0) \to F(I_1)\big)$ with $0 \to X \to I_0 \to I_1 \to \cdots$ an injective resolution of $X$. Note that $R_0F$ is well defined up to isomorphism since every choice of injective resolution leads to isomorphisms in homology. Using standard techniques in homological algebra it is straightforward to verify that $R_0F$ is indeed left exact and that for a left exact functor $G$ we have $R_0G \cong G$. 
\end{proof}
Analogously it can be shown that $\mathcal{R}\mathrm{ex}(\calA,\calA') \simeq \Fun(\mathrm{Proj}(\calA),\calA')$. Combining the equivalences $\coend\mathrm{ex}(\calA,\calA') \simeq \Fun(\mathrm{Inj}(\calA),\calA')$ and $\mathcal{R}\mathrm{ex}(\calA,\calA') \simeq \Fun(\mathrm{Proj}(\calA),\calA')$ with the fact that in a finite tensor category, every projective object is also injective and vice versa
\autocite[Prop.\,6.1.3]{EGNO} we immediately get an equivalence $\mathcal{R}\mathrm{ex}(\calc,\calD) \simeq \mathcal{L}\mathrm{ex}(\calc,\calD)$. Moreover, we also have $\mathcal{L}\mathrm{ex}(\calc,\calD) \simeq \Fun(\Proj(\calc),\calD)$. With this we can reinterpret the definition of the left exact coend as the requirement of having a projective object present. 

\begin{remark}
The equivalence $\mathcal{R}\mathrm{ex}(\calc,\calD) \simeq \mathcal{L}\mathrm{ex}(\calc,\calD)$ constructed above
is different from the one given in \autocite[Thm.\,3.2]{FSS19eilenbergwatts} mentioned before. This can be seen by considering where for example the identity functor is sent under both equivalences. In the above
equivalence the identity gets sent to itself while in the one by Fuchs et.\ al.\ it gets sent to the Nakayama functor \autocite[Sec.\,3.5]{FSS19eilenbergwatts}. 
\end{remark}

\section{Non-semisimple 3d TFT}\label{sec:3dTFT}
In this section we will briefly review the non-semisimple TFTs of \autocite{DGGPR19}. First we will recall bichrome graphs as well as the corresponding $3$-manifold invariants. Then we will define a $3$-dimensional bordism category and introduce two subcategories of so-called admissible bordisms and study their relation.  We apply the universal TFT construction to obtain two non-compact TFTs and discuss an algebraic model for the state spaces of these TFTs, as well as their behaviour under orientation reversal and Deligne products. Our exposition mostly follows \autocite[Sec.\,2]{DGGPR20}, except that we include results on the relation of the TFTs for the two dual sets of admissible bordisms, and on their behaviour under orientation reversal and Deligne products.

\subsection{3-manifold invariants}\label{ssec:3mf-inv}
Bichrome graphs are a generalisation of the ribbon graphs of  \autocite[Sec.\! I.2]{Tu16} where two kinds of edges are present, \emph{red} edges without labels, and \emph{blue} edges labelled by objects of $\calc$. Coupons can be, according to the edges intersecting them, either \emph{bichrome} and unlabelled, or \emph{blue} and labelled as usual by morphisms of $\calc$. Furthermore, there are only two possible configurations of bichrome coupons allowed, from which we will only need the following one (which we just draw as a horizontal line):
\begin{align*}
    \begin{tikzpicture}
        \draw[string-blue,thick] (0.25,0) -- (0.25,0.6);
       \draw[string-red,thick] (0,-1) -- (0,0);
       \draw[string-red,thick,->] (0,0) -- (0,-0.5);
       \draw[string-red,thick] (0.5,-1) -- (0.5,0);
       \draw[string-red,thick,->] (0.5,-1) -- (0.5,-0.4);
       \draw[string-red,thick] (-0.2,0) -- (0.7,0);
       \node at (0.25,0.55) [string-blue,above] {$\coend$};
   \end{tikzpicture} 
\end{align*}
See \autocite[Sec.\,3.2]{DGGPR19} for the other one, which involves an end instead of the coend,
and \autocite[Rem.\,3.6]{DGGPR19} on why it suffices to only consider one. Red coupons are generally forbidden. 

There is a category $\calB_{\calc}$ of $\calc$-coloured bichrome graphs. Objects $(\underline{V},\underline{\epsilon})$ are finite sequences $((V_1,\epsilon_1),\dots,(V_n,\epsilon_n))$ where every $V_k$ is an object of $\calc$ and $\epsilon_k \in \{+,-\}$. 
From every object $(\underline{V},\underline{\epsilon})$ we get a set of $\calc$-labelled framed, oriented blue points located at the fixed points $0,1,2,3,\dots$ on the real axis in $\R^2$. 
A morphism $T \colon (\underline{V},\underline{\epsilon}) \to (\underline{W},\underline{\nu})$ is an isotopy class of bichrome graphs in $\R^2 \times I$ between the corresponding standard sets of framed, oriented blue points such that the framings, orientations, and labels match. The subcategory $\calR_\calc$ consisting of all objects but only blue graphs is the familiar category of $\calc$-coloured ribbon graphs of \autocite{Tu16}. In \autocite[Sec.\,3.1]{DGGPR19} it is shown how to extend the Reshetikhin-Turaev functor $F_\calc \colon \calR_\calc \to \calc$ to all bichrome graphs. For this construction a non-zero integral $\Lambda$ of $\coend$ needs to be chosen, and for this reason the corresponding functor will be denoted by $F_{\Lambda} \colon \calB_\calc \to \calc$. We will not recall the construction in full detail, instead we will illustrate how to evaluate the functor $F_\intL : \calB_\calc \to \calc$ in an example in \Cref{fig:bicex}.

\begin{figure}[tb]
\centering
        \begin{subfigure}{0.3\textwidth}
            \begin{tikzpicture}
        \begin{scope}[decoration={
                      markings,
                      mark=at position 0.52 with {\arrow{>}}}
                     ] 
            \draw[string-red,thick,postaction={decorate}] (-1,0.75) arc (180:360: 0.5 and 0.5);
            \draw[string-red,thick,postaction={decorate}] (0,0.1) arc (0:180: 0.5 and 0.5);
            \draw[string-red,thick,postaction={decorate}] (-1,0.1) arc (180:360: 0.5 and 0.5);
            \fill[white] (-1,0.36) rectangle (-0.7,0.48)
            (-1.6,-0.58) rectangle (-1.45,-0.45);
            \draw[string-red,thick] (-0.9,1.4) -- (-0.9,1.3)
            (-0.1,1.4) -- (-0.1,1.3)
            (-0.9,1.3) .. controls (-0.9,1.1) and (-1,0.95) .. (-1,0.75)
            (-0.1,1.3) .. controls (-0.1,1.1) and (0,0.95) .. (0,0.75);
            \draw[string-red,thick] (-1.1,1.4) -- (0.1,1.4);
            \draw[string-blue,thick,postaction={decorate}] (-0.5,1.4) -- (-0.5,2.4);
            \fill[white] (-0.3,0.36) rectangle (0,0.48);
            \draw[string-red,thick] (-0.5,-0.4) arc (270:90: 0.5 and 0.5);
            \draw[string-red,thick] (-0.5,0.25) arc (270:360: 0.5 and 0.5);
        \end{scope}
        \node at (-0.45,1.9) [string-blue,right] {\scriptsize $\coend$};
   \end{tikzpicture} 
            \caption{Morphism in $\calB_\calc$ from $\varnothing$ to $(\coend,+)$}\label{fig:bicex1}
        \end{subfigure}
    \hfill
    \begin{subfigure}{0.3\textwidth}
        \begin{tikzpicture}[scale=0.85,baseline=0cm]
        \begin{scope}[decoration={
                      markings,
                      mark=at position 0.52 with {\arrow{>}}}
                     ] 
            \draw[string-red,thick,postaction={decorate}] (-2.5,0.7) arc (0:180: 0.35 and 0.35);  
            \draw[string-red,thick,postaction={decorate}] (-1.9,0.7) arc (0:-180: 0.3 and 0.35);  
            \fill[white](-3.13,1.1) rectangle (-3.0,0.85);
            \draw[string-red,thick,postaction={decorate}] (-3,0.8) arc (0:180: 0.4 and 0.3);
            \draw[string-red,thick] (-1.2,0.7) -- (-1.2,0.9)
            (-1.9,0.7) -- (-1.9,0.9)
            (-3,0.8) -- (-3,-0.5)
            (-3.8,0.8) -- (-3.8,-0.5)
            (-1.2,0.5) -- (-1.2,0.7)
            (-3.2,0.5) -- (-3.2,0.7)
            (-2.55,-0.2) -- (-2.55,-0.5)
            (-1.85,-0.2) -- (-1.85,-0.5);
            \fill[white] (-3.1,0.3) rectangle (-2.9,0.15);
            \draw[string-red,thick] (-3.2,0.5) .. controls (-3.2,0.2) and (-2.55,0.1) .. (-2.55,-0.2);
            \draw[string-red,thick,postaction={decorate}] (-1.2,0.5) .. controls (-1.2,0.2) and (-1.85,0.1) .. (-1.85,-0.2);
            \draw[string-red,thick] (-1,0.9) -- (-2.1,0.9);
            \draw[string-blue,thick,postaction={decorate}] (-1.55,0.9) -- (-1.55,1.9);
        \end{scope}
        \node at (-1.5,1.4) [string-blue,right] {\scriptsize $\coend$};
   \end{tikzpicture} 
        \caption{Cut presentation of the morphism}\label{fig:bicex2}
    \end{subfigure}
    \hfill
    \begin{subfigure}{0.3\textwidth}
        \begin{tikzpicture}[scale=0.85,baseline=0cm]
        \begin{scope}[decoration={
                      markings,
                      mark=at position 0.52 with {\arrow{>}}}
                     ] 
            \draw[string-blue,thick,postaction={decorate}] (-2.5,0.7) arc (0:180: 0.35 and 0.35);  
            \draw[string-blue,thick,postaction={decorate}] (-1.9,0.7) arc (0:-180: 0.3 and 0.35);  
            \fill[white](-3.13,1.1) rectangle (-3.0,0.85);
            \draw[string-blue,thick,postaction={decorate}] (-3,0.8) arc (0:180: 0.4 and 0.3);
            \draw[string-blue,thick] (-1.2,0.7) -- (-1.2,0.9)
            (-1.9,0.7) -- (-1.9,0.9)
            (-3,0.8) -- (-3,-0.5)
            (-3.8,0.8) -- (-3.8,-0.5)
            (-1.2,0.5) -- (-1.2,0.7)
            (-3.2,0.5) -- (-3.2,0.7)
            (-2.55,-0.2) -- (-2.55,-0.5)
            (-1.85,-0.2) -- (-1.85,-0.5);
            \fill[white] (-3.1,0.3) rectangle (-2.9,0.15);
            \draw[string-blue,thick] (-3.2,0.5) .. controls (-3.2,0.2) and (-2.55,0.1) .. (-2.55,-0.2);
            \draw[string-blue,thick,postaction={decorate}] (-1.85,-0.2) .. controls (-1.85,0.1) and (-1.2,0.2) .. (-1.2,0.5);
            \draw[string-blue,thick] (-1,0.9) -- (-2.1,0.9);
            \draw[string-blue,thick,postaction={decorate}] (-1.55,0.9) -- (-1.55,1.9);
        \end{scope}
        \node at (-1.5,1.4) [string-blue,right] {\scriptsize $\coend$};
        \node at (-3,-0.45) [string-blue,below] {\scriptsize $X_2$};
        \node at (-3.75,-0.41) [string-blue,below] {\scriptsize $X_2^*$};
        \node at (-1.8,-0.45) [string-blue,below] {\scriptsize $X_1$};
        \node at (-2.45,-0.41) [string-blue,below] {\scriptsize $X_1^*$};
   \end{tikzpicture} 
        \caption{$\calc$-coloured ribbon graph for the morphism}\label{fig:bicex3}
    \end{subfigure}
    \caption{Schematic algorithm for the evaluation of the functor $F_\intL : \calB_\calc \to \calc$ on a morphism.}\label{fig:bicex}
\end{figure}

Starting from the bichrome graph in Figure\,\ref{fig:bicex}(a), we form the \emph{cut presentation} in Figure\,\ref{fig:bicex}(b) where each red loop is cut once (bichrome coupons are considered part of the red graph). 
Now we choose objects $X_1, \dots, X_n$ and label the red edges correspondingly by the $X_k$ and $X_k^*$, e.g.\ in Figure\,\ref{fig:bicex}(b) we have $n=2$. The bichrome coupons meeting these edges will be labelled by the universal dinatural morphism $\iota_{X_k}$. At this point, all edges and coupons are labelled and thus blue, see Figure\,\ref{fig:bicex}(c). 
We have now obtained a $\calc$-coloured ribbon graph, i.e.\ a morphism in $\calR_\calc$. To this we can apply the Reshetikhin-Turaev functor $F_\calc$ to obtain a morphism in $\calc$. By construction, the resulting family of morphisms in $\calc$ is dinatural in the labels $X_1,\dots,X_n$. Thus, by the universal property of $\coend$ we get a morphism out of $\coend^{\otimes n}$, possibly tensored with the objects coming from blue boundary vertices. As the final step we precompose our morphism with the $n$-fold tensor power of the integral $\intL$, again tensored with the identity on the objects coming from blue boundary vertices. In our example we obtain 
\begin{equation}
    F_\intL( T ) =  (\overline{\omega} \otimes \id_{\coend}) \circ (\Lambda \otimes (\Delta\circ \Lambda)) = \mathcal{S} \circ S \circ \Lambda = \mathcal{S} \circ \Lambda 
\end{equation}
where $\overline{\omega} = \omega \circ (S \otimes \id_\coend)$ from \eqref{eq:hopfpairingmirror}
and $\mathcal{S}$ is the modular $S$-morphism from \eqref{eq:def-S-endo-of-L}. 

It can be shown that this construction gives a functor $F_\intL$ called the \emph{Lyubashenko-Reshetikhin-Turaev functor}, we refer to \autocite[Sec.\,3.1]{DGGPR19} for more details on the definition and well-definedness of $F_\intL$. Moreover, by construction we have a commuting diagram of functors
\begin{equation}
\begin{tikzcd}[ampersand replacement=\&]
	{\calR_\calc} \& \calc \\
	{\calB_\calc}
	\arrow["{F_\calc}", from=1-1, to=1-2]
	\arrow[from=1-1, to=2-1]
	\arrow["{F_\Lambda}"', from=2-1, to=1-2]
\end{tikzcd}
\end{equation}
where the functor $\calR_\calc \to \calB_\calc$ acts as the identity on objects and as the inclusion of purely blue ribbon graphs into bichrome graphs at the level of morphisms.

We can now use this functor to define invariants of closed bichrome graphs, i.e.\ endomorphisms of $\varnothing$ in $\calB_\calc$, and of $3$-manifolds. For this we need so-called \emph{admissible bichrome graphs}, which are bichrome graphs with at least one blue edge labelled by a projective object of $\calc$. For an admissible closed bichrome graph $T$ we define its \emph{cutting presentation} as a bichrome graph $T_V$ featuring a single incoming boundary vertex and a single outgoing one, both positive and labelled by $V \in \Proj(\calc)$, and whose trace closure is $T$. For any $T$ a closed admissible bichrome graph and $T_V$ a cutting presentation of $T$, the scalar
\begin{equation}
 F'_\intL(T) := \rmt_V(F_\intL(T_V))
\end{equation} 
is a topological invariant of $T$, so in particular it is independent of the choice of cutting presentation $T_V$, see \autocite[Thm.\,3.3]{DGGPR19}. 

We can now use $F'_\intL$ to define invariants of \emph{decorated $3$-manifolds}, i.e.\ pairs $(M,T)$, where $M$ is a connected closed $3$-manifold, and where $T \subset M$ is a closed bichrome graph. We call a pair $(M,T)$ \emph{admissible} if $T$ is.

Let now $(M,T)$ be an admissible decorated $3$-manifold, and let $L$ be a surgery presentation of $M$ given by a red framed oriented link in $S^3$ with $\ell$ components and signature $\sigma$. Assume further that the bichrome graph $T$ is contained in the exterior of the surgery link $L$, so that we can think of them as simultaneously embedded in $S^3$. Moreover let $\calD$ be a choice of square root of the modularity parameter $\zeta$, and 
\begin{equation}\label{eq:defdelta}
    \delta = \frac{\calD}{\Delta_+} = \frac{\Delta_-}{\calD}.
\end{equation}
Modular tensor categories with $\delta = 1$ are sometimes called \emph{anomaly free}.
The scalar
\begin{equation}\label{eq:L'-via-F'}
 \rmL'_\calc(M,T) := \calD^{-1-\ell} \delta^{-\sigma} F'_\intL(L \cup T)
\end{equation}
is a topological invariant of the pair $(M,T)$, see \autocite[Thm.\,3.8]{DGGPR19}, and called the \emph{renormalised Lyubashenko invariant} of the admissible decorated $3$-manifold $(M,T)$. 

\begin{remark}
The construction of \autocite{DGGPR19} also allows for other tensor ideals of $\calc$ as long as they permit a modified trace, see \autocite{BGR2023linkinvs} for examples of this.
In particular by choosing $\calc$ as tensor ideal with the categorical trace one recovers the original Lyubashenko invariants. 
The construction of renormalised $3$-manifold invariants via modified traces was introduced in \autocite{CGP12}. 
\end{remark}

Let us now discuss how $\rmL'_\calc$ behaves under orientation reversal.
To do so, we need to be more specific on the orientation data contained in bichrome graph. Namely, a bichrome graph $T$ (in a 3-manifold $M$ or in $\R^2\times I$) is a ribbon graph with blue and red components, and each of these consists of ribbons and coupons. A ribbon is an embedded annulus or rectangle with a one-dimensional core which carries a $1$-orientation and gives the direction of the ribbon, as well as a $2$-orientation on the ribbon surface. In pictures like the one in \Cref{fig:bicex}(a), we draw the core and its orientation, while the ribbon surface is not shown and is assumed to be parallel to the paper-plane $\R \times \{0\} \times I$ and carries its $2$-orientation. A coupon is an embedded rectangle with a $2$-orientation and a choice of bottom edge and top edge to which ribbons attach, and which determine the source and target object of the labelling morphism (for a blue coupon). 

For a bichrome graph $T$ in $\R^2\times I$ we define the \emph{mirrored} bichrome graph $\overline{T}$ as the image of $T$ under reflection along the plane $\R \times \{0\} \times I$ (the paper-plane in our drawings). 
In particular, if a ribbon lies in this plane (or is parallel to the plane), the reflection does not affect its $1$- or $2$-orientation. 
Note that we can obtain a diagram of $\overline{T}$, i.e.\ a projection of the embedding $\overline{T} \subset \R^2\times I$ to the plane $\R \times \{0\} \times I$, from a diagram of $T$ by exchanging all overcrossings with undercrossings. 

For a decorated $3$-manifold $(M,T)$ we denote by $(-M,T)$ the decorated $3$-manifold obtained by reversing the orientation of $M$ and keeping the $1$- and $2$-orientations of $T$ as they are. For example, if $M$ is the $3$-sphere written as $\R^3 \cup \{\infty\}$ and $T$ a closed bichrome graph in $\R^3$, then $(-M,T)$ is orientation-preserving diffeomorphic to $(M,\overline T)$ via the above reflection.

\begin{lemma}\label{lem:invori}
For an admissible decorated $3$-manifold $(M,T)$ we have
    \begin{equation}
        \rmL'_\calc(-M,T) = \rmL'_{\overline{\calc}}(M,T),
    \end{equation}
    with $\overline{\calc}$ the mirrored category of $\calc$.
\end{lemma}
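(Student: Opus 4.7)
The plan is to reduce both sides to an evaluation of the bichrome Lyubashenko--Reshetikhin--Turaev functor via a common surgery presentation and then to track each ingredient of \eqref{eq:L'-via-F'} under orientation reversal. Pick a framed oriented link $L\subset S^3$ with $\ell$ components and signature $\sigma$ that is a surgery presentation of $M$, with $T$ sitting in its complement. Mirror reflection of $S^3$ along the paper-plane is orientation-reversing and sends $L\cup T$ to $\overline L\cup\overline T$, exhibiting $\overline L$ as a surgery presentation of $-M$. The linking matrix of $\overline L$ is the negative of that of $L$, so $\sigma(\overline L)=-\sigma$, and therefore
\begin{equation*}
\rmL'_\calc(-M,T)\;=\;\calD^{-1-\ell}\,\delta_\calc^{\sigma}\,F'_\intL(\overline L\cup\overline T).
\end{equation*}

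Next I would compare the data entering $\rmL'_{\overline\calc}$ with that of $\rmL'_\calc$. The categories $\calc$ and $\overline\calc$ share the same underlying finite linear category, hence the same coend object $\coend$, and the integral and cointegral can be chosen compatibly. The defining relation $\cat{T}\circ\iota_X=\iota_X\circ(\id\otimes\theta_X)$ then forces $\cat{T}_{\overline\calc}=\cat{T}^{-1}_\calc$, so \eqref{eq:defDeltapm} gives $\Delta^\pm_{\overline\calc}=\Delta^\mp_\calc$. Consequently $\zeta_{\overline\calc}=\zeta_\calc$, we may take $\calD_{\overline\calc}=\calD_\calc$, and \eqref{eq:defdelta} then yields $\delta_{\overline\calc}=\delta_\calc^{-1}$.

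The decisive geometric input is the identity
\begin{equation*}
F'_{\intL,\overline\calc}(G)\;=\;F'_{\intL,\calc}(\overline G)
\end{equation*}
for every closed admissible bichrome graph $G$. For purely blue graphs this is immediate from the Reshetikhin--Turaev construction: mirroring exchanges over- and under-crossings as well as left- and right-handed twists, which is precisely the substitution $\beta\leftrightarrow\beta^{-1}$, $\theta\leftrightarrow\theta^{-1}$ that distinguishes $\calc$ from $\overline\calc$. For a bichrome graph one runs the cut-presentation algorithm recalled in \Cref{fig:bicex} to reduce to the blue case, and checks that the insertion of the dinaturals $\iota_X$ and of the integral $\Lambda$ transports correctly under mirroring; here the identity $\overline\omega=\omega\circ(S\otimes\id)$ from \eqref{eq:hopfpairingmirror} provides the comparison of Hopf pairings in the two categories. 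Granting this, combining the previous observations gives
\begin{equation*}
\rmL'_{\overline\calc}(M,T)\;=\;\calD^{-1-\ell}\,\delta_{\overline\calc}^{-\sigma}\,F'_{\intL,\overline\calc}(L\cup T)\;=\;\calD^{-1-\ell}\,\delta_\calc^{\sigma}\,F'_{\intL,\calc}(\overline L\cup\overline T)\;=\;\rmL'_\calc(-M,T).
\end{equation*}
The principal obstacle is this last compatibility for bichrome graphs: the product $\mu$ and pairing $\omega$ on $\coend$ explicitly use the braiding, so the Hopf algebra structures in $\calc$ and $\overline\calc$ genuinely differ, and one must verify that mirroring the red edges and the bichrome coupons, together with the matching of $\Delta^\pm$, $\Lambda$, and $\lambda$, precisely accounts for this change of structure.
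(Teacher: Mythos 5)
Your proposal is correct and follows essentially the same route as the paper: mirror the surgery presentation to get $\overline{L}$ with signature $-\sigma$, reduce to the identity $F'_{\overline{\intL}}(\overline{G})=F'_{\intL}(G)$ via Turaev's semisimple argument for blue graphs, and track the coend data, with $\Delta^{\pm}_{\overline{\calc}}=\Delta^{\mp}_{\calc}$ accounting for the sign flip of the signature. The one point you flag as an obstacle (compatibility of the integral and the bichrome coupons under mirroring) is handled in the paper at essentially the same level of detail, by observing $\overline{\mu}=\mu\circ\beta^{-1}_{\coend,\coend}$ so that $\Lambda$ remains a two-sided integral for $\overline{\coend}$ and one may set $\overline{\Lambda}=\Lambda$.
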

\begin{proof}
    First recall that if $L$ is a surgery presentation of $M$, then $\overline{L}$ is a surgery presentation of $-M$ \autocite[Ch.\,3.4]{Sav12topof3man}. Moreover, for an admissible pair $(-M,T)$ a surgery representation is given by $(S^3,\overline{L}\cup \overline{T})$. In particular the signature of $\overline{L}$ is minus the signature of $L$. Let us denote with $F'_{\overline{\intL}}$ the Lyubashenko-Reshetikhin-Turaev functor for $\overline{\calc}$. On bichrome graphs $T$ with no red components we immediately get 
    \begin{equation}
        F'_{\overline{\intL}}(\overline{T}) = F'_{\intL}(T)
    \end{equation}
    by the same argument as in the semisimple setting \autocite[Cor.\,II.2.8.4]{Tu16}. 

    Next note that since $\calc$ and $\overline\calc$ are equal as pivotal categories, the coend $\coend$ is given by the same object and dinatural transformation in either case. 
    This remains true for the coalgebra structure and the unit morphism in \eqref{eq:coendhopf} as their definition in \Cref{fig:coend-Hopf-def} does not involve the ribbon structure. 
    The product $\overline\mu$ and antipode $\overline{S}$ are in general different when computed in $\overline\calc$, and we denote by $\overline{\coend}$ the corresponding Hopf algebra in $\overline{\calc}$. We stress that $\overline{\coend}$ is in general not a Hopf algebra in $\calc$ as the braiding enters the Hopf algebra axioms. 
    It can however be directly checked from the definition that $\overline{\mu} = \mu \circ \beta^{-1}_{\coend,\coend} : \coend \otimes \coend \to \coend$ as morphisms in $\calc$.
    Using this it follows that $\Lambda$ is also a two-sided integral for $\overline{\coend}$, and we set $\overline\Lambda = \Lambda$.
    Then in particular $\Delta_{\pm}^{\overline{\calc}} = \Delta_{\mp}^{\calc}$. Using this the claim follows.
\end{proof}

Next we turn to $3$-manifold invariants for Deligne products of modular tensor categories. Recall that the \emph{Deligne product} of two finite linear categories $\calA$ and $\calB$ is a finite linear category $\calA \boxtimes \calB$ together with a functor $\boxtimes \colon \calA \times \calB \to \calA \boxtimes \calB$ which is right exact in each argument and satisfies the following universal property: Let $\calD$ be another finite linear category and let $F \colon \calA \times \calB \to \calD$ be a functor right exact in each argument, then there is a unique (up to unique natural isomorphism)
right exact functor $\widehat{F}  \colon \calA \boxtimes \calB \to \calD$ together with an equivalence
$F \cong \widehat{F} \circ \boxtimes$. See \autocite[Sec.\,1.11]{EGNO} for the existence of the Deligne product as well as more properties. For $\calc$ and $\calD$ modular tensor categories, also $\calc\boxtimes \calD$ is a modular tensor category.

Let $(M,T)$ be an admissible $\calc \boxtimes \calD$ decorated $3$-manifold such that all labels of $T_{\calc \boxtimes \calD}$ are of the form $X\boxtimes Y$ for some $X\in \calc$ and $Y\in \calD$, and all coupons are pure tensors under the isomorphism
\begin{align}
        \Hom_{\calc \boxtimes \calD} (-\boxtimes -, -\boxtimes -)\cong \Hom_\calc(-,-) \otimes_\kk \Hom_{\calD}(-,-) 
\end{align}
coming from the universal property of the Deligne product \autocite[Prop.\,1.11.2]{EGNO}. Now define $T_\calc$ (resp.\ $T_\calD$) to be the $\calc$ (resp.\ $\calD$) admissible bichrome graph with same underlying topological graph as $T$, but with only the $\calc$ (resp.\ $\calD$) labels.
\begin{lemma}\label{lem:invdeligne}
    Let $(M,T)$, $(M,T_\calc)$, and $(M,T_\calD)$ be as above. Then
    \begin{equation}
        \rmL'_{\calc\boxtimes\calD}(M,T) = \rmL'_{\calc}(M,T_\calc) \cdot \rmL'_{\calD}(M,T_\calD).
    \end{equation}
\end{lemma}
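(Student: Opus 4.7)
The strategy is to show that every ingredient entering the definition \eqref{eq:L'-via-F'} of the renormalised Lyubashenko invariant factorises over the Deligne product, and then assemble the factorised pieces. Throughout, I fix a single surgery presentation $L$ of $M$ in $S^3$ with $\ell$ components and signature $\sigma$; the same $L$ is used for all three invariants, so the topological prefactors $\ell$ and $\sigma$ appear identically in each.

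The plan is as follows. First I would verify that the Reshetikhin--Turaev functor is multiplicative: on the subcategory of $\calR_{\calc\boxtimes\calD}$ whose objects are tuples of pure tensors $X_i\boxtimes Y_i$ and whose coupons are pure tensors under the isomorphism $\Hom_{\calc\boxtimes\calD}(-\boxtimes-,-\boxtimes-)\cong\Hom_\calc(-,-)\otimes_{\kk}\Hom_\calD(-,-)$, one has $F_{\calc\boxtimes\calD}(T)=F_\calc(T_\calc)\boxtimes F_\calD(T_\calD)$. This is a direct check on generating diagrams (crossings, cups, caps, twists, coupons), each of which is defined via the structure morphisms of the underlying ribbon category and is compatible with $\boxtimes$ by construction.

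Next I would identify the coend and its Hopf structure: $\coend_{\calc\boxtimes\calD}\cong\coend_\calc\boxtimes\coend_\calD$ with universal dinatural transformation $\iota_{X\boxtimes Y}=\iota_X\boxtimes\iota_Y$. The definitions in \Cref{fig:coend-Hopf-def} then imply that product, coproduct, unit, counit, antipode and Hopf pairing all factor as tensor products of the corresponding morphisms in $\calc$ and $\calD$. By uniqueness up to scalar of the integral and cointegral, one may normalise $\Lambda_{\calc\boxtimes\calD}=\Lambda_\calc\boxtimes\Lambda_\calD$ and correspondingly $\lambda_{\calc\boxtimes\calD}=\lambda_\calc\boxtimes\lambda_\calD$; the normalisation $\lambda\circ\Lambda=\id_\unit$ is respected. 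Combining with the first step, the Lyubashenko--Reshetikhin--Turaev functor factors on admissible bichrome graphs whose blue decorations are pure tensors: $F_{\Lambda_{\calc\boxtimes\calD}}(T)=F_{\Lambda_\calc}(T_\calc)\boxtimes F_{\Lambda_\calD}(T_\calD)$, because in the cut presentation each red loop, once labelled by some $X_k\boxtimes Y_k$, contributes a factor $\iota_{X_k}\boxtimes\iota_{Y_k}$ followed by $\Lambda_\calc\boxtimes\Lambda_\calD$, which splits into the two single--category contributions.

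For the cutting along a projective blue edge used in the definition of $F'_\intL$, I would invoke that if $V_{\calc\boxtimes\calD}=V_\calc\boxtimes V_\calD$ with $V_\calc\in\Proj(\calc)$, $V_\calD\in\Proj(\calD)$, then the modified trace satisfies $\trI^{\calc\boxtimes\calD}_{V_\calc\boxtimes V_\calD}(f\boxtimes g)=\trI^\calc_{V_\calc}(f)\cdot\trI^\calD_{V_\calD}(g)$, after choosing the modified trace on $\calc\boxtimes\calD$ consistently (this is the correct normalisation; it follows from uniqueness up to scalar together with cyclicity and the partial trace property applied to a pure tensor endomorphism). Combined with the previous step this yields
\begin{equation}
F'_{\Lambda_{\calc\boxtimes\calD}}(L\cup T)=F'_{\Lambda_\calc}(L\cup T_\calc)\cdot F'_{\Lambda_\calD}(L\cup T_\calD).
\end{equation}
Finally, from $\Lambda_{\calc\boxtimes\calD}=\Lambda_\calc\boxtimes\Lambda_\calD$ and the factorisation of the twist $\theta_{X\boxtimes Y}=\theta_X\boxtimes\theta_Y$, the defining equation \eqref{eq:defDeltapm} gives $\Delta^{\pm}_{\calc\boxtimes\calD}=\Delta^{\pm}_\calc\cdot\Delta^{\pm}_\calD$, hence $\zeta_{\calc\boxtimes\calD}=\zeta_\calc\cdot\zeta_\calD$. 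Choosing the square root $\calD_{\calc\boxtimes\calD}:=\calD_\calc\cdot\calD_\calD$ and using \eqref{eq:defdelta}, $\delta_{\calc\boxtimes\calD}=\delta_\calc\cdot\delta_\calD$. Multiplying the three factorisations and collecting the prefactors $\calD^{-1-\ell}\delta^{-\sigma}$ proves the claim.

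The main obstacle is the multiplicativity of the modified trace on Deligne products with the chosen normalisations; once that is in place, every other ingredient factors straightforwardly by uniqueness arguments and the universal property of $\boxtimes$.
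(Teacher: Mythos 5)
Your overall strategy is the same as the paper's: factorise the ``red'' data through $\coend_{\calc\boxtimes\calD}\cong\coend_\calc\boxtimes\coend_\calD$ and $\Lambda_{\calc\boxtimes\calD}=\Lambda_\calc\boxtimes\Lambda_\calD$, factorise the ``blue'' data through multiplicativity of the modified trace, and check that the normalisations ($\zeta$, $\Delta^{\pm}$, the choice of square root, the surgery prefactors) are compatible. Your bookkeeping of those normalisations is correct and in fact more explicit than the paper's.

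The one genuine gap is precisely the step you yourself flag as the main obstacle. ``Uniqueness up to scalar together with cyclicity and the partial trace property applied to a pure tensor endomorphism'' does not yet yield $\trI^{\calc\boxtimes\calD}_{V\boxtimes W}(f\boxtimes g)=\trI^{\calc}_{V}(f)\cdot\trI^{\calD}_{W}(g)$: to invoke uniqueness you must first establish that the product formula, extended linearly over $\End_{\calc\boxtimes\calD}(V\boxtimes W)\cong\End_\calc(V)\otimes_\kk\End_\calD(W)$ and to all of $\Proj(\calc\boxtimes\calD)$, actually \emph{is} a modified trace. Cyclicity is easy, but the partial-trace axiom must hold for $h\in\End(P\otimes X)$ with $X$ an \emph{arbitrary} object of $\calc\boxtimes\calD$, not just a pure tensor, and this is not immediate. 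Without that verification, uniqueness up to scalar says nothing: it compares two modified traces, whereas here you would only be comparing two linear functionals on $\End_\calc(V)\otimes_\kk\End_\calD(W)$, which need not be proportional merely because both restrict sensibly to pure tensors. The paper closes exactly this gap by a different mechanism: modified traces on a finite tensor category correspond to trivialisations of the Nakayama functor \autocite[Thm.\,3.6]{SW21tracefieldtheory}, and the Nakayama functor of a Deligne product is the Deligne product of the Nakayama functors \autocite[Prop.\,3.20]{FSS19eilenbergwatts}, so multiplicativity of the (suitably normalised) modified trace is automatic. To complete your proof you should either import that argument or carry out the direct verification of the two trace axioms for the product formula; the remaining steps of your proposal are fine.
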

\begin{proof}
    First recall the isomorphism $\coend_{\calc \boxtimes \calD} \cong \coend_{\calc}\boxtimes \coend_{\calD}$ from \autocite[Cor.\! 3.12]{FSS19eilenbergwatts}. This is actually an isomorphism of Hopf algebras because the Hopf algebra structures come from the universal property of the relevant coends, thus $\Lambda_{\calc \boxtimes \calD} = \Lambda_{\calc}\boxtimes \Lambda_{\calD}$. Therefore the invariants of purely red graphs are the same. For blue graphs note that the unique-up-to-scalar modified trace on $\calc\boxtimes \calD$ is canonically determined by the modified traces on $\calc$ and $\calD$. To see this combine the relation between trivialisations of the Nakayama functor and modified traces on a finite tensor category \autocite[Thm.\! 3.6]{SW21tracefieldtheory} with the behaviour of the Nakayama functor under the Deligne tensor product \autocite[Prop.\,3.20]{FSS19eilenbergwatts}.
\end{proof}

\subsection{Admissible bordism categories}\label{ssec:adbordcat}
 Before coming to the definition of the relevant $3$-dimensional bordism categories, we need to adapt a few definitions from above to a more general setting. A \emph{blue set $P$ inside a closed surface $\Sigma$} is a finite, discrete set of blue points of $\Sigma$,
 each endowed with a $0$-orientation $\pm$, a non-zero tangent vector, and a label given by an object of $\calc$. 
 A \emph{bichrome graph $T$ inside a 3-dimensional bordism $M \colon \Sigma \to \Sigma'$} is a bichrome graph embedded inside $M$ such that its boundary vertices are given by blue sets inside the boundary $\partial M$. Moreover the boundary identification $\partial M \cong -\Sigma \sqcup\Sigma'$ needs to be compatible with the blue sets. 
 With this terminology in place, we can define the symmetric monoidal category ${\bord}_{3,2}^{\chi}(\calc)$ of $3$-dimensional bordisms with $\calc$-coloured bichrome graphs:
\begin{itemize}\setlength{\leftskip}{-1.3em}

    \item 
An \emph{object $\underline{\Sigma}$ of ${\bord}_{3,2}^{\chi}(\calc)$} is a triple $(\Sigma, P, \lambda)$ where: 
\begin{enumerate}\setlength{\leftskip}{-1.3em}
 \item $\Sigma$ is a closed surface;
 \item $P \subset \Sigma$ is a blue set;
 \item $\lambda \subset H_1(\Sigma;\R)$ is a Lagrangian subspace with respect to the intersection pairing.\footnote{This is needed to precisely formulate the gluing anomaly, see \autocite[Ch.\,IV]{Tu16}. 
 See also \autocite[Sec.\,4]{Haioun23nonssrel} for an interpretation of the origin of $\lambda$ in the context of relative TFTs.}
\end{enumerate}

\item
A \emph{morphism $\underline{M} \colon \underline{\Sigma} \to \underline{\Sigma}'$} is an equivalence class of triples $(M,T,n)$ where:
\begin{enumerate}\setlength{\leftskip}{-1.3em}
 \item $M$ is a 3-dimensional bordism from $\Sigma$ to $\Sigma'$;
 \item $T \subset M$ is a bichrome graph from $P$ to $P'$;
 \item $n \in \Z$ is an integer called the \emph{signature defect} of $M$.
\end{enumerate}
Two triples $(M,T,n)$ and $(M',T',n')$ are equivalent if $n = n'$ and if there exists an isomorphism of bordisms $f : M \rightarrow M'$ satisfying $f(T) = T'$. 

\item
The \emph{identity morphism $\id_{\underline{\Sigma}} : \underline{\Sigma} \rightarrow \underline{\Sigma}$ associated with an object $\underline{\Sigma} = (\Sigma,P,\lambda)$ of ${\bord}_{3,2}^{\chi}(\calc)$} is the equivalence class of the triple
\begin{equation}
 (\Sigma \times I,P \times I,0).
\end{equation}

\item
The \emph{composition $\underline{M}' \circ \underline{M} : \underline{\Sigma} \to \underline{\Sigma}''$ of morphisms $\underline{M} \colon \underline{\Sigma} \rightarrow \underline{\Sigma}'$ and $\underline{M}' \colon \underline{\Sigma}' \rightarrow \underline{\Sigma}''$} in 
${\bord}_{3,2}^{\chi}(\calc)$ is the equivalence class of the triple
\begin{equation}\label{eq:anomaly-morphism-composition}
 \big(\, M \cup_{\Sigma'} M' \,,\, T \cup_{P'} T' \,,\, n + n' - \mu(M_*(\lambda),\lambda',M'^* (\lambda'')) \,\big),
\end{equation}
where $M_*(\lambda)$ and $M'^*(\lambda'')$ are certain Lagrangian subspaces of $H_1(\Sigma';\R)$ and $\mu$ denotes the Maslov index, see \autocite[Sec.\,IV.3-4]{Tu16} for details.

\item
The \emph{monoidal product $\underline{\Sigma} \disjun \underline{\Sigma}'$ of objects $\underline{\Sigma}$, $\underline{\Sigma}'$} in ${\bord}_{3,2}^{\chi}(\calc)$ is the triple 
\begin{equation}
 (\Sigma \sqcup \Sigma',P \sqcup P',\lambda \oplus \lambda').
\end{equation}
The \emph{unit of ${\bord}_{3,2}^{\chi}(\calc)$} is the object whose surface is the empty set, and it will be denoted $\varnothing$. 
The \emph{monoidal product $\underline{M} \disjun \underline{M}' : \underline{\Sigma} \disjun \underline{\Sigma}' \rightarrow \underline{\Sigma}'' \disjun \underline{\Sigma}'''$ of morphisms $\underline{M} : \underline{\Sigma} \rightarrow \underline{\Sigma}''$, $\underline{M}' : \underline{\Sigma}' \rightarrow \underline{\Sigma}'''$} in ${\bord}_{3,2}^{\chi}(\calc)$ is the equivalence class of the triple
\begin{equation}
 (M \sqcup M',T \sqcup T',n+n').
\end{equation}
\end{itemize}
It is straightforward to see that ${\bord}_{3,2}^{\chi}(\calc)$ can be equipped with a symmetric braiding, and moreover with a pivotal structure. For the latter,
the dual object of $\underline{\Sigma}$ in ${\bord}_{3,2}^{\chi}(\calc)$ is given by $\underline{\Sigma}^* = (-\Sigma,-P,\lambda)$ where $-P$ is the same set of blue points with 
$0$-orientation and tangent vector
reversed. The $3$-manifold underlying the dual of a morphism $[(M,T,n)]$ is again $M$ and not $-M$, but with in-going and out-going boundary component exchanged, see \autocite[Sec.\! 10.1.4]{TV} for more details on the pivotal structure. As usual we will say a morphism $[(M,T,n)]$ has a certain topological property if the $3$-manifold underlying $M$ has this property, e.g.\ we say $[(M,T,n)]$ is connected if and only if $M$ is connected. The superscript $\chi$ indicates that we are working with an extension of the standard bordism category ${\bord}_{3,2}(\calc)$ (which does not include Lagrangian subspaces and signature defects).

The TFTs of \autocite{DGGPR19} are defined on a strictly smaller subcategory which is no longer rigid. There are two choices for this subcategory:

\begin{definition}
 The \emph{admissible bordism (sub)categories} $\widehat{\bord}_{3,2}^{\chi}(\calc)$ and $\widecheck{\bord}_{3,2}^{\chi}(\calc)$ are the symmetric monoidal subcategories of ${\bord}_{3,2}^{\chi}(\calc)$ with the same objects but featuring only morphisms $[(M,T,n)]$ which satisfy one of the following \emph{admissibility conditions}:
\begin{enumerate}
    \item $\widehat{\bord}_{3,2}^{\chi}(\calc)$: Every connected component of $M$ disjoint from the \emph{outgoing} boundary contains an admissible bichrome subgraph of $T$, i.e.\ at least one edge of $T$ in that component is labelled with a projective object. 
    \item $\widecheck{\bord}_{3,2}^{\chi}(\calc)$: Every connected component of $M$ disjoint from the \emph{incoming} boundary contains an admissible bichrome subgraph of $T$.
\end{enumerate}	
\end{definition}
Note that there are morphisms in ${\bord}_{3,2}^{\chi}(\calc)$ which are neither in $\widehat{\bord}_{3,2}^{\chi}(\calc)$ nor $\widecheck{\bord}_{3,2}^{\chi}(\calc)$, e.g.\ $(M,\emptyset,0)$ for any closed $3$-manifold $M$. As mentioned above, neither $\widehat{\bord}_{3,2}^{\chi}(\calc)$ nor $\widecheck{\bord}_{3,2}^{\chi}(\calc)$ are rigid,
in fact the dualisable objects are exactly the surfaces with at least one projectively labelled marked point. The following lemma is clear:
\begin{lemma}
The duality functor $(-)^* \colon \bord_{3,2}^{\chi}(\calc) \to \bord_{3,2}^{\chi}(\calc)^{\mathrm{op}}$ induces an equivalence of symmetric monoidal categories $\widehat{\bord}_{3,2}^{\chi}(\calc) \simeq \widecheck{\bord}_{3,2}^{\chi}(\calc)^{\mathrm{op}}$.
\end{lemma}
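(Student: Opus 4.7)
The plan is to unpack the pivotal duality functor explicitly on morphisms and check that the admissibility conditions for $\widehat{\bord}^{\chi}_{3,2}(\calc)$ and $\widecheck{\bord}^{\chi}_{3,2}(\calc)$ are exchanged under it. First I would recall from the description of the pivotal structure on $\bord^{\chi}_{3,2}(\calc)$ that on objects we have $\underline{\Sigma}^{*}=(-\Sigma,-P,\lambda)$, and that on a morphism $[(M,T,n)]\colon \underline{\Sigma}\to \underline{\Sigma}'$ the pivotal dual is represented by the same triple $(M,T,n)$ but with the roles of incoming and outgoing boundary swapped, so that it is read as a bordism $\underline{\Sigma}'^{*}\to \underline{\Sigma}^{*}$ in $\bord^{\chi}_{3,2}(\calc)$, or equivalently as a morphism $\underline{\Sigma}^{*}\to \underline{\Sigma}'^{*}$ in $\bord^{\chi}_{3,2}(\calc)^{\mathrm{op}}$. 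The underlying $3$-manifold, bichrome graph, and signature defect are unchanged; only the identification of the two pieces of $\partial M$ as incoming versus outgoing is reversed. Symmetric monoidality of this functor is part of the standard pivotal structure and needs no extra argument.

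Next I would translate the admissibility conditions. Fix $f=[(M,T,n)]\colon \underline{\Sigma}\to \underline{\Sigma}'$ and let $C\subset M$ be a connected component. In the original bordism, $C$ is ``disjoint from the outgoing boundary'' iff $C\cap \Sigma'=\varnothing$. In the dualised bordism $(-)^{*}(f)$, viewed as a morphism $\underline{\Sigma}'^{*}\to \underline{\Sigma}^{*}$ of $\bord^{\chi}_{3,2}(\calc)$, the incoming boundary is precisely $\underline{\Sigma}'^{*}$, and the condition ``$C$ is disjoint from the incoming boundary'' becomes $C\cap \Sigma'=\varnothing$ again. Hence
\begin{equation}
f\in \widehat{\bord}^{\chi}_{3,2}(\calc) \iff (-)^{*}(f)\in \widecheck{\bord}^{\chi}_{3,2}(\calc),
\end{equation}
where on the right $(-)^{*}(f)$ is regarded as a morphism in $\widecheck{\bord}^{\chi}_{3,2}(\calc)$, equivalently as a morphism in $\widecheck{\bord}^{\chi}_{3,2}(\calc)^{\mathrm{op}}$ out of $\underline{\Sigma}^{*}$ into $\underline{\Sigma}'^{*}$. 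This shows that the duality functor restricts to a symmetric monoidal functor
\begin{equation}
(-)^{*}\colon \widehat{\bord}^{\chi}_{3,2}(\calc)\longrightarrow \widecheck{\bord}^{\chi}_{3,2}(\calc)^{\mathrm{op}}.
\end{equation}

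For the equivalence I would run the identical argument in the opposite direction to obtain a symmetric monoidal functor $(-)^{*}\colon \widecheck{\bord}^{\chi}_{3,2}(\calc)\to \widehat{\bord}^{\chi}_{3,2}(\calc)^{\mathrm{op}}$, and then invoke the canonical pivotal isomorphism $\underline{\Sigma}^{**}\cong \underline{\Sigma}$ in $\bord^{\chi}_{3,2}(\calc)$. This gives a natural monoidal isomorphism between the identity and the composite of the two restricted dualities on each of $\widehat{\bord}^{\chi}_{3,2}(\calc)$ and $\widecheck{\bord}^{\chi}_{3,2}(\calc)^{\mathrm{op}}$, so the two functors are mutual quasi-inverses.

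The only step requiring any care is keeping straight the two ``opposites'' that appear: the categorical $\mathrm{op}$ that swaps source and target of morphisms, versus the geometric swap of incoming and outgoing boundary components induced by the pivotal structure. Once these are aligned, the admissibility conditions match tautologically and all remaining verifications (functoriality, compatibility with $\disjun$, and with the symmetric braiding) are inherited from the fact that $(-)^{*}$ is already an equivalence on the ambient category $\bord^{\chi}_{3,2}(\calc)$.
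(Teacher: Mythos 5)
Your proposal is correct and is exactly the verification the paper has in mind: the paper offers no written proof (it declares the lemma ``clear''), and the intended argument is precisely your observation that the pivotal dual keeps the triple $(M,T,n)$ but swaps the roles of incoming and outgoing boundary, so the two admissibility conditions are exchanged and the (strict, since $\underline{\Sigma}^{**}=\underline{\Sigma}$) double-dual identification makes the restricted duality functors mutually inverse. Nothing in your write-up deviates from or goes beyond what the paper's one-line justification presupposes.
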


\subsection{TFT construction}\label{sec:tqftcon}
We extend the renormalised Lyubashenko invariant to closed morphisms $\underline{M} = [(M,T,n)]$, i.e.\ endomorphisms of the monoidal unit, 
of $\widehat{\bord}_{3,2}^{\chi}(\calc)$ by setting
\begin{equation}
 \rmL'_\calc(\underline{M}) := \delta^n \rmL'_\calc(M,T)
\end{equation}
for $M$ connected and by setting
\begin{equation}
 \rmL'_\calc(\underline{M}_1 \disjun \ldots \disjun \underline{M}_k) := \prod_{i=1}^k \rmL'_\calc(\underline{M}_i)
\end{equation}
for $\underline{M}$ a finite disjoint union of closed connected morphisms $\underline{M}_1, \ldots, \underline{M}_k$.

The universal TFT construction of \autocite{BHMV95univ} allows one to extend $\rmL'_\calc$ to a functor from bordisms to vector spaces as follows.
For $\underline{\Sigma} \in \widehat{\bord}_{3,2}^{\chi}(\calc)$ define $ \widehat\calV(\underline{\Sigma})$ to be the free vector space generated by the set of morphisms $\underline{M}_{\underline{\Sigma}} : \varnothing \rightarrow \underline{\Sigma}$ of $\widehat{\bord}_{3,2}^{\chi}(\calc)$, and $ \widehat\calV'(\underline{\Sigma})$ the free vector space generated by the set of morphisms $\underline{M}'_{\underline{\Sigma}} : \underline{\Sigma} \rightarrow \varnothing$ of $\widehat{\bord}_{3,2}^{\chi}(\calc)$. Next, consider the bilinear form
\begin{equation}\label{eq:curly-V-pairing}
 \begin{aligned}
     \langle \cdot , \cdot \rangle_{\underline{\Sigma}} \colon  \widehat\calV'(\underline{\Sigma}) \times  \widehat\calV(\underline{\Sigma})  &\rightarrow  \mathbbm{k} \\
   (\underline{M}'_{\underline{\Sigma}},\underline{M}_{\underline{\Sigma}})  &\mapsto  \rmL'_\calc(\underline{M}'_{\underline{\Sigma}} \circ \underline{M}_{\underline{\Sigma}}).
 \end{aligned}
\end{equation}
Let $\widehat\rmV_{\calc}(\underline{\Sigma})$ be the quotient vector space of $ \widehat\calV(\underline{\Sigma})$ with respect to the right radical of the bilinear form $\langle \cdot , \cdot \rangle_{\underline{\Sigma}}$, and similarly let $\widehat\rmV'_{\calc}(\underline{\Sigma})$ be the quotient vector space of the $ \widehat\calV'(\underline{\Sigma})$ with respect to the left radical of the bilinear form $\langle \cdot , \cdot \rangle_{\underline{\Sigma}}$. 
We will abuse notation by denoting both projections as $[ {} \cdot {} ]$, i.e.\ $[ {} \cdot {} ] : \calV(\underline{\Sigma}) \to  \widehat\rmV_\calc(\underline{\Sigma})$ and $[ {} \cdot {} ] : \calV'(\underline{\Sigma}) \to  \widehat\rmV'_\calc(\underline{\Sigma})$,
    and by also denoting the pairing induced from \eqref{eq:curly-V-pairing} by
\begin{equation}\label{eq:straight-V-pairing}
 \langle \cdot , \cdot \rangle_{\underline{\Sigma}} \colon   \widehat\rmV'_{\calc}(\underline{\Sigma}) \otimes  \widehat\rmV_{\calc}(\underline{\Sigma}) \rightarrow \mathbbm{k} .
\end{equation}
This pairing is non-degenerate by construction.

For $\underline{M} \colon \underline{\Sigma} \rightarrow \underline{\Sigma}'$ a morphism of $\widehat{\bord}_{3,2}^{\chi}(\calc)$ let $ \widehat\rmV_{\calc}(\underline{M})$ be the linear map defined by
\begin{equation}
 \begin{aligned}
   \widehat\rmV_{\calc}(\underline{M}) \colon  \widehat\rmV_{\calc}(\underline{\Sigma}) & \rightarrow   \widehat\rmV_{\calc}(\underline{\Sigma}') \\
   [\underline{M}_{\underline{\Sigma}}] & \mapsto  [\underline{M} \circ \underline{M}_{\underline{\Sigma}}],
 \end{aligned}
\end{equation}
and similarly let $\widehat\rmV'_{\calc}(\underline{M})$ be the linear map defined by
\begin{equation}
 \begin{aligned}
   \widehat\rmV'_{\calc}(\underline{M}) \colon   \widehat\rmV'_{\calc}(\underline{\Sigma}') & \rightarrow   \widehat\rmV'_{\calc}(\underline{\Sigma}) \\
    [\underline{M}'_{\underline{\Sigma}'}] & \mapsto  [\underline{M}'_{\underline{\Sigma}'} \circ \underline{M}].
 \end{aligned}
\end{equation}
The construction clearly defines functors
\begin{equation}
 \widehat{\rmV}_{\calc} \colon \widehat{\bord}_{3,2}^{\chi}(\calc) \to \Vect_{\mathbbm{k}}, \qquad \widehat{\rmV}'_{\calc} \colon \widehat{\bord}_{3,2}^{\chi}(\calc)^{\op}  \to \Vect_{\mathbbm{k}},
\end{equation}
moreover from the equivalence $\widehat{\bord}_{3,2}^{\chi}(\calc)^{\op} \simeq \widecheck{\bord}_{3,2}^{\chi}(\calc)$
we further get
\begin{equation}
 \widecheck{\rmV}_{\calc} \colon \widecheck{\bord}_{3,2}^{\chi}(\calc) \to \Vect_{\mathbbm{k}}, \qquad \widecheck{\rmV}'_{\calc} \colon \widecheck{\bord}_{3,2}^{\chi}(\calc)^{\op}  \to \Vect_{\mathbbm{k}},
\end{equation}
where $\widecheck{\rmV}_{\calc} = \widehat{\rmV}'_{\calc} \circ (-)^*$ and $\widecheck{\rmV}'_{\calc} = \widehat{\rmV}_{\calc} \circ (-)^*$ and $(-)^*$ is the duality functor.
Note that we can restrict our attention to the covariant functors $\widehat{\rmV}_{\calc} \colon \widehat{\bord}_{3,2}^{\chi}(\calc) \to \Vect_{\mathbbm{k}}$ and $\widecheck{\rmV}_{\calc} \colon \widecheck{\bord}_{3,2}^{\chi}(\calc) \to \Vect_{\mathbbm{k}}$ because the contravariant ones can be recovered from them by precomposition with $(-)^*$. 

In fact, both functors define TFTs, with the non-trivial step being the proof of monoidality:
\begin{theorem}[{\autocite[Thm.\,4.12]{DGGPR19}}]
Let $\calc$ be a modular tensor category. The functors
\begin{equation}
\widehat{\rmV}_{\calc} \colon \widehat{\bord}_{3,2}^{\chi}(\calc)  \to \Vect_{\mathbbm{k}}
  \quad\text{and}\quad \widecheck{\rmV}_{\calc} \colon \widecheck{\bord}_{3,2}^{\chi}(\calc) \to \Vect_{\mathbbm{k}}
\end{equation}
are symmetric monoidal.
\end{theorem}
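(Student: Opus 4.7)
The plan is to verify the two non-trivial axioms of symmetric monoidality: the unit identification $\widehat{\rmV}_\calc(\varnothing)\cong\mathbbm{k}$, and the existence of a natural isomorphism
\begin{equation*}
\Phi_{\underline{\Sigma},\underline{\Sigma}'}\colon \widehat{\rmV}_\calc(\underline{\Sigma})\otimes \widehat{\rmV}_\calc(\underline{\Sigma}') \longrightarrow \widehat{\rmV}_\calc(\underline{\Sigma}\disjun\underline{\Sigma}'),\quad [\underline{M}]\otimes[\underline{M}']\mapsto[\underline{M}\disjun\underline{M}'].
\end{equation*}
Well-definedness on the quotients, naturality, and coherence for symmetry and associativity all follow formally from the multiplicativity $\rmL'_\calc(\underline{N}_1\disjun\underline{N}_2)=\rmL'_\calc(\underline{N}_1)\,\rmL'_\calc(\underline{N}_2)$ built into the extension of the renormalised Lyubashenko invariant to closed morphisms, together with the strict symmetric monoidal structure of disjoint union on $\widehat{\bord}^\chi_{3,2}(\calc)$.

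For the unit, observe that on endomorphisms of $\varnothing$ the pairing \eqref{eq:straight-V-pairing} factors as $\langle [\underline{N}],[\underline{M}]\rangle_\varnothing = \rmL'_\calc(\underline{N})\,\rmL'_\calc(\underline{M})$, i.e.\ it is the outer product of the linear functional $\rmL'_\calc$ with itself. Since $\rmL'_\calc$ is non-zero (take a $3$-ball containing an unknotted ribbon loop labelled by a projective object with non-vanishing modified trace), the quotient $\widehat{\rmV}_\calc(\varnothing)$ is one-dimensional and generated by any such bordism.

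Injectivity of $\Phi_{\underline{\Sigma},\underline{\Sigma}'}$ is then immediate from non-degeneracy of \eqref{eq:straight-V-pairing}. Given $v=\sum_{i,j} a_{ij}\,[\underline{M}_i]\otimes[\underline{M}'_j]$ with $\Phi(v)=0$, for every $[\underline{N}]\in\widehat{\rmV}'_\calc(\underline{\Sigma})$ and $[\underline{N}']\in\widehat{\rmV}'_\calc(\underline{\Sigma}')$ multiplicativity of $\rmL'_\calc$ yields
\begin{equation*}
0 \;=\; \langle [\underline{N}\disjun\underline{N}'],\Phi(v)\rangle_{\underline{\Sigma}\disjun\underline{\Sigma}'} \;=\; \sum_{i,j} a_{ij}\,\langle[\underline{N}],[\underline{M}_i]\rangle_{\underline{\Sigma}}\,\langle[\underline{N}'],[\underline{M}'_j]\rangle_{\underline{\Sigma}'},
\end{equation*}
which means the image of $v$ in $\widehat{\rmV}'_\calc(\underline{\Sigma})^*\otimes \widehat{\rmV}'_\calc(\underline{\Sigma}')^*$ under the injection coming from the two pairings vanishes, forcing $v=0$.

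The main obstacle is surjectivity of $\Phi_{\underline{\Sigma},\underline{\Sigma}'}$: a general admissible $\underline{M}\colon\varnothing\to\underline{\Sigma}\disjun\underline{\Sigma}'$ may have connected components touching both $\underline{\Sigma}$ and $\underline{\Sigma}'$, so $\underline{M}$ is not literally a disjoint union. The plan is to modify $\underline{M}$ within its class in $\widehat{\rmV}_\calc(\underline{\Sigma}\disjun\underline{\Sigma}')$ until it becomes one. Choose an embedded separating $2$-sphere $S$ inside a connecting component of $M$, disjoint from the bichrome graph $T$, which separates the boundary pieces belonging to $\underline{\Sigma}$ from those belonging to $\underline{\Sigma}'$. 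Perform $1$-surgery along $S$, splitting this component into two pieces; since both still meet the outgoing boundary, admissibility is preserved. At the level of the invariant the surgery is compensated by the insertion of a red meridian circle decorated with the integral $\Lambda$ on $\coend$, together with the correction factors $\calD$ and $\delta$ from \eqref{eq:defdelta} and the normalisation $\lambda\circ\Lambda=\id_\unit$. The key computation is that the resulting disconnected bordism pairs identically against every element of $\widehat{\rmV}'_\calc(\underline{\Sigma}\disjun\underline{\Sigma}')$ with the original $\underline{M}$, and hence represents the same class modulo the right radical. Iterating over all connecting components expresses $[\underline{M}]$ as an image under $\Phi$. Finally, the monoidality of $\widecheck{\rmV}_\calc$ follows from the duality equivalence $\widehat{\bord}^\chi_{3,2}(\calc)^{\op}\simeq\widecheck{\bord}^\chi_{3,2}(\calc)$ and the identification $\widecheck{\rmV}_\calc = \widehat{\rmV}'_\calc\circ(-)^*$.
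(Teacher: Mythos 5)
The paper does not prove this statement itself: it is imported as \autocite[Thm.\,4.12]{DGGPR19}, with the only in-text remark being that monoidality is the non-trivial step of the universal construction. So there is no internal proof to compare against; your sketch is a reconstruction of the cited argument, and its architecture is the right one. Functoriality is built into the universal construction; the unit space is one-dimensional because composition over $\varnothing$ is disjoint union, so the pairing on endomorphisms of $\varnothing$ is the rank-one form $(\underline{N},\underline{M})\mapsto \rmL'_\calc(\underline{N})\,\rmL'_\calc(\underline{M})$; injectivity of $\Phi_{\underline{\Sigma},\underline{\Sigma}'}$ follows from non-degeneracy of the two factor pairings; and the only substantive work is surjectivity, i.e.\ showing that a connected admissible bordism into $\underline{\Sigma}\disjun\underline{\Sigma}'$ agrees, modulo the radical, with a split one. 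That is indeed where the proof in \autocite{DGGPR19} lives.

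Three points deserve correction or sharpening. First, an endomorphism of $\varnothing$ is a \emph{closed} $3$-manifold, so your witness for $\rmL'_\calc\neq 0$ should be $S^3$ containing a $P$-labelled unknot carrying a coupon $f$ with $\rmt_P(f)\neq 0$ (such $f$ exists by non-degeneracy of the modified trace pairing), not a $3$-ball. Second, the mechanism for splitting a connected component along a separating $2$-sphere is the connected-sum formula $\rmL'_\calc(M\# M',T\sqcup T')=\calD\,\rmL'_\calc(M,T)\,\rmL'_\calc(M',T')$, valid when both summands carry admissible graphs; a red meridian decorated with the integral $\Lambda$ implements index-$2$ surgery along a link component, not the index-$1$ surgery along $S^2$ you need here, so that sentence should be replaced by an appeal to the connected-sum formula. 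Third, and consequently, the split bordism does \emph{not} pair identically with the original one: $[\underline{M}]$ equals the image under $\Phi$ of a split class only up to the explicit power of $\calD$ coming from that formula, which must be carried along. Finally, well-definedness of $\Phi$ on the quotients is not quite a consequence of multiplicativity alone, since one must test against non-split morphisms $\underline{\Sigma}\disjun\underline{\Sigma}'\to\varnothing$; the clean argument absorbs $\underline{M}'$ and the test bordism into a single morphism $\underline{\Sigma}\to\varnothing$ via $\underline{N}''\circ(\id_{\underline{\Sigma}}\disjun\underline{M}')$ and uses functoriality. None of these affect the viability of the strategy, and your admissibility bookkeeping (both pieces of a split component still meet the outgoing boundary) is exactly the point where the hatted admissibility condition is used.
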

\begin{remark}\label{rem:statespacesspanned}
    \begin{enumerate}[(1)]
        \item
        By the universal construction for any $\underline{\Sigma} \in {\bord}_{3,2}^{\chi}(\calc)$ the state space $\widehat{\rmV}_\calc(\underline{\Sigma})$ and its linear dual $\widehat{\rmV}_\calc(\underline{\Sigma})^*$ are spanned by the vectors obtained from evaluating $\widehat{\rmV}_\calc$ on all bordisms $\varnothing \to \underline{\Sigma}$ and $\underline{\Sigma} \to \varnothing$, respectively. 
        \item 
        For any $\underline{\Sigma} \in {\bord}_{3,2}^{\chi}(\calc)$ we have 
        \begin{align}\label{eq:dualstatespace}
            \widecheck{\rmV}_{\calc}(\underline{\Sigma})^* \cong \widecheck{\rmV}'_{\calc}(\underline{\Sigma}) = \widehat{\rmV}_{\calc}(\underline{\Sigma}^*)
        \end{align}
        where $\underline{\Sigma}^*$ is the dual object of $\underline{\Sigma}$ in ${\bord}_{3,2}^{\chi}(\calc)$, and where the first isomorphism is induced by the non-degenerate pairing \eqref{eq:straight-V-pairing} (but for $\widecheck{\rmV}_{\calc}$). In this sense we get a pair of TFTs dual to each other. 
    \end{enumerate}
\end{remark}

\subsection{Algebraic state spaces}\label{sec:algstatespace}
We will now very briefly recall an algebraic model for the state spaces of $\widehat{\rmV}_\calc$ associated with connected objects of ${\bord}_{3,2}^{\chi}(\calc)$.

For this let $g,q,p\in\Z_{\geq0}$, we consider a standard closed connected surface $\Sigma_g$ of genus $g$, a handlebody $H_g$ with $\partial H_g = \Sigma_g$, and the Lagrangian subspace $\lambda_g \subset H_1(\Sigma_g)$ given by the kernel of the inclusion of $\Sigma_g$ into $H_g$. 
For a $(p+q)$-tuple of objects $(\underline{X};\underline{Y}) \equiv (X_1,\ldots,X_p,Y_1,\ldots,Y_q) \in \calc^{\times (p+q)}$, we denote with 
\begin{equation}\label{eq:standard-surf_gV}
\underline{\Sigma}_g^{(\underline{X};\underline{Y})} = (\Sigma_g,P_{(\underline{X};\underline{Y})},\lambda_g)
\end{equation}
the object of $\bord_{3,2}^{\chi}(\calc)$ with $p$ negatively oriented marked points decorated using the $X_i$ and $q$ positively oriented marked points decorated using the $Y_j$. For $f \in \Hom_\calc(\coend^{\otimes g} \otimes X_1 \otimes \ldots \otimes X_p ,Y_1 \otimes \ldots \otimes Y_q)$ we consider the admissible bordism $\left[\left(H_g,T_f,0\right)\right] \colon \varnothing \to \underline{\Sigma}_g^{(\underline{X};\underline{Y})}$ 
shown in \Cref{fig:standard-handle-body}.

\begin{figure}[tb]
\begin{equation*}
\left(H_g,T_f,0\right) =~ 
\raisebox{-.45\height}{\begin{tikzpicture}
        \filldraw[fill=lightgray, draw=black, thick] (-2,0.5) ellipse (1.7 and 2.5);
        \filldraw[fill=lightgray, draw=black, thick] (3,0.5) ellipse (1.7 and 2.5);
        \filldraw[lightgray] (-2,-2) rectangle (3,3);
        \draw[thick] (-2.01,-2) -- (3.01,-2);
        \draw[thick] (-2.01,3) -- (3.01,3);
        \filldraw[string-blue] (1.5,-1.4) circle (0.5mm);
        \draw[string-blue,thick,->] (1.5,-1.4) -- (1.15,-1.4);
       \filldraw[string-blue] (3,-1.4) circle (0.5mm);
       \draw[string-blue,thick,->] (3,-1.4) -- (2.65,-1.4);
        \filldraw[string-blue] (1.5,2.4) circle (0.5mm);
        \draw[string-blue,thick,->] (1.5,2.4) -- (1.85,2.4);
       \filldraw[string-blue] (3,2.4) circle (0.5mm);
       \draw[string-blue,thick,->] (3,2.4) -- (3.35,2.4);
       \draw[string-blue,thick] (1.5,-1.4) -- (1.5,0.2);
       \draw[string-blue,thick,->] (1.5,-1.4) -- (1.5,-0.6);
       \draw[string-blue,thick] (3,-1.4) -- (3,0.2);
       \draw[string-blue,thick,->] (3,-1.4) -- (3,-0.6);
       \draw[string-blue,thick] (1.5,0.8) -- (1.5,2.4);
       \draw[string-blue,thick,->] (1.5,0.8) -- (1.5,1.6);
       \draw[string-blue,thick] (3,0.8) -- (3,2.4);
       \draw[string-blue,thick,->] (3,0.8) -- (3,1.6);
       \draw[string-blue,thick] (0,-0.5) -- (0,0.2);
       \draw[string-red,thick] (-0.4,-0.5) -- (0.4,-0.5);
       \draw[string-red,thick] (-0.3,-0.5) -- (-0.3,-0.7);
       \draw[string-red,thick] (0.3,-0.5) -- (0.3,-0.7);
       \draw[string-red,thick] (-0.6,-1) arc (180:360:0.6 and 0.4);
       \draw[string-red,thick,->] (-0.6,-1) arc (180:275:0.6 and 0.4);
       \draw[string-red,thick] (-0.3,-0.7) .. controls (-0.3,-0.8) and (-0.6,-0.9) .. (-0.6,-1);
       \draw[string-red,thick] (0.3,-0.7) .. controls (0.3,-0.8) and (0.6,-0.9) .. (0.6,-1);
       \draw[string-blue,thick] (-1.9,-0.5) -- (-1.9,0.2);
       \draw[string-red,thick] (-2.3,-0.5) -- (-1.5,-0.5);
       \draw[string-red,thick] (-2.2,-0.5) -- (-2.2,-0.7);
       \draw[string-red,thick] (-1.6,-0.5) -- (-1.6,-0.7);
       \draw[string-red,thick] (-2.5,-1) arc (180:360:0.6 and 0.4);
       \draw[string-red,thick,->] (-2.5,-1) arc (180:275:0.6 and 0.4);
       \draw[string-red,thick] (-2.2,-0.7) .. controls (-2.2,-0.8) and (-2.5,-0.9) .. (-2.5,-1);
       \draw[string-red,thick] (-1.6,-0.7) .. controls (-1.6,-0.8) and (-1.3,-0.9) .. (-1.3,-1);
       \filldraw[fill=white, draw=string-blue, thick] (-2.4,0.2) rectangle (3.4,0.8);
       \filldraw[fill=white, draw=black, thick] (-1.5,-1) arc (0:-180:0.4 and 0.2);
       \fill[lightgray] (-2.3,-0.9) rectangle (-1.5,-1.09);
       \filldraw[fill=white, draw=black, thick] (-1.55,-1.1) arc (0:180:0.35 and 0.15);
       \draw[black, thick] (-1.5,-1) arc (0:-180:0.4 and 0.2);
       \filldraw[fill=white, draw=black, thick] (0.4,-1) arc (0:-180:0.4 and 0.2);
       \fill[lightgray] (-0.4,-0.9) rectangle (0.4,-1.09);
       \filldraw[fill=white, draw=black, thick] (0.35,-1.1) arc (0:180:0.35 and 0.15);
       \draw[black, thick] (0.4,-1) arc (0:-180:0.4 and 0.2);
       \node at (0.5,0.5) [string-blue] {$f$};
       \node at (1.55,-0.65) [string-blue,right] {\scriptsize $X_1$};
       \node at (3.05,-0.65) [string-blue,right] {\scriptsize $X_p$};
       \node at (1.55,1.55) [string-blue,right] {\scriptsize $Y_1$};
       \node at (3.05,1.55) [string-blue,right] {\scriptsize $Y_q$};
       \node at (0,-0.1) [string-blue,right] {\scriptsize $\coend$};
       \node at (-1.9,-0.1) [string-blue,right] {\scriptsize $\coend$};
       \node at (-0.9,-1.1) [thick] {$\dots$};
       \node at (2.4,-0.7) [string-blue] {$\dots$};
       \node at (2.4,1.5) [string-blue] {$\dots$};
   \end{tikzpicture} }
\end{equation*}
\caption{The bordism used to identify Hom-spaces in $\calc$ and state spaces of $\widehat{\rmV}_\calc$.}
\label{fig:standard-handle-body}
\end{figure}
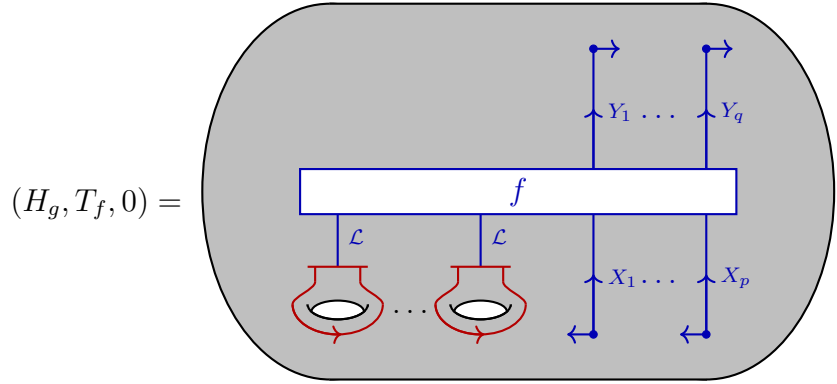

\begin{theorem}[{\autocite[Prop.\,4.17]{DGGPR19}}]
The map
\begin{equation}
\begin{aligned}
    \Phi_{\Sigma}^{(\underline{X};\underline{Y})}  \colon \Hom_\calc(\coend^{\otimes g} \otimes X_1 \otimes \ldots \otimes X_p ,Y_1 \otimes \ldots \otimes Y_q) &\to \widehat{\rmV}_\calc\left(\underline{\Sigma}_g^{(\underline{X};\underline{Y})} \right) \\
    f &\mapsto \widehat{\rmV}_\calc\left(\left[\left(H_g,T_f,0\right)\right] \right)
    (1_\kk)
\end{aligned}
\end{equation}
is a linear isomorphism.
\end{theorem}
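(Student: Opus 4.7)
The plan is to prove bijectivity by establishing surjectivity and injectivity separately, using the universal TFT construction and the Hopf-algebraic structure of $\coend$. Linearity of $\Phi_\Sigma^{(\underline{X};\underline{Y})}$ is automatic from the linear dependence of the bichrome graph $T_f$ on $f$ at its central coupon.

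For \textbf{surjectivity}, I would start with an arbitrary admissible bordism $\underline{M} = [(M,T,n)] \colon \varnothing \to \underline{\Sigma}_g^{(\underline{X};\underline{Y})}$ and bring it to the standard form $[(H_g, T_f, 0)]$ modulo the radical in three steps. First, the Lickorish--Wallace theorem lets me realise $M$ as the handlebody $H_g$ equipped with a red surgery link $L$; any residual signature defect $n$ is absorbed up to an overall nonzero scalar by inserting $\pm 1$-framed red unknots, each rescaling by $\delta^{\pm 1}$. Second, an isotopy of the resulting bichrome graph $T \cup L$ inside $H_g$ arranges its boundary-meeting strands near the standard marked points of $\Sigma_g$. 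Third, I appeal to the universal property of $\coend$ and the dinaturality of $\iota_X$ to contract each red component (with its attached bichrome coupon) into blue strands labelled by $\coend$, eventually collapsing everything into a single central blue coupon decorated by a morphism $f \colon \coend^{\otimes g} \otimes X_1 \otimes \cdots \otimes X_p \to Y_1 \otimes \cdots \otimes Y_q$. Admissibility of $T$ ensures that a projective blue edge is present throughout.

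For \textbf{injectivity}, I would exploit the non-degeneracy of the pairing $\langle \cdot, \cdot \rangle_{\underline{\Sigma}}$ in \eqref{eq:straight-V-pairing}. Assuming $\Phi(f) = 0$, it suffices to test against dual bordisms of the form $[(H_g, T_h, 0)]^*$ indexed by $h \in \Hom_\calc(Y_1 \otimes \cdots \otimes Y_q, \coend^{\otimes g} \otimes X_1 \otimes \cdots \otimes X_p)$. Gluing reassembles the underlying $3$-manifold into $\#_g(S^2 \times S^1)$, carrying the composite bichrome graph obtained by joining the $f$- and $h$-coupons along the boundary strands. Using the standard surgery presentation of $\#_g(S^2 \times S^1)$ by $g$ zero-framed red meridians and evaluating via \eqref{eq:L'-via-F'}, the pairing reduces to a modified trace of a morphism built explicitly from $f$, $h$, the integral $\Lambda$, and the Hopf structure on $\coend$. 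Non-degeneracy of the modified trace on $\Proj(\calc)$ combined with non-degeneracy of the Hopf and Radford pairings on $\coend$ then forces $f = 0$.

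The main obstacle is the surjectivity step: one must carefully check that the successive bordism manipulations and, in particular, the dinatural absorption of red components descend to the quotient by the radical rather than merely to bordism equivalence, and that admissibility (the presence of a projective blue edge in every connected component of $M$ disjoint from the outgoing boundary) is preserved by every surgery move along the way. The injectivity step is more computational and ultimately amounts to a direct evaluation of $F'_\intL$ on a pair of glued handlebodies followed by a Hopf-algebraic non-degeneracy argument.
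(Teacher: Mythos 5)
First, a point of orientation: the paper you are working from does not prove this statement at all --- it is imported verbatim from \autocite[Prop.\,4.17]{DGGPR19}, so there is no in-paper proof to match. Your outline does follow the strategy of the original proof (spanning via relative surgery presentations and reduction to the standard handlebody graph; injectivity via the universal-construction pairing), and your remarks on linearity and on absorbing the signature defect into powers of $\delta$ are fine.

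There is, however, a genuine gap in your injectivity step. You propose to test $\Phi(f)$ against the dual handlebodies $[(H_g,T_h,0)]^*$ with $h \in \Hom_\calc(Y_1\otimes\cdots\otimes Y_q,\coend^{\otimes g}\otimes X_1\otimes\cdots\otimes X_p)$. But as a morphism $\underline{\Sigma}_g^{(\underline{X};\underline{Y})}\to\varnothing$ in $\widehat{\bord}_{3,2}^{\chi}(\calc)$, such a bordism has empty outgoing boundary, so \emph{every} connected component must contain a projectively labelled blue edge; red components and $\coend$-labelled edges do not count. If none of the $X_i,Y_j$ is projective (e.g.\ the bare torus, $g=1$, $p=q=0$), your test family simply does not lie in $\widehat{\calV}'(\underline{\Sigma})$, the pairing \eqref{eq:curly-V-pairing} is not defined on it, and the cutting-presentation/modified-trace evaluation you invoke has no projective strand to cut. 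This is exactly the point where the non-semisimple theory diverges from the Reshetikhin--Turaev case: one must instead use test graphs into which a projective object has been inserted --- for instance replacing a $\coend$-edge by the universal morphism $\iota_P$ for a projective generator $P$ (using that $\coend$ is a quotient of $P^*\otimes P$), or letting $h$ factor through a projective --- and then show that these still separate $\Hom_\calc(\coend^{\otimes g}\otimes X_1\otimes\cdots\otimes X_p, Y_1\otimes\cdots\otimes Y_q)$, using non-degeneracy of the modified trace on $\Proj(\calc)$ together with non-degeneracy of $\omega$ and the normalisation of $\Lambda$. Relatedly, your surjectivity step 3 (sliding the red surgery link into the $g$ standard meridian positions and fusing all blue strands into one coupon) is where most of the actual work of \autocite{DGGPR19} sits; you correctly flag it as the obstacle, but as written it is an assertion rather than an argument.
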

\begin{remark}
    \begin{enumerate}[(1)]
    \item In \autocite[Sec.\,4]{DGGPR19} only positively orientated marked points are considered. Our setting is obtained from theirs by first declaring negatively oriented points to be labelled with the dual object and then using the isomorphism (\ref{eq:dualstatespace}) together with the natural isomorphism 
    \[
    \Hom_{\calc}(\coend^{\otimes g} \otimes X_1 \otimes \dots \otimes X_p, Y_1 \otimes \dots \otimes Y_q ) \cong \Hom_{\calc}(\coend^{\otimes g} \otimes X_1 \otimes \dots \otimes X_p \otimes Y_q^* \otimes \dots \otimes Y_1^*,\unit)
    \]
    induced by the rigidity and braiding of $\calc$. 
    \item A different, but related model of the state spaces can be given in terms skein modules \autocite[Sec.\,2.3.3]{DGGPR20}.
\end{enumerate}

\end{remark}

\subsection{Orientation reversal and Deligne products}\label{ssec:tftorideligne}
Finally, we will study how the behaviour of the $3$-manifold invariants under orientation reversal and the Deligne product -- discussed at the end of 
\Cref{ssec:3mf-inv} -- 
gives corresponding statements about the TFTs. For this we will need the following technical lemma, based on \autocite[Lem.\! 17.2]{TV}.\footnote{That lemma cannot be used in our setting directly because the admissible bordism categories are not rigid.} 
\begin{lemma}\label{lem:tftiso}
    Let $\mathcal{A}$ be a category with a fixed object $\unit$. Let $F,G \colon \mathcal{A} \to \vect$ be functors such that:
    \begin{enumerate}[(1)]
        \item for $H=F,G$ one has
        \begin{enumerate}[(a)]
            \item there exists a linear isomorphism $H^0 \colon \kk \to H(\unit)$;
            \item for all $A \in \mathcal{A}$ one has
            \begin{equation}           
            H(A) = \mathrm{span}_{\kk}\big\{\,H(f)(H^0(1_\kk))\,\big|\,f \in \Hom_{\mathcal{A}}(\unit,A)\big\};
            \end{equation}
            \item for all $A \in \mathcal{A}$ one has
            \begin{equation}
            H(A)^* = \mathrm{span}_{\kk}\big\{\, (H^0)^{-1} \circ H(g)\,\big|\,g \in \Hom_{\mathcal{A}}(A,\unit)\big\};
            \end{equation}
        \end{enumerate}
        \item for all $u \in \Hom_{\mathcal{A}}(\unit,\unit)$ the following diagram commutes,
\begin{align}
    \begin{tikzcd}[ampersand replacement=\&]
	\& {F(\unit)} \& {F(\unit)} \\
	\kk \&\&\& \kk \\
	\& {G(\unit)} \& {G(\unit)}
	\arrow["{F(u)}", from=1-2, to=1-3]
	\arrow["{G(u)}"', from=3-2, to=3-3]
	\arrow["{F^0}", from=2-1, to=1-2]
	\arrow["{G^0}"', from=2-1, to=3-2]
	\arrow["{(F^0)^{-1}}", from=1-3, to=2-4]
	\arrow["{(G^0)^{-1}}"', from=3-3, to=2-4]
\end{tikzcd}.
\end{align}
    \end{enumerate}
    Then for all $A \in \calA$ there exists a unique linear map $\phi_A \colon F(A) \to G(A)$ such that for all $f \in \Hom_\calA(\unit,A)$ the following diagram commutes,
\begin{align}\label{eq:defphi}
\begin{tikzcd}[ampersand replacement=\&]
	\& {F(\unit)} \& {F(A)} \\
	\kk \\
	\& {G(\unit)} \& {G(A)}
	\arrow["{F(f)}", from=1-2, to=1-3]
	\arrow["{G(f)}"', from=3-2, to=3-3]
	\arrow["{F^0}", from=2-1, to=1-2]
	\arrow["{G^0}"', from=2-1, to=3-2]
	\arrow["{\phi_A}", dashed, from=1-3, to=3-3]
\end{tikzcd}.
\end{align}
Furthermore, the collection $(\phi_A)_{A \in \calA}$ is a natural isomorphism $F \Rightarrow G$.
\end{lemma}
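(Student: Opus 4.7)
The plan is to specify $\phi_A$ by its values on the spanning family provided by hypothesis (1b) for $F$, and then verify the remaining properties using (1c) together with condition (2). Concretely, I would set
\begin{equation}
\phi_A\bigl(F(f)\circ F^0(1_\kk)\bigr) \,:=\, G(f)\circ G^0(1_\kk), \qquad f\in\Hom_{\calA}(\unit,A),
\end{equation}
and extend linearly. The commutativity of the diagram \eqref{eq:defphi} is then built into the definition, and uniqueness is automatic, since the vectors $F(f)\circ F^0(1_\kk)$ already span $F(A)$ by (1b).

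The main, and essentially the only nontrivial, step is checking that this prescription is well-defined. Suppose $\sum_i \alpha_i\, F(f_i)\circ F^0(1_\kk) = 0$ in $F(A)$; I must show that $\sum_i \alpha_i\, G(f_i)\circ G^0(1_\kk) = 0$ in $G(A)$. By hypothesis (1c) applied to $G$, it suffices to evaluate the latter sum against every covector of the form $(G^0)^{-1}\circ G(g)$ with $g\in\Hom_{\calA}(A,\unit)$. Using functoriality of $G$, this pairing reads $\sum_i \alpha_i\,(G^0)^{-1}\circ G(g\circ f_i)\circ G^0(1_\kk)$, and applying condition (2) to each scalar $g\circ f_i \in \Hom_{\calA}(\unit,\unit)$ converts it into $\sum_i \alpha_i\,(F^0)^{-1}\circ F(g\circ f_i)\circ F^0(1_\kk) = (F^0)^{-1}\circ F(g)\bigl(\sum_i \alpha_i\,F(f_i)\circ F^0(1_\kk)\bigr) = 0$. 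Hence every generator of $G(A)^*$ annihilates the $G$-side sum, so the sum itself vanishes.

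With $\phi_A$ in place, naturality follows from a diagram chase on generators: for $h\in\Hom_{\calA}(A,B)$ and $f\in\Hom_{\calA}(\unit,A)$, both $\phi_B\circ F(h)$ and $G(h)\circ\phi_A$ send $F(f)\circ F^0(1_\kk)$ to $G(h\circ f)\circ G^0(1_\kk)$, by functoriality and the definition of $\phi$, and generators span $F(A)$ by (1b) for $F$. Finally, to promote $\phi_A$ to an isomorphism, I would run the symmetric construction with the roles of $F$ and $G$ exchanged, this time invoking (1b) for $G$ and (1c) for $F$, to obtain a map $\psi_A\colon G(A)\to F(A)$. By construction $\psi_A$ sends $G(f)\circ G^0(1_\kk)$ back to $F(f)\circ F^0(1_\kk)$, so $\psi_A\circ\phi_A$ and $\phi_A\circ\psi_A$ restrict to the identity on the respective spanning families and therefore equal the identity.
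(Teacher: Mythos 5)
Your proposal is correct and follows essentially the same route as the paper's proof: define $\phi_A$ on the spanning vectors $F(f)(F^0(1_\kk))$, verify well-definedness by pairing against the covectors $(G^0)^{-1}\circ G(g)$ from (1c) and transporting the scalar $g\circ f_i\in\Hom_\calA(\unit,\unit)$ across condition (2), then obtain naturality and invertibility from the spanning property. The only cosmetic difference is that the paper phrases the well-definedness check via the kernel of the surjection from the free vector space on $\Hom_\calA(\unit,A)$, while you argue directly on linear relations; these are the same argument.
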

\begin{proof} 
    If $\phi_A$ exists, then it is an isomorphism by (1b). The proof of existence of $\phi_A$ will be very similar to the one in \autocite[Lem.\! 17.2]{TV}. Namely,
let $\kk\langle\Hom_\calA(\unit, A)\rangle$ be the vector space freely generated by $\Hom_\calA(\unit, A)$ and let    \begin{equation}
        \pi_A^H \colon \kk\langle\Hom_\calA(\unit, A)\rangle \to H(A)
    \end{equation}
    be the linear extension of the map
    \begin{equation}
            \Hom_\calA(\unit, A) \to H(A) 
            ,\quad
            f \mapsto H(f)(H^0(1_\kk)).
    \end{equation}

The maps $\pi_A^F$ and $\pi_A^G$ are surjective by (1b). We want to define $\phi_A$ such that $\phi_A \circ \pi_A^F = \pi_A^G$, which is well defined if $\pi_A^G|_{\mathrm{ker}(\pi_A^F)} = \{0\}$. To check this, let $v = \sum_{i=1}^n v_i f_i \in \mathrm{ker}(\pi_A^F)$ with $v_i \in \kk$ and $f_i \in \Hom_\calA(\unit,A)$. By (1c) it is enough to check that for all $g \in \Hom_\calA(A,\unit)$ we have that
\begin{equation}
    \begin{aligned}
        \sum_{i=1}^n v_i \, \big((G^0)^{-1} \circ G(g) \circ G(f_i)\big) (G^0(1_\kk)) = 0.
    \end{aligned}
\end{equation}
By (2) and functoriality of $F$ and $G$ the left hand side is equal to
\begin{equation}
    \begin{aligned}
        \sum_{i=1}^n v_i \, \big((F^0)^{-1} \circ F(g) \circ F(f_i) \big)(F^0(1_\kk)),
    \end{aligned}
\end{equation}
but this is zero, as already 
$\sum_{i=1}^n v_i \, F(f_i) (F^0(1_\kk)) = \pi_A^F(v) =0$. 
Hence $\phi_A$ is well defined and satisfies \eqref{eq:defphi}. 
The proof of naturality is now the same as in \autocite[Lem.\! 17.2]{TV}.
\end{proof}

For the actual application to TFT functors we will use the following corollary, which includes a slightly stronger uniqueness statement as commutativity of \eqref{eq:defphi} will be implied by monoidality.

\begin{corollary}\label{cor:tftiso}
If $\calA$ is monoidal, $\unit \in \calA$ is the monoidal unit, and $F,G$ are monoidal functors satisfying the conditions in Lemma~\ref{lem:tftiso}, then there exists a unique monoidal natural isomorphism $F \Rightarrow G$.
\end{corollary}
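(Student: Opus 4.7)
The plan is to apply Lemma~\ref{lem:tftiso} with $F^0 = F_0$ and $G^0 = G_0$ taken to be the unit constraints of the monoidal functors, producing a natural isomorphism $\phi \colon F \Rightarrow G$ characterised by the commuting diagram~\eqref{eq:defphi}. It then remains to show (a) that this $\phi$ is in fact monoidal, and (b) that any monoidal natural isomorphism $\phi' \colon F \Rightarrow G$ automatically satisfies~\eqref{eq:defphi}, which by the uniqueness clause of Lemma~\ref{lem:tftiso} forces $\phi' = \phi$.

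For (a), compatibility with the units, $\phi_\unit \circ F_0 = G_0$, is immediate from~\eqref{eq:defphi} applied with $A = \unit$ and $f = \id_\unit$. Compatibility with the tensor constraints, $\phi_{A \otimes B} \circ F_2^{A,B} = G_2^{A,B} \circ (\phi_A \otimes \phi_B)$, I would test on the spanning set of $F(A) \otimes F(B)$ provided by condition~(1b) of Lemma~\ref{lem:tftiso}, namely vectors of the form $F(f)(F_0(1_\kk)) \otimes F(g)(F_0(1_\kk))$ with $f \colon \unit \to A$ and $g \colon \unit \to B$. Using naturality of the tensor constraint, $F_2^{A,B} \circ (F(f) \otimes F(g)) = F(f \otimes g) \circ F_2^{\unit,\unit}$, together with the monoidal unit axiom $F_2^{\unit,\unit} \circ (F_0 \otimes F_0) = F_0$, and the corresponding identities for $G$, both sides of the tensor-compatibility equation reduce via~\eqref{eq:defphi} to $G(f \otimes g)(G_0(1_\kk))$. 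Hence the equation holds on a spanning set and therefore everywhere by linearity.

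For (b), let $\phi' \colon F \Rightarrow G$ be any monoidal natural isomorphism. Its unit compatibility forces $\phi'_\unit \circ F_0 = G_0$, so $\phi'_\unit = G_0 \circ F_0^{-1} = \phi_\unit$. For any $f \colon \unit \to A$, naturality of $\phi'$ then yields $\phi'_A \circ F(f) \circ F_0 = G(f) \circ \phi'_\unit \circ F_0 = G(f) \circ G_0$, which is precisely diagram~\eqref{eq:defphi} for $\phi'$. The uniqueness statement of Lemma~\ref{lem:tftiso} therefore gives $\phi'_A = \phi_A$ for every $A$. I do not anticipate any serious obstacle; the only mildly tedious point is the bookkeeping of the monoidal coherence identities used in part~(a), but these are routine applications of the definition of a monoidal functor.
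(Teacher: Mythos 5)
Your proposal is correct and follows essentially the same route as the paper: the paper also takes $F^0,G^0$ to be the unit constraints, invokes the argument of \autocite[Lem.\,17.2]{TV} for monoidality (which is precisely your spanning-set computation using condition (1b) and the coherence of the tensor constraints), and deduces uniqueness as in \autocite[Lem.\,4.2]{CMRSS21RTorbi} by observing that unit compatibility plus naturality force any monoidal natural isomorphism to satisfy \eqref{eq:defphi}. You have merely written out the details that the paper delegates to those references.
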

\begin{proof}
The natural isomorphism is the one constructed in Lemma~\ref{lem:tftiso} with $F^0$ and $G^0$ coming from the monoidal structure of $F$ and $G$, respectively. The proof of monoidality is the same as in \autocite[Lem.\! 17.2]{TV} as that part of the proof only uses \eqref{eq:defphi}. Uniqueness follows as in the proof of \cite[Lem.\,4.2]{CMRSS21RTorbi}.
\end{proof}

Let us now consider the effect of orientation reversal. First note that
$\widehat{\bord}_{3,2}^{\chi}(\overline{\calc}) = \widehat{\bord}_{3,2}^{\chi}(\calc)$ as $\calc$ and $\overline{\calc}$ have the same underlying pivotal monoidal category.
Consider the following symmetric monoidal functor
\begin{equation}
\begin{aligned}
    \overline{(-)} \colon \widehat{\bord}_{3,2}^{\chi}(\calc) &\to \widehat{\bord}_{3,2}^{\chi}(\calc) \\
    (\Sigma,P,\lambda) &\mapsto (-\Sigma,P,\lambda)
    \\
    [(M,T,n)] &\mapsto [(-M,T,n)]
\end{aligned}.
\end{equation}
 Note that this functor is not the same as the duality functor $(-)^*$, in particular it does not change the direction of composition.
 By combining \Cref{rem:statespacesspanned} with \Cref{lem:invori}, we can use \Cref{cor:tftiso} to get:
\begin{corollary}
    There is a unique natural
    monoidal isomorphism\footnote{Recall that there is no extra condition for a monoidal natural transformation to preserve the braiding as this is automatic.}
    \begin{equation}
        \widehat{\rmV}_{\overline{\calc}} \cong \widehat{\rmV}_{{\calc}} \circ \overline{(-)}.
    \end{equation}
\end{corollary}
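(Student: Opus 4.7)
The plan is to apply \Cref{cor:tftiso} with source $\calA = \widehat{\bord}_{3,2}^{\chi}(\calc)$ (this coincides with $\widehat{\bord}_{3,2}^{\chi}(\overline{\calc})$ since the two categories share their underlying pivotal monoidal structure), monoidal unit $\varnothing$, and the pair of symmetric monoidal functors $F := \widehat{\rmV}_{\overline{\calc}}$ and $G := \widehat{\rmV}_{\calc} \circ \overline{(-)}$. That both functors are symmetric monoidal is immediate: $F$ by the theorem of \autocite{DGGPR19} quoted above, and $G$ as a composition of symmetric monoidal functors, since $\overline{(-)}$ fixes $\varnothing$ and preserves disjoint unions. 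Their monoidal coherence data provide the linear isomorphisms $F^0, G^0 \colon \kk \to \kk$ demanded in condition (1a).

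Conditions (1b) and (1c) of \Cref{lem:tftiso} are exactly the content of \Cref{rem:statespacesspanned}(1) for $F$. They transfer to $G$ because $\overline{(-)}$ is an involution of $\widehat{\bord}_{3,2}^{\chi}(\calc)$, hence a bijection on Hom sets; concretely, spanning families of bordisms $\varnothing \to \underline{\Sigma}$ (respectively $\underline{\Sigma} \to \varnothing$) are carried bijectively to spanning families for $G(\underline{\Sigma}) = \widehat{\rmV}_{\calc}(\overline{\underline{\Sigma}})$, and the assertion follows.

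The main technical step is condition (2). For any $u \in \End_{\calA}(\varnothing)$ both $F(u)$ and $G(u)$ act on $\kk$ as scalars, and by multiplicativity on disjoint unions it suffices to treat the case of a closed connected morphism $u = [(M,T,n)]$. Then $F(u)$ equals $\delta_{\overline{\calc}}^{n}\,\rmL'_{\overline{\calc}}(M,T)$ while $G(u)$ equals $\delta_{\calc}^{n}\,\rmL'_{\calc}(-M,T)$. \Cref{lem:invori} identifies the underlying three-manifold invariants $\rmL'_{\calc}(-M,T) = \rmL'_{\overline{\calc}}(M,T)$, and a compatible choice of the square root $\calD_{\overline{\calc}}$ of the modularity parameter of $\overline{\calc}$ relative to $\calD_{\calc}$ matches the prefactors $\delta_{\calc}^{n}$ and $\delta_{\overline{\calc}}^{n}$. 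This matching of the signature defect contribution is the crux; once it is verified, the remaining diagram in (2) commutes.

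With (1a), (1b), (1c) and (2) all checked, \Cref{cor:tftiso} produces the unique monoidal natural isomorphism $\widehat{\rmV}_{\overline{\calc}} \cong \widehat{\rmV}_{\calc} \circ \overline{(-)}$, which is the claim.
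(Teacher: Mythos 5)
Your overall strategy is exactly the paper's: the corollary is obtained by feeding \Cref{rem:statespacesspanned} (for conditions (1b), (1c)) and \Cref{lem:invori} (for condition (2)) into \Cref{cor:tftiso}, and your verification of conditions (1a)--(1c) is fine, including the observation that $\overline{(-)}$ is an involution of $\widehat{\bord}_{3,2}^{\chi}(\calc)$ and hence carries spanning families to spanning families.

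The one step that does not work as you have written it is the matching of the signature-defect prefactors in condition (2). You claim that a suitable choice of the square root $\calD_{\overline{\calc}}$ makes $\delta_{\overline{\calc}}^{\,n}=\delta_{\calc}^{\,n}$. But from $\Delta_{\pm}^{\overline{\calc}}=\Delta_{\mp}^{\calc}$ and $\zeta_{\overline{\calc}}=\zeta_{\calc}$ one gets $\calD_{\overline{\calc}}=\pm\calD_{\calc}$ and hence $\delta_{\overline{\calc}}=\pm\delta_{\calc}^{-1}$; no choice of sign gives $\delta_{\overline{\calc}}=\delta_{\calc}$ unless $\delta_{\calc}^{4}=1$, which fails for a general modular tensor category. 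The resolution is that the comparison you actually need is $\delta_{\overline{\calc}}^{\,n}$ against $\delta_{\calc}^{-n}$, not $\delta_{\calc}^{\,n}$: for $\overline{(-)}$ to be functorial the signature defect must be negated along with the orientation (the Maslov index in \eqref{eq:anomaly-morphism-composition} changes sign when the intersection pairing on $H_1(\Sigma';\R)$ is negated), so the closed morphism $u=[(M,T,n)]$ is sent to $[(-M,T,-n)]$ and $G(u)=\delta_{\calc}^{-n}\,\rmL'_{\calc}(-M,T)$. With $\calD_{\overline{\calc}}=\calD_{\calc}$ one then has $\delta_{\overline{\calc}}=\delta_{\calc}^{-1}$ and condition (2) follows from \Cref{lem:invori}. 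So the gap is a sign bookkeeping issue in the treatment of $n$ rather than a flaw in the overall argument, but as stated your justification of condition (2) would fail.
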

Next we turn to the effect of taking Deligne products.
Let $\calc$ and $\calD$ be modular tensor categories.
We define $\widehat{\bord}_{3,2}^{\chi,\,\mathrm{fact}}(\calc\boxtimes\calD)$ as the following symmetric monoidal subcategory of $\widehat{\bord}_{3,2}^{\chi}(\calc\boxtimes\calD)$:
\begin{itemize}
    \item objects: $(\Sigma,P,\lambda)$ such that the labels of $P$ are of the form $X\boxtimes Y$ for some $X\in \calc$ and $Y\in \calD$;
    \item morphisms: $[(M,T,n)]$ such that all coupons of $T$ are pure tensors under the isomorphism
\begin{align}
        \Hom_{\calc \boxtimes \calD} (-\boxtimes -, -\boxtimes -)\cong \Hom_\calc(-,-) \otimes_\kk \Hom_{\calD}(-,-);
\end{align}
\end{itemize}
By forgetting either the label in $\calc$ or in $\calD$, one obtains symmetric monoidal functors $\widehat{\bord}_{3,2}^{\chi,\,\mathrm{fact}}(\calc\boxtimes\calD) \to \widehat{\bord}_{3,2}^{\chi}(\calc)$ and $\widehat{\bord}_{3,2}^{\chi,\,\mathrm{fact}}(\calc\boxtimes\calD) \to \widehat{\bord}_{3,2}^{\chi}(\calD)$. 
Pulling back along these functors, it is straightforward to define 
\begin{equation}
    \widehat{\rmV}_{\calc} \otimes_{\kk} \widehat{\rmV}_{\calD} \colon \widehat{\bord}_{3,2}^{\chi,\,\mathrm{fact}}(\calc\boxtimes\calD) \to \vect.
\end{equation}
Combining \Cref{cor:tftiso} with \Cref{lem:invdeligne} we conclude:
\begin{corollary}
    Let $\widehat{\rmV}_{\calc\boxtimes\calD}^{\mathrm{fact}}$ be the restriction of $\widehat{\rmV}_{\calc\boxtimes\calD}$ to $\widehat{\bord}_{3,2}^{\chi,\,\mathrm{fact}}(\calc\boxtimes\calD)$. Then there is a unique monoidal natural isomorphism
    \begin{equation}
        \widehat{\rmV}_{\calc\boxtimes\calD}^{\mathrm{fact}} \cong \widehat{\rmV}_{\calc} \otimes_{\kk} \widehat{\rmV}_{\calD}.
    \end{equation}
\end{corollary}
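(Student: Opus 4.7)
The plan is to apply \Cref{cor:tftiso} to the two symmetric monoidal functors
\begin{equation}
    F = \widehat{\rmV}_{\calc\boxtimes\calD}^{\mathrm{fact}} \quad\text{and}\quad G = \widehat{\rmV}_{\calc} \otimes_{\kk} \widehat{\rmV}_{\calD}
\end{equation}
on the monoidal category $\calA = \widehat{\bord}_{3,2}^{\chi,\,\mathrm{fact}}(\calc\boxtimes\calD)$ whose monoidal unit is $\unit = \varnothing$. The normalisations $F^0, G^0 \colon \kk \to \kk$ are those supplied by the respective monoidal structures, and hence condition (2) of \Cref{lem:tftiso} reduces to the statement that on every closed connected morphism $\underline{M} \in \End_{\calA}(\varnothing)$ the invariants coincide, i.e.\ $F(\underline{M}) = G(\underline{M})$ as scalars.

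First I would verify condition (2). By definition of $F$, $G$, and the universal TFT construction, $F(\underline{M})$ equals $\rmL'_{\calc\boxtimes\calD}(\underline{M})$ while $G(\underline{M})$ equals $\rmL'_{\calc}(\underline{M}_\calc)\cdot \rmL'_{\calD}(\underline{M}_\calD)$, where $\underline{M}_\calc$ and $\underline{M}_\calD$ are obtained by discarding the $\calD$-, resp.\ $\calc$-labels from the coupons and edges (this uses in an essential way that we restricted to factorised coupons so that ``discarding one factor'' is well-defined). The equality of these scalars is precisely \Cref{lem:invdeligne}, applied connected-component by connected-component and combined with multiplicativity of the signature defect contribution $\delta^n$, which factorises because $\delta_{\calc\boxtimes\calD} = \delta_{\calc}\delta_{\calD}$ follows from $\Delta_\pm^{\calc\boxtimes\calD} = \Delta_\pm^{\calc}\Delta_\pm^{\calD}$ (a consequence of $\Lambda_{\calc\boxtimes\calD} = \Lambda_\calc \boxtimes \Lambda_\calD$ already used in the proof of \Cref{lem:invdeligne}).

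Next I would verify the spanning conditions (1b) and (1c). \Cref{rem:statespacesspanned} gives these for $\widehat{\rmV}_{\calc\boxtimes\calD}$ using arbitrary (not necessarily factorised) bordisms $\varnothing \to \underline\Sigma$ and $\underline\Sigma \to \varnothing$. The key point is that the inclusion $\widehat{\bord}_{3,2}^{\chi,\,\mathrm{fact}}(\calc\boxtimes\calD) \hookrightarrow \widehat{\bord}_{3,2}^{\chi}(\calc\boxtimes\calD)$ is essentially surjective on bordisms with boundary objects of the form $X\boxtimes Y$, up to $\kk$-linear combinations: since by the universal property of $\boxtimes$ the morphisms between objects of the form $X\boxtimes Y$ in $\calc\boxtimes\calD$ are spanned by pure tensors $f \otimes g$, every coupon label can be expanded as a finite $\kk$-linear combination of factorised ones, which by linearity of $\widehat{\rmV}_{\calc\boxtimes\calD}$ on bordisms (coming from its linearity on coupons) expresses any state as a sum of states coming from factorised bordisms. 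Hence $F$ inherits the spanning properties from $\widehat{\rmV}_{\calc\boxtimes\calD}$, and for $G$ the spanning follows likewise from the corresponding property for $\widehat{\rmV}_\calc$ and $\widehat{\rmV}_\calD$ separately, via the definition of $G$ as a pullback of tensor products along the two forgetful functors.

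With (1) and (2) in hand, \Cref{cor:tftiso} delivers the unique monoidal natural isomorphism $F \cong G$. The main subtle point is the spanning argument in the previous paragraph: one has to be careful that the decomposition of a coupon labelled by a morphism in $\calc\boxtimes\calD$ into pure tensors produces genuinely factorised bordisms in $\widehat{\bord}_{3,2}^{\chi,\,\mathrm{fact}}(\calc\boxtimes\calD)$, and that admissibility is preserved — for the latter it suffices to note that the summands of a decomposition of a projectively labelled edge $X \boxtimes Y$ with $X$ and $Y$ projective are themselves projective in $\calc\boxtimes\calD$, since $\Proj(\calc\boxtimes\calD) = \Proj(\calc)\boxtimes\Proj(\calD)$.
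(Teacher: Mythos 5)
Your proof follows exactly the paper's route: the paper's entire argument for this corollary is ``Combining \Cref{cor:tftiso} with \Cref{lem:invdeligne} we conclude'', and you apply precisely these two results in the same way. The additional details you supply --- the factorisation $\delta_{\calc\boxtimes\calD}=\delta_\calc\delta_\calD$ of the signature-defect contribution, and the expansion of coupon labels into pure tensors to establish the spanning conditions within the factorised subcategory --- are correct elaborations of steps the paper leaves implicit.
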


\section{Chiral modular functors}\label{sec:modfunc}
In this section we will describe how to use the $3$d TFTs from the previous section to construct a two dimensional \emph{chiral modular functor}. We are first going to discuss the definition of chiral modular functors as symmetric monoidal $2$-functors. Afterwards we will use the $3$d TFTs reviewed in the previous section to construct such a chiral modular functor. To this end, we will first define assignments on objects, $1$-morphisms, and $2$-morphisms, and then show that these assignments can be consistently turned into a $2$-functor by studying the image of certain $3$-bordisms under the TFT. 

Before coming to the definition of chiral modular functors we note that several variants of modular functors are used in the literature. We will use a $2$-categorical variant due to \autocite{FSY2023RCFTstring1} which is well suited for our applications. In this article a $2$-category will always mean a weak $2$-category otherwise known as a bicategory. Analogously a $2$-functor will always mean a weak $2$-functor with coherence isomorphisms often called pseudofunctor. See for example \autocite{JY20twodcategories}. For more background on symmetric monoidal $2$-categories and $2$-functors we refer to \autocite[Ch.\,2]{SchommerPries2011thesis}, a concise review is given in \autocite[App.\,D]{DeRenzi2022nonssext1}.

\subsection{Chiral bordisms and profunctors}\label{ssec:chiral-bord-prof}
Let us first describe the source and target $2$-categories.
The symmetric monoidal $2$-category $\bord_{2+\epsilon,2,1}^{\chi}$ of two dimensional \emph{chiral bordisms} consists of:
\begin{itemize}
    \item objects: closed one-dimensional manifolds, i.e.\ finite disjoint unions of the unit circle $S^1$;
    \item $1$-morphisms: for objects $\Gamma$ and $\Gamma'$, a $1$-morphism from $\Gamma$ to $\Gamma'$, denoted as $\Gamma \to \Gamma'$,
    is a tuple $(\Sigma,\lambda)$ where $\Sigma$ is two-dimensional bordism\footnote{Note that we have an explicit bordism here, not its diffeomorphism class.} $\Sigma \colon \Gamma \to \Gamma'$ and $\lambda \subset H_1(\overline{\Sigma};\R)$ 
    is a Lagrangian subspace with respect to the intersection pairing for $\overline{\Sigma}$ the closed two-dimensional manifold obtained from $\Sigma$ by gluing in all the disks bounding $\Gamma$ and $\Gamma'$; 
    \item $2$-morphisms: for $1$-morphisms $(\Sigma,\lambda) \colon \Gamma \to \Gamma'$    
    and $(\Sigma',\lambda') \colon \Gamma \to \Gamma'$, a $2$-morphism $(\Sigma,\lambda) \Rightarrow (\Sigma',\lambda')$ is a tuple $([f],n)$ where $n \in \Z$, and $[f]$ is the isotopy class of a diffeomorphism $f \colon \Sigma \to \Sigma'$ which is compatible with the boundary parametrisations;  
    \item horizontal composition: for $1$-morphisms $(\Sigma,\lambda) \colon \Gamma \to \Gamma'$ and $(\Sigma',\lambda') \colon \Gamma' \to \Gamma''$ the horizontal composition $(\Sigma',\lambda') \diamond (\Sigma,\lambda)\colon \Gamma \to \Gamma''$ is given by 
    \begin{equation}
    (\Sigma' \sqcup_{\Gamma'} \Sigma,\lambda' + \lambda),
    \end{equation}
    where we view $\lambda$ and $\lambda'$ as subspaces of $H_1(\overline{\Sigma' \sqcup_{\Gamma'} \Sigma};\R)$ via the inclusions $H_1(\overline{\Sigma};\R)\hookrightarrow H_1(\overline{\Sigma' \sqcup_{\Gamma'} \Sigma};\R)$ and $H_1(\overline{\Sigma'};\R)\hookrightarrow H_1(\overline{\Sigma' \sqcup_{\Gamma'} \Sigma};\R)$, respectively;
    \item identity $1$-morphism: for an object $\Gamma$ the identity $1$-morphism $\id_\Gamma \colon \Gamma \to \Gamma$ is 
    \begin{equation}
    (\Gamma\times I, \{0\})
    \end{equation} 
    as $\overline{\Gamma\times I}$ is a disjoint union of 2-spheres and so $H_1(\overline{\Gamma\times I};\R)=\{0\}$;    \item vertical composition: 
    for $2$-morphisms $([f],n) \colon (\Sigma,\lambda) \Rightarrow (\Sigma',\lambda')$ and $([g],m) \colon$ $(\Sigma',\lambda')$ $\Rightarrow$ $(\Sigma'',\lambda'')$,     
    the vertical composition is given by the $2$-morphism 
    \begin{equation}
    ([g\circ f],n+m-\mu((C_f)_*\lambda,\lambda',(C_g)^*\lambda''));\footnote{\textrm{Here $(C_f)$ and $(C_g)$ denote the diffeomorphism class of the three dimensional mapping cylinder bordisms coming from $[f]$ and $[g]$, and the new signature defect is computed as in \eqref{eq:anomaly-morphism-composition}.}}
    \end{equation}
    \item identity $2$-morphism: for a $1$-morphism $(\Sigma,\lambda) \colon \Gamma \to \Gamma'$ the identity $2$-morphism $\id_{(\Sigma,\lambda)} \colon (\Sigma,\lambda) \Rightarrow (\Sigma,\lambda)$ is the equivalence class of 
    \begin{equation}
    ([\id], 0);
    \end{equation} 
    \item disjoint union as symmetric monoidal structure;
\end{itemize}
There is more coherence data which needs to be specified such as the associativity $2$-morphisms for horizontal composition, however we will skip these details, see for example \autocite[App.\,B \& C]{DeRenzi2022nonssext1} as well as \autocite[Rem.\,2.2]{FSY2023RCFTstring1} for more details. Alternatively one could also define $\bord_{2+\epsilon,2,1}^{\chi}$ as the subcategory of the bordism category defined in \autocite[Sec.\,2.2]{DeRenzi2022nonssext1} with trivial colourings and only mapping cylinders as $2$-morphisms.

The endomorphisms of a surface $\Sigma$ form a central extension of the pure mapping class group of $\Sigma$ by $\Z$, where the extension corresponds to the signature defects, see also \autocite[Sec.\,3.1]{DGGPR20}.
Note here that we could define the $2$-morphisms as diffeomorphism classes of mapping cylinders for diffeomorphisms instead. This is because elements in the isotopy class of $f$ are in one to one correspondence to elements in the diffeomorphism class of the mapping cylinder $C_f$ of $f$.

In the following we will often suppress the Lagrangian subspaces and signature defects when they are not directly relevant. 

Before coming to the target $2$-category, recall that the Deligne tensor product $\calA \boxtimes \calB$ of two finite linear categories $\calA$ and $\calB$, see \Cref{ssec:3mf-inv}, also satisfies an analogous universal property for left exact functors, by the equivalence of left exact and right exact functors \autocite[Thm.\,3.2]{FSS19eilenbergwatts}, see also the discussion in \autocite[Sec.\,2.4]{BW22modfunc}.

The symmetric monoidal $2$-category $\cat{P}\mathrm{rof}_{\mathbbm{k}}^{\coend \mathrm{ex}}$ of \emph{linear, left exact profunctors} consists of
\begin{itemize}
    \item objects; finite linear categories;
    \item $1$-morphisms: for objects $\cat{A}$ and $\cat{B}$ a $1$-morphism from $\cat{A}$ to $\cat{B}$ is a left exact linear profunctor from $\cat{A} \slashedrightarrow \cat{B}$, i.e.\ a left exact linear functor $\cat{A}^{\mathrm{op}} \boxtimes \cat{B} \to \vect$;
    \item $2$-morphisms: natural transformations;
    \item horizontal composition: for $1$-morphisms $F \colon \cat{A} \slashedrightarrow \cat{B}$ and $G\colon \cat{B} \slashedrightarrow \calc$ the horizontal composition is the left exact coend
    \begin{align}
        G \diamond F (-,\sim) := \oint^{B \in \cat{B}}\! G(B,\sim) \otimes_{\mathbbm{k}} F(-,B)  \colon \cat{A} \slashedrightarrow \calc
    \end{align}
    where $-$ stands for an argument from the source category (in this case $\calA$) and $\sim$ for an argument from the target category (in this case $\calc$);
    \item identity $1$-morphism: for an object $\calA$ the identity $1$-morphism $\id_\calA \colon \calA \slashedrightarrow \calA$ is given by unique functor induced from the universal property of the Deligne product applied to the Hom functor $\Hom_\calA(-,\sim)$;
    \item vertical composition: standard vertical composition of natural transformations; 
    \item identity $2$-morphism: identity natural transformations;
    \item Deligne's tensor product of finite linear categories as the symmetric monoidal structure;
\end{itemize} 

For more details on profunctors see for example \autocite[Ch.\,5]{Loregian2021coendcalc}. 

Note that there is a $2$-functor from the $2$-category $\coend\mathrm{ex}_\kk$ of finite linear categories, left exact functors, and natural transformations to $\cat{P}\mathrm{rof}_{\mathbbm{k}}^{\coend \mathrm{ex}}$ which is the identity on objects, and sends a functor $F \colon \calA \to \calB$ to the profunctor 
$\Hom_{\calB}(F(-),\sim) \colon \calA \slashedrightarrow \calB$. It should be mentioned here that this is a contravariant $2$-functor, i.e.\ it reverses the direction of $2$-morphisms. Under our finiteness assumptions, this $2$-functor is actually an equivalence by Eilenberg-Watts, see \autocite[Lem.\,3.2]{Shimizu2016unimod}. 
The reason we use $\cat{P}\mathrm{rof}_{\mathbbm{k}}^{\coend \mathrm{ex}}$ as a target rather than $\coend\mathrm{ex}_\kk$ is that the former appears more naturally when we define the modular functor on $1$- and $2$-morphisms. 

In the following we will often use $-$ for both the source and target argument as it will be clear from the context which one is which, e.g.\ we will write $\Hom_\calA(-,-)$ instead of $\Hom_\calA(-,\sim)$.

\begin{definition} \label{def:modfuncchiral}
    A \emph{chiral modular functor} is a symmetric monoidal $2$-functor 
\begin{align}
    \mathrm{Bl}^{\chi}\colon\bord^{\chi}_{2+\epsilon,2,1} \to \cat{P}\mathrm{rof}_{\mathbbm{k}}^{\coend \mathrm{ex}}
\end{align}
from the symmetric monoidal $2$-category of two-dimensional chiral bordisms to the symmetric monoidal $2$-category of left exact profunctors.
\end{definition}

Throughout the rest of this section let $\calc$ be a fixed modular tensor category and denote with
\begin{align}
\widehat{\rmV}_{\calc} \colon \widehat{\bord}_{3,2}^{\chi}(\calc) \to \vect
\end{align}
the version of the TFT constructed from $\calc$ with admissibility condition on bordism components disjoint from the outgoing boundary. 

We will now construct a chiral modular functor using the TFT $\widehat{\rmV}_{\calc}$,
which we will call the \emph{chiral block functor} of $\calc$,
\begin{equation}
    \Blc\colon\bord^{\chi}_{2+\epsilon,2,1} \to \cat{P}\mathrm{rof}_{\mathbbm{k}}^{\coend \mathrm{ex}}.
\end{equation}
To do this we will first define the action of $\Blc$ on objects, $1$-morphisms, and $2$-morphisms, in \Cref{ssec:modfunc1mor} and \Cref{ssec:modfunc2mor}. By studying the gluing of surfaces in \Cref{ssec:modfunc3mor} we will then show that these assignments assemble into a symmetric monoidal $2$-functor in \Cref{thm:chiralmf}. 

We want to emphasise here that the definition of chiral modular functors and the following construction are $2$-categorical versions of the ones given in \autocite[Ch.\,5]{BK2001modular} if we restrict to modular fusion categories. The main technical difference lies in the treatment of gluing of surfaces which is more subtle here due to the appearance of coends.

\subsection{Block functors}\label{ssec:modfunc1mor}

\begin{definition}[Chiral block functor on objects]
For an object $\Gamma \in \bord_{2+\epsilon,2,1}^{\chi}$ we define $\Blc\left(\Gamma \right)$ to be the Deligne product over its connected components
\begin{equation}
    \Blc\left(\Gamma \right):= \calc^{\boxtimes \pi_0(\Gamma)} ,
\end{equation}
where we index the factors directly by the connected components $\pi_0(\Gamma)$ of $\Gamma$.
\end{definition}

In particular $\Blc(S^1) = \calc$, and by convention the empty product is $\vect$, so that $\Blc(\varnothing) := \vect$. 

Next we construct the profunctors describing the value on $1$-morphisms, called \emph{conformal block functors} in \autocite{FSY2023RCFTstring1}. 

Let $(\Sigma,\lambda) \colon \Gamma \to \Gamma'$ be a $1$-morphism in $\bord_{2+\epsilon,2,1}^{\chi}$. According to the above prescription on objects we want to construct a left exact functor 
\begin{equation}
\Blc\big((\Sigma,\lambda)\big) \colon (\calc^{\mathrm{op}})^{\boxtimes \pi_0(\Gamma)} \boxtimes \calc^{\boxtimes \pi_0(\Gamma')} \to \vect.
\end{equation}
First let us fix lists of objects $\underline{X} \in \calc^{\times \pi_0(\Gamma)}$ and $\underline{Y} \in \calc^{\times \pi_0(\Gamma')}$, where we again index the factors directly by the connected components.
We turn $(\Sigma,\lambda)$ into an object of $\widehat{\bord}_{3,2}^{\chi}(\calc)$ by gluing $|\pi_0(\Gamma)|+|\pi_0(\Gamma')|$ standard unit disks $D^2 \subset \mathbb{C}$ with specific marked points at the origin into the boundary of $\Sigma$. For any incoming boundary component $\gamma \in \pi_0(\Gamma)$, we mark the origin of the disk with a \emph{negatively oriented} point labelled with the corresponding element $X_{\gamma}$ in $\underline{X}$. For any outgoing boundary component $\gamma' \in \pi_0(\Gamma')$, we mark the origin of the disk with a \emph{positively oriented} point labelled with $Y_{\gamma'}$ in $\underline{Y}$. 
As tangent vectors at these points we use the unit vector along the positive real axis for positively oriented points, and the one along the negative real axis for negatively oriented points.
From this procedure we obtain an object
\begin{align}
    \underline{\Sigma}^{(\underline{X};\underline{Y})} := (\overline{\Sigma},   
    P_{(\underline{X},\underline{Y})},\lambda) \in \widehat{\bord}_{3,2}^{\chi}(\calc) \ ,
\end{align} 
see Figure\,\ref{fig:genus2} for an illustration.
\begin{figure}[t]
    \centering
    \begin{subfigure}{0.42\textwidth}
    \centering
        \begin{tikzpicture}
        \filldraw[fill=lightgray!70!white, draw=black, thick] (-1,-1) ellipse (2.5 and 1.5);
        \filldraw[white,opacity=0.7] (-1,-0.3) circle (0.25);
        \draw[thick,dashed] (-1,-0.3) circle (0.25);
        \filldraw[white,opacity=0.7] (-1.9,-1.7) circle (0.25);
        \draw[thick,dashed] (-1.9,-1.7) circle (0.25);
        \filldraw[white,opacity=0.7] (0,-1.7) circle (0.25);
        \draw[thick,dashed] (0,-1.7) circle (0.25);
       \filldraw[fill=white, draw=black, thick] (-1.5,-1) arc (0:-180:0.4 and 0.2);
       \fill[lightgray!70!white] (-2.3,-0.9) rectangle (-1.5,-1.09);
       \filldraw[fill=white, draw=black, thick] (-1.55,-1.1) arc (0:180:0.35 and 0.15);
       \draw[black, thick] (-1.5,-1) arc (0:-180:0.4 and 0.2);
       \filldraw[fill=white, draw=black, thick] (0.4,-1) arc (0:-180:0.4 and 0.2);
       \fill[lightgray!70!white] (-0.4,-0.9) rectangle (0.4,-1.09);
       \filldraw[fill=white, draw=black, thick] (0.35,-1.1) arc (0:180:0.35 and 0.15);
       \draw[black, thick] (0.4,-1) arc (0:-180:0.4 and 0.2);
   \end{tikzpicture} 
    \end{subfigure}
    \hspace{-0.5cm}
    \begin{subfigure}{0.15\textwidth}
        \centering
        \begin{tikzpicture}[baseline=0.3cm]
        \draw[thick,->](0,2) -- (1.5,2);
        \end{tikzpicture}
    \end{subfigure}
    \hspace{-0.5cm}
    \begin{subfigure}{0.42\textwidth}
    \centering
        \begin{tikzpicture}
        \filldraw[fill=lightgray!70!white, draw=black, thick] (-1,-1) ellipse (2.5 and 1.5);
        \filldraw[string-blue] (-1,-0.3) circle (0.05);
        \draw[string-blue,->] (-1,-0.3) -- (-0.75,-0.3);
        \filldraw[string-blue] (-1.9,-1.7) circle (0.05);
        \draw[string-blue,->] (-1.9,-1.7) -- (-2.15,-1.7);
        \filldraw[string-blue] (0,-1.7) circle (0.05);
        \draw[string-blue,->] (0,-1.7) -- (-0.25,-1.7);
       \filldraw[fill=white, draw=black, thick] (-1.5,-1) arc (0:-180:0.4 and 0.2);
       \fill[lightgray!70!white] (-2.3,-0.9) rectangle (-1.5,-1.09);
       \filldraw[fill=white, draw=black, thick] (-1.55,-1.1) arc (0:180:0.35 and 0.15);
       \draw[black, thick] (-1.5,-1) arc (0:-180:0.4 and 0.2);
       \filldraw[fill=white, draw=black, thick] (0.4,-1) arc (0:-180:0.4 and 0.2);
       \fill[lightgray!70!white] (-0.4,-0.9) rectangle (0.4,-1.09);
       \filldraw[fill=white, draw=black, thick] (0.35,-1.1) arc (0:180:0.35 and 0.15);
       \draw[black, thick] (0.4,-1) arc (0:-180:0.4 and 0.2);
       \node at (-1,-0.3) [string-blue,below] {\scriptsize $(Y,+)$};
       \node at (-1.9,-1.7) [string-blue,below] {\scriptsize $(X_1,-)$};
       \node at (0,-1.7) [string-blue,below] {\scriptsize $(X_2,-)$};
   \end{tikzpicture} 
    \end{subfigure}
    \caption{Turning a genus two surface with two incoming and one outgoing boundary circles $\Sigma_2^{(2,1)}$ into the object $\underline{\Sigma}_2^{(X_1,X_2;Y)}$.}
    \label{fig:genus2}
\end{figure}
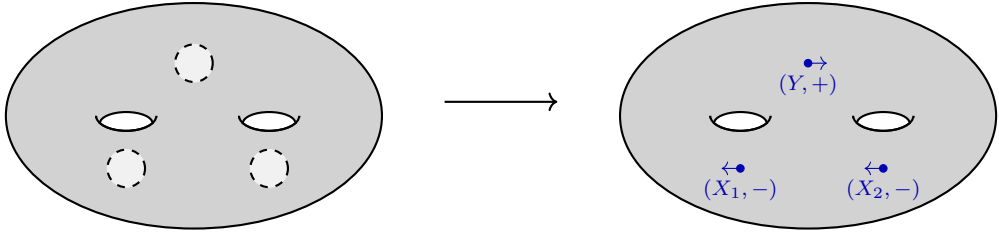

Now for lists of objects $\underline{X}' \in \calc^{\times \pi_0(\Gamma)}$ and $\underline{Y}' \in \calc^{\times \pi_0(\Gamma')}$ and morphisms $f_\gamma \colon X_\gamma' \to X_\gamma$ and $g_{\gamma'} \colon Y_{\gamma'} \to Y_{\gamma'}'$ with $\gamma \in \pi_0(\Gamma)$ and $\gamma' \in \pi_0(\Gamma')$ we define a bordism $\underline{C}^{(\underline{f};\underline{g})}$ in $\widehat{\bord}_{3,2}(\calc)$ as the class of $(\overline{\Sigma} \times [0,1], T_{(\underline{f};\underline{g})},0)$ with embedded ribbon graph $T_{(\underline{f};\underline{g})}$ given by locally embedding $f_\gamma$ (respectively $g_{\gamma'}$) in a cylinder $D_2 \times I$ over the boundary component $\gamma$ (respectively $\gamma'$), see \Cref{fig:mfmorph} for a local illustration.
\begin{figure}[t]
\centering
    \begin{subfigure}{0.45\linewidth}
        \centering
        \begin{tikzpicture}
        \filldraw[fill=lightgray!40!white, opacity=0.7, draw=black, dotted] (0,0) ellipse (2 and 1);
        \draw[string-blue,thick] (0,0) -- (0,4);
        \begin{scope}[very thick,decoration={
                      markings,
                      mark=at position 0.52 with {\arrow{>}}}
                     ] 
            \draw[string-blue,thick,postaction={decorate}]  (0,2) -- (0,0);
            \draw[string-blue,thick,postaction={decorate}]  (0,4) -- (0,2);
        \end{scope}
        \filldraw[fill=white,draw=string-blue] (-0.3,1.8) -- (-0.3,2.2) -- (0.3,2.2) -- (0.3,1.8) -- cycle;
        \draw[dashed] (0,0) ellipse (0.6 and 0.3)
        (0,4) ellipse (0.6 and 0.3)
        (-0.6,0) -- (-0.6,4)
        (0.6,0) -- (0.6,4);
        \filldraw[fill=lightgray!40!white, opacity=0.7, draw=black, dotted]
        (0,4) ellipse (2 and 1);
        \draw[dotted] (-2,0) -- (-2,4)
        (2,0) -- (2,4);
        \draw[dashed] (0,0) ellipse (0.6 and 0.3)
        (0,4) ellipse (0.6 and 0.3);
        \filldraw[string-blue] (0,0) circle (0.05);
        \draw[string-blue,->] (0,0) -- (-0.25,0);
        \filldraw[string-blue] (0,4) circle (0.05);
        \draw[string-blue,->] (0,4) -- (-0.25,4);
        \node at (0,0) [string-blue,right] {\scriptsize $X_{\gamma}$};
        \node at (0,4) [string-blue,right] {\scriptsize $X_{\gamma}'$};
        \node at (0,2) [string-blue] {\scriptsize $f_{\gamma}$};
        \node at (-1.3,0) {\scriptsize $\underline{\Sigma}^{\underline{X};\underline{Y}}$};
        \node at (-1.3,4) {\scriptsize $\underline{\Sigma}^{\underline{X}';\underline{Y}'}$};
        \node at (0.3,-0.45) {\small $\gamma$};
        \node at (0.3,3.55) {\small $\gamma$};
   \end{tikzpicture} 
        \caption{}
    \end{subfigure}
    \hspace{0.5cm}
    \begin{subfigure}{0.45\textwidth}
        \centering
        \begin{tikzpicture}
    \begin{scope}[xshift=1cm,yshift=1cm]
        \filldraw[fill=lightgray!40!white, opacity=0.7, draw=black, dotted] (-1,-1) ellipse (2 and 1);
        \draw[string-blue,thick] (-1,-1) -- (-1,3);
        \begin{scope}[very thick,decoration={
                      markings,
                      mark=at position 0.52 with {\arrow{>}}}
                     ] 
            \draw[string-blue,thick,postaction={decorate}]  (-1,-1) -- (-1,1);
            \draw[string-blue,thick,postaction={decorate}]  (-1,1) -- (-1,3);
        \end{scope}
        \filldraw[fill=white,draw=string-blue] (-1.3,0.8) -- (-1.3,1.2) -- (-0.7,1.2) -- (-0.7,0.8) -- cycle;
        \draw[dashed] (-1,-1) ellipse (0.6 and 0.3)
        (-1,3) ellipse (0.6 and 0.3)
        (-1.6,-1) -- (-1.6,3)
        (-0.4,-1) -- (-0.4,3);
        \filldraw[fill=lightgray!40!white, opacity=0.7, draw=black, dotted]
        (-1,3) ellipse (2 and 1);
        \draw[dotted] (-3,-1) -- (-3,3)
        (1,-1) -- (1,3);
        \draw[dashed] (-1,-1) ellipse (0.6 and 0.3)
        (-1,3) ellipse (0.6 and 0.3);
        \filldraw[string-blue] (-1,-1) circle (0.05);
        \draw[string-blue,->] (-1,-1) -- (-0.75,-1);
        \filldraw[string-blue] (-1,3) circle (0.05);
        \draw[string-blue,->] (-1,3) -- (-0.75,3);
        \node at (-0.9,-1) [string-blue,left] {\scriptsize $Y_{\gamma'}$};
        \node at (-0.9,3) [string-blue,left] {\scriptsize $Y_{\gamma'}'$};
        \node at (-0.95,1) [string-blue] {\scriptsize $g_{\gamma'}$};
        \node at (-2.3,-1) {\scriptsize $\underline{\Sigma}^{\underline{X};\underline{Y}}$};
        \node at (-2.3,3) {\scriptsize $\underline{\Sigma}^{\underline{X}';\underline{Y}'}$};
        \node at (-0.7,-1.45) {\small $\gamma'$};
        \node at (-0.7,2.55) {\small $\gamma'$};
        \end{scope}
   \end{tikzpicture} 
        \caption{}
    \end{subfigure}
    \caption{Local ribbon graphs for (a) incoming boundary $\gamma$ and (b) outgoing boundary components $\gamma'$.}\label{fig:mfmorph}
\end{figure}
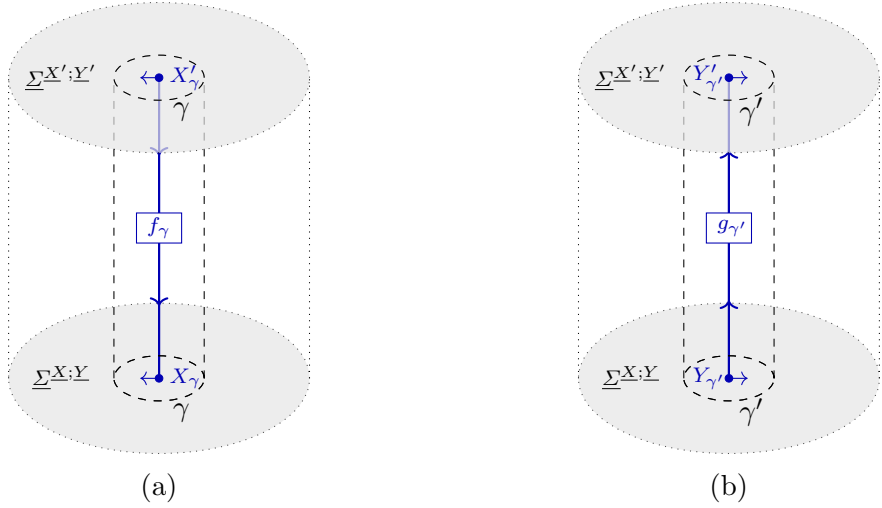
Note that $\underline{C}^{(\underline{f};\underline{g})}$ is indeed admissible.
\begin{lemma}\label{lem:bllex}
    The assignment
\begin{equation}\label{eq:bllex-def}
    \begin{aligned}
     \mathrm{bl}^{\chi}_{\calc} \big((\Sigma,\lambda)\big)
     \colon (\calc^{\mathrm{op}})^{\times\pi_0(\Gamma)} \times \calc^{\times \pi_0(\Gamma')} &\to \vect \\    
     (\underline{X};\underline{Y}) &\mapsto \widehat{\rmV}_\calc(\underline{\Sigma}^{(\underline{X};\underline{Y})}), \\
    (\underline{f};\underline{g}) &\mapsto \widehat{\rmV}_\calc(\underline{C}^{(\underline{f};\underline{g})}).
    \end{aligned}
\end{equation}
defines a functor left exact in each argument.
\end{lemma}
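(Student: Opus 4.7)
The plan is to separate the claim into functoriality and left exactness in each argument. The first part uses only the axioms of the TFT $\widehat{\rmV}_{\calc}$ together with elementary properties of the mapping cylinders $\underline{C}^{(\underline{f};\underline{g})}$, while the second part reduces to the algebraic model of state spaces from \Cref{sec:algstatespace}.

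For functoriality, the first step is to observe that $\underline{C}^{(\underline{\id};\underline{\id})}$ equals the identity morphism $\id_{\underline{\Sigma}^{(\underline{X};\underline{Y})}}$ in $\widehat{\bord}_{3,2}^{\chi}(\calc)$: the underlying $3$-manifold is the cylinder $\overline{\Sigma} \times I$, the ribbon graphs in the local tubes over the punctures consist of straight ribbons with no coupons, and the signature defect is zero. Next, for composable pairs $(\underline{f};\underline{g})$ and $(\underline{f}';\underline{g}')$, I would show that stacking the two bordisms yields $\underline{C}^{(\underline{f'f};\underline{g'g})}$: the two coupons in each local cylinder can be isotoped together and labelled with the composed morphism in $\calc$, and the Maslov correction appearing in \eqref{eq:anomaly-morphism-composition} vanishes because for stacked cylinders all three Lagrangian subspaces involved coincide with $\lambda$. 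Functoriality of $\widehat{\rmV}_{\calc}$ then finishes this half of the argument.

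For left exactness, the first move is to reduce to the connected case by invoking monoidality of $\widehat{\rmV}_{\calc}$: the tensor product over $\mathbbm{k}$ of functors that are left exact in each argument is again left exact in each argument. For connected $\Sigma$ of genus $g$, the isomorphism $\Phi_{\Sigma}^{(\underline{X};\underline{Y})}$ from \Cref{sec:algstatespace} identifies $\widehat{\rmV}_{\calc}(\underline{\Sigma}^{(\underline{X};\underline{Y})})$ with $\Hom_{\calc}(\coend^{\otimes g} \otimes X_1 \otimes \ldots \otimes X_p, Y_1 \otimes \ldots \otimes Y_q)$. The key technical point is that this family of isomorphisms is natural in $(\underline{X};\underline{Y})$, so that under it $\widehat{\rmV}_{\calc}(\underline{C}^{(\underline{f};\underline{g})})$ corresponds to the evident map given by pre-composition with $\id_{\coend^{\otimes g}} \otimes f_1 \otimes \ldots \otimes f_p$ and post-composition with $g_1 \otimes \ldots \otimes g_q$. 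Granting naturality, left exactness in each argument is immediate from left exactness of $\Hom_{\calc}$ combined with biexactness of $\otimes$ in a finite tensor category.

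The main obstacle I expect is verifying naturality of $\Phi_{\Sigma}^{(\underline{X};\underline{Y})}$. The plan is to glue $\underline{C}^{(\underline{f};\underline{g})}$ on top of the standard handlebody of \Cref{fig:standard-handle-body} and observe that the resulting bordism is again diffeomorphic to a standard handlebody, now with ribbon graph obtained from $T_h$ by inserting the coupons $f_i$ at the bottom of the $X_i$-strands and $g_j$ at the top of the $Y_j$-strands, i.e.\ the graph $T_{(g_1 \otimes \ldots \otimes g_q) \circ h \circ (\id_{\coend^{\otimes g}} \otimes f_1 \otimes \ldots \otimes f_p)}$. Applying $\widehat{\rmV}_{\calc}$ and using functoriality then yields the required naturality square; here one has to be careful to check that no spurious signature defect arises from the gluing, which again follows from the Maslov index argument used above. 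Everything else is routine.
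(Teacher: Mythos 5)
Your proposal is correct and follows essentially the same route as the paper: functoriality is deduced from functoriality of $\widehat{\rmV}_{\calc}$, and left exactness is obtained by reducing to the connected case via monoidality and then identifying $\mathrm{bl}^{\chi}_{\calc}(\Sigma)$ with a representable functor through the natural isomorphisms $\Phi_{\Sigma}^{(\underline{X};\underline{Y})}$. The only difference is that you spell out the steps the paper leaves implicit (the identity/composition checks for the cylinders, the vanishing Maslov indices, and the gluing-onto-handlebody argument establishing naturality of $\Phi$), all of which are carried out correctly.
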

\begin{proof}
    Functoriality follows from the functoriality of $\widehat{\rmV}_\calc$, note that contravariance in the incoming components comes from the orientation of the ribbons.
    
    For left exactness let us first consider the case where $\Sigma$ is connected. Let $p = |\pi_0(\Gamma)|$ and $q = |\pi_0(\Gamma')|$ and let us fix an explicit choice of ordering of boundary components. Next recall the family of isomorphisms 
    \begin{equation}
    \begin{aligned}\label{eq:isoblockhom}
        \Phi_{\Sigma}^{ (\underline{X};\underline{Y})} \colon \Hom_{\calc}(\coend^{\otimes g} \otimes X_1 \otimes \dots \otimes X_p\,, Y_1 \otimes \dots \otimes Y_q ) \to \widehat{\rmV}_\calc  
        \left(\underline{\Sigma}_g^{(\underline{X};\underline{Y})}\right),
    \end{aligned}
    \end{equation}
    from Section\,\ref{sec:algstatespace}.
    A straightforward computation shows that this family defines a natural isomorphism 
    \begin{equation}
    \Hom_{\calc}(\coend^{\otimes g} \otimes - \otimes \dots \otimes -, - \otimes \dots \otimes - ) \Longrightarrow \mathrm{bl}^{\chi}_{\calc}\big((\Sigma,\lambda)\big)(-,\dots,-;-,\dots,-).
    \end{equation}
    Thus $\mathrm{bl}^{\chi}_{\calc}\big((\Sigma,\lambda)\big)$ is representable and therefore in particular left exact in each argument. 
    For the general case it suffices to consider two connected components, i.e.\ $\Sigma = \Sigma_1 \disjun \Sigma_2$ with $\Sigma_1$ and $\Sigma_2$ connected. From the monoidality of $\widehat{\rmV}_\calc$ we get 
    \begin{equation}
        \mathrm{bl}^{\chi}_{\calc}(\Sigma) = \mathrm{bl}^{\chi}_{\calc}(\Sigma_1 \disjun \Sigma_2) \cong \mathrm{bl}^{\chi}_{\calc}(\Sigma_1) \otimes_{\mathbbm{k}} \mathrm{bl}^{\chi}_{\calc}(\Sigma_2).
    \end{equation}
    As $\mathrm{bl}^{\chi}_{\calc}(\Sigma_1)$ and $\mathrm{bl}^{\chi}_{\calc}(\Sigma_2)$ are left exact by the above argument also $\mathrm{bl}^{\chi}_{\calc}(\Sigma)$ is left exact.
    \end{proof}
\begin{definition}[Chiral block functor on 1-morphisms]
For $(\Sigma,\lambda) \colon \Gamma \to \Gamma'$ we define 
    \begin{align}
        \Blc\big((\Sigma,\lambda)\big) \colon (\calc^{\mathrm{op}})^{\boxtimes \pi_0(\Gamma)} \boxtimes  \calc^{\boxtimes  \pi_0(\Gamma')} \to \vect
    \end{align}
to be the functor induced by $\mathrm{bl}^{\chi}_{\calc}\big((\Sigma,\lambda)\big)$ in \eqref{eq:bllex-def} on the Deligne product.
\end{definition}

\begin{corollary}
    For any pair of $1$-morphisms $\Sigma$ and $\Sigma'$ the chiral block functor satisfies
\begin{equation}
    \mathrm{Bl}^{\chi}_{\calc}(\Sigma \sqcup \Sigma') \cong \mathrm{Bl}^{\chi}_{\calc}(\Sigma) \otimes_{\mathbbm{k}} \mathrm{Bl}^{\chi}_{\calc}(\Sigma').
\end{equation}
\end{corollary}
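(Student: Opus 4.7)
The plan is to reduce the claim to the symmetric monoidality of the TFT $\widehat{\rmV}_\calc$ from \autocite[Thm.\,4.12]{DGGPR19} combined with the universal property of the Deligne product. Let $(\Sigma,\lambda) \colon \Gamma_1 \to \Gamma_1'$ and $(\Sigma',\lambda') \colon \Gamma_2 \to \Gamma_2'$ be two $1$-morphisms. The canonical identifications $\pi_0(\Gamma_1 \sqcup \Gamma_2) = \pi_0(\Gamma_1) \sqcup \pi_0(\Gamma_2)$ and $\pi_0(\Gamma_1' \sqcup \Gamma_2') = \pi_0(\Gamma_1') \sqcup \pi_0(\Gamma_2')$ let me split any label $(\underline X;\underline Y)$ for $\Sigma \sqcup \Sigma'$ into $(\underline X_1;\underline Y_1)$ for $\Sigma$ and $(\underline X_2;\underline Y_2)$ for $\Sigma'$.

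Next I would observe that the disk-insertion procedure producing the object $\underline{\Sigma}^{(\underline X;\underline Y)} \in \widehat{\bord}_{3,2}^{\chi}(\calc)$ and the cylinder bordism $\underline{C}^{(\underline f;\underline g)}$ representing morphisms is local to each boundary component, hence compatible with disjoint unions. Explicitly,
\begin{equation*}
\underline{(\Sigma \sqcup \Sigma')}^{(\underline X;\underline Y)} = \underline{\Sigma}^{(\underline X_1;\underline Y_1)} \disjun \underline{\Sigma'}^{(\underline X_2;\underline Y_2)}
\end{equation*}
as objects of $\widehat{\bord}_{3,2}^{\chi}(\calc)$, with Lagrangian subspaces combining as $\lambda \oplus \lambda'$, and the analogous identity holds for the bordisms $\underline{C}^{(\underline f;\underline g)}$. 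Applying the symmetric monoidal structure of $\widehat{\rmV}_\calc$ then produces a natural isomorphism of multilinear functors
\begin{equation*}
\mathrm{bl}^{\chi}_{\calc}(\Sigma \sqcup \Sigma') \,\cong\, \mathrm{bl}^{\chi}_{\calc}(\Sigma) \otimes_{\kk} \mathrm{bl}^{\chi}_{\calc}(\Sigma')
\end{equation*}
between functors $(\calc^{\mathrm{op}})^{\times \pi_0(\Gamma_1 \sqcup \Gamma_2)} \times \calc^{\times \pi_0(\Gamma_1' \sqcup \Gamma_2')} \to \vect$ that are left exact in each argument.

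Finally, to promote this identity to an isomorphism of profunctors I would invoke the universal property of the Deligne product, as recalled in \Cref{ssec:chiral-bord-prof}: both sides of the desired isomorphism arise as the essentially unique left exact extension of a functor that is left exact in each argument. The remaining bookkeeping is to check that taking these left exact extensions commutes with forming outer Deligne products, which reduces to an iterated application of \autocite[Prop.\,1.11.2]{EGNO}. I do not expect a genuine obstacle here; the only care required is to track the ordering of the boundary-indexed factors in the Deligne products $\calc^{\boxtimes \pi_0(-)}$ and the induced permutation isomorphisms, but these are absorbed into the symmetric monoidal coherence of $\cat{P}\mathrm{rof}_{\mathbbm{k}}^{\coend \mathrm{ex}}$ and will reappear when this corollary is upgraded in \Cref{thm:chiralmf} to the full monoidality of $\Blc$ as a $2$-functor.
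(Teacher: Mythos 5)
Your proposal is correct and follows essentially the same route as the paper: the key identity $\mathrm{bl}^{\chi}_{\calc}(\Sigma \sqcup \Sigma') \cong \mathrm{bl}^{\chi}_{\calc}(\Sigma) \otimes_{\mathbbm{k}} \mathrm{bl}^{\chi}_{\calc}(\Sigma')$ is extracted from the symmetric monoidality of $\widehat{\rmV}_{\calc}$ applied to the disjoint union of decorated surfaces and cylinder bordisms, exactly as the paper does inside the proof of the left-exactness lemma, and the passage to the Deligne product is the same appeal to its universal property. Your version merely spells out the locality of the disk-insertion construction and the bookkeeping of factors, which the paper leaves implicit.
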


\subsection{Mapping class group actions}\label{ssec:modfunc2mor}

Let us now turn to the $2$-morphism level. Let $(\Sigma,\lambda)$ and $(\Sigma',\lambda')$ be $1$-morphisms from $\Gamma$ to $\Gamma'$ and let $([f],n)$ be a $2$-morphism from $(\Sigma,\lambda)$ to $(\Sigma',\lambda')$. We construct decorated surfaces $\underline{\Sigma}^{(\underline{X};\underline{Y})}$ and $\underline{\Sigma'}^{(\underline{X};\underline{Y})}$ as above for $\underline{X} \in \calc^{\times \pi_0(\Gamma)}$ and $\underline{Y} \in \calc^{\times \pi_0(\Gamma')}$. The diffeomorphism $f$ underlying $([f],n)$ gives rise to a $3$-manifold with corners, its mapping cylinder $C_f$. This only depends on the isotopy class of $[f]$ if we are considering the diffeomorphism class of the $3$-manifold $C_f$. Analogously to the construction of $\mathrm{Bl}^{\chi}_{\calc}$ on surfaces we can glue in solid cylinders with embedded ribbon graph labelled by the $\underline{X} \in \calc^{\times \pi_0(\Gamma)}$ and $\underline{Y} \in \calc^{\times \pi_0(\Gamma')}$ to obtain a family of morphisms $(\underline{C}_f^{(\underline{X};\underline{Y})},n) \colon \underline{\Sigma}^{(\underline{X};\underline{Y})} \to \underline{\Sigma'}^{(\underline{X};\underline{Y})}$ in $\widehat{\bord}_{3,2}^{\chi}(\calc)$. Applying the TFT $\widehat{\rmV}_\calc$ gives us a family of linear maps 
\begin{align}
    \widehat{\rmV}_\calc((\underline{C}_f^{(\underline{X};\underline{Y})}),n) \colon \mathrm{bl}^{\chi}_{\calc}(\Sigma)(\underline{X};\underline{Y}) \to \mathrm{bl}^{\chi}_{\calc}(\Sigma')(\underline{X};\underline{Y}),
\end{align}
which is natural in the labels by construction. We now set
\begin{equation}
    \mathrm{bl}^{\chi}_{\calc}\big(([f],n)\big) := 
    \left(\widehat{\rmV}_\calc((\underline{C}_f^{(\underline{X};\underline{Y})}),n)\right)_{\underline{X} \in \calc^{\times \pi_0(\Gamma)},\underline{Y} \in \calc^{\times \pi_0(\Gamma')}}.
\end{equation}

\begin{definition}[Chiral block functor on 2-morphisms]
For a 2-morphism $([f],n) \colon (\Sigma,\lambda) \Rightarrow (\Sigma',\lambda')$, 
\begin{equation}
    \Blc\big(([f],n)\big) \colon \Blc\big((\Sigma,\lambda)\big) \Rightarrow \Blc\big((\Sigma',\lambda')\big)
\end{equation} 
is defined as the natural isomorphism induced by $\mathrm{bl}^{\chi}_{\calc}\big(([f],n)\big)$ on the Deligne product. 
\end{definition}

\begin{remark}\label{rem:gluing}
\begin{enumerate}[(1)]
\item Let us briefly discuss how this is related to \autocite{DGGPR20}. First note that any $2$-endomorphism $[f]$ of $\Sigma$ gives an element in (a central extension of) the mapping class group of the decorated surface $\underline{\Sigma}^{(\underline{X};\underline{Y})}$ discussed in \autocite[Sec.\,3.1]{DGGPR20} because the underlying diffeomorphism $f$ needs to be compatible with the boundary parametrisation, see also \autocite[Prop.\,5.1.8]{BK2001modular}. Thus we can associate it to a mapping cylinder endomorphism of $\underline{\Sigma}^{(\underline{X};\underline{Y})}$ defined in \autocite[Sec.\,3.2]{DGGPR20}, however this needs to be in the same diffeomorphism class as $(\underline{C}_f^{(\underline{X};\underline{Y})},n)$ by \autocite[Prop.\,5.1.8]{BK2001modular}. Thus our construction and the one of \autocite{DGGPR20} are compatible with each other. 
\item Recall from \Cref{sec:tqftcon} that the signature defect $n$ enters in the construction of $\widehat{\rmV}_\calc$ via the coefficient $\delta$, defined in equation \ref{eq:defdelta}. If $\delta = 1$, i.e.\ if $\calc$ is anomaly free, then $\widehat{\rmV}_\calc$, and thus $\mathrm{Bl}_{\calc}^{\chi}$, will give genuine linear representations of the unextended mapping class group, see also \autocite[Sec.\,3.2]{DGGPR20}.
\end{enumerate}
    
\end{remark}

\subsection{Gluing of surfaces}\label{ssec:modfunc3mor}

We now turn to the compatibility of $\mathrm{Bl}^{\chi}_{\calc}$ with horizontal composition, which means for $\Sigma_1 \colon \Gamma_1 \to \Gamma$ and $\Sigma_2 \colon \Gamma \to \Gamma_2$ composable $1$-morphisms we want to show that the profunctor $\mathrm{Bl}^{\chi}_{\calc}(\Sigma_1 \sqcup_{\Gamma} \Sigma_2)$ is naturally isomorphic to the composition $\Blc(\Sigma_2)\diamond \Blc(\Sigma_1)$.
Note that to include situations where we do not want to glue along the whole outgoing boundary of $\Sigma_1$ or the whole incoming boundary of $\Sigma_2$ we can always modify the $\Sigma_i$ by taking the disjoint union with sufficiently many copies of the cylinder $S^1\times I$. 

Let $\underline{W} \in \calc^{\times \pi_0(\Gamma_1)}$, $\underline{X} \in \calc^{\times \pi_0(\Gamma)}$, and $\underline{Y} \in \calc^{\times \pi_0(\Gamma_2)}$ . We will employ the shorthand notation $\underline{\Sigma_1} \equiv \underline{\Sigma_1}^{(\underline{W};\underline{X})}$, $\underline{\Sigma_2} \equiv \underline{\Sigma_2}^{(\underline{X};\underline{Y})}$, and $\underline{\Sigma_1 \sqcup_{\Gamma} \Sigma_2} \equiv \underline{\Sigma_1 \sqcup_{\Gamma} \Sigma_2}^{(\underline{W};\underline{Y})}$. 

We define a family of bordisms 
\begin{equation}
    M_{\underline{X}} \colon \underline{\Sigma_1} \sqcup \underline{\Sigma_2} \to \underline{\Sigma_1 \sqcup_{\Gamma} \Sigma_2} 
\end{equation}
in $\widehat{\bord}_{3,2}^{\chi}(\calc)$ with underlying $3$-manifold
\begin{equation}
    M_{\underline{X}} = (\underline{\Sigma_1 \disjun \Sigma_2} \times I)/\sim
\end{equation}
where $\sim$ identifies on $\underline{\Sigma_1} \disjun \underline{\Sigma_2} \times \{1\}$ the disks glued into the components of $\Gamma$ on $\Sigma_1$ and $\Sigma_2$.

On handle bodies the action of the bordism $M_{\underline{X}}$ is the same as attaching a handle with an embedded $X_\gamma$-labelled ribbon to the neighbourhood of the marked points labelled with $X_\gamma$ for any $\gamma \in \pi_0(\Gamma)$. See Figure \ref{fig:gluingbordism} for a local visualisation.
\begin{figure}[t]
        \centering
        \begin{tikzpicture}[scale=0.8]
       \draw[thick] (-0.5,0) arc (90:450: 0.2 and 0.5);
       \draw[thick] (-0.5,0) .. controls (-2.5,0.3) .. (-3.5,1);
       \draw[thick,dotted] (-3.5,1) -- (-4,1.4);
       \draw[thick] (-0.5,-1) .. controls (-2.5,-1.3) .. (-3.5,-2);
       \draw[thick,dotted] (-3.5,-2) -- (-4,-2.4);
       \draw[thick] (1,0) arc (90:270: 0.2 and 0.5);
       \draw[thick,dotted] (1,-1) arc (270:450: 0.2 and 0.5);
       \draw[thick] (1,0) .. controls (3,0.3) .. (4,1);
       \draw[thick,dotted] (4,1) -- (4.5,1.4);
       \draw[thick] (1,-1) .. controls (3,-1.3) .. (4,-2);
       \draw[thick,dotted] (4,-2) -- (4.5,-2.4);
       \node at (-2.5,-0.5) {$\Sigma_1$};
       \node at (3,-0.5) {$\Sigma_2$};
       \node at (-0.5,-1.2) {$\Gamma$};
       \node at (1,-1.2) {$\Gamma$};
       \node at (0.25,1) {$\bord^{\chi}_{2+\epsilon,2,1}$};
       \draw[thick] (11,0.2) .. controls (9,0.2) .. (8,1);
       \draw[thick,dotted] (8,1) -- (7.5,1.4);
       \draw[thick] (11,-1.2) .. controls (9,-1.2) .. (8,-2);
       \draw[thick,dotted] (8,-2) -- (7.5,-2.4);
       \draw[thick] (11,0.2) .. controls (13,0.2) .. (14,1);
       \draw[thick,dotted] (14,1) -- (14.5,1.4);
       \draw[thick] (11,-1.2) .. controls (13,-1.2) .. (14,-2);
       \draw[thick,dotted] (14,-2) -- (14.5,-2.4);
       \node at (11.25,-0.5) {$\Sigma_1 \sqcup_{\Gamma} \Sigma_2$};
       \draw[thick] (-0.5,-7) arc (90:-90: 0.4 and 0.5);
       \draw[thick] (-0.5,-7) .. controls (-2.5,-6.7) .. (-3.5,-6);
       \draw[thick,dotted] (-3.5,-6) -- (-4,-5.6);
       \draw[thick] (-0.5,-8) .. controls (-2.5,-8.3) .. (-3.5,-9);
       \draw[thick,dotted] (-3.5,-9) -- (-4,-9.4);
       \filldraw[string-blue] (-0.1,-7.5) circle (0.05);
       \draw[thick] (1,-7) arc (90:270: 0.4 and 0.5);
       \draw[thick] (1,-7) .. controls (3,-6.7) .. (4,-6);
       \draw[thick,dotted] (4,-6) -- (4.5,-5.6);
       \draw[thick] (1,-8) .. controls (3,-8.3) .. (4,-9);
       \draw[thick,dotted] (4,-9) -- (4.5,-9.4);
       \filldraw[string-blue] (0.6,-7.5) circle (0.05);
       \node at (-2.5,-7.5) {$\underline{\Sigma_1}$};
       \node at (3,-7.5) {$\underline{\Sigma_2}$};
       \node at (0.25,-9) {$\widehat{\bord}^{\chi}_{3,2}(\calc)$};
       \node at (-0.1,-7.5) [string-blue,left] {\scriptsize $(X,+)$};
       \node at (0.6,-7.5) [string-blue,right] {\scriptsize $(X,-)$};
       \begin{scope}[xshift=0.5cm]
       \fill[lightgray] (7.5,-10) rectangle (14.5,-5);
       \fill[white] (7.4,-4.98) -- (11,-6.02) -- (14.6,-4.98) -- cycle;
       \fill[white] (11,-6) .. controls (9,-6) .. (7.5,-5)
        (11,-6) .. controls (13,-6) .. (14.5,-5)
        (14.5,-5) -- (7.5,-5);
       \draw[thick] (11,-6) .. controls (9,-6) .. (7.5,-5)
        (11,-6) .. controls (13,-6) .. (14.5,-5);
       \fill[white] (7.4,-10.02) -- (11,-8.98) -- (14.6,-10.02) -- cycle;
       \fill[white] (11,-9) .. controls (9,-9) .. (7.5,-10)
        (11,-9) .. controls (13,-9) .. (14.5,-10)
        (14.5,-5) -- (7.5,-5);
       \draw[thick] (11,-9) .. controls (9,-9) .. (7.5,-10)
        (11,-9) .. controls (13,-9) .. (14.5,-10);
       \fill[lightgray!40] (10.02,-7.01) -- (10.02,-7.99) -- (7.5,-8.48) -- (7.5,-6.52) --cycle;
       \filldraw[fill=lightgray!40,thick] (10,-7) arc (90:-90:0.4 and 0.5);
       \filldraw[fill=lightgray,thick]
        (10,-7) .. controls (8.5,-7) .. (7.5,-6.5)
        (10,-8) .. controls (8.5,-8) .. (7.5,-8.5);
       \fill[lightgray!40] (11.98,-7.01) -- (11.98,-7.99) -- (14.5,-8.48) -- (14.5,-6.52) --cycle;
        \filldraw[fill=lightgray!40,thick] (12,-7) arc (90:270:0.4 and 0.5);
       \filldraw[fill=lightgray,thick]
        (12,-7) .. controls (13.5,-7) .. (14.5,-6.5) 
        (12,-8) .. controls (13.5,-8) .. (14.5,-8.5);
        \filldraw[string-blue] (10.4,-7.5) circle (0.05)
        (11.6,-7.5) circle (0.05);
        \begin{scope}[very thick,decoration={
                      markings,
                      mark=at position 0.52 with {\arrow{>}}}
                     ] 
            \draw[string-blue,thick,postaction={decorate}] (11.6,-7.5) .. controls (11,-7.3) .. (10.4,-7.5);
        \end{scope}
       \node at (10.4,-7.5) [string-blue,left] {\scriptsize $X$};
       \node at (11.6,-7.5) [string-blue,right] {\scriptsize $X$};
       \node at (11.25,-9.5) {$M_{\underline{X}}$};
       \end{scope}
       \draw[thick,dashed,<->] (0.25,-6) -- (0.25,-2);
       \draw[thick,->] (4.5,-0.5) -- (7.5,-0.5);
       \draw[thick,->] (4.5,-7) -- (10,-2);
       \draw[thick,dashed,<->] (8,-4) .. controls (9,-4.5) and (10.5,-4) .. (11,-5.5);
       \node at (6,-0.5) [above] {glue along $\Gamma$};
   \end{tikzpicture} 
        \caption{Schematic visualisation of the gluing procedure to obtain a family of morphisms in $\widehat{\bord}_{3,2}^{\chi}(\calc)$.}\label{fig:gluingbordism}
\end{figure}
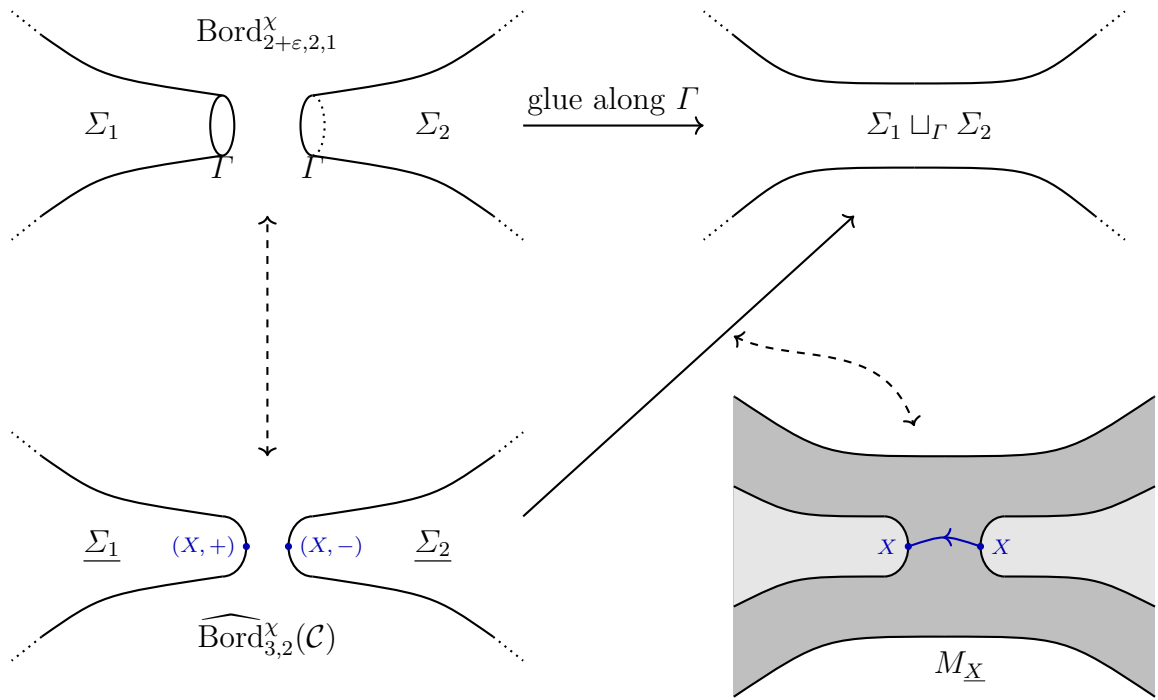

This family of bordisms is by definition dinatural in the labels $\underline{X}$ as well as natural in the $\underline{W}$ and $\underline{Y}$. In the rest of this section we will study the image of the so obtained family under the TFT $\widehat{V}_{\calc}$. 

Since gluing is a local procedure we can obtain $\Sigma_1 \sqcup_\Gamma \Sigma_2$ by consecutively gluing along each connected component of $\Gamma$ one at a time. Moreover the order in which these gluings are performed is irrelevant. Formally this amounts to the functoriality and associativity of horizontal composition in $\bord_{2+\epsilon,2,1}^{\chi}$. We can thus restrict our attention to one gluing. Let us fix an order of gluings and let $\Sigma$ denote the surface obtained after gluing $\Sigma_1$ and $\Sigma_2$ along $(|\pi_0(\Gamma)|-1)$ components of $\Gamma$. We now need to distinguish the following two scenarios:
\begin{enumerate}
    \item We glue boundary components on two different components of $\Sigma$.
    \item We glue boundary components on a connected component of $\Sigma$.
\end{enumerate}
Both of these scenarios are different from a global perspective, as can be seen by the example of gluing two disjoint cylinders to a torus: 
\begin{align*}
    \begin{tikzpicture}[scale=0.8]
       \draw[thick] (0,0) arc (0:360:0.5 and 0.3) 
       (2,0) arc (0:360:0.5 and 0.3)
       (0,0) arc (180:360:0.5 and 0.3)
       (-1,0) arc (180:360:1.5 and 1)
       (0,0.7) arc (0:-180:0.5 and 0.3)
       (2,0.7) arc (0:-180:0.5 and 0.3)
       (0,0.7) arc (180:0:0.5 and 0.3)
       (-1,0.7) arc (180:0:1.5 and 1);
       \draw[thick,dotted] (0,0.7) arc (0:180:0.5 and 0.3)
       (2,0.7) arc (0:180:0.5 and 0.3);
       \begin{scope}[xshift=4.5cm]
           \draw[thick] (2,0) arc (0:360:0.5 and 0.3)
       (0,0) arc (180:360:0.5 and 0.3)
       (-1,0) arc (180:360:1.5 and 1)
       (2,0.7) arc (0:-180:0.5 and 0.3)
       (0,0.7) arc (180:0:0.5 and 0.3)
       (-1,0.7) arc (180:0:1.5 and 1)
       (0,0) -- (0,0.7)
       (-1,0) -- (-1,0.7);
       \draw[thick,dotted] (2,0.7) arc (0:180:0.5 and 0.3);
       \end{scope}
       \begin{scope}[xshift=9.5cm,yshift=0.35cm]
           \draw[thick] (0,0) ellipse (1.5 and 1)
           (0.5,0.06) arc (-10:-170:0.5 and 0.3)
           (0.4,-0.06) arc (10:170:0.4 and 0.25);
       \end{scope}
       \draw[thick,->] (2.2,0.35) -- (3.2,0.35);
       \draw[thick,->] (6.7,0.35) -- (7.7,0.35);
   \end{tikzpicture} 
\end{align*}
\noindent
Gluing along the left pair of incoming and outgoing boundary circles results in a cylinder so that the other two boundary components now lie on the same connected component. The second gluing then identifies the two ends of the cylinder to arrive at the torus.

We can now formulate the main result of this section.
\begin{proposition}\label{prop:gluingfunc}
    Let $\Sigma$ be a surface with at least one incoming and one outgoing boundary, and let $\Sigma_{\mathrm{gl}}$ be the surface obtained from gluing these boundaries. Then the dinatural family
    \begin{align}
        \eta_X \colon \mathrm{Bl}^{\chi}_{\calc}(\Sigma)(X,X) \to \mathrm{Bl}^{\chi}_{\calc}(\Sigma_{\mathrm{gl}})
    \end{align}
    obtained from the gluing bordisms $M_X$ is universal in the category of left exact functors, in other words
    \begin{align}
        \mathrm{Bl}^{\chi}_{\calc}(\Sigma_{\mathrm{gl}}) \cong \oint^{X\in \calc }\! \mathrm{Bl}^{\chi}_{\calc}(\Sigma)(X,X).
    \end{align}
\end{proposition}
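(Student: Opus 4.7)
The plan is to reduce the statement to an algebraic identity in the Hom-spaces of $\calc$ via the isomorphism $\Phi_\Sigma^{(\underline{X};\underline{Y})}$ recalled in Section~\ref{sec:algstatespace}. Under this identification each state space $\mathrm{Bl}^\chi_\calc(\Sigma)(\underline{W};\underline{Y})$ becomes a Hom-space of the form $\Hom_\calc(\coend^{\otimes g} \otimes W_1\otimes\cdots\otimes W_p, Y_1\otimes\cdots\otimes Y_q)$, where $g$ is the genus of a handlebody bounding $\overline{\Sigma}$. The goal is then to show that the map $\widehat{\rmV}_\calc(M_X)$, transported through $\Phi$, becomes (up to natural isomorphism in the other variables) the universal dinatural family computing the left exact coend.

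As suggested in the excerpt, I would split into the two local cases. \emph{Case (1): gluing two different connected components of $\Sigma$.} Here one fuses two handlebodies along a disk neighbourhood of the glued boundary; no new handle appears, so $g$ is unchanged. A direct computation in $\calR_\calc$ shows that inserting the ribbon graph of $M_X$ into the standard handlebody bordisms from Figure~\ref{fig:standard-handle-body} corresponds, via $\Phi$, to the dinatural family
\[
i_X \colon \Hom_\calc(U\otimes X, V)\otimes_\kk \Hom_\calc(U', X\otimes V') \to \Hom_\calc(U\otimes U', V\otimes V')
\]
of Corollary~\ref{cor:gluingalg1}, which is precisely an ordinary coend in $\vect$. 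Composing with the Deligne product identifications for the other components of $\Gamma_1$ and $\Gamma_2$ then yields the left exact coend by the parameter theorem.

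\emph{Case (2): gluing two boundary components on the same connected component of $\Sigma$.} Here gluing adds a handle, so the genus jumps from $g$ to $g+1$ and a new $\coend$-edge appears along the new handle. The key step is to verify, using a sequence of ribbon-graph moves in $\calB_\calc$ together with the definitions of the coproduct and integral in Figure~\ref{fig:coend-Hopf-def}, that evaluating $\widehat{\rmV}_\calc(M_X)$ on the generator $f \in \Hom_\calc(\coend^{\otimes g}\otimes X \otimes U, X\otimes V)$ produces exactly the morphism depicted in Proposition~\ref{prop:lexcoendhom}, i.e.\ the dinatural family $i^{U,V}_X\colon \Hom_\calc(X\otimes U, X\otimes V)\to \Hom_\calc(\coend\otimes U, V)$ involving the Radford pairing $\kappa$. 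Once this is established, Proposition~\ref{prop:lexcoendhom} provides the required universal property.

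Having identified the dinatural families in both cases, one concludes as follows. Since $\mathrm{Bl}^\chi_\calc(\Sigma)$ and $\mathrm{Bl}^\chi_\calc(\Sigma_\mathrm{gl})$ are left exact in each argument (Lemma~\ref{lem:bllex}), it suffices to check the universal property on the representing Hom-spaces, which is exactly what the two cases provide. General gluings are then handled by iterating single-component gluings, which is well-defined by the functoriality and associativity of horizontal composition in $\bord^\chi_{2+\epsilon,2,1}$. I expect the main obstacle to be the local ribbon-graph calculation in Case~(2): one must carefully match the handle added by $M_X$ against the bichrome graph appearing in the standard handlebody presentation of $\mathrm{Bl}^\chi_\calc(\Sigma_\mathrm{gl})$, including the precise appearance of the integral $\Lambda$ and the cointegral $\lambda$ underlying the Radford pairing, so that the image of $\widehat{\rmV}_\calc(M_X)$ genuinely coincides with the morphism of Proposition~\ref{prop:lexcoendhom} rather than a scalar multiple thereof.
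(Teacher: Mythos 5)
Your proposal follows essentially the same route as the paper: reduce to Hom-spaces via $\Phi$, treat the disconnected gluing via Corollary~\ref{cor:gluingalg1} and the connected gluing by identifying $\widehat{\rmV}_\calc(M_X)$ with the dinatural family of Proposition~\ref{prop:lexcoendhom}, then conclude by iterating single gluings. The scalar issue you flag as the main obstacle is exactly what the paper resolves: the ``slide trick'' contributes a factor $\zeta^{-1}$ that cancels against the $\zeta$ arising from $\calS^2 = \zeta S^{-1}$ in the bichrome-graph computation, so the Radford pairing $\kappa$ appears on the nose.
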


\begin{corollary}The chiral block functors are compatible with horizontal composition
    \begin{equation}
        \mathrm{Bl}^{\chi}_{\calc}(\Sigma_1 \sqcup_\Gamma \Sigma_2) \cong \Blc(\Sigma_2)\diamond \Blc(\Sigma_1).
    \end{equation}
\end{corollary}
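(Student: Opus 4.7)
The plan is to iterate \Cref{prop:gluingfunc} over the connected components of $\Gamma$, one boundary circle at a time. Fix an enumeration $\pi_0(\Gamma) = \{\gamma_1,\ldots,\gamma_k\}$ and let $\Sigma^{(i)}$ denote the surface obtained from $\Sigma_1 \sqcup \Sigma_2$ by gluing along $\gamma_1,\ldots,\gamma_i$, so that $\Sigma^{(0)} = \Sigma_1 \sqcup \Sigma_2$ and $\Sigma^{(k)} = \Sigma_1 \sqcup_\Gamma \Sigma_2$. Gluings along disjoint boundary circles are performed in disjoint local neighbourhoods, so they commute up to canonical diffeomorphism, and each step $\Sigma^{(i-1)} \to \Sigma^{(i)}$ falls into one of the two local cases (across connected components, or within a single connected component) discussed before \Cref{prop:gluingfunc}.

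Starting from the monoidality isomorphism
\begin{equation}
\Blc(\Sigma^{(0)}) \;\cong\; \Blc(\Sigma_1) \otimes_\kk \Blc(\Sigma_2)
\end{equation}
established at the end of \Cref{ssec:modfunc1mor}, an application of \Cref{prop:gluingfunc} to the single gluing performed at step $i$ produces a natural isomorphism
\begin{equation}
\Blc(\Sigma^{(i)})(\ldots) \;\cong\; \oint^{X_i \in \calc}\! \Blc(\Sigma^{(i-1)})(\ldots,X_i;\,X_i,\ldots),
\end{equation}
in which the two label slots attached to $\gamma_i$ are identified under the coend variable $X_i$ and the remaining arguments are unaffected. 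Composing these $k$ isomorphisms yields an iterated left exact coend expression for $\Blc(\Sigma_1 \sqcup_\Gamma \Sigma_2)$, with one $\calc$-variable for each component of $\Gamma$.

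The final step is to collapse these iterated coends into a single left exact coend over $\calc^{\boxtimes \pi_0(\Gamma)}$. Invoking the Fubini (parameter) theorem recalled in \Cref{sec:algprelim} for the nested coends, together with the universal property of the Deligne tensor product for functors left exact in each argument, one obtains
\begin{equation}
\Blc(\Sigma_1 \sqcup_\Gamma \Sigma_2)(\underline{W};\underline{Y}) \;\cong\; \oint^{\underline{X} \in \calc^{\boxtimes \pi_0(\Gamma)}}\! \Blc(\Sigma_2)(\underline{X};\underline{Y}) \otimes_\kk \Blc(\Sigma_1)(\underline{W};\underline{X}),
\end{equation}
which by definition is the value of $\Blc(\Sigma_2) \diamond \Blc(\Sigma_1)$ at $(\underline{W};\underline{Y})$. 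Naturality in the outer arguments $\underline{W}$ and $\underline{Y}$ is inherited from the naturality of the gluing bordisms $M_{\underline{X}}$, and the resulting natural isomorphism is canonical since each individual coend was.

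The main obstacle I expect is the collapse of iterated left exact coends to a single coend over the Deligne product, which is not completely formal: one needs that left exactness in each of $k$ arguments of $\calc^{\times k}$ lifts to left exactness out of $\calc^{\boxtimes k}$, and that this lift is compatible with the formation of coends. Both facts follow from the universal property of the Deligne product recorded in \Cref{ssec:3mf-inv}, once one passes between left and right exact functors via \autocite[Thm.\,3.2]{FSS19eilenbergwatts} and applies the ordinary Fubini theorem for coends; no additional TFT input beyond \Cref{prop:gluingfunc} is required at this stage.
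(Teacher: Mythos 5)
Your proposal is correct and follows essentially the same route as the paper: the text preceding \Cref{prop:gluingfunc} reduces the statement to one gluing at a time (using that gluings along disjoint circles commute), and the corollary is then an iterated application of that proposition combined with the monoidality isomorphism from \Cref{ssec:modfunc1mor}. The only point worth flagging is that the collapse of the nested left exact coends into a single coend over $\calc^{\boxtimes \pi_0(\Gamma)}$ rests on the Fubini theorem for \emph{left exact} coends (cited elsewhere in the paper as \autocite[Thm.\,B.2]{Lyu1996ribbon}) rather than on the ordinary parameter theorem recalled in \Cref{sec:algprelim}, but you correctly identify this as the nontrivial ingredient.
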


To prove \Cref{prop:gluingfunc} we will now study the two scenarios discussed above separately. 
\subsubsection*{Proof of \Cref{prop:gluingfunc}: disconnected case}
\begin{figure}
\centering
        \begin{subfigure}{0.4\textwidth}
            \begin{tikzpicture}[scale=0.7]
        \filldraw[fill=lightgray, draw=black, thick] (-2,0) ellipse (1.7 and 2);
        \filldraw[fill=lightgray, draw=black, thick] (1.7,0) ellipse (1.7 and 2);
        \filldraw[lightgray] (-2,-2) rectangle (1.7,2);
        \draw[thick] (-2.01,-2) -- (1.71,-2);
        \draw[thick] (-2.01,2) -- (1.71,2);
        \filldraw[string-blue] (1,1.6) circle (0.5mm)
        (-1.5,1.6) circle (0.5mm);
        \draw[string-blue,thick] (1,1.6) -- (1,0.5);
        \draw[string-blue,thick,->] (1,0.5) -- (1,2.25);
        \draw[string-blue,thick] (-1.5,1.6) -- (-1.5,0.5);
        \draw[string-blue,thick,->] (-1.5,0.5) -- (-1.5,1.1);
        \begin{scope}[yshift=-0.1cm]
        \filldraw[string-blue] (1.5,-1.2) circle (0.5mm);
       \draw[string-blue,thick] (1.5,-1.2) -- (1.5,0);
       \draw[string-blue,thick,->] (1.5,-1.2) -- (1.5,-0.5);
       \draw[string-blue,thick] (-1.9,-0.5) -- (-1.9,0);
       \draw[string-red,thick] (-2.3,-0.5) -- (-1.5,-0.5);
       \draw[string-red,thick] (-2.2,-0.5) -- (-2.2,-0.7);
       \draw[string-red,thick] (-1.6,-0.5) -- (-1.6,-0.7);
       \draw[string-red,thick] (-2.5,-1) arc (180:360:0.6 and 0.4);
       \draw[string-red,thick,->] (-2.5,-1) arc (180:275:0.6 and 0.4);
       \draw[string-red,thick] (-2.2,-0.7) .. controls (-2.2,-0.8) and (-2.5,-0.9) .. (-2.5,-1);
       \draw[string-red,thick] (-1.6,-0.7) .. controls (-1.6,-0.8) and (-1.3,-0.9) .. (-1.3,-1);
       \draw[string-blue,thick] (0,-0.5) -- (0,0);
       \draw[string-red,thick] (-0.4,-0.5) -- (0.4,-0.5);
       \draw[string-red,thick] (-0.3,-0.5) -- (-0.3,-0.7);
       \draw[string-red,thick] (0.3,-0.5) -- (0.3,-0.7);
       \draw[string-red,thick] (-0.6,-1) arc (180:360:0.6 and 0.4);
       \draw[string-red,thick,->] (-0.6,-1) arc (180:275:0.6 and 0.4);
       \draw[string-red,thick] (-0.3,-0.7) .. controls (-0.3,-0.8) and (-0.6,-0.9) .. (-0.6,-1);
       \draw[string-red,thick] (0.3,-0.7) .. controls (0.3,-0.8) and (0.6,-0.9) .. (0.6,-1);
       \filldraw[fill=white, draw=string-blue, thick] (-2.4,0) rectangle (1.9,0.6);
       \filldraw[fill=white, draw=black, thick] (-1.5,-1) arc (0:-180:0.4 and 0.2);
       \fill[lightgray] (-2.3,-0.9) rectangle (-1.5,-1.09);
       \filldraw[fill=white, draw=black, thick] (-1.55,-1.1) arc (0:180:0.35 and 0.15);
       \draw[black, thick] (-1.5,-1) arc (0:-180:0.4 and 0.2);
       \filldraw[fill=white, draw=black, thick] (0.4,-1) arc (0:-180:0.4 and 0.2);
       \fill[lightgray] (-0.4,-0.9) rectangle (0.4,-1.09);
       \filldraw[fill=white, draw=black, thick] (0.35,-1.1) arc (0:180:0.35 and 0.15);
       \draw[black, thick] (0.4,-1) arc (0:-180:0.4 and 0.2);
        \end{scope}
    \begin{scope}[yshift=5cm]
        \filldraw[fill=lightgray, draw=black, thick] (-2,0) ellipse (1.7 and 2);
        \filldraw[fill=lightgray, draw=black, thick] (1.7,0) ellipse (1.7 and 2);
        \filldraw[lightgray] (-2,-2) rectangle (1.7,2);
        \draw[thick] (-2.01,-2) -- (1.71,-2);
        \draw[thick] (-2.01,2) -- (1.71,2);
        \filldraw[string-blue] (1,1.6) circle (0.5mm)
        (-0.25,1.6) circle (0.5mm)
        (-1.5,1.6) circle (0.5mm);
        \draw[string-blue,thick] (1,1.6) -- (1,0.5);
        \draw[string-blue,thick,->] (1,0.5) -- (1,1.1);
        \draw[string-blue,thick] (-1.5,1.6) -- (-1.5,0.5);
        \draw[string-blue,thick,->] (-1.5,0.5) -- (-1.5,1.1);
        \draw[string-blue,thick] (-0.25,1.6) -- (-0.25,0.5);
        \draw[string-blue,thick,->] (-0.25,0.5) -- (-0.25,1.1);
        \begin{scope}[yshift=-0.1cm]
        \filldraw[string-blue] (1,-1.2) circle (0.5mm);
        \draw[string-blue,thick] (1,-1.2) -- (1,0);
        \draw[string-blue,thick] (-1.9,-0.5) -- (-1.9,0);
        \draw[string-red,thick] (-2.3,-0.5) -- (-1.5,-0.5);
        \draw[string-red,thick] (-2.2,-0.5) -- (-2.2,-0.7);
        \draw[string-red,thick] (-1.6,-0.5) -- (-1.6,-0.7);
        \draw[string-red,thick] (-2.5,-1) arc (180:360:0.6 and 0.4);
        \draw[string-red,thick,->] (-2.5,-1) arc (180:275:0.6 and 0.4);
        \draw[string-red,thick] (-2.2,-0.7) .. controls (-2.2,-0.8) and (-2.5,-0.9) .. (-2.5,-1);
        \draw[string-red,thick] (-1.6,-0.7) .. controls (-1.6,-0.8) and (-1.3,-0.9) .. (-1.3,-1);
        \filldraw[fill=white, draw=string-blue, thick] (-2.4,0) rectangle (1.9,0.6);
        \filldraw[fill=white, draw=black, thick] (-1.5,-1) arc (0:-180:0.4 and 0.2);
        \fill[lightgray] (-2.3,-0.9) rectangle (-1.5,-1.09);
        \filldraw[fill=white, draw=black, thick] (-1.55,-1.1) arc (0:180:0.35 and 0.15);
        \draw[black, thick] (-1.5,-1) arc (0:-180:0.4 and 0.2);
    \end{scope}        
    \end{scope}
    \fill[lightgray] (0.7,1.5) -- (0.7,3.9) -- (1.3,3.9) --
    (1.3,1.5) -- cycle;
    \draw[thick] (0.7,1.7) -- (0.7,3.7)
    (1.3,3.7) -- (1.3,1.7)
    (0.4,1.3) .. controls (0.7,1.3) and (0.7,1.7) .. (0.7,1.7)
    (1.6,1.3) .. controls (1.3,1.3) and (1.3,1.7) .. (1.3,1.7)
    (0.4,4.1) .. controls (0.7,4.1) and (0.7,3.7) .. (0.7,3.7)
    (1.6,4.1) .. controls (1.3,4.1) and (1.3,3.7) .. (1.3,3.7); 
    \begin{scope}[very thick,decoration={
                      markings,
                      mark=at position 0.52 with {\arrow{>}}}
                     ] 
        \draw[string-blue,thick,postaction={decorate}] (1,1.3) -- (1,3.9);
        \end{scope}
    \node at (1.3,1) [string-blue] {$X$};
   \end{tikzpicture} 
            \caption{Action of $M_X$ on handle bodies bounding $\Sigma_2^{(1,2)}$ and $\Sigma_1^{(1,3)}$, respectively.}\label{fig:handlebodyglue1a}
        \end{subfigure}
        \hspace{0.5cm}
    \begin{subfigure}{0.4\textwidth}
       \begin{tikzpicture}[scale=0.7]
        \begin{scope}[yscale=1.9,xscale=1.4]
            \filldraw[fill=lightgray, draw=black, thick] (-2,0) ellipse (2 and 2);
            \filldraw[fill=lightgray, draw=black, thick] (1.3,0) ellipse (2 and 2);
            \filldraw[lightgray] (-2,-2) rectangle (1.3,2);
            \draw[thick] (-2.01,-2) -- (1.31,-2);
            \draw[thick] (-2.01,2) -- (1.31,2);
        \end{scope}
        \begin{scope}[yshift=-1.8cm,xshift=0.25cm]
            \draw[string-blue,thick] (0.75,3) -- (0.75,0.5);
            \draw[string-blue,thick,->] (0.75,0.5) -- (0.75,1.1);
            \draw[string-blue,thick] (-1.5,1.2) -- (-1.5,0.5);
            \draw[string-blue,thick,->] (-1.5,0.5) -- (-1.5,1.1);
        \begin{scope}
            \filldraw[string-blue] (1.5,-1.2) circle (0.5mm);
           \draw[string-blue,thick] (1.5,-1.2) -- (1.5,0);
           \draw[string-blue,thick,->] (1.5,-1.2) -- (1.5,-0.5);
           \draw[string-blue,thick] (-1.9,-0.5) -- (-1.9,0);
           \draw[string-red,thick] (-2.3,-0.5) -- (-1.5,-0.5);
           \draw[string-red,thick] (-2.2,-0.5) -- (-2.2,-0.7);
           \draw[string-red,thick] (-1.6,-0.5) -- (-1.6,-0.7);
           \draw[string-red,thick] (-2.5,-1) arc (180:360:0.6 and 0.4);
           \draw[string-red,thick,->] (-2.5,-1) arc (180:275:0.6 and 0.4);
           \draw[string-red,thick] (-2.2,-0.7) .. controls (-2.2,-0.8) and (-2.5,-0.9) .. (-2.5,-1);
           \draw[string-red,thick] (-1.6,-0.7) .. controls (-1.6,-0.8) and (-1.3,-0.9) .. (-1.3,-1);
           \draw[string-blue,thick] (0,-0.5) -- (0,0);
           \draw[string-red,thick] (-0.4,-0.5) -- (0.4,-0.5);
           \draw[string-red,thick] (-0.3,-0.5) -- (-0.3,-0.7);
           \draw[string-red,thick] (0.3,-0.5) -- (0.3,-0.7);
           \draw[string-red,thick] (-0.6,-1) arc (180:360:0.6 and 0.4);
           \draw[string-red,thick,->] (-0.6,-1) arc (180:275:0.6 and 0.4);
           \draw[string-red,thick] (-0.3,-0.7) .. controls (-0.3,-0.8) and (-0.6,-0.9) .. (-0.6,-1);
           \draw[string-red,thick] (0.3,-0.7) .. controls (0.3,-0.8) and (0.6,-0.9) .. (0.6,-1);
           \filldraw[fill=white, draw=string-blue, thick] (-2.4,0) rectangle (1.9,0.6);
           \filldraw[fill=white, draw=black, thick] (-1.5,-1) arc (0:-180:0.4 and 0.2);
           \fill[lightgray] (-2.3,-0.9) rectangle (-1.5,-1.09);
           \filldraw[fill=white, draw=black, thick] (-1.55,-1.1) arc (0:180:0.35 and 0.15);
           \draw[black, thick] (-1.5,-1) arc (0:-180:0.4 and 0.2);
           \filldraw[fill=white, draw=black, thick] (0.4,-1) arc (0:-180:0.4 and 0.2);
           \fill[lightgray] (-0.4,-0.9) rectangle (0.4,-1.09);
           \filldraw[fill=white, draw=black, thick] (0.35,-1.1) arc (0:180:0.35 and 0.15);
           \draw[black, thick] (0.4,-1) arc (0:-180:0.4 and 0.2);
        \end{scope}
        \end{scope}
        \begin{scope}[yshift=1.8cm,xshift=-0.25cm]
            \filldraw[string-blue] (1,1.6) circle (0.5mm)
            (-0.25,1.6) circle (0.5mm)
            (-1.5,1.6) circle (0.5mm);
            \draw[string-blue,thick] (1,1.6) -- (1,0.5);
            \draw[string-blue,thick,->] (1,0.5) -- (1,1.1);
            \draw[string-blue,thick] (-1.5,1.6) -- (-1.5,0.5);
            \draw[string-blue,thick,->] (-1.5,0.5) -- (-1.5,1.1);
            \draw[string-blue,thick] (-0.25,1.6) -- (-0.25,0.5);
            \draw[string-blue,thick,->] (-0.25,0.5) -- (-0.25,1.1);
            \begin{scope}[yshift=-0.1cm]
                \draw[string-blue,thick] (1.25,-1.2) -- (1.25,0);
                \draw[string-blue,thick,->] (1.25,-1.2) -- (1.25,-0.5);
                \filldraw[fill=white, draw=string-blue, thick] (-2.4,0) rectangle (1.9,0.6);
                \draw[string-blue,thick] (-1.9,-0.5) -- (-1.9,0);
        \end{scope}        
    \end{scope}
        \begin{scope}[yshift=-1.8cm,xshift=-1.5cm]
                \draw[string-blue,thick] (-1.9,-0.5) -- (-1.9,0);
                \draw[string-red,thick] (-2.3,-0.5) -- (-1.5,-0.5);
                \draw[string-red,thick] (-2.2,-0.5) -- (-2.2,-0.7);
                \draw[string-red,thick] (-1.6,-0.5) -- (-1.6,-0.7);
                \draw[string-red,thick] (-2.5,-1) arc (180:360:0.6 and 0.4);
                \draw[string-red,thick,->] (-2.5,-1) arc (180:275:0.6 and 0.4);
                \draw[string-red,thick] (-2.2,-0.7) .. controls (-2.2,-0.8) and (-2.5,-0.9) .. (-2.5,-1);
                \draw[string-red,thick] (-1.6,-0.7) .. controls (-1.6,-0.8) and (-1.3,-0.9) .. (-1.3,-1);
                \filldraw[fill=white, draw=black, thick] (-1.5,-1) arc (0:-180:0.4 and 0.2);
                \fill[lightgray] (-2.3,-0.9) rectangle (-1.5,-1.09);
                \filldraw[fill=white, draw=black, thick] (-1.55,-1.1) arc (0:180:0.35 and 0.15);
                \draw[black, thick] (-1.5,-1) arc (0:-180:0.4 and 0.2);
        \end{scope}
        \filldraw[string-blue] (2,3.4) circle (0.5mm);
        \draw[string-blue,thick] (2,3.4) -- (2,1.9);
        \draw[string-blue,thick,->] (2,2.4) -- (2,2.9);
        \begin{scope}[very thick,decoration={
                      markings,
                      mark=at position 0.52 with {\arrow{>}}}
                     ] 
        \draw[string-blue,thick,postaction={decorate}] (-1.25,-0.6) .. controls (-1.25,-0.1) and (2,0.2) .. (2,1.9);
        \draw[string-blue,thick,postaction={decorate}]
        (-3.4,-1.8) .. controls (-3.35,-1.1) and (-2.15,0.5) .. (-2.15,1.2);
        \end{scope}
        \fill[lightgray] (0.85,0) rectangle (1.15,1);
        \draw[string-blue,thick] (1,0) -- (1,1);
        \node at (1.3,-0.4) [string-blue] {$X$};
   \end{tikzpicture} 
       \vspace{0.2cm}
        \caption{Isotoped handle body with embedded ribbon graph as in \Cref{cor:gluingalg1}.}\label{fig:handlebodyglue1b}
    \end{subfigure}
    \caption{Illustration of the handle body obtained after applying $M_X$ for gluing $\Sigma_2^{(1,2)}$ and $\Sigma_1^{(1,3)}$ to $\Sigma_3^{(1,4)}$.}\label{fig:handlebodyglue1}
\end{figure}
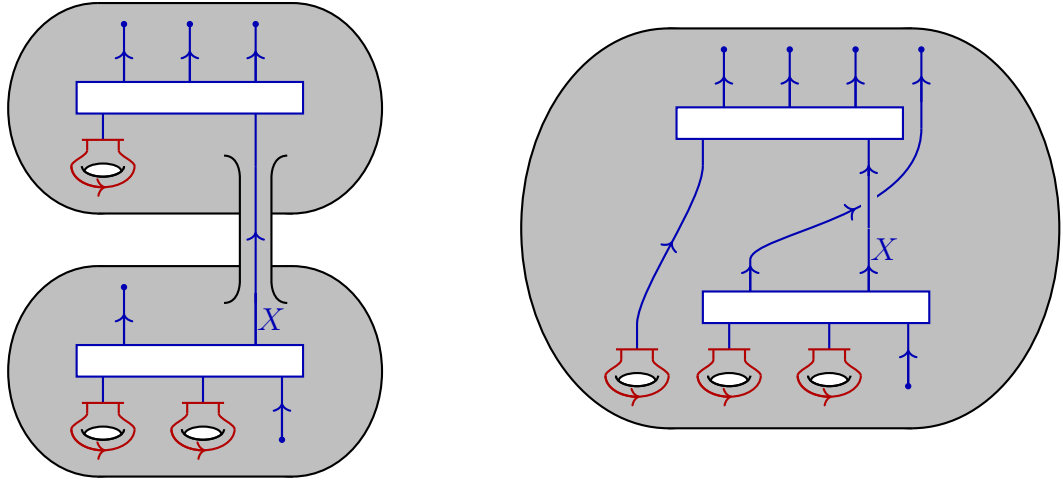

For the first case we can assume without loss of generality that $\Sigma$ has only two connected components, i.e.\ $\Sigma \cong \Sigma_{g_1}^{({p_1},{q_1})} \sqcup \Sigma_{g_2}^{({p_2},{q_2})}$ where $\Sigma_{g}^{({p},{q})}$ denotes a connected surfaces of genus $g$ with $p$ incoming and $q$ outgoing boundary components. After gluing along one boundary component we obtain a connected surface of the form $\Sigma_{\mathrm{gl}} \cong \Sigma_{g_1+g_2}^{({p_1+p_2-1},{q_1+q_2-1})}$. Using the identification of the state space with the morphism space in $\calc$ from \Cref{sec:algstatespace} it can easily be verified that the composition of $M_X$ with the standard handle bodies is given by a handle body with embedded ribbon graph induced from composition and the braiding. This is illustrated in Figure \ref{fig:handlebodyglue1} for the case of gluing $\Sigma_2^{(1,2)}$ and $\Sigma_1^{(1,3)}$ to $\Sigma_3^{(1,4)}$.

Using the isomorphisms identifying state spaces of the TFT with Hom spaces of $\calc$ it immediately follows that the dinatural family $\eta_X$ corresponds to the universal family from \Cref{cor:gluingalg1}.

\subsubsection*{Proof of \Cref{prop:gluingfunc}: connected case}
For the second case let us assume for simplicity that $\Sigma$ is a genus zero surface with two incoming and two outgoing boundary components, i.e.\ $\Sigma \cong \Sigma_{0}^{2,2}$, the general case works completely analogous. After gluing two of the boundary components we obtain a surface of the form $\Sigma_{\mathrm{gl}} \cong \Sigma_{1}^{(1,1)}$. Following the same line of arguments as above we obtain a handle body with embedded ribbon graph as depicted in \Cref{fig:handlebodyglue2a}.

\begin{figure}[t]
\centering
        \begin{subfigure}{0.4\textwidth}
            \begin{tikzpicture}
        \filldraw[fill=lightgray, draw=black, thick] (-1.7,0) ellipse (1.7 and 2);
        \filldraw[fill=lightgray, draw=black, thick] (0.3,0) ellipse (1.7 and 2);
        \filldraw[lightgray] (-1.7,-2) rectangle (0.3,2);
        \draw[thick] (-1.71,-2) -- (0.31,-2);
        \draw[thick] (-1.71,2) -- (0.31,2);
        \filldraw[fill=white,draw=string-blue,thick] (-0.8,-0.2) rectangle (0.9,0.4);
        \filldraw[string-blue] (0.5,1.4) circle (0.5mm)
        (0.5,-1.2) circle (0.5mm);
        \draw[string-blue,thick] (-0.4,-0.2) -- (-0.4,-1)
        (-0.4,0.4) -- (-0.4,1)
        (-2.2,-1) -- (-2.2,1);
        \begin{scope}[very thick,decoration={
                      markings,
                      mark=at position 0.5 with {\arrow{>}}}
                     ] 
        \draw[string-blue,thick,postaction={decorate}] (0.5,-1.2) -- (0.5,-0.2);
        \draw[string-blue,thick,postaction={decorate}] (0.5,0.4) -- (0.5,1.4);
        \draw[string-blue,thick,postaction={decorate}] (-0.4,1) arc (0:180: 0.9 and 0.6);
        \draw[string-blue,thick,postaction={decorate}] (-2.2,-1) arc (180:360: 0.9 and 0.6);
        \end{scope}
        \begin{scope}[scale=1.5,xshift=1cm,yshift=0.3cm]
            \filldraw[fill=white, draw=black, thick] (-1.5,-1) arc (0:-180:0.4 and 0.2);
       \fill[lightgray] (-2.3,-0.9) rectangle (-1.5,-1.09);
       \filldraw[fill=white, draw=black, thick] (-1.55,-1.1) arc (0:180:0.35 and 0.15);
       \draw[black, thick] (-1.5,-1) arc (0:-180:0.4 and 0.2);
        \end{scope}
       \node at (-0.4,-0.5) [string-blue,right] {$X$};
   \end{tikzpicture}
            \caption{Action of $M_X$ on handle body bounding $\Sigma_0^{(2,2)}$.}\label{fig:handlebodyglue2a}
        \end{subfigure}
        \hspace{1cm}
    \begin{subfigure}{0.4\textwidth}
       \begin{tikzpicture}
        \filldraw[fill=lightgray, draw=black, thick] (-1.7,0) ellipse (1.7 and 2);
        \filldraw[fill=lightgray, draw=black, thick] (0.3,0) ellipse (1.7 and 2);
        \filldraw[lightgray] (-1.7,-2) rectangle (0.3,2);
        \draw[thick] (-1.71,-2) -- (0.31,-2);
        \draw[thick] (-1.71,2) -- (0.31,2);
        \filldraw[fill=white,draw=string-blue,thick] (-0.8,0.1) rectangle (0.9,0.7);
        \draw[string-red,thick] (-1.35,-0.73) arc (-90:-270: 0.2 and 0.4);
        \filldraw[string-blue] (0.5,1.4) circle (0.5mm)
        (0.5,-1.2) circle (0.5mm);
        \draw[string-blue,thick](-0.4,0.7) -- (-0.4,0.8)
        (-0.4,0.05) -- (-0.4,0.1)
        (-1.6,0.05) -- (-1.6,0.8);
        \begin{scope}[decoration={
                      markings,
                      mark=at position 0.55 with {\arrow{>}}}
                     ] 
            \fill[lightgray] (-1.6,-0.2) rectangle (-1.45,-0.05)
            (-1.6,-0.58) rectangle (-1.45,-0.45);
            \draw[string-blue,thick,postaction={decorate}] (0.5,-1.2) -- (0.5,0.1);
            \draw[string-blue,thick,postaction={decorate}] (0.5,0.7) -- (0.5,1.4);
            \draw[string-blue,thick,postaction={decorate}] (-0.4,0.8) arc (0:180: 0.6 and 0.3);
            \draw[string-blue,thick,postaction={decorate}] (-1.6,0.05) arc (-180:0: 0.6 and 0.3);
            \draw[string-red,thick,postaction={decorate}] (-0.45,-1.1) arc (0:180: 0.9 and 0.6);
            \draw[string-red,thick,postaction={decorate}] (-2.25,-1.1) arc (-180:0: 0.9 and 0.6);
            \fill[lightgray] (-1.1,-0.3) rectangle (-1.25,-0.17)
            (-1.1,-0.58) rectangle (-1.25,-0.45);
            \draw[string-red,thick,postaction={decorate}] (-1.35,0.07) arc (90:-90: 0.2 and 0.4);
        \end{scope}
        \begin{scope}[scale=1.5,xshift=1cm,yshift=0.3cm]
            \filldraw[fill=white, draw=black, thick] (-1.5,-1) arc (0:-180:0.4 and 0.2);
            \fill[lightgray] (-2.3,-0.9) rectangle (-1.5,-1.09);
            \filldraw[fill=white, draw=black, thick] (-1.55,-1.1) arc (0:180:0.35 and 0.15);
            \draw[black, thick] (-1.5,-1) arc (0:-180:0.4 and 0.2);
        \end{scope}
       \node at (-0.3,-0.2) [string-blue] {$X$};
   \end{tikzpicture} 
        \caption{Handle body after application of the ``slide trick".}\label{fig:handlebodyglue2b}
    \end{subfigure}
    \caption{Handle body obtained after applying $M_X$ to a handle body bounding $\Sigma_0^{(2,2)}$.}\label{fig:handlebodyglue2}
\end{figure}
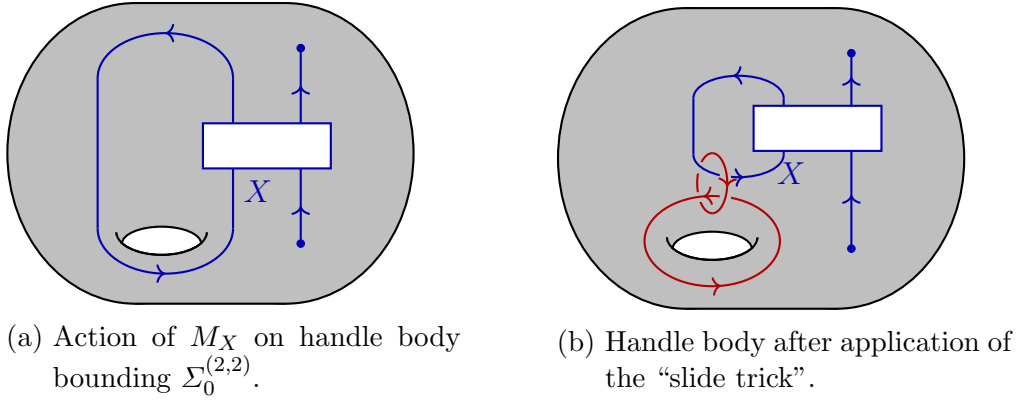

To translate the action of $\eta_X$ to Hom-spaces we need to relate the graph in  \Cref{fig:handlebodyglue2a}
to a standard handle body with bichrome graph as in \Cref{fig:standard-handle-body}. To this end, we first apply the TFT $\widehat{\rmV}_\calc$ and then use the "slide trick" employed in \autocite[Lem.\,4.14]{DGGPR19} to modify the bichrome graph. The resulting handle body with bichrome graph is depicted in \Cref{fig:handlebodyglue2b}. We have 
{
\allowdisplaybreaks
\begin{equation}
    \widehat{\rmV}_\calc \big( \text{Fig.\,\ref{fig:handlebodyglue2}a}
    \big) 
    =
    \zeta^{-1}\, \widehat{\rmV}_\calc 
    \big( \text{Fig.\,\ref{fig:handlebodyglue2}b}
    \big) 
    =
    \widehat{\rmV}_\calc \left( 
    \begin{tikzpicture}[scale=0.85,baseline=0cm]
        \filldraw[fill=lightgray, draw=black, thick] (-1.7,0) ellipse (1.7 and 2);
        \filldraw[fill=lightgray, draw=black, thick] (0.3,0) ellipse (1.7 and 2);
        \filldraw[lightgray] (-1.7,-2) rectangle (0.3,2);
        \draw[thick] (-1.71,-2) -- (0.31,-2);
        \draw[thick] (-1.71,2) -- (0.31,2);
        \filldraw[fill=white,draw=string-blue,thick] (-0.8,-0.25) rectangle (0.9,0.45)
        (-2.2,1) rectangle (-0.4,1.6);
        \filldraw[string-blue] (0.5,1.4) circle (0.5mm)
        (0.5,-1.2) circle (0.5mm);
        \draw[string-blue,thick](-0.4,0.45) -- (-0.4,0.7)
        (-0.4,-0.3) -- (-0.4,-0.25)
        (-1.2,-0.3) -- (-1.2,0.7)
        (-0.3,0.7) -- (-1.3,0.7)
        (-0.8,0.7) -- (-0.8,1);
        \begin{scope}[decoration={
                      markings,
                      mark=at position 0.52 with {\arrow{>}}}
                     ] 
            \draw[string-blue,thick,postaction={decorate}] (0.5,-1.2) -- (0.5,-0.25);
            \draw[string-blue,thick,postaction={decorate}] (0.5,0.45) -- (0.5,1.4);
            \draw[string-blue,thick,postaction={decorate}] (-1.2,-0.3) arc (-180:0: 0.4 and 0.3);
            \draw[string-red,thick,postaction={decorate}] (-2.7,-1) arc (-180:0: 0.9 and 0.6);
        \end{scope}
        \begin{scope}[scale=1.5,xshift=0.7cm,yshift=0.3cm]
            \filldraw[fill=white, draw=black, thick] (-1.5,-1) arc (0:-180:0.4 and 0.2);
            \fill[lightgray] (-2.3,-0.9) rectangle (-1.5,-1.09);
            \filldraw[fill=white, draw=black, thick] (-1.55,-1.1) arc (0:180:0.35 and 0.15);
            \draw[black, thick] (-1.5,-1) arc (0:-180:0.4 and 0.2);
        \end{scope}
        \draw[string-blue,thick] (-1.8,0.1) -- (-1.8,1);
        \draw[string-red,thick] (-2.2,0.1) -- (-1.4,0.1);
        \draw[string-red,thick] (-2.1,-0.3) -- (-2.1,0.1);
        \draw[string-red,thick] (-1.5,-0.3) -- (-1.5,0.1);
        \draw[string-red,thick] (-2.1,-0.3) .. controls (-2.1,-0.5) and (-2.7,-0.8) .. (-2.7,-1);
        \draw[string-red,thick] (-1.5,-0.3) .. controls (-1.5,-0.5) and (-0.9,-0.8) .. (-0.9,-1);
        \node at (-0.25,-0.5) [string-blue] {$X$};
        \node at (-0.05,0.65) [string-blue] {$\iota_X$};
        \node at (-1.3,1.3) [string-blue] {$\kappa$};
   \end{tikzpicture} 
    \right).
\end{equation}}
The second equality follows from a computation in bichrome graphs:
\begin{equation}
    \begin{aligned}
        \begin{tikzpicture}[scale=0.85,baseline=0cm]
        \filldraw[fill=white,draw=string-blue,thick] (-0.8,0.1) rectangle (0.9,0.7);
        \draw[string-red,thick] (-1.35,-0.73) arc (-90:-270: 0.2 and 0.4);
        \draw[string-blue,thick](-0.4,0.7) -- (-0.4,0.8)
        (-0.4,0.05) -- (-0.4,0.1)
        (-1.6,0.05) -- (-1.6,0.8);
        \begin{scope}[decoration={
                      markings,
                      mark=at position 0.58 with {\arrow{>}}}
                     ] 
            \fill[white] (-1.6,-0.2) rectangle (-1.45,-0.05)
            (-1.6,-0.58) rectangle (-1.45,-0.45);
            \draw[string-blue,thick,postaction={decorate}] (0.5,-1.2) -- (0.5,0.1);
            \draw[string-blue,thick,postaction={decorate}] (0.5,0.7) -- (0.5,1.4);
            \draw[string-blue,thick,postaction={decorate}] (-0.4,0.8) arc (0:180: 0.6 and 0.3);
            \draw[string-blue,thick,postaction={decorate}] (-1.6,0.05) arc (-180:0: 0.6 and 0.3);
            \draw[string-red,thick,postaction={decorate}] (-0.85,-1) arc (0:180: 0.5 and 0.5);
            \fill[white] (-1.1,-0.3) rectangle (-1.25,-0.17)
            (-1.1,-0.58) rectangle (-1.25,-0.45);
            \draw[string-red,thick,postaction={decorate}] (-1.35,0.07) arc (90:-90: 0.2 and 0.4);
            \draw[string-red,thick] (-0.85,-1) -- (-0.85,-1.2)
            (-1.85,-1) -- (-1.85,-1.2);
        \end{scope}
       \node at (-0.3,-0.2) [string-blue] {\scriptsize $X$};
   \end{tikzpicture} 
   &\overset{(1)}=
    \begin{tikzpicture}[scale=0.85,baseline=0cm]
        \filldraw[fill=white,draw=string-blue,thick] (-1,0.1) rectangle (0.7,0.7);
        \draw[string-blue,thick](-0.6,0.7) -- (-0.6,0.8)
        (-0.6,0.05) -- (-0.6,0.1)
        (-1.4,0.05) -- (-1.4,0.8);
        \begin{scope}[decoration={
                      markings,
                      mark=at position 0.52 with {\arrow{>}}}
                     ] 
            \draw[string-red,thick,postaction={decorate}] (-1.2,0.7) arc (0:180: 0.35 and 0.35);  
            \draw[string-red,thick,postaction={decorate}] (-2.5,0.7) arc (0:180: 0.35 and 0.35);  
            \draw[string-red,thick,postaction={decorate}] (-1.9,0.7) arc (0:-180: 0.3 and 0.35);  
            \fill[white] (-1.37,1.1) rectangle (-1.28,0.9)
            (-3.13,1.1) rectangle (-3.0,0.85);
            \draw[string-blue,thick,postaction={decorate}] (0.3,-1.2) -- (0.3,0.1);
            \draw[string-blue,thick,postaction={decorate}] (0.3,0.7) -- (0.3,1.4);
            \draw[string-blue,thick,postaction={decorate}] (-0.6,0.8) arc (0:180: 0.4 and 0.3);
            \draw[string-blue,thick,postaction={decorate}] (-1.4,0.05) arc (-180:0: 0.4 and 0.3);
            \draw[string-red,thick,postaction={decorate}] (-3,0.8) arc (0:180: 0.4 and 0.3);
            \draw[string-red,thick] (-3,0.8) -- (-3,-1.2)
            (-3.8,0.8) -- (-3.8,-1.2)
            (-1.2,0.5) -- (-1.2,0.7)
            (-3.2,0.5) -- (-3.2,0.7);
            \fill[white] (-3.1,0.3) rectangle (-2.9,0.15)
            (-1.5,0.3) rectangle (-1.3,0.15);
            \draw[string-red,thick,postaction={decorate}] (-2.55,-0.2) arc (180:360: 0.35 and 0.35);
            \draw[string-red,thick] (-3.2,0.5) .. controls (-3.2,0.2) and (-2.55,0.1) .. (-2.55,-0.2)
            (-1.2,0.5) .. controls (-1.2,0.2) and (-1.85,0.1) .. (-1.85,-0.2);
        \end{scope}
       \node at (-0.5,-0.2) [string-blue] {\scriptsize $X$};
   \end{tikzpicture} 
    \overset{(2)}=
    \begin{tikzpicture}[scale=0.85,baseline=0cm]
        \filldraw[fill=white,draw=string-blue,thick] (-1,-0.1) rectangle (0.7,0.5)
        (-1.8,1.3) rectangle (-0.8,1.7)
        (-3.4,1.3) rectangle (-2.4,1.7);
        \draw[string-blue,thick](-0.6,0.5) -- (-0.6,0.8)
        (-0.6,-0.25) -- (-0.6,-0.1)
        (-1.4,-0.25) -- (-1.4,0.8)
        (-1,1.3) -- (-1,0.8)
        (-1.6,1.3) -- (-1.6,1)
        (-3.2,1.3) -- (-3.2,0.8)
        (-2.6,1.3) -- (-2.6,1);
        \draw[string-red,thick] (-3.7,0.8) -- (-2.7,0.8);
        \begin{scope}[decoration={
                      markings,
                      mark=at position 0.52 with {\arrow{>}}}
                     ] 
            \draw[string-blue,thick,postaction={decorate}] (0.3,-0.8) -- (0.3,-0.1);
            \draw[string-blue,thick,postaction={decorate}] (0.3,0.5) -- (0.3,1.8);
            \draw[string-blue,thick] (-1.5,0.8) -- (-0.5,0.8);
            \draw[string-blue,thick,postaction={decorate}] (-1.4,-0.25) arc (-180:0: 0.4 and 0.3);
            \draw[string-red,thick,postaction={decorate}] (-2.8,-0.8) -- (-2.8,0.8);
            \draw[string-red,thick,postaction={decorate}] (-3.6,0.8) -- (-3.6,-0.8);        
            \draw[string-blue,thick] (-1.6,1) arc (0:-180:0.5 and 0.5)
            (-2.1,0.5) -- (-2.1,0);
            \filldraw[string-blue,thick] (-2.1,0) circle (0.05cm);
        \end{scope}
       \node at (-0.4,-0.4) [string-blue] {\scriptsize $X$};
       \node at (-1.95,-0.1) [string-blue] {\scriptsize $\Lambda$};
       \node at (-0.3,0.7) [string-blue] {\scriptsize $\iota_X$};
       \node at (-1.3,1.5) [string-blue] {\scriptsize $\omega$};
       \node at (-2.9,1.53) [string-blue] {\scriptsize $\overline{\omega}$};
   \end{tikzpicture} 
   \\
   &\overset{(3)}=
    \begin{tikzpicture}[scale=0.85,baseline=0.3cm]
        \filldraw[fill=white,draw=string-blue,thick] (-1,-0.1) rectangle (0.7,0.5)
        (-1.8,1.3) rectangle (-0.8,1.7)
        (-3.4,1.3) rectangle (-2.4,1.7);
        \draw[string-blue,thick](-0.6,0.5) -- (-0.6,0.8)
        (-0.6,-0.25) -- (-0.6,-0.1)
        (-1.4,-0.25) -- (-1.4,0.8)
        (-1,1.3) -- (-1,0.8)
        (-1.6,1.3) -- (-1.6,1)
        (-3.2,1.3) -- (-3.2,0.3)
        (-2.6,1.3) -- (-2.6,1);
        \begin{scope}[decoration={
                      markings,
                      mark=at position 0.52 with {\arrow{>}}}
                     ] 
            \draw[string-blue,thick,postaction={decorate}] (0.3,-0.8) -- (0.3,-0.1);
            \draw[string-blue,thick,postaction={decorate}] (0.3,0.5) -- (0.3,1.8);
            \draw[string-blue,thick] (-1.5,0.8) -- (-0.5,0.8);
            \draw[string-blue,thick,postaction={decorate}] (-1.4,-0.25) arc (-180:0: 0.4 and 0.3);
            \draw[string-red,thick,postaction={decorate}] (-2.8,-0.8) -- (-2.8,0.3);
            \draw[string-red,thick,postaction={decorate}] (-3.6,0.3) -- (-3.6,-0.8);        
            \draw[string-red,thick] (-3.7,0.3) -- (-2.7,0.3);
            \draw[string-blue,thick] (-1.6,1) arc (0:-180:0.5 and 0.5)
            (-2.1,0.5) -- (-2.1,0);
            \filldraw[string-blue,thick] (-2.1,0) circle (0.05cm);
            \draw[string-blue,thick] (-3.2,0.8) circle (0.13cm)
            (-3.33,0.8) -- (-3.07,0.8);
        \end{scope}
       \node at (-0.4,-0.4) [string-blue] {\scriptsize $X$};
       \node at (-1.95,-0.1) [string-blue] {\scriptsize $\Lambda$};
       \node at (-0.3,0.7) [string-blue] {\scriptsize $\iota_X$};
       \node at (-1.3,1.5) [string-blue] {\scriptsize $\omega$};
       \node at (-2.9,1.5) [string-blue] {\scriptsize $\omega$};
   \end{tikzpicture}
   \overset{(4)}=
    \begin{tikzpicture}[scale=0.85,baseline=0.3cm]
        \filldraw[fill=white,draw=string-blue,thick] (-1,-0.1) rectangle (0.7,0.5)
        (-2.2,1.3) rectangle (-0.8,1.7);
        \draw[string-blue,thick](-0.6,0.5) -- (-0.6,0.8)
        (-0.6,-0.25) -- (-0.6,-0.1)
        (-1.4,-0.25) -- (-1.4,0.8)
        (-1,1.3) -- (-1,0.8)
        (-2,1.3) -- (-2,0.3);
        \begin{scope}[decoration={
                      markings,
                      mark=at position 0.52 with {\arrow{>}}}
                     ] 
            \draw[string-blue,thick,postaction={decorate}] (0.3,-0.8) -- (0.3,-0.1);
            \draw[string-blue,thick,postaction={decorate}] (0.3,0.5) -- (0.3,1.8);
            \draw[string-blue,thick] (-1.5,0.8) -- (-0.5,0.8);
            \draw[string-blue,thick,postaction={decorate}] (-1.4,-0.25) arc (-180:0: 0.4 and 0.3);
            \draw[string-red,thick,postaction={decorate}] (-1.7,-0.8) -- (-1.7,0.3);
            \draw[string-red,thick,postaction={decorate}] (-2.3,0.3) -- (-2.3,-0.8);        
            \draw[string-red,thick] (-2.4,0.3) -- (-1.6,0.3);
            \draw[string-blue,thick] (-2,0.55) circle (0.13cm)
            (-2.13,0.55) -- (-1.87,0.55);
        \end{scope}
        \filldraw[fill=white,draw=string-blue,thick] (-2.15,0.8) rectangle (-1.85,1.15);
        \node at (-2,1) [string-blue] {\scriptsize $\calS$};
       \node at (-0.4,-0.4) [string-blue] {\scriptsize $X$};
       \node at (-0.3,0.7) [string-blue] {\scriptsize $\iota_X$};
       \node at (-1.5,1.5) [string-blue] {\scriptsize $\omega$};
   \end{tikzpicture} 
   \\
   &\overset{(5)}=\zeta \,
    \begin{tikzpicture}[scale=0.85,baseline=0.3cm]
        \filldraw[fill=white,draw=string-blue,thick] (-1,-0.1) rectangle (0.7,0.5)
        (-2.2,1.3) rectangle (-0.8,1.7);
        \draw[string-blue,thick](-0.6,0.5) -- (-0.6,0.8)
        (-0.6,-0.25) -- (-0.6,-0.1)
        (-1.4,-0.25) -- (-1.4,0.8)
        (-1,1.3) -- (-1,0.8)
        (-2,1.3) -- (-2,0.3);
        \begin{scope}[decoration={
                      markings,
                      mark=at position 0.52 with {\arrow{>}}}
                     ] 
            \draw[string-blue,thick,postaction={decorate}] (0.3,-0.8) -- (0.3,-0.1);
            \draw[string-blue,thick,postaction={decorate}] (0.3,0.5) -- (0.3,1.8);
            \draw[string-blue,thick] (-1.5,0.8) -- (-0.5,0.8);
            \draw[string-blue,thick,postaction={decorate}] (-1.4,-0.25) arc (-180:0: 0.4 and 0.3);
            \draw[string-red,thick,postaction={decorate}] (-1.7,-0.8) -- (-1.7,0.3);
            \draw[string-red,thick,postaction={decorate}] (-2.3,0.3) -- (-2.3,-0.8);        
            \draw[string-red,thick] (-2.4,0.3) -- (-1.6,0.3);
        \end{scope}
       \node at (-0.4,-0.4) [string-blue] {\scriptsize $X$};
       \node at (-0.3,0.7) [string-blue] {\scriptsize $\iota_X$};
       \node at (-1.5,1.5) [string-blue] {\scriptsize $\kappa$};
   \end{tikzpicture} 
    \end{aligned}
\end{equation}

Step 1 is isotopy invariance; step 2 amounts to the defining identities for the structure morphisms of the coend $\coend$ from \Cref{ssec:ribboncats} together with expressing the integral $\Lambda$ as a red cup; step 3 is \eqref{eq:hopfpairingmirror}; in step 4 we insert the definition of the $S$-transformation $\calS$ in \eqref{eq:def-S-endo-of-L}; step 5 is \eqref{eq:S^2}.

In this form it is evident that $\eta_X$ is the universal dinatural transformation $i_X$ from \Cref{prop:lexcoendhom}, after identifying the state spaces of the TFT with the Hom spaces of $\calc$. This finishes the proof of \Cref{prop:gluingfunc}. \qed

Altogether we now have:
\begin{theorem}\label{thm:chiralmf}
    For every modular tensor category $\calc$, the 3d TFT
    \begin{align}
    \widehat{\rmV}_{\calc} \colon \widehat{\bord}_{3,2}^{\chi}(\calc) \to \vect
    \end{align}
    of \autocite{DGGPR19} induces a chiral modular functor
    \begin{align}
    \mathrm{Bl}_\calc^{\chi}\colon\bord_{2+\epsilon,2,1}^{\mathrm{\chi}} \to \cat{P}\mathrm{rof}_{\mathbbm{k}}^{\coend \mathrm{ex}}.
    \end{align}
\end{theorem}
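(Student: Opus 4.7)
The assignment of $\mathrm{Bl}^{\chi}_{\calc}$ on objects, $1$-morphisms and $2$-morphisms has been constructed in \Cref{ssec:modfunc1mor}--\Cref{ssec:modfunc2mor}, and left exactness of $1$-morphisms was checked in \Cref{lem:bllex}. To upgrade this data to a symmetric monoidal $2$-functor I would proceed by producing the required coherence $2$-isomorphisms and then reducing each coherence axiom to a statement about diffeomorphism classes of $3$-bordisms, to which functoriality and symmetric monoidality of $\widehat{\rmV}_\calc$ apply.

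First, I would produce the compositor. For composable $1$-morphisms $\Sigma_1 : \Gamma_1 \to \Gamma$ and $\Sigma_2 : \Gamma \to \Gamma_2$, the gluing bordism $M_{\underline{X}}$ from \Cref{ssec:modfunc3mor} together with \Cref{prop:gluingfunc} already provides a natural isomorphism $\mathrm{Bl}^{\chi}_{\calc}(\Sigma_2 \sqcup_{\Gamma}\Sigma_1) \cong \mathrm{Bl}^{\chi}_{\calc}(\Sigma_2)\diamond \mathrm{Bl}^{\chi}_{\calc}(\Sigma_1)$. One checks it is independent of the order in which the connected components of $\Gamma$ are glued, which amounts to noting that the corresponding bordisms $M_{\underline{X}}$ supported on disjoint handles commute in $\widehat{\bord}_{3,2}^{\chi}(\calc)$. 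The unitor is given by the natural isomorphism $\mathrm{bl}^{\chi}_{\calc}(S^1\times I)(X,Y)\cong \Hom_\calc(X,Y)$ obtained from the representability argument in the proof of \Cref{lem:bllex}, which identifies $\mathrm{Bl}^{\chi}_{\calc}(\id_{S^1})$ with the identity profunctor on $\calc$. The symmetric monoidal structure $1$-isomorphisms $\mathrm{Bl}^{\chi}_{\calc}(\Gamma\sqcup \Gamma') \simeq \mathrm{Bl}^{\chi}_{\calc}(\Gamma)\boxtimes \mathrm{Bl}^{\chi}_{\calc}(\Gamma')$ are canonical, and the coherence data for the symmetric structure on $1$-morphisms comes from monoidality of $\widehat{\rmV}_\calc$ together with the corollary at the end of \Cref{ssec:modfunc1mor}.

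Next I would verify the coherence axioms. The associator pentagon for three composable $1$-morphisms translates to an equality of linear maps obtained by evaluating $\widehat{\rmV}_\calc$ on two $3$-bordisms built from gluing bordisms $M_{\underline{X}}$ in the two orders; these $3$-bordisms are diffeomorphic by an ambient isotopy which moves the glued handles past each other, so the corresponding maps agree by functoriality of the TFT. The triangle axiom reduces to the trivial identification of the gluing bordism over a cylinder boundary with an identity $3$-cylinder. The hexagon axioms for the braiding reduce similarly to the symmetry of disjoint union under $\widehat{\rmV}_\calc$. Naturality of all these coherence isomorphisms in $2$-morphisms follows because each $2$-morphism $([f],n)$ acts by $\widehat{\rmV}_\calc$ on a mapping cylinder, and compositions of mapping cylinders with gluing bordisms are again mapping cylinders up to a diffeomorphism.

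The main obstacle, as I see it, is bookkeeping the anomaly data: the Lagrangian subspaces $\lambda\subset H_1(\overline{\Sigma};\R)$ and the signature defects $n$ appearing in \eqref{eq:anomaly-morphism-composition}. Concretely, one must check that the composition of the Maslov-index correction for vertical composition of $2$-morphisms in $\bord_{2+\epsilon,2,1}^{\chi}$ matches the signature defect produced by vertical composition of mapping cylinders in $\widehat{\bord}_{3,2}^{\chi}(\calc)$, and that the compositor constructed above is compatible with the choice of Lagrangians on a glued surface. Both of these are essentially the content of \autocite[Sec.\,IV.3--4]{Tu16} transported to our setting, but they need to be spelled out carefully to ensure the diagrams of $3$-bordisms used above actually coincide in $\widehat{\bord}_{3,2}^{\chi}(\calc)$, not only after forgetting the anomaly decorations.
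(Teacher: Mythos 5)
Your proposal is correct and follows essentially the same route as the paper: the theorem is stated there as a summary ("Altogether we now have\dots") of the constructions on objects, $1$- and $2$-morphisms together with \Cref{lem:bllex} and \Cref{prop:gluingfunc}, with the compositor given by the gluing bordisms exactly as you describe. The anomaly bookkeeping you flag as the main obstacle is in fact built in by design, since vertical composition in $\bord_{2+\epsilon,2,1}^{\chi}$ is defined via the same Maslov-index formula \eqref{eq:anomaly-morphism-composition} used for composing the corresponding mapping cylinders in $\widehat{\bord}_{3,2}^{\chi}(\calc)$.
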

Moreover going through the construction carefully we immediately find:
\begin{corollary}\label{cor:modfunciso}
    Let $\calc$ and $\calD$ be modular tensor categories which are equivalent as finite ribbon categories. Then the ribbon equivalence $\calc \simeq \calD$ induces an isomorphism $\mathrm{Bl}_\calc^{\chi} \cong \mathrm{Bl}_\calD^{\chi}$ of symmetric monoidal $2$-functors.\footnote{For the definition of a $2$-natural transformation see \autocite[Sec.\,4.2]{JY20twodcategories}.}
\end{corollary}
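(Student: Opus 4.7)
The plan is to lift the ribbon equivalence $\Phi\colon \calc \xrightarrow{\sim} \calD$ to each categorical layer used in the construction of $\mathrm{Bl}^{\chi}$ and check compatibility at every step. First I would define a symmetric monoidal isomorphism
\[
\Phi_{*}\colon \widehat{\bord}_{3,2}^{\chi}(\calc) \xrightarrow{\sim} \widehat{\bord}_{3,2}^{\chi}(\calD)
\]
acting as the identity on the underlying topology (surfaces, Lagrangian subspaces, signature defects, $3$-manifolds, and the underlying bichrome graphs) and relabelling blue points, edges, and coupons via $\Phi$. That $\Phi_{*}$ preserves admissibility uses that ribbon equivalences preserve projective objects.

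Next I would verify that $\widehat{\rmV}_\calc \cong \widehat{\rmV}_\calD \circ \Phi_{*}$ as symmetric monoidal functors. The coend object, its Hopf-algebra structure, the integral $\Lambda$, the modified trace, and the modularity parameters $\zeta, \Delta^\pm, \delta$ are all characterised by universal or normalisation conditions that are preserved under $\Phi$; after setting $\Lambda_\calD := \Phi(\Lambda_\calc)$, the Lyubashenko-Reshetikhin-Turaev functor satisfies $F_{\Lambda_\calc} \cong \Phi^{-1}\circ F_{\Lambda_\calD}\circ \Phi_{*}$ on bichrome graphs, and hence $\rmL'_\calc(\underline M) = \rmL'_\calD(\Phi_{*}\underline M)$ on closed admissible morphisms. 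Invoking \Cref{cor:tftiso}, together with the spanning property of \Cref{rem:statespacesspanned}(1), then produces a unique monoidal natural isomorphism between the two TFTs.

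To promote this to a symmetric monoidal $2$-natural isomorphism $\mathrm{Bl}_\calc^{\chi} \cong \mathrm{Bl}_\calD^{\chi}$, I would take as the component $1$-cell at an object $\Gamma$ the profunctor corresponding, under the Eilenberg-Watts identification $\coend\mathrm{ex}_\kk \simeq \cat{P}\mathrm{rof}_{\mathbbm{k}}^{\coend\mathrm{ex}}$, to the equivalence $\Phi^{\boxtimes \pi_0(\Gamma)}\colon \calc^{\boxtimes\pi_0(\Gamma)} \xrightarrow{\sim} \calD^{\boxtimes \pi_0(\Gamma)}$. The naturality $2$-cell at a $1$-morphism $(\Sigma,\lambda)$ is the TFT-level isomorphism applied to $\underline{\Sigma}^{(\underline X;\underline Y)}$; it is natural in the labels by functoriality of $\Phi$ and extends uniquely to the Deligne product by the universal property used to define $\mathrm{Bl}^{\chi}$ on $1$-morphisms. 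Compatibility with horizontal composition follows because $\Phi_{*}$ sends the gluing bordisms $M_X$ of \Cref{prop:gluingfunc} to $M_{\Phi(X)}$, matching the universal dinatural families on either side; compatibility with $2$-morphisms follows from $\Phi_{*}$ preserving mapping cylinders.

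The main obstacle is not any single verification but assembling these pointwise isomorphisms into the data of a genuine symmetric monoidal $2$-natural transformation and checking the associated coherence diagrams (associator, unitor, and symmetry constraints for the monoidal structure, together with the pseudonaturality hexagon for horizontal composition). Each such diagram commutes either tautologically at the topological level, since $\Phi_{*}$ acts trivially on underlying manifolds, or by a universal property preserved under $\Phi$; the effort lies entirely in organising the $2$-categorical bookkeeping.
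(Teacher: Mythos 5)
Your proposal is correct and follows the same route the paper intends: the paper offers no explicit proof of this corollary, merely asserting that it follows from ``going through the construction carefully,'' and your outline---transporting the ribbon equivalence to a relabelling functor $\Phi_*$ on admissible bordism categories, matching the coend, integral, modified trace and anomaly data, invoking \Cref{cor:tftiso} with \Cref{rem:statespacesspanned} to identify the TFTs, and then assembling the component equivalences $\Phi^{\boxtimes\pi_0(\Gamma)}$ with the gluing compatibility $M_X\mapsto M_{\Phi(X)}$---is precisely that careful walk through the construction. The only detail worth making explicit is that the auxiliary choices entering $\widehat{\rmV}_{\calD}$ (the integral, the modified trace, and the square root $\calD$ of $\zeta$) must be taken to be the transports under $\Phi$ of those made for $\calc$, which is harmless since each is unique up to scalar.
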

\begin{remark}\label{rem:mainthm}
\begin{enumerate}[(1)]
    \item Our construction is related to the one of \autocite{DeRenzi2021nonssext2} as both use the $3$d TFTs of \autocite{DGGPR19} to construct symmetric monoidal $2$-functors. However both the source and target $2$-categories differ. Moreover, in \autocite{DeRenzi2021nonssext2} functoriality is satisfied automatically and the non-trivial step in the construction is proving monoidality. In our case it is exactly the other way around with monoidality being built in and functoriality being non-trivial.
    \item In \autocite{Lyu95invproj} Lyubashenko constructed a modular functor, albeit in a different formulation, from a modular tensor category $\calc$ directly using generators and relations. Nevertheless we can still compare his projective MCG representations and gluing maps to the ones constructed above. The projective MCG representations of Lyubashenko and of the $3d$ TFT $\widehat{\rmV}_\calc$ are isomorphic by the main result of \autocite{DGGPR20}.
    Moreover the gluing morphisms $\eta_X$ from \Cref{prop:gluingfunc} are precisely the gluing morphisms from \autocite[Sec.\,9.2]{Lyu1996ribbon} under the above isomorphism.
    \item An operadic formulation and classification of modular functors is given in \autocite{BW22modfunc}. There it is also shown that Lyubashenko's modular functor, suitably interpreted in their framework, is the essentially unique chiral modular functor that can be constructed from a modular tensor category \autocite[Cor.\,8.3]{BW22modfunc}. 
\end{enumerate}
\end{remark}

\section{Modular functors on surfaces with involution}\label{sec:anomfreemod}

In this concluding section, we will extend the chiral modular functors from the previous section to a larger $2$-category of bordisms with orientation-reversing involutions. To this end, we will discuss two functors, one in each direction, between the chiral bordism $2$-category and the one with orientation reversing involutions. We will then show that pulling back the chiral modular functor associated with a modular tensor category $\calc$ along their composite results in a chiral modular functor which is naturally isomorphic to the one coming from the Drinfeld centre $\mathcal{Z}(\mathcal{C})$. Finally, we will discuss the connection to the $2$-category of open-closed bordisms and the corresponding modular functors of \autocite[Sec.\,2.1]{FSY2023RCFTstring1}. 

\subsection[The 2-category of surfaces with involution]{The \texorpdfstring{$\boldsymbol{2}$}{2}-category of surfaces with involution}\label{ssec:bordinv}
The symmetric monoidal $2$-category
$\bord_{2+\epsilon,2,1}^{\circlearrowleft}$ of two-dimensional \emph{bordisms with involution} 
consists of:
\begin{itemize}
    \item objects: are tuples $(\Gamma ,\tau)$, where $\Gamma$ is a closed one-dimensional manifold, and $\tau$ is an orientation reversing involution of $\Gamma$;
    \item $1$-morphisms: for objects $(\Gamma ,\tau)$ and $(\Gamma ',\tau')$, a $1$-morphism from $(\Gamma ,\tau)$ to $(\Gamma',\tau')$, denoted as $(\Gamma ,\tau) \to (\Gamma',\tau')$,
    is a tuple $(\Sigma,\sigma)$, where $\Sigma$ is a two-dimensional bordism $\Sigma \colon \Gamma  \to \Gamma'$, and $\sigma$ is an orientation reversing involution of $\Sigma$ which restricts to $\tau$ ($\tau'$) on $\Gamma$ (resp. $\Gamma'$);
    \item $2$-morphisms: for $1$-morphisms $(\Sigma,\sigma) \colon (\Gamma,\tau) \to (\Gamma',\tau')$ and 
    $(\Sigma,\sigma) \colon (\Gamma,\tau) \to (\Gamma',\tau')$ a $2$-morphism $[f] \colon (\Sigma,\lambda) \Rightarrow (\Sigma',\lambda')$ is the isotopy class $[f]$ of a diffeomorphism $f \colon \Sigma \to \Sigma'$ which is compatible with the boundary parametrisations and commutes with the involutions;
    \item horizontal composition: for $1$-morphisms $(\Sigma,\sigma) \colon (\Gamma ,\tau) \to (\Gamma',\tau')$ and \\ 
    $(\Sigma',\sigma') \colon (\Gamma',\tau') \to (\Gamma'',\tau'')$, the horizontal composition $(\Sigma',\sigma') \diamond (\Sigma,\sigma) \colon (\Gamma,\tau) \to (\Gamma'',\tau'')$ is given by $(\Sigma' \sqcup_{\Gamma'} \Sigma,\sigma' \sqcup_{\Gamma'} \sigma)$;
    \item identity $1$-morphism: for an object $(\Gamma,\tau)$ the identity $1$-morphism $\id_{(\Gamma,\tau)} \colon (\Gamma,\tau) \to (\Gamma,\tau)$ is $(\Gamma\times I, \tau \times \id_I)$;
\end{itemize}
The rest of the structure is defined analogous to $\bord_{2+\epsilon,2,1}^{\chi}$, see \Cref{ssec:chiral-bord-prof} for details.

We will now discuss a functor from $\bord^{\circlearrowleft}_{2+\epsilon,2,1}$ to $\bord^{\chi}_{2+\epsilon,2,1}$ which forgets most of the data contained in the involutions. 
For this let $(\Sigma,\sigma)$ be a $1$-morphism in $\bord^{\circlearrowleft}_{2+\epsilon,2,1}$ and denote with $\lambda_\sigma$ the eigenspace of the induced map $\sigma_* \colon H_1(\Sigma,\R) \to H_1(\Sigma,\R)$ for the eigenvalue $-1$. By \autocite[Lem.\,3.5]{FFFS2002} $\lambda_\sigma$ is a Lagrangian subspace of $H_1(\Sigma,\R)$. 

\begin{lemma}
    The assignment
\begin{equation}
    \begin{aligned}
        U\colon \bord^{\circlearrowleft}_{2+\epsilon,2,1} &\to \bord^{\chi}_{2+\epsilon,2,1}\\
        (\Gamma ,\tau) &\mapsto \Gamma \\
        (\Sigma,\sigma) &\mapsto (\Sigma,\lambda_\sigma) \\
        [f] &\mapsto ([f],0)
    \end{aligned}
\end{equation}
defines a symmetric monoidal functor.
\end{lemma}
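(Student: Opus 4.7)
The plan is to verify, in order, the following: that $U$ is well-defined on $1$-morphisms (i.e.\ that the subspace $\lambda_\sigma$ can be interpreted as a Lagrangian in $H_1(\overline{\Sigma};\R)$), that $U$ preserves horizontal and vertical composition as well as identities, and that the symmetric monoidal coherence data on both sides match up. The conceptual engine throughout is the fact that a diffeomorphism which intertwines two orientation-reversing involutions also intertwines their $(-1)$-eigenspaces on homology, so that all Maslov-index correction terms will vanish automatically.

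First I would deal with well-definedness. Since the Lagrangian decoration in $\bord^{\chi}_{2+\epsilon,2,1}$ lives in the homology of the capped surface $\overline{\Sigma}$ rather than $\Sigma$ itself, the involution $\sigma$ must be extended to an orientation-reversing involution $\overline{\sigma}$ of $\overline{\Sigma}$. The extension is produced by using that $\tau,\tau'$ act as reflections on the boundary circles and hence extend canonically to reflections of the capping disks; I would then reinterpret $\lambda_\sigma$ as the $(-1)$-eigenspace of $\overline{\sigma}_*$ on $H_1(\overline{\Sigma};\R)$, which is Lagrangian by \autocite[Lem.\,3.5]{FFFS2002}. The cited result is formulated for closed surfaces, so this reduction to the capped surface is essential.

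Next I would establish functoriality. For horizontal composition one must show the additivity formula
\begin{equation}
\lambda_{\sigma'\sqcup_{\Gamma'}\sigma} \;=\; \lambda_{\sigma'} + \lambda_\sigma
\end{equation}
inside $H_1(\overline{\Sigma'\sqcup_{\Gamma'}\Sigma};\R)$. I would prove this via the Mayer--Vietoris sequence for the decomposition $\overline{\Sigma'\sqcup_{\Gamma'}\Sigma} = \overline{\Sigma}\cup_{\Gamma'}\overline{\Sigma'}$, noting that the whole sequence is equivariant with respect to $\overline{\sigma}\sqcup_{\Gamma'}\overline{\sigma'}$, so passing to $(-1)$-eigenspaces of the three homologies yields the required equality. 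For $2$-morphisms, the key observation is that if $f\colon\Sigma\to\Sigma'$ intertwines $\sigma$ and $\sigma'$, then $(C_f)_*\overline{\lambda_\sigma}=\overline{\lambda_{\sigma'}}$ as subspaces of $H_1(\overline{\Sigma'};\R)$; consequently, in the vertical composition formula of $\bord^{\chi}_{2+\epsilon,2,1}$, the Maslov index $\mu((C_f)_*\lambda_\sigma,\lambda_{\sigma'},(C_g)^*\lambda_{\sigma''})$ has two equal arguments and therefore vanishes, which is exactly what is needed to conclude $U([g]\circ [f])=U([g])\circ U([f])$ with total signature defect $0$.

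Preservation of identity $1$-morphisms is immediate since $H_1(\overline{\Gamma\times I};\R)=0$, and preservation of identity $2$-morphisms is trivial. Compatibility with the symmetric monoidal structure follows from the evident compatibility of eigenspace decompositions with direct sums under disjoint union, and the coherence $2$-isomorphisms on both categories are inherited unchanged since they live over identical underlying bordisms with vanishing signature defect. I expect the main technical obstacle to be a careful bookkeeping in the Mayer--Vietoris argument for horizontal composition, in particular ensuring that the gluing circles in $\Gamma'$ (which are fixed by $\tau'$ only setwise) contribute the expected equivariant terms so that the $(-1)$-eigenspace decomposition really does split additively.
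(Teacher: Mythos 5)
Your proposal is correct and follows essentially the same route as the paper: the paper likewise observes that horizontal composition and monoidality hold essentially by definition, and that the only nontrivial point is the vanishing of the signature-defect correction for vertical composition, which it delegates to \autocite[Prop.\,3.6]{FFFS2002}. You instead derive that vanishing directly from the equivariance $f_*(\lambda_\sigma)=\lambda_{\sigma'}$ together with the degeneracy of the Maslov index on triples with a repeated argument, and you spell out the capping of $\Sigma$ to $\overline{\Sigma}$ and the additivity of the $(-1)$-eigenspaces under gluing (for the latter, note one can also shortcut the Mayer--Vietoris bookkeeping: equivariance gives $\lambda_\sigma+\lambda_{\sigma'}\subseteq\lambda_{\sigma'\sqcup_{\Gamma'}\sigma}$, and since both sides are Lagrangian they coincide).
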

\begin{proof}
     Functoriality for horizontal compositions as well as symmetric monoidality are clear by definition. With this particular choice of Lagrangian subspaces the choice of $0$ as signature defect is preserved under composition of $2$-morphisms by \autocite[Prop.\,3.6]{FFFS2002}, which in turn guarantees the functoriality with respect to  vertical composition.
\end{proof}

There is also a symmetric monoidal $2$-functor in the converse direction called the \emph{orientation double} functor, or just \emph{double} functor constructed as follows: 
\begin{itemize}
    \item 
For an object $\Gamma \in \bord^{\chi}_{2+\epsilon,2,1}$ we define 
\begin{align}
    \widehat{\Gamma } = \Gamma  \sqcup -\Gamma .
\end{align}
This is a closed oriented manifold which naturally carries an orientation reversing involution $\tau_\Gamma \colon \widehat{\Gamma } \to \widehat{\Gamma }$.

\item
For a $1$-morphism 
$(\Sigma,\lambda) \colon \Gamma \to \Gamma'$ we use the surface $\Sigma$ to 
define
\begin{align}
    \widehat{\Sigma} = \Sigma \sqcup -\Sigma.
\end{align}
This is a two dimensional manifold with boundary
$\partial \widehat{\Sigma} = \partial(\Sigma) \sqcup \partial(-\Sigma)$. Thus we can use the boundary parametrisation of $\Sigma \colon \Gamma \to \Gamma'$ to equip $\widehat{\Sigma}$ with the structure of a bordism from $\widehat{\Gamma}$ from $\widehat{\Gamma'}$. Moreover $\widehat{\Sigma}$ again comes with an orientation reversing involution $\sigma_{\Sigma} \colon \widehat{\Sigma} \to \widehat{\Sigma}$. It is easy to check that $\sigma_\Sigma$ is compatible with the boundary parametrisations.

\item For a 2-morphism $F = ([f],n) \colon (\Sigma,\lambda) \to (\Sigma',\lambda')$ we set $\widehat{F} = [f\sqcup -f]$.
\end{itemize}

The following lemma is now clear:
\begin{lemma}
    The assignment
    \begin{equation}
    \begin{aligned}
        \widehat{(-)}\colon \bord^{\chi}_{2+\epsilon,2,1} &\to \bord^{\circlearrowleft}_{2+\epsilon,2,1}
        \\
        \Gamma &\mapsto (\widehat{\Gamma},\tau_\Gamma) \\
        (\Sigma,\lambda) &\mapsto (\widehat{\Sigma},\sigma_\Sigma) \\
        ([f],n) &\mapsto [f\sqcup -f]
    \end{aligned}
    \end{equation}
    defines a symmetric monoidal $2$-functor
\end{lemma}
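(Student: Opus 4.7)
The strategy is to verify each piece of data required of a symmetric monoidal $2$-functor: well-definedness on objects, $1$-morphisms, and $2$-morphisms; the coherence $2$-isomorphisms for horizontal composition and units; the monoidality constraints; and the pentagon/triangle/hexagon axioms. Since the assignment $\widehat{(-)}$ is literally ``disjoint union with the orientation reverse,'' all of these reduce to elementary topological identifications, which is why the lemma is labeled clear. The one conceptual point to flag is that the signature defect $n$ on $2$-morphisms in $\bord_{2+\epsilon,2,1}^{\chi}$ is simply discarded, and we must check that this loss of data does not obstruct functoriality of vertical composition.

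First I would check well-definedness. On objects this is immediate: $\widehat{\Gamma} = \Gamma \sqcup -\Gamma$ is a closed $1$-manifold and the swap involution $\tau_\Gamma$ is orientation reversing. On $1$-morphisms, $\widehat{\Sigma} = \Sigma \sqcup -\Sigma$ carries the swap $\sigma_\Sigma$, which is orientation reversing and restricts on boundaries to $\tau_\Gamma$ and $\tau_{\Gamma'}$, so $(\widehat\Sigma,\sigma_\Sigma)$ is a valid $1$-morphism $(\widehat\Gamma,\tau_\Gamma)\to(\widehat{\Gamma'},\tau_{\Gamma'})$; note that the Lagrangian subspace $\lambda$ is simply forgotten, as the target has no such decoration. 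On $2$-morphisms, $f\sqcup -f$ is a diffeomorphism of $\widehat\Sigma$ compatible with boundary parametrisations, commutes with the swap $\sigma_\Sigma$ by construction, and its isotopy class depends only on $[f]$; the signature defect $n$ is discarded.

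Next I would verify functoriality. For vertical composition, given $([f],n)$ and $([g],m)$, the composite in $\bord_{2+\epsilon,2,1}^{\chi}$ is $([g\circ f],n+m-\mu)$ for a Maslov correction $\mu$; applying $\widehat{(-)}$ gives $[(g\circ f)\sqcup -(g\circ f)] = [(g\sqcup -g)\circ (f\sqcup -f)]$, which is exactly $\widehat{([g],m)}\circ\widehat{([f],n)}$, and the Maslov index contributes nothing because the target tracks no signature defect. Preservation of identity $2$-morphisms is obvious. For horizontal composition of $1$-morphisms one has a canonical identification
\begin{equation}
\widehat{\Sigma'\sqcup_{\Gamma'}\Sigma} \;=\; (\Sigma'\sqcup_{\Gamma'}\Sigma)\sqcup -(\Sigma'\sqcup_{\Gamma'}\Sigma) \;\cong\; (\Sigma'\sqcup -\Sigma')\sqcup_{\Gamma'\sqcup -\Gamma'}(\Sigma\sqcup -\Sigma) \;=\; \widehat{\Sigma'}\sqcup_{\widehat{\Gamma'}}\widehat{\Sigma},
\end{equation}
intertwining $\sigma_{\Sigma'\sqcup_{\Gamma'}\Sigma}$ with $\sigma_{\Sigma'}\sqcup_{\widehat{\Gamma'}}\sigma_\Sigma$; this supplies the compositor $2$-isomorphism, and the unitor comes from $\widehat{\Gamma\times I}\cong\widehat\Gamma\times I$.

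Finally, the symmetric monoidal structure comes from $\widehat{\Gamma_1\sqcup\Gamma_2}\cong\widehat\Gamma_1\sqcup\widehat\Gamma_2$ (after reordering) and the analogous statement for surfaces, with compatible involutions. The hardest part, strictly speaking, is bookkeeping: verifying that the compositors and the monoidal structure isomorphisms satisfy the coherence diagrams (pentagon, triangle, hexagon for the braiding) of a symmetric monoidal $2$-functor. However, because every coherence cell in the source involves only Lagrangian data and signature defects that $\widehat{(-)}$ discards, all such diagrams reduce to topological identifications of doubled bordisms which commute strictly up to canonical diffeomorphisms; I would state this as ``a direct check using the universal property of disjoint union and gluing'' rather than unpack it explicitly. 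This completes the proof.
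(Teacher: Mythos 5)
Your proposal is correct and matches the paper's (implicit) reasoning: the paper gives no proof at all, declaring the lemma ``clear'' once the assignment on objects, $1$-morphisms and $2$-morphisms has been spelled out, and your verification simply makes the routine checks explicit. In particular you correctly identify the only points worth flagging — that the Lagrangian subspace and signature defect are discarded without obstructing vertical composition, and that doubling commutes with gluing along closed boundary circles, which supplies the compositor — so nothing is missing.
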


\subsection{Anomaly free modular functors and the Drinfeld centre }
If we pullback a chiral modular functor along $U$, the corresponding mapping class group representations will no longer be projective, i.e.\ the modular functor will be anomaly free. Moreover if we further pullback along $\widehat{(-)}$ we obtain a new chiral modular functor. For the chiral modular functor $\mathrm{Bl}_\calc^{\chi}$ from \Cref{thm:chiralmf} this gives a geometric relation between $\calc$ and its Drinfeld centre $\calZ(\calc)$. 

First recall that the \emph{Drinfeld centre} of a monoidal category $\calA$ is the braided monoidal category $\calZ(\calA)$ with objects pairs $(X,\gamma_X)$, where $X \in \calA$, and $\gamma_X \colon X \otimes - \Rightarrow - \otimes X$ is a natural isomorphism, called a \emph{half braiding}, satisfying a hexagon type axiom \autocite[Def.\,7.13.1]{EGNO}. 
A key property of the Drinfeld centre is that for suitable $\calA$ it is modular:

\begin{proposition}[{\autocite[Thm.\,5.11]{Shimizu17ribbondrinf}}]\label{prop:drinmod}
Let $\calA$ be finite tensor category satisfying a sphericality condition.\footnote{
The sphericality condition is the one given in \autocite[Def.\,3.5.2]{DSS13tensorcats}, not the equality of left and right trace. If $\calA$ is in addition ribbon and is itself modular, it automatically satisfies this sphericality condition. To see this, combine \autocite[Thm.\,1.3]{SS21modtracenaka} and \autocite[Cor.\,4.7]{gr2020}.}
 Then $\calZ(\calA)$ is a modular tensor category.
\end{proposition}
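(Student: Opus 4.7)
The plan is to verify the three pieces of structure that constitute modularity (\Cref{def:mtc}): that $\calZ(\calA)$ is finite tensor, that it carries a ribbon structure, and that the Hopf pairing $\omega$ on its coend $\int^{Z\in\calZ(\calA)} Z^{*}\otimes Z$ is non-degenerate. The first is well-known: one realises $\calZ(\calA)$ as the category of modules in $\calA\boxtimes\calA^{\mathrm{op}}$ over a suitable algebra (the canonical algebra coming from the regular $\calA$-bimodule structure on $\calA$), which is itself a finite tensor category, so finiteness, rigidity and simplicity of the unit are inherited. The braiding $\beta_{(X,\gamma_X),(Y,\gamma_Y)} = \gamma_X|_Y$ is built directly from the half-braidings; this is the content of, say, \autocite[Prop.\,7.13.8]{EGNO}.

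For the ribbon structure I would follow the usual recipe: the forgetful functor $U\colon \calZ(\calA)\to\calA$ has a canonical structure of a braided pivotal functor once $\calA$ is pivotal, so the pivotal structure on $\calA$ lifts uniquely to one on $\calZ(\calA)$ with $(X,\gamma_X)^{**}\cong(X,\gamma_X)$ realised via the pivotal structure of $X$ and a compatibility of $\gamma_X$ with duality. The twist is then defined through the half-braiding, e.g.\ $\theta_{(X,\gamma_X)}$ is obtained from the categorical ribbon formula combined with $\gamma_X$. At this point the sphericality hypothesis on $\calA$ (in the Douglas--Schommer-Pries--Snyder sense) is exactly what is needed for this candidate twist to satisfy $\theta^{*} = \theta$, turning the pivotal braided structure on $\calZ(\calA)$ into a genuine ribbon structure.

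The main work is then modularity. Rather than attempting to compute $\omega$ directly on the coend of $\calZ(\calA)$, my plan would be to invoke one of the equivalent formulations of modularity provided by \autocite[Thm.\,1.1]{SHIMIZU2019nondeg}, specifically that non-degeneracy of $\omega$ is equivalent to triviality of the M\"uger centre $\calZ_2(\calZ(\calA))$. An object $(X,\gamma_X)$ lies in $\calZ_2(\calZ(\calA))$ iff the squared braiding $\gamma_Y|_X\circ\gamma_X|_Y$ is the identity for all $(Y,\gamma_Y)\in\calZ(\calA)$. Using the universal property of the Drinfeld centre as the monoidal centre construction, one translates this into the condition that $\gamma_X$ is trivial as a half-braiding with respect to every object in the image of the free half-braiding functor; this forces $(X,\gamma_X)\cong \unit^{\oplus n}$ in the end, which together with simplicity of the unit yields triviality of the M\"uger centre.

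The hardest step is certainly the last one: making the M\"uger centre argument rigorous in the non-semisimple setting requires some care, since one cannot simply decompose into simples. In \autocite{Shimizu17ribbondrinf} this is handled by analysing the coend of $\calZ(\calA)$ via the forgetful/induction adjunction $U\dashv R$ between $\calA$ and $\calZ(\calA)$, exhibiting the coend as $R(\unit)\otimes R(\unit)$-related object in $\calZ(\calA)$ and using the explicit description of $R$ to check non-degeneracy of $\omega$. I would follow this strategy, identifying the key computation as the explicit verification that the composite $\calS\colon \coend\to\coend$ defined in \eqref{eq:def-S-endo-of-L} is invertible for the induced Hopf algebra on $\calZ(\calA)$, with the sphericality hypothesis on $\calA$ being used precisely to ensure compatibility of the adjunction $U\dashv R$ with the pivotal structure that in turn makes $\calS$ invertible.
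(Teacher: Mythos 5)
First, a structural point: the paper does not prove this proposition at all. It is imported verbatim from Shimizu's work (the citation \autocite[Thm.\,5.11]{Shimizu17ribbondrinf} in the header), with only a footnote explaining which sphericality condition is meant; there is no in-paper argument to compare your sketch against, so the comparison has to be with Shimizu's proof. Measured against that, your outline has the right overall shape (lift the pivotal structure to $\calZ(\calA)$, obtain a twist from braiding plus pivotality, prove non-degeneracy via one of the equivalent conditions of \autocite{SHIMIZU2019nondeg}), but it mislocates the sphericality hypothesis in a way that matters. Non-degeneracy of the canonical Hopf pairing on the coend of $\calZ(\calA)$ is a property of the braided structure alone and holds for the Drinfeld centre of an \emph{arbitrary} finite tensor category $\calA$; no pivotal or spherical structure enters. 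Your last paragraph, where sphericality is ``used precisely to ensure \dots that in turn makes $\calS$ invertible,'' therefore attaches the hypothesis to the wrong half of the theorem. Its actual job is the one you half-state earlier: it is exactly what makes the canonical twist on the pivotal braided category $\calZ(\calA)$ satisfy $(\theta_X)^* = \theta_{X^*}$, i.e.\ what upgrades the balanced/pivotal structure to a genuine ribbon structure.

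Second, the M\"uger-centre step is a genuine gap as written. Passing from ``$(X,\gamma_X)$ is transparent in $\calZ(\calA)$'' to ``$(X,\gamma_X)\cong\unit^{\oplus n}$'' is the entire content of non-degeneracy in the non-semisimple setting, and it does not follow formally from the universal property of the centre: one cannot decompose into simples, and a transparent object could a priori be a nontrivial iterated extension of $\unit$. Shimizu's argument does not proceed by inspecting transparent objects directly; he first establishes the equivalence of the various non-degeneracy conditions (trivial M\"uger centre, non-degenerate $\omega$, the functor $\calB\boxtimes\overline{\calB}\to\calZ(\calB)$ being an equivalence) and then verifies one of them for $\calZ(\calA)$ using the induction/forgetful adjunction and the explicit description of the coend of the centre. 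Without supplying that input, your sketch does not close. (A small additional slip: the algebra realisation of the centre lives in $\calA\boxtimes\calA^{\mathrm{rev}}$, the monoidally reversed category, not $\calA^{\mathrm{op}}$.)
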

A generalisation of this result to pivotal tensor categories can be found in \autocite[Cor.\,2.14]{MW22drinGV}.

There is a natural forgetful functor $U \colon \calZ(\calA) \to \calA$ which sends a pair $(X,\gamma_X)$ to $X$. Moreover, for any braided tensor category $\calB$ with braiding $\beta$, there is a natural braided functor $\calB \to \calZ(\calB)$ sending an object $X \in \calB$ to $(X,\beta_{X,-}) \in \calZ(\calB)$, as well as a corresponding braided functor $\overline{\calB}\to \calZ(\calB)$, where $\overline{\calB}$ is the same monoidal category as $\calB$ but with reversed braiding. These functors extend to a braided functor 
\begin{align}
    \calB \boxtimes \overline{\calB} \to \calZ(\calB).
\end{align}
If $\calB$ is ribbon, $\calZ(\calB)$ is ribbon as well and the functor discussed above is a ribbon functor.
\begin{proposition}[{\autocite[Thm.\,1.1]{SHIMIZU2019nondeg}}]\label{prop:modularchar}
    Let $\calB$ be a finite ribbon tensor category. Then $\calB$ is modular if and only if the canonical ribbon functor
    \begin{align}
        \calB \boxtimes \overline{\calB} \to \calZ(\calB)
    \end{align}
    is an equivalence.
\end{proposition}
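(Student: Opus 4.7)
The plan is to prove both implications by relating the canonical functor $F\colon \calB \boxtimes \overline{\calB} \to \calZ(\calB)$ to the non-degeneracy of the Hopf pairing $\omega$ on the coend $\coend$. First I would make $F$ explicit: the braiding of $\calB$ gives a braided tensor functor $G_+\colon \calB \to \calZ(\calB)$ sending $X$ to $(X, \beta_{X,-})$, and the mirrored braiding gives $G_-\colon \overline{\calB} \to \calZ(\calB)$ sending $Y$ to $(Y, \beta_{-,Y}^{-1})$. By the hexagon axioms, the images of $G_+$ and $G_-$ strictly commute inside $\calZ(\calB)$, so the universal property of the Deligne product assembles them into a ribbon functor $F$ between finite ribbon categories.

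For the forward direction I would argue contrapositively: if $\omega$ is degenerate, then the M\"uger center $Z_2(\calB) = \{X \in \calB \mid \beta_{Y,X} \circ \beta_{X,Y} = \id_{X \otimes Y}\ \forall Y\}$ is strictly larger than $\vect$, since triviality of $Z_2(\calB)$ is one of the equivalent characterizations of modularity noted after Definition \ref{def:mtc}. For any $X \in Z_2(\calB)$, the half-braidings on $X$ coming from $\beta$ and $\beta^{-1}$ coincide, so $G_+(X) = G_-(X)$ in $\calZ(\calB)$. Hence $F$ identifies $X \boxtimes \unit$ with $\unit \boxtimes X$; since these are non-isomorphic in $\calB \boxtimes \overline{\calB}$ whenever $X$ is not a multiple of $\unit$, this contradicts full faithfulness of $F$.

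The backward direction is the main technical step. My plan is to use the monadic description of $\calZ(\calB)$ as the category of modules over the central monad in $\calB$, which as an algebra object is $\coend$ equipped with the Hopf structure recalled in \eqref{eq:coendhopf}. Non-degeneracy of $\omega$ is precisely the factorizability of $\coend$, and by a Hopf-algebraic argument analogous to the classical case of finite-dimensional factorizable quasi-triangular Hopf algebras, this factorizability produces an equivalence between $\coend$-modules in $\calB$ and $\calB \boxtimes \overline{\calB}$ compatible with $F$. A more direct alternative is to check that $F$ is fully faithful by computing Hom spaces in $\calZ(\calB)$ as ends in $\calB$, using $\omega$ to identify them with the corresponding Hom spaces in $\calB \boxtimes \overline{\calB}$, and then invoke a Frobenius--Perron dimension count to upgrade full faithfulness to essential surjectivity.

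The main obstacle is the backward direction: converting the algebraic non-degeneracy of $\omega$ into a categorical equivalence. Either route -- factorizability of $\coend$ as a Hopf algebra in $\calB$, or explicit Hom-space identification combined with a dimension count -- requires substantial input from the theory of finite tensor categories, but both ultimately rest on the same underlying observation, namely that $\omega$ being non-degenerate rigidifies the structure of the central monad enough to force a splitting of $\calZ(\calB)$ as a Deligne product.
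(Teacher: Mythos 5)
The first thing to note is that the paper does not prove this statement: \Cref{prop:modularchar} is imported verbatim from \autocite[Thm.\,1.1]{SHIMIZU2019nondeg}, so there is no in-paper argument to compare against, and in the context of the paper the correct move is simply the citation. Judged on its own terms, your proposal is a reasonable road map but not a proof; both implications have genuine gaps.

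For ``equivalence $\Rightarrow$ modular'' your contrapositive argument is structurally sound: the identification $F(X\boxtimes\unit)=F(\unit\boxtimes X)$ for $X$ in the M\"uger centre, and the resulting failure of full faithfulness, is correct (and can be patched to work even when $Z_2(\calB)$ is non-semisimple and contains no simple object other than $\unit$, by comparing dimensions of Hom spaces rather than isomorphism classes of simples). The gap is the very first step: you invoke ``$\omega$ degenerate $\Rightarrow$ $Z_2(\calB)$ nontrivial'', i.e.\ one direction of the equivalence between non-degeneracy of the Hopf pairing and triviality of the M\"uger centre. That equivalence is itself one of the substantive implications of the same Theorem~1.1 of Shimizu being proved here, and the direction you need is the hard one (the elementary direction is that a transparent object produces an element in the radical of $\omega$). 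So this half is circular unless you supply an independent argument. For ``modular $\Rightarrow$ equivalence'', which you correctly identify as the main technical step, you only name two strategies --- factorizability of $\coend$ transported through the monadic description of $\calZ(\calB)$ as the category of modules over the central monad, or a Hom-space computation followed by a Frobenius--Perron dimension count --- without carrying either out. The phrase ``by a Hopf-algebraic argument analogous to the classical case'' is exactly where the content of the theorem lives: identifying $\Hom_{\calZ(\calB)}(F(-),F(-))$ as an end in $\calB$ and using non-degeneracy of $\omega$ to match it with the corresponding Hom space in $\calB\boxtimes\overline{\calB}$ is the heart of the proof and is not routine. (The final upgrade from fully faithful to an equivalence via $\mathrm{FPdim}(\calZ(\calB))=\mathrm{FPdim}(\calB)^2$ is fine.)
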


\begin{proposition}{\label{prop:condi}}
    Let $\calc$ be a modular tensor category. There exists a symmetric monoidal $2$-natural isomorphism filling the following diagram of symmetric monoidal $2$-functors
\begin{align}
\begin{tikzcd}[ampersand replacement=\&]
	{\bord_{2+\epsilon,2,1}^{\mathrm{\chi}}} \& {\bord^{\circlearrowleft}_{2+\epsilon,2,1}} \& {\bord_{2+\epsilon,2,1}^{\mathrm{\chi}}} \\
	\\
	\&\& {\cat{P}\mathrm{rof}_{\mathbbm{k}}^{\coend \mathrm{ex}}}
	\arrow[""{name=0, anchor=center, inner sep=0}, "{\mathrm{Bl}_{\calZ(\calc)}^{\chi}}"', from=1-1, to=3-3]
	\arrow["{\widehat{(-)}}", from=1-1, to=1-2]
	\arrow["U", from=1-2, to=1-3]
	\arrow["{\mathrm{Bl}_\calc^{\chi}}", from=1-3, to=3-3]
	\arrow["\cong"', shorten >=8pt, Rightarrow, from=1-3, to=0]
\end{tikzcd}
\quad .
\end{align}
\end{proposition}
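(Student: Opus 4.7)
The plan is to reduce the statement to the algebraic equivalence $\calZ(\calc) \simeq \calc \boxtimes \overline{\calc}$ and then to exploit the compatibility of the TFT $\widehat{\rmV}_\calc$ with Deligne products and orientation reversal established in \Cref{ssec:tftorideligne}. By \Cref{prop:modularchar} the canonical ribbon functor gives an equivalence $\calZ(\calc) \simeq \calc \boxtimes \overline{\calc}$ of modular tensor categories, and \Cref{cor:modfunciso} then supplies a symmetric monoidal $2$-natural isomorphism $\mathrm{Bl}_{\calZ(\calc)}^{\chi} \cong \mathrm{Bl}_{\calc \boxtimes \overline{\calc}}^{\chi}$. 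It is therefore enough to construct a symmetric monoidal $2$-natural isomorphism
\begin{equation}
\mathrm{Bl}_{\calc \boxtimes \overline{\calc}}^{\chi} \cong \mathrm{Bl}_\calc^{\chi} \circ U \circ \widehat{(-)}\,.
\end{equation}

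On objects both $2$-functors yield the same finite linear category: using the canonical bijection $\pi_0(-\Gamma) \cong \pi_0(\Gamma)$ and the fact that $\calc$ and $\overline\calc$ share the same underlying linear structure, one has $\calc^{\boxtimes \pi_0(\Gamma)} \boxtimes \calc^{\boxtimes \pi_0(-\Gamma)} \cong (\calc \boxtimes \overline{\calc})^{\boxtimes \pi_0(\Gamma)}$. On a $1$-morphism $(\Sigma, \lambda)$ the plan is to evaluate both profunctors on pure-tensor labels $\underline{X \boxtimes Y}$ and chain three isomorphisms: first the factorisation corollary at the end of \Cref{ssec:tftorideligne} identifies $\widehat{\rmV}_{\calc \boxtimes \overline{\calc}}(\underline{\Sigma}^{(\underline{X \boxtimes Y})})$ with $\widehat{\rmV}_\calc(\underline{\Sigma}^{(\underline{X})}) \otimes_\kk \widehat{\rmV}_{\overline{\calc}}(\underline{\Sigma}^{(\underline{Y})})$; next the orientation-reversal isomorphism $\widehat{\rmV}_{\overline{\calc}} \cong \widehat{\rmV}_\calc \circ \overline{(-)}$ converts the second factor into $\widehat{\rmV}_\calc((-\Sigma)^{(\underline{Y})})$; and finally monoidality of $\widehat{\rmV}_\calc$ combines the two tensor factors into $\widehat{\rmV}_\calc(\underline{\widehat{\Sigma}}^{(\underline{X};\underline{Y})})$, which is exactly the value of the right-hand side. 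The matching of Lagrangian subspaces $\lambda \oplus \lambda = \lambda_{\sigma_\Sigma}$ required for this last step follows from the fact that $\sigma_\Sigma$ swaps the two copies of $\Sigma$ and therefore acts as $-\id$ on the antidiagonal part of $H_1$ of the corresponding closed surface.

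At the $2$-morphism level the functor $U$ sends the signature defect to zero, so $\mathrm{Bl}_\calc^{\chi} \circ U \circ \widehat{(-)}$ produces honest (non-projective) mapping class group representations. Agreement with $\mathrm{Bl}_{\calc\boxtimes\overline\calc}^{\chi}$ forces $\calc \boxtimes \overline{\calc}$ to be anomaly free, and indeed a direct computation using $\Delta^+_{\overline\calc} = \Delta^-_\calc$ gives $\Delta^{\pm}_{\calc\boxtimes\overline\calc} = \Delta^+_\calc \Delta^-_\calc = \zeta_\calc$, whence a compatible choice of $\calD$ yields $\delta_{\calc\boxtimes\overline\calc} = 1$. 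Naturality in the labels is automatic from the construction, while symmetric monoidality is inherited from the fact that disjoint union and Deligne tensor product intertwine the factorisation and orientation-reversal isomorphisms. The main technical obstacle is compatibility with horizontal composition in $\bord_{2+\epsilon,2,1}^{\chi}$: on the $\calZ(\calc)$-side the gluing of $\Sigma$ along a single boundary component is implemented by a single bordism $M_{X \boxtimes Y}$ producing a coend over $\calc \boxtimes \overline{\calc}$, whereas $\widehat{(-)}$ replaces one such gluing by two gluings on $\widehat\Sigma$, resulting in two nested coends over $\calc$ via \Cref{prop:gluingfunc}. To match these one verifies that the orientation-doubled gluing bordism on $\widehat\Sigma$ agrees, under the combined factorisation and orientation-reversal isomorphisms, with $M_{X \boxtimes Y}$, together with the canonical identification $\coend_{\calc\boxtimes\overline\calc} \cong \coend_\calc \boxtimes \coend_{\overline\calc}$ from \autocite[Cor.\ 3.12]{FSS19eilenbergwatts}.
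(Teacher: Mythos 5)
Your proposal takes the same route as the paper: reduce to $\mathrm{Bl}^{\chi}_{\calc\boxtimes\overline{\calc}}$ via \Cref{prop:modularchar} and \Cref{cor:modfunciso}, check objects trivially, and then derive the identification on $1$- and $2$-morphisms from the orientation-reversal and Deligne-product compatibility of $\widehat{\rmV}_{\calc}$ established in \Cref{ssec:tftorideligne}; the paper's own proof is in fact terser and leaves the $1$-/$2$-morphism level and the gluing compatibility as a ``direct consequence'' of those corollaries, so your extra detail on the anomaly computation $\delta_{\calc\boxtimes\overline{\calc}}=1$ and on matching one coend over $\calc\boxtimes\overline{\calc}$ with two nested coends over $\calc$ is welcome rather than divergent. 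One inaccuracy: the equality $\lambda\oplus\lambda=\lambda_{\sigma_\Sigma}$ is false in general. Since $\sigma_\Sigma$ swaps the two copies, its induced map on $H_1(\overline{\widehat{\Sigma}};\R)\cong H_1(\overline{\Sigma};\R)\oplus H_1(\overline{\Sigma};\R)$ is (up to the identification of the two summands) the swap, whose $(-1)$-eigenspace is the full antidiagonal $\{(v,-v)\}$, a Lagrangian of the same dimension as $\lambda\oplus\lambda$ but a different subspace; note also that $\widehat{(-)}$ discards $\lambda$ entirely, so $\lambda_{\sigma_\Sigma}$ could not remember it. This does not damage the argument, precisely because $\calc\boxtimes\overline{\calc}$ is anomaly free (as you verify), so objects of $\widehat{\bord}_{3,2}^{\chi}(\calc\boxtimes\overline{\calc})$ differing only in their Lagrangian have canonically identified state spaces and the Maslov-index corrections drop out; but the justification should be phrased that way rather than as an equality of Lagrangians.
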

\begin{proof} 
    Let us denote the composition 
    \begin{align}
        \bord^{\chi}_{2+\epsilon,2,1} \stackrel{\widehat{(-)}}{\to} \bord^{\circlearrowleft}_{2+\epsilon,2,1} \stackrel{U}{\to} \bord^{\chi}_{2+\epsilon,2,1} \stackrel{\mathrm{Bl}_\calc^{\chi}}{\to} \cat{P}\mathrm{rof}^{\coend\mathrm{ex}_\kk} 
    \end{align}
    by $\widehat{\mathrm{Bl}}_\calc$. First note that the equivalence $\calc \boxtimes \overline{\calc} \simeq \calZ(\calc)$  induces an isomorphism $\mathrm{Bl}_{\calc \boxtimes \overline{\calc}}^\chi \cong \mathrm{Bl}_{\calZ(\calc)}^\chi$ by \Cref{cor:modfunciso}. It thus suffices to show that 
    \begin{equation}
        \widehat{\mathrm{Bl}}_\calc \cong \mathrm{Bl}_{\calc \boxtimes \overline{\calc}}^\chi. 
    \end{equation} 
    On the level of linear categories we have $\calc = \overline{\calc}$, thus we immediately get $\widehat{\mathrm{Bl}}_\calc(\Gamma) = (\calc \boxtimes \calc)^{\pi_0(\Gamma)} = (\calc \boxtimes \overline{\calc})^{\pi_0(\Gamma)} = \mathrm{Bl}_{\calc \boxtimes \overline{\calc}}^{\chi}(\Gamma)$ for any object $\Gamma \in \bord_{2+\epsilon,2,1}^{\chi}$. On the level of $1$- and $2$-morphisms the result is a direct consequence of the behaviour of the TFT discussed in \Cref{ssec:tftorideligne}.
   \end{proof}   
\begin{remark}
    Note that in a string-net construction the chiral modular functor of a Drinfeld centre is automatically anomaly free because the mapping class group acts geometrically on the string-nets \autocite[Cor.\,8.7]{MSWY23drinmodfunc}.
\end{remark}

\subsection{Open-closed bordisms and full modular functors}
    Finally, let us discuss how the modular functors from above are related to the so-called \emph{full modular functors} of \autocite[Sec.\,2.1]{FSY2023RCFTstring1}. The symmetric monoidal $2$-category $\bord_{2+\epsilon,2,1}^{\oc}$ of two dimensional oriented \emph{open-closed bordisms} is defined as a kind of categorification of the category of open-closed bordisms defined in \autocite[Sec.\,3]{LP2008octqfts}, see \autocite{BCR2006ocbordcat} for a detailed account. The basic idea is to also allow compact one dimensional manifolds with boundaries as objects, consequentially bordisms between them need to have an underlying manifold with corners, more precisely a $\langle2\rangle$-manifold. Recall here that a two dimensional $\langle2\rangle$-manifold $\Sigma$ is a $2$-dimensional compact manifold with corners together with a decomposition $\partial \Sigma = \partial^{\mathrm{g}} \Sigma \sqcup \partial^{\mathrm{f}} \Sigma$ of its boundary into a \emph{gluing} boundary
    $\partial^{\mathrm{g}} \Sigma$ and a \emph{free} boundary $\partial^{\mathrm{f}} \Sigma$ such that $\partial^{\mathrm{g}} \Sigma \cap \partial^{\mathrm{f}} \Sigma$ consists of the corner points of $\Sigma$.\footnote{The gluing and free boundaries are called black and coloured boundaries, respectively, in \autocite[Sec.\,3]{LP2008octqfts}.} We define $\bord_{2+\epsilon,2,1}^{\oc}$ as follows:
    \begin{itemize}
    \item objects: compact one-dimensional manifolds, i.e.\ finite disjoint unions of the standard interval $I = [0,1]$ and the unit circle $S^1$;
    \item $1$-morphisms: for objects $\Upsilon$ and $\Upsilon'$, a $1$-morphism from $\Upsilon$ to $\Upsilon'$, denoted as $\Upsilon \to \Upsilon'$,
    is a two-dimensional open-closed bordism $\Sigma \colon \Upsilon \to \Upsilon'$, i.e.\ a $\langle2\rangle$-manifold together with a parametrisation of its gluing boundary $\partial^{\mathrm{gl}}\Sigma \cong -\Upsilon \sqcup \Upsilon'$;
    \item $2$-morphisms: for $1$-morphisms $\Sigma \colon \Upsilon \to \Upsilon'$ and $\Sigma' \colon \Upsilon \to \Upsilon'$ a $2$-morphism $[f] \colon \Sigma \Rightarrow \Sigma'$ is the isotopy class of a diffeomorphism $f \colon \Sigma \to \Sigma'$ of $\langle2\rangle$-manifolds which is compatible with the boundary parametrisations.
\end{itemize}
The rest of the structure is defined analogously to $\bord_{2+\epsilon,2,1}^{\chi}$, see \Cref{ssec:chiral-bord-prof} for details.

An example of a $1$-morphism $W$ from $S^1$ to $I$ is given by the so-called whistle bordism 
\[
    \begin{tikzpicture}[baseline=1cm,rotate=180]
        \draw[bordcolor-C,very thick] (1,0) -- (0,0);
        \filldraw[bordcolor-A] (0,0) circle (0.5mm);
        \filldraw[bordcolor-A] (1,0) circle (0.5mm);
        \draw[bordcolor-C,very thick] (1,2) arc (0:180:0.5 and 0.2);
        \draw[bordcolor-C,thick] (0,0) -- (0,2);
        \draw[bordcolor-C,thick] (1,0) -- (1,2);
        \draw[bordcolor-C,very thick] (1,2) arc (0:-180:0.5 and 0.2);
        \draw[bordcolor-A,very thick] (1,0) arc (0:180:0.5 and 1);
    \end{tikzpicture}
\]
where the red line indicates the free boundary, while the thick black ones correspond to the gluing boundary. 

Let us now discuss the connection between $\bord_{2+\epsilon,2,1}^{\circlearrowleft}$ and $\bord_{2+\epsilon,2,1}^{\oc}$. 
For this we first extend the orientation double functor to 
$\bord_{2+\epsilon,2,1}^{\oc}$ as follows:
\begin{itemize}
    \item For an object $\Upsilon \in \bord^{\oc}_{2+\epsilon,2,1}$ we define 
\begin{align}
    \widehat{\Upsilon} = \Upsilon \sqcup -\Upsilon / \sim \quad \text{with} \quad (p,+) \sim (p,-) \quad \text{for} \quad p \in \partial \Upsilon.
\end{align} 
This is a closed oriented manifold which naturally carries an orientation reversing involution $\tau_\Upsilon \colon \widehat{\Upsilon} \to \widehat{\Upsilon}$ coming from the orientation reversing involution on $\Upsilon \sqcup -\Upsilon$. 
\item For a $1$-morphism $\Sigma \colon \Upsilon \to \Upsilon'$ we define
\begin{align}
    \widehat{\Sigma} = \Sigma \sqcup -\Sigma / \sim \quad \text{with} \quad (p,+) \sim (p,-) \quad \text{for} \quad p \in \partial^f \Sigma.
\end{align}
This is a two dimensional manifold with boundary, moreover we have $\partial \widehat{\Sigma} = \partial^{\mathrm{g}}(\Sigma) \sqcup \partial^{\mathrm{g}}(-\Sigma) / \sim$. 
Thus we can use the boundary parametrisation of $\Sigma \colon \Upsilon \to \Upsilon'$ to equip $\widehat{\Sigma}$ with the structure of a bordism from $\widehat{\Upsilon}$ to $\widehat{\Upsilon'}$. Moreover $\widehat{\Sigma}$ again comes with an orientation reversing involution $\sigma_\Sigma\colon \widehat{\Sigma} \to \widehat{\Sigma}$. It is easy to check that $\sigma_\Sigma$ is compatible with the boundary parametrisations.

\item For a $2$-morphism $[f] \colon \Sigma \to \Sigma'$ we can extend the underlying diffeomorphism $f$ to a homeomorphism $\widehat{f} \colon \widehat{\Sigma} \to \widehat{\Sigma'}$ by the universal property of the quotient. Moreover $\widehat{f}$ commutes with the induced involutions by definition. Since the smooth and the topological mapping class group are isomorphic \autocite[Sec.\,2.1]{FM2011pMCG} we can set $\widehat{[f]} := [\widehat{f}]$.
\end{itemize}
\begin{lemma}
    The assignment
    \begin{equation}
    \begin{aligned}
        \widehat{(-)}\colon \bord^{\oc}_{2+\epsilon,2,1} &\to \bord^{\circlearrowleft}_{2+\epsilon,2,1}
        \\
        \Upsilon &\mapsto (\widehat{\Upsilon},\tau_\Upsilon) 
        \\
\Sigma &\mapsto (\widehat{\Sigma},\sigma_\Sigma) \\
        [f] &\mapsto [\widehat{f}\,]
    \end{aligned}
    \end{equation}
    defines a symmetric monoidal $2$-functor. 
\end{lemma}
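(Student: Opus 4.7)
The plan is to verify, in turn, well-definedness on each level of morphism, compatibility with the compositions, and monoidality. The main subtlety — and essentially the only non-formal point — is the smooth structure on the double along the free boundary, which I address first.

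First, I would make sense of $\widehat{\Sigma}$ as a smooth 2-manifold with boundary. The free boundary $\partial^{\mathrm{f}}\Sigma$ meets the gluing boundary $\partial^{\mathrm{g}}\Sigma$ precisely at the corner points of the $\langle 2 \rangle$-structure; away from these corners one has a collar neighbourhood of $\partial^{\mathrm{f}}\Sigma$ diffeomorphic to $\partial^{\mathrm{f}}\Sigma \times [0,1)$, and the standard doubling construction endows $\widehat{\Sigma}$ with a canonical smooth structure under which $\partial^{\mathrm{f}}\Sigma$ becomes an interior smooth submanifold. Near each corner point $p \in \partial^{\mathrm{f}}\Sigma \cap \partial^{\mathrm{g}}\Sigma$ one uses a corner chart $\Sigma \supset [0,\epsilon)^2$ and glues two copies along one factor; the resulting chart is a smooth half-plane, with $\partial^{\mathrm{g}}\Sigma$ continuing smoothly across $p$. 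This identifies $\partial \widehat{\Sigma}$ with $\widehat{\partial^{\mathrm{g}}\Sigma}$, giving a bordism $\widehat{\Sigma} \colon \widehat{\Upsilon} \to \widehat{\Upsilon'}$, and the swap of the two copies is a smooth orientation-reversing involution $\sigma_\Sigma$ restricting to $\tau_{\Upsilon}$ and $\tau_{\Upsilon'}$ on the boundary.

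Second, I would verify compatibility with horizontal composition. For composable $1$-morphisms $\Sigma \colon \Upsilon \to \Upsilon'$ and $\Sigma' \colon \Upsilon' \to \Upsilon''$ the two operations involved — doubling along the free boundary and gluing along the gluing boundary $\Upsilon'$ — take place in disjoint regions of the surface, so one obtains a canonical diffeomorphism
\begin{equation}
\widehat{\Sigma' \circ \Sigma} \,\cong\, \widehat{\Sigma'} \circ \widehat{\Sigma}
\end{equation}
equivariant with respect to the swap involutions; this is the required compositor. Analogously the identity $\Upsilon \times I$ in $\bord^{\oc}_{2+\epsilon,2,1}$ doubles canonically to $\widehat{\Upsilon} \times I$, giving the unitor. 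The pentagon and triangle coherence conditions then reduce to the corresponding coherences in $\bord^{\circlearrowleft}_{2+\epsilon,2,1}$ because the gluing coherences are local and unaffected by doubling.

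Third, for $2$-morphisms I would check that for a diffeomorphism $f \colon \Sigma \to \Sigma'$ of $\langle 2 \rangle$-manifolds the extension $\widehat{f}$ across $\partial^{\mathrm{f}}$ is a priori only a homeomorphism of the doubles (since $f$ need not be smooth in a normal direction at the free boundary in a way that matches the smoothed chart); however, the isotopy class $[\widehat{f}\,]$ is well-defined in the topological mapping class group of $\widehat{\Sigma}$, and by the cited isomorphism with the smooth mapping class group one obtains a well-defined isotopy class of diffeomorphisms. Compatibility with $\sigma$ is immediate from the construction, and functoriality $\widehat{[g \circ f]} = \widehat{[g]} \circ \widehat{[f]}$ as well as $\widehat{[\id_\Sigma]} = [\id_{\widehat{\Sigma}}]$ is then clear. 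Symmetric monoidality follows since the doubling construction visibly commutes with disjoint unions and sends the symmetry braiding to the symmetry braiding. The hardest part of the plan is ensuring the smooth-structure bookkeeping near corners is internally consistent with the smoothness demanded of composition coherences; once this is set up carefully, the remaining verifications are routine.
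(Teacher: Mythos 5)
Your proposal is correct and follows exactly the route the paper takes: the paper in fact gives no separate proof of this lemma, stating it as immediate from the construction (with the corner/smoothing issues left implicit and the $2$-morphism level handled by the same appeal to the isomorphism between smooth and topological mapping class groups that you invoke). Your write-up simply makes explicit the routine verifications — corner charts for the smooth structure on the double, locality of doubling versus gluing for the compositors, and compatibility with disjoint union — that the paper declares "easy to check".
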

We employ the same notation for the orientation double functor here as in the previous section since there is a commutative diagram
\begin{equation}\label{diag:double}
    \begin{tikzcd}[ampersand replacement=\&]
	{\bord_{2+\epsilon,2,1}^{\chi}} \&\& {\bord_{2+\epsilon,2,1}^{\circlearrowleft}} \\
	\& {\bord_{2+\epsilon,2,1}^{\oc}}
	\arrow["{\widehat{(-)}}", from=1-1, to=1-3]
	\arrow[from=1-1, to=2-2]
	\arrow["{\widehat{(-)}}"', from=2-2, to=1-3]
\end{tikzcd}
\end{equation}
where $\bord_{2+\epsilon,2,1}^{\chi} \to \bord_{2+\epsilon,2,1}^{\oc}$ forgets the Lagrangian subspace and signature defect.
\begin{remark}
Note that $\widehat{(-)} \colon \bord^{\oc}_{2+\epsilon,2,1} \to \bord^{\circlearrowleft}_{2+\epsilon,2,1}$ is not locally essentially surjective as it only gives surfaces whose quotient by the involution is again orientable, e.g.\ $S^2$ with antipodal involution is not in its essential image as the quotient space is the crosscap.
\end{remark}
Pulling back the anomaly free modular functor from the previous section we obtain a \emph{full modular functor} in the sense of \autocite[Sec.\,2.1]{FSY2023RCFTstring1}
\begin{align}
    \mathrm{Bl}_{\calc}^{\mathrm{full}}\colon\bord^{\oc}_{2+\epsilon,2,1} \stackrel{\widehat{(-)}}{\longrightarrow} \bord^{\circlearrowleft}_{2+\epsilon,2,1} \stackrel{U}{\longrightarrow} \bord^{\chi}_{2+\epsilon,2,1} \stackrel{\mathrm{Bl}^{\chi}_{\calc}}{\longrightarrow}  \cat{P}\mathrm{rof}_{\mathbbm{k}}^{\coend \mathrm{ex}}.
\end{align}

As an illustrative application, we will now use $\mathrm{Bl}_{\calc}^{\mathrm{full}}$ to give a topological proof for the following result of \autocite[Sec.\,5.6]{BV12monadcenter} on the monadicity of the Drinfeld centre, in the special case where one already starts from a modular tensor category
(see also \autocite[Ch.\,9]{TV} for a textbook account):
\begin{proposition}\label{prop:freeforgetadj}{\autocite[Cor.\,5.14]{BV12monadcenter}}
    Let $\calc$ be a modular tensor category, then  the forgetful functor $U \colon \calZ(\calc) \to \calc$ has a two sided adjoint $F \colon \calc \to \calZ(\calc)$. Moreover the corresponding monad $U \circ F$ is naturally isomorphic to the central monad $Z$ on $\calc$
    \begin{align}
        (U \circ F)(-) \cong Z (-) := \int^{X \in \calc} X^* \otimes - \otimes X.
    \end{align}
\end{proposition}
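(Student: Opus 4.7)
The plan is to derive this algebraic fact topologically by evaluating the full modular functor on the whistle bordism and its reverse, then composing them to realise the central monad.

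First I would identify the images of the basic objects. Since $\widehat{I} \cong S^1$ and $\widehat{S^1} \cong S^1 \sqcup S^1$, we have $\mathrm{Bl}_\calc^{\mathrm{full}}(I) = \calc$ and $\mathrm{Bl}_\calc^{\mathrm{full}}(S^1) = \calc \boxtimes \calc$; as $\calc$ and $\overline{\calc}$ agree as linear categories, Proposition~\ref{prop:modularchar} gives $\calc \boxtimes \calc \simeq \calZ(\calc)$. Next, the double $\widehat{W}$ of the whistle is a pair of pants from $S^1 \sqcup S^1$ to $S^1$, so by the algebraic model of \Cref{sec:algstatespace} the profunctor $\mathrm{Bl}_\calc^{\mathrm{full}}(W)$ is given by $(X,Y;Z)\mapsto \Hom_\calc(X\otimes Y,Z)$, which corepresents the tensor product $\otimes\colon\calc\boxtimes\overline{\calc}\to\calc$; under $\calc \boxtimes \overline{\calc} \simeq \calZ(\calc)$ this is precisely the forgetful functor $U$. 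The reverse whistle $W^{*}\colon I \to S^1$ analogously produces a profunctor, and under the Eilenberg-Watts equivalence $\cat{P}\mathrm{rof}_{\mathbbm{k}}^{\coend\mathrm{ex}} \simeq \coend\mathrm{ex}_{\mathbbm{k}}$ this is represented by a left exact functor which we denote $F\colon\calc\to\calZ(\calc)$.

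To obtain the two-sided adjunction $F \dashv U \dashv F$, I would use the orientation-reversing symmetry in $\bord^{\oc}_{2+\epsilon,2,1}$ that swaps incoming and outgoing boundaries and identifies the bordism $W^{*}$ with $W$ up to this symmetry. Applied through the symmetric monoidal $2$-functor $\mathrm{Bl}_\calc^{\mathrm{full}}$ and the equivalence $\cat{P}\mathrm{rof}_{\mathbbm{k}}^{\coend\mathrm{ex}} \simeq \coend\mathrm{ex}_{\mathbbm{k}}$, this forces the functor $F$ represented by $W^{*}$ to serve simultaneously as a left and a right adjoint of $U$. Put differently, any left exact functor between finite linear categories has both adjoints automatically, so the topological content lies precisely in exhibiting \emph{one} profunctor, namely $\mathrm{Bl}_\calc^{\mathrm{full}}(W^{*})$, that plays both roles simultaneously.

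Finally, to identify the monad, I would compose the two whistles. The bordism $W \circ W^{*}\colon I \to I$ has double a genus-one surface with one incoming and one outgoing $S^1$ boundary component. By \Cref{prop:gluingfunc} together with the algebraic model of \Cref{sec:algstatespace}, its image under $\mathrm{Bl}_\calc^{\chi}$ is the profunctor
\begin{equation}
(X;Y)\;\longmapsto\;\Hom_\calc(\coend \otimes X,\,Y),
\end{equation}
which corepresents the endofunctor $X\mapsto \coend\otimes X$ of $\calc$. A dinatural use of the braiding of $\calc$ provides the natural isomorphism $\coend \otimes X \cong \int^{A\in\calc} A^{*}\otimes X\otimes A = Z(X)$. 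The main obstacle I anticipate is the verification that the monad multiplication and unit on $U\circ F$ arising from the topological composition of pants bordisms match the Hopf algebra product and unit on $\coend$ displayed in \Cref{fig:coend-Hopf-def}, and hence the standard monad structure on $Z$. This amounts to a ribbon-graph computation in the spirit of the proof of \Cref{prop:gluingfunc}, using the slide trick and the identities \eqref{eq:hopfpairingmirror} and \eqref{eq:S^2}, to trace the coherence data for horizontal composition of $\mathrm{Bl}_\calc^{\mathrm{full}}$ through the universal property of the relevant left exact coends.
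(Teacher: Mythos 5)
Your overall strategy is the paper's: evaluate $\mathrm{Bl}^{\mathrm{full}}_\calc$ on the two whistle bordisms to produce $U$ and $F$, then compose them to identify the monad. Your identification of $\mathrm{Bl}^{\mathrm{full}}_\calc(W)$ with $\Hom_\calc(U(-),-)$ is correct, and your shortcut for the monad --- reading off the genus-one state space $\Hom_\calc(\coend\otimes X,Y)$ for the double of $W\diamond\widetilde{W}$ and then using a dinatural braiding to pass to $\int^{A\in\calc}A^*\otimes X\otimes A$ --- is a legitimate alternative to the paper's route, which instead decomposes the doubled composite into two closed pairs of pants and a braiding and runs an explicit left exact coend calculation. (Note also that the paper only establishes the isomorphism of underlying functors, not of monad structures, so the verification you flag as the "main obstacle" goes beyond what is actually proved.)

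There is, however, a genuine gap in your justification of the two-sided adjunction, and neither of the two arguments you offer closes it. First, in $\bord^{\oc}_{2+\epsilon,2,1}$ all $2$-morphisms are isotopy classes of diffeomorphisms and hence invertible, so $W$ and the reversed whistle are \emph{not} adjoint $1$-morphisms in the source $2$-category: there are no candidate unit and counit $2$-morphisms, since $\widetilde{W}\diamond W$ is not diffeomorphic to $\mathrm{id}_{S^1}$. Consequently no adjunction can be transported through the $2$-functor, and an ``orientation-reversing symmetry'' of the bordism $2$-category is extra structure whose compatibility with $\mathrm{Bl}^{\mathrm{full}}_\calc$ would itself need to be proved. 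Second, the fallback claim that any left exact functor between finite linear categories automatically has both adjoints is false: a functor has a right adjoint only if it preserves cokernels, i.e.\ is also right exact, and for instance $\Hom_A(\kk,-)$ on $A=\kk[x]/(x^2)$-modules is left exact but not right exact. The paper closes this step on the algebraic side: it computes $\mathrm{Bl}^{\mathrm{full}}_\calc(\widetilde{W})\cong\Hom_\calc(-,U(-))$ independently from the reversed pair of pants, invokes the purely categorical fact that $\Hom_\calc(U(-),-)$ and $\Hom_\calc(-,U(-))$ are adjoint $1$-morphisms in $\cat{P}\mathrm{rof}_{\mathbbm{k}}^{\coend \mathrm{ex}}$, and then uses left exactness, hence representability, of these profunctors to extract $F$ together with both adjunctions. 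You should replace your symmetry argument with this computation.
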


\begin{proof}
For the first part of the statement we will study the whistle bordism $W$ from above. For the interval $I$ we directly compute $\mathrm{Bl}_{\calc}^{\mathrm{full}}(I) = \calc.$
While for the unit circle $S^1$ we get $\mathrm{Bl}_{\calc}^{\mathrm{full}}(S^1) \simeq \calZ(\calc)$ by the same argument as in the proof of \Cref{prop:condi}. The double of the whistle bordism $W \colon S^1 \to I$ is given by a pair of pants bordism $\widehat{W} \colon S^1 \sqcup - S^1 \to S^1 $. From this we get $\mathrm{bl}_{\calc}^{\chi}(\widehat{W}) \cong \Hom_{\calc}(-\otimes -,-)$. 
By carefully tracking how the involved functors behave under the equivalence $\calc\boxtimes\overline{\calc} \simeq \calZ(\calc)$
we find that 
\begin{align}
    \mathrm{Bl}^{\mathrm{full}}_{\calc}(W) \cong \Hom_{\calc}(U(-),-) \colon \calZ(\calc) \slashedrightarrow \calc,
\end{align} 
where $U \colon \calZ(\calc) \to \calc$ is the forgetful functor.

Now note that there is also a whistle bordism in the opposite direction $\widetilde{W} \colon I \to S^1$ in $\bord^{\oc}_{2+\epsilon,2,1}$.
For this an analogous computation gives
\begin{align}
    \mathrm{Bl}^{\mathrm{full}}_{\calc}(\widetilde{W}) 
    \cong \Hom_{\calc}(-,U(-)) \colon \calc \slashedrightarrow \calZ(\calc).
\end{align} 
In particular the profunctors $\mathrm{Bl}^{\mathrm{full}}_{\calc}(W)$ and $\mathrm{Bl}^{\mathrm{full}}_{\calc}(\widetilde{W})$ are adjoint $1$-morphisms in $\cat{P}\mathrm{rof}_{\mathbbm{k}}^{\coend \mathrm{ex}}$ \autocite[Rem.\,5.2.1]{Loregian2021coendcalc}. 

Moreover since all profunctors we consider are left exact, they are representable. From this we get a functor $F \colon \calc \to \calZ(\calc)$ such that $\mathrm{Bl}^{\mathrm{full}}_{\calc}(\widetilde{W}) \cong \Hom_{\calZ(\calc)}(F(-),-)$. In particular $F \colon \calc \to \calZ(\calc)$ is a left-adjoint of $U \colon \calZ(\calc) \to \calc$ by definition. By an analogous argument $F$ is also a right-adjoint. This proves the first part of \Cref{prop:freeforgetadj}.

For the second part recall that there is a diffeomorphism between $W \sqcup_{S^1} \widetilde{W}$ and the composition between two flat pairs of pants and the braiding, see e.g.\ \autocite[Sec.\,3.2]{carqueville16defect}:
\begin{equation}
\begin{tikzpicture}[scale=0.5,baseline=1.5cm]
        \draw[bordcolor-C,thick] (1,0) -- (0,0);
        \draw[bordcolor-C,thick] (0,0) -- (0,6);
        \draw[bordcolor-C,thick] (1,0) -- (1,6);
        \draw[bordcolor-C,thick] (0,6) -- (1,6);
        \filldraw[bordcolor-A] (0,0) circle (0.5mm);
        \filldraw[bordcolor-A] (1,0) circle (0.5mm);
        \draw[bordcolor-A,very thick] (1,0) arc (0:180:0.5 and 1);
        \filldraw[bordcolor-A] (0,6) circle (0.5mm);
        \filldraw[bordcolor-A] (1,6) circle (0.5mm);
        \draw[bordcolor-A,very thick] (1,6) arc (0:-180:0.5 and 1);
    \end{tikzpicture}
   \, \cong
    \begin{tikzpicture}[scale=0.5,baseline=1.5cm]
        \draw[bordcolor-C,thick] (1,6) -- (2,6);
        \filldraw[bordcolor-A] (1,6) circle (0.5mm);
        \filldraw[bordcolor-A] (2,6) circle (0.5mm);
        \draw[bordcolor-A,very thick] (2,4) arc (0:180:0.5 and 0.6);
        \draw[bordcolor-A,very thick] (0,4) .. controls (0.1,5.2) and (0.9,4.8) .. (1,6);
        \draw[bordcolor-A,very thick] (3,4) .. controls (2.9,5.2) and (2.1,4.8) .. (2,6);
        \draw[bordcolor-C,thick] (1,0) -- (2,0);
        \filldraw[bordcolor-A] (2,0) circle (0.5mm);
        \filldraw[bordcolor-A] (1,0) circle (0.5mm);
        \draw[bordcolor-A,very thick] (2,2) arc (0:-180:0.5 and 0.6);
        \draw[bordcolor-A,very thick] (0,2) .. controls (0.1,0.8) and (0.9,1.2) .. (1,0);
        \draw[bordcolor-A,very thick] (3,2) .. controls (2.9,0.8) and (2.1,1.2) .. (2,0);
        \draw[bordcolor-A,very thick] (0,4) -- (2,2);
        \draw[bordcolor-A,very thick] (1,4) -- (3,2);
        \fill[white,opacity=0.9] (0,2) -- (1,2) -- (3,4) -- (2,4) -- (0,2);
        \draw[bordcolor-A,very thick] (1,2) -- (3,4);
        \draw[bordcolor-A,very thick] (0,2) -- (2,4);
        \filldraw[bordcolor-A] (0,2) circle (0.15mm);
        \filldraw[bordcolor-A] (1,2) circle (0.15mm);
        \filldraw[bordcolor-A] (2,2) circle (0.15mm);
        \filldraw[bordcolor-A] (3,2) circle (0.15mm);
        \filldraw[bordcolor-A] (0,4) circle (0.15mm);
        \filldraw[bordcolor-A] (1,4) circle (0.15mm);
        \filldraw[bordcolor-A] (2,4) circle (0.15mm);
        \filldraw[bordcolor-A] (3,4) circle (0.15mm);
    \end{tikzpicture}
\end{equation}
Applying $\mathrm{Bl}^{\mathrm{full}}_{\calc}$ to the left hand side we immediately get
\begin{equation}
\mathrm{Bl}^{\mathrm{full}}_{\calc}\left(\,
    \begin{tikzpicture}[scale=0.3,baseline=0.7cm]
        \draw[bordcolor-C,thick] (1,0) -- (0,0);
        \draw[bordcolor-C,thick] (0,0) -- (0,6);
        \draw[bordcolor-C,thick] (1,0) -- (1,6);
        \draw[bordcolor-C,thick] (0,6) -- (1,6);
        \filldraw[bordcolor-A] (0,0) circle (0.5mm);
        \filldraw[bordcolor-A] (1,0) circle (0.5mm);
        \draw[bordcolor-A,very thick] (1,0) arc (0:180:0.5 and 1);
        \filldraw[bordcolor-A] (0,6) circle (0.5mm);
        \filldraw[bordcolor-A] (1,6) circle (0.5mm);
        \draw[bordcolor-A,very thick] (1,6) arc (0:-180:0.5 and 1);
    \end{tikzpicture}\, \right)
    \cong \Hom_{\calc}((U \circ F)(-),-)
\end{equation}
 by functoriality. For the right hand side first note that the double of a flat pair of pants is a closed pair of pants, and the double of the braiding in the open sector is the braiding in the closed sector. Composing the resulting profunctors finally leads to 
{\allowdisplaybreaks
\begin{align}
    \mathrm{Bl}^{\mathrm{full}}_{\calc}\left( \begin{tikzpicture}[scale=0.3,baseline=0.7cm]
        \draw[bordcolor-C,thick] (1,6) -- (2,6);
        \filldraw[bordcolor-A] (1,6) circle (0.5mm);
        \filldraw[bordcolor-A] (2,6) circle (0.5mm);
        \draw[bordcolor-A,very thick] (2,4) arc (0:180:0.5 and 0.6);
        \draw[bordcolor-A,very thick] (0,4) .. controls (0.1,5.2) and (0.9,4.8) .. (1,6);
        \draw[bordcolor-A,very thick] (3,4) .. controls (2.9,5.2) and (2.1,4.8) .. (2,6);
        \draw[bordcolor-C,thick] (1,0) -- (2,0);
        \filldraw[bordcolor-A] (2,0) circle (0.5mm);
        \filldraw[bordcolor-A] (1,0) circle (0.5mm);
        \draw[bordcolor-A,very thick] (2,2) arc (0:-180:0.5 and 0.6);
        \draw[bordcolor-A,very thick] (0,2) .. controls (0.1,0.8) and (0.9,1.2) .. (1,0);
        \draw[bordcolor-A,very thick] (3,2) .. controls (2.9,0.8) and (2.1,1.2) .. (2,0);
        \draw[bordcolor-A,very thick] (0,4) -- (2,2);
        \draw[bordcolor-A,very thick] (1,4) -- (3,2);
        \fill[white,opacity=0.9] (0,2) -- (1,2) -- (3,4) -- (2,4) -- (0,2);
        \draw[bordcolor-A,very thick] (1,2) -- (3,4);
        \draw[bordcolor-A,very thick] (0,2) -- (2,4);
        \filldraw[bordcolor-A] (0,2) circle (0.15mm);
        \filldraw[bordcolor-A] (1,2) circle (0.15mm);
        \filldraw[bordcolor-A] (2,2) circle (0.15mm);
        \filldraw[bordcolor-A] (3,2) circle (0.15mm);
        \filldraw[bordcolor-A] (0,4) circle (0.15mm);
        \filldraw[bordcolor-A] (1,4) circle (0.15mm);
        \filldraw[bordcolor-A] (2,4) circle (0.15mm);
        \filldraw[bordcolor-A] (3,4) circle (0.15mm);
    \end{tikzpicture}\right) 
    &\cong 
    \mathrm{Bl}^{\mathrm{full}}_{\calc}\left( \,
    \begin{tikzpicture}[scale=0.3,baseline=1.3cm]
        \draw[bordcolor-C,thick] (1,6) -- (2,6);
        \filldraw[bordcolor-A] (1,6) circle (0.5mm);
        \filldraw[bordcolor-A] (2,6) circle (0.5mm);
        \draw[bordcolor-A,very thick] (2,4) arc (0:180:0.5 and 0.6);
        \draw[bordcolor-A,very thick] (0,4) .. controls (0.1,5.2) and (0.9,4.8) .. (1,6);
        \draw[bordcolor-A,very thick] (3,4) .. controls (2.9,5.2) and (2.1,4.8) .. (2,6);
        \draw[bordcolor-C,thick] (0,4) -- (1,4);
        \filldraw[bordcolor-A] (0,4) circle (0.5mm);
        \filldraw[bordcolor-A] (1,4) circle (0.5mm);
        \draw[bordcolor-C,thick] (2,4) -- (3,4);
        \filldraw[bordcolor-A] (2,4) circle (0.5mm);
        \filldraw[bordcolor-A] (3,4) circle (0.5mm);
    \end{tikzpicture}\,\right) 
    \diamond \mathrm{Bl}^{\mathrm{full}}_{\calc}\left( \,
    \begin{tikzpicture}[scale=0.3,baseline=0.4cm]
        \draw[bordcolor-C,thick] (1,0) -- (2,0);
        \filldraw[bordcolor-A] (2,0) circle (0.5mm);
        \filldraw[bordcolor-A] (1,0) circle (0.5mm);
        \draw[bordcolor-A,very thick] (2,2) arc (0:-180:0.5 and 0.6);
        \draw[bordcolor-A,very thick] (0,2) .. controls (0.1,0.8) and (0.9,1.2) .. (1,0);
        \draw[bordcolor-A,very thick] (3,2) .. controls (2.9,0.8) and (2.1,1.2) .. (2,0);
        \draw[bordcolor-C,thick] (0,4) -- (1,4);
        \filldraw[bordcolor-A] (0,4) circle (0.5mm);
        \filldraw[bordcolor-A] (1,4) circle (0.5mm);
        \filldraw[bordcolor-A] (2,4) circle (0.5mm);
        \filldraw[bordcolor-A] (3,4) circle (0.5mm);
        \draw[bordcolor-A,very thick] (0,4) -- (2,2);
        \draw[bordcolor-A,very thick] (1,4) -- (3,2);
        \fill[white,opacity=0.9] (0,2) -- (1,2) -- (3,4) -- (2,4) -- (0,2);
        \draw[bordcolor-A,very thick] (1,2) -- (3,4);
        \draw[bordcolor-A,very thick] (0,2) -- (2,4);
        \filldraw[bordcolor-A] (0,2) circle (0.15mm);
        \filldraw[bordcolor-A] (1,2) circle (0.15mm);
        \filldraw[bordcolor-A] (2,2) circle (0.15mm);
        \filldraw[bordcolor-A] (3,2) circle (0.15mm);
        \filldraw[bordcolor-A] (0,4) circle (0.15mm);
        \filldraw[bordcolor-A] (1,4) circle (0.15mm);
        \filldraw[bordcolor-A] (2,4) circle (0.15mm);
        \filldraw[bordcolor-A] (3,4) circle (0.15mm);
        \draw[bordcolor-C,thick] (2,4) -- (3,4);
    \end{tikzpicture}\,\right) 
    \nonumber
    \\
    &\cong \oint^{X\boxtimes Y \in \calc\boxtimes\calc} \Hom_{\calc}\left(X \otimes Y, - \right) \otimes_\mathbb{k} \Hom_{\calc}\left( - ,Y \otimes X\right) 
    \nonumber
    \\
    &\cong \oint^{X\in \calc} \oint^{Y \in \calc} \Hom_{\calc}\left(X \otimes Y, - \right) \otimes_\mathbb{k} \Hom_{\calc}\left( - ,Y \otimes X\right) 
    \nonumber
    \\
    &\cong \oint^{Y\in \calc} \oint^{X \in \calc} \Hom_{\calc}\left(X \otimes Y, - \right) \otimes_\mathbb{k} \Hom_{\calc}\left( Y^*\otimes- ,X\right) 
    \nonumber
    \\
    &\cong \oint^{Y\in \calc} \Hom_{\calc}\left(Y^* \otimes - \otimes Y, - \right)  
    \nonumber
    \\
    &\cong \Hom_{\calc}\left(\int^{Y \in \calc}Y^* \otimes - \otimes Y ,-\right).
    \end{align}}
Here, the first and second isomorphism come from functoriality and definition of $\mathrm{Bl}^{\mathrm{full}}_{\calc}$, respectively. The third step uses \autocite[Lem.\,3.2]{FSS19eilenbergwatts} in the setting were all functors are left exact. The fourth isomorphism uses the Fubini theorem for left exact coends \autocite[Thm.\,B.2]{Lyu1996ribbon} as well as rigidity of $\calc$. The last two isomorphisms are the Yoneda lemma and \Cref{prop:lexcoendhom}, respectively. By functoriality of $\mathrm{Bl}^{\mathrm{full}}_{\calc}$ we thus obtain an isomorphism of profunctors 
\begin{equation}
    \Hom_{\calc}((U \circ F)(-),-) \cong \Hom_{\calc}\left(\int^{Y \in \calc}Y^* \otimes - \otimes Y ,-\right),
\end{equation}
which by the Yoneda lemma implies the second part of \Cref{prop:freeforgetadj}. 
\end{proof}

\addcontentsline{toc}{section}{References}
\renewcommand{\bibfont}{\small}
\raggedright
\printbibliography
\end{document}